\theoremstyle{plain}
\newtheorem{theorem}{Theorem}[section]
\newtheorem*{theorem*}{Theorem}
\newtheorem*{theorem-Main A}{Theorem \ref{Thm: Main A}}
\newtheorem*{theorem-Main B}{Theorem \ref{Thm: Main Theorem B} (rephrased)}
\newtheorem*{theorem-Main C}{Theorem \ref{Thm: Main Theorem C} (rephrased)}
\newtheorem*{theorem-Tunnel 1}{Theorem \ref{Thm: Tunnel 1}}
\newtheorem*{theorem-Tunnel 2}{Theorem \ref{Thm: Tunnel 2}}
\newtheorem{corollary}[theorem]{Corollary}
\newtheorem{lemma}[theorem]{Lemma}
\theoremstyle{definition}
\newtheorem*{remark}{Remark}
\newcommand{\Z}{\mathbb Z}
\newcommand{\N}{\mathbb N}
\newcommand{\nil}{\varnothing}
\newcommand{\wihat}{\widehat}
\newcommand{\defn}[1]{\textbf{#1}}
\newcommand{\boundary}{\partial}
\newcommand{\mc}[1]{\mathcal{#1}}
\newcommand{\ob}[1]{\overline{#1}}
\newcommand{\inter}[1]{\mathring{#1}}
      \def\@setcopyright{}
      \def\serieslogo@{}
\begin{document}

   \title[Band Taut Sutured Manifolds]{Band Taut Sutured Manifolds}
   \author{Scott A Taylor}
   \email{sataylor@colby.edu}
   \thanks{}

  \begin{abstract}
Attaching a 2--handle to a genus two or greater boundary component of a 3--manifold is a natural generalization of Dehn filling a torus boundary component. We prove that there is an interesting relationship between an essential surface in a sutured 3--manifold, the number of intersections between the boundary of the surface and one of the sutures, and the cocore of the 2--handle in the manifold after attaching a 2--handle along the suture. We use this result to show that tunnels for tunnel number one knots or links in any 3--manifold can be isotoped to lie on a branched surface corresponding to a certain taut sutured manifold hierarchy of the knot or link exterior. In a subsequent paper, we use the theorem to prove that  band sums satisfy the cabling conjecture, and to give new proofs that unknotting number one knots are prime and that genus is superadditive under band sum. To prove the theorem, we introduce band taut sutured manifolds and prove the existence of band taut sutured manifold hierarchies.
\end{abstract}
  
    \maketitle
  \section{Introduction}
  
Gabai's sutured manifold theory \cite{G1, G2, G3} is central to a number of stunning results concerning Dehn surgery on knots in 3--manifolds. Many of these insights make use of a famous theorem of Gabai \cite[Corollary 2.4]{G2}: with certain mild hypotheses, there is at most one way to fill a torus boundary component of a 3--manifold so that Thurston norm decreases. Lackenby \cite{L1}, building on this work, proved a theorem relating Dehn surgery properties of a knot to the intersection between the knot and essential surfaces in the 3--manifold. Lackenby used his results to study the effect of twisting the unknot along a knot having linking number zero with the unknot, and to study \cite{L2} the uniqueness properties of Dehn surgery on certain knots in certain 3-manifolds. Lackenby \cite{L3} and Kalfagianni \cite{K} also used Lackenby's theorem to study the unknotting properties of certain knots. 

In this paper, we prove a version of Lackenby's theorem for attaching a 2--handle to a sutured 3--manifold along a suture. Like Lackenby, we use Scharlemann's combinatorial version of sutured manifold theory \cite{MSc3}. Although our method is inspired by the proofs of Gabai's and Lackenby's theorems, the proof is very different.

For the statement of the main result, let $(N,\gamma)$ be a sutured manifold and let $F \subset \boundary N$ be a component of genus at least 2. Let $b \subset \gamma \cap F$ be a component. Let $N[b]$ be the 3--manifold obtained by attaching a 2--handle to $N$ along $b$ and let $\beta \subset N[b]$ be the cocore of the 2--handle. For a 3-manifold $M$ with $T$ the union of two torus boundary components, we say that a non-zero homology class $y \in H_2(M,\boundary M)$ is \defn{Seifert-like} for $T$ if the projection of $y$ to the first homology of each component of $T$ is non-zero. The main result of this paper is:

\begin{theorem-Main C}
Suppose that $(N,\gamma)$ is taut and that the components of $\boundary N - A(\gamma)$ adjacent to $b$ are both thrice punctured spheres or are both once-punctured tori. Let $Q \subset N$ be a surface having no component a sphere or disc disjoint from $\gamma$. Assume that $\boundary Q$ intersects $\gamma$ minimally and that $|\boundary Q \cap b| \geq 1$. Then one of the following is true:
\begin{enumerate}
\item $Q$ has a compressing or $b$-boundary compressing disc.
\item $(N[b],\beta) = (M'_0,\beta'_0) \# (M'_1,\beta'_1)$ where $M'_1$ is a lens space and $\beta'_1$ is a core of a genus one Heegaard splitting of $M'_1$.
\item The sutured manifold $(N[b],\gamma-b)$ is taut. The arc $\beta$ can be properly isotoped  to be embedded on a branched surface $B(\mc{H})$ associated to a taut sutured manifold hierarchy $\mc{H}$ for $N[b]$. There is also a proper isotopy of $\beta$ in $N[b]$ to an arc disjoint from the first decomposing surface in $\mc{H}$. If $b$ is adjacent to thrice-punctured spheres, that first decomposing surface can be taken to represent $\pm y$ for any given non-zero $y \in H_2(N[b],\boundary N[b])$. If $b$ is adjacent to once-punctured tori, the first decomposing surface can be taken to represent any Seifert-like homology class for the corresponding unpunctured torus components of $\boundary N[b]$. 
\item 
\[
-2\chi(Q) + |\boundary Q \cap \gamma| \geq 2|\boundary Q \cap b|.
\]
\end{enumerate}
\end{theorem-Main C}

A \defn{$b$-boundary compressing disc} for a properly embedded surface $Q \subset N$ transverse to $b$ is a disc with boundary consisting of an arc on $Q$ and a subarc of $b$ and with interior disjoint from $Q \cup \boundary N$. See Figure \ref{Fig: b-boundary compressing disc}.

\begin{figure}[ht!] 
\labellist \small\hair 2pt 
\pinlabel {$F \subset \boundary N$} [br] at 582 4
\pinlabel {$b$} [br] at 556 76
\pinlabel {$Q$} [bl] at 398 110
\pinlabel {$D$} [l] at 324 142
\endlabellist 
\centering 
\includegraphics[scale=.4]{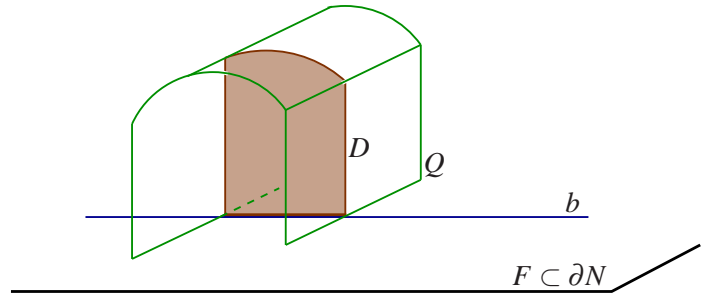}
\caption{$D$ is an $b$-boundary compressing disc for the surface $Q$ outlined in green.}
\label{Fig: b-boundary compressing disc}
\end{figure}

In \cite{L2}, Lackenby shows how to add sutures to the (non-empty) boundary of a compact, orientable, irreducible and boundary-irreducible manifold (other than a 3--ball) to create a taut sutured manifold. In his construction all components of $R(\gamma)$ are thrice-punctured spheres or tori, so the hypothesis in Theorem \ref{Thm: Main Theorem C} that $b$ be adjacent to thrice-punctured sphere components of $R(\gamma)$ is reasonable. 

The fourth conclusion is useful, since the inequality can be rearranged to be:
\[
-2\chi(Q) + |\boundary Q \cap (\gamma - b)| \geq |\boundary Q \cap b|.
\]
Thus, for example, if $\boundary Q$ happens to be disjoint from $\gamma - b$, then twice the negative euler characteristic of the surface is an upper bound for the number of times the boundary of the surface intersects $b$. 

The third conclusion of the theorem is of particular interest in that it is related to several well-known and very useful facts:
\begin{itemize}
\item If $K$ is an unknotting number one knot in $S^3$ and if $\beta$ is an arc in the knot complement defining a crossing change converting $K$ into the unknot then $\beta$ is isotopic into a minimal genus Seifert surface for $K$.
\item If $K$ is a tunnel number one knot in $S^3$ and if $\beta$ is a tunnel then, if the Scharlemann-Thompson invariant \cite{ST} is not 1, $\beta$ can be isotoped into a minimal genus Seifert surface for $K$.
\end{itemize}

Any minimal genus Seifert surface can be used as the first surface in a taut sutured manifold hierarchy of the knot exterior, and so any minimal genus Seifert surface can be thought of as part of a branched surface associated to a taut sutured manifold hierarchy of the knot exterior. Since these facts have proven to be very useful, the third conclusion of the main theorem of this paper also has the potential to be useful and perhaps points to a connection between the various \textit{ad hoc} methods used to push certain arcs onto minimal genus Seifert surfaces.

Applications of Theorem \ref{Thm: Main Theorem C} include a proof that knots that are band sums satisfy the cabling conjecture \cite[Theorem 8.1]{T3}, a partial solution to a conjecture of Scharlemann and Wu \cite[Corollary 5.4]{T3}, a near complete solution of a conjecture of Scharlemann \cite[Corollary 6.2]{T3}, and new proofs of three classical facts:
\begin{itemize}
\item Knot genus is superadditive under band connect sum \cite[Theorem 7.3]{T3}.
\item Unknotting number one knots are prime \cite[Theorem 7.2]{T3}.
\item Tunnel number one knots in $S^3$ have minimal genus Seifert surfaces disjoint from a given tunnel (Theorem \ref{ST-disjt tunnel} below).
\end{itemize}

These three facts previously all had proofs which use sutured manifold theory, but the methods were different. The advantage of the new proofs is that they are all nearly identical. Since some effort is required to rephrase the theorems in a way in which the main theorem of this paper can be usefully applied, we defer proofs for all but the last fact to \cite{T3}; the new proof of the last fact is given in this paper. Indeed, we prove the following stronger theorem for tunnel number one knots and 2--component links in any 3--manifold admitting such a knot or link (see Section \ref{Tunnels} for the definitions):

\begin{theorem-Tunnel 1}
Suppose that $L_b$ is a knot or 2-component link in a closed, orientable 3-manifold $M$ such that $L_b$ has tunnel number one. Let $\beta$ be a tunnel for $L_b$. Assume also that $(M - L_b, \beta)$ does not have a (lens space, core) summand. Then there exist  (possibly empty) curves $\wihat{\gamma}$ on $\boundary(M - \inter{\eta}(L_b))$ such that $(M - \inter{\eta}(L_b), \wihat{\gamma})$ is a taut sutured manifold and the arc $\beta$ can be properly isotoped to lie on the branched surface associated to a taut sutured manifold hierarchy of $(M - \inter{\eta}(L_b), \wihat{\gamma})$. In particular, if $L_b$ has a (generalized) Seifert surface, then there exists a minimal genus (generalized) Seifert surface for $L_b$ that is disjoint from $\beta$.
\end{theorem-Tunnel 1}

\section{Motivation and Outline}
As motivation for our proof of Theorem \ref{Thm: Main Theorem C}, we briefly review the proofs of Gabai's and Lackenby's theorems. For reference, here are (simplified and weakened) statements of Gabai's and Lackenby's theorems. The sutured manifold terminology will be explained in the next section.

\begin{theorem*}[Gabai]
Let $N$ be an atoroidal Haken 3--manifold whose boundary is the non-empty union of tori. Let $S$ be a Thurston norm minimizing surface representing an element of $H_2(N,\boundary N)$ and let $P$ be a component of $\boundary M$ such that $P \cap S = \nil$. Then, with at most one exception, $S$ remains norm minimizing in each manifold obtained by Dehn filling $N$ along a slope in $P$.
\end{theorem*}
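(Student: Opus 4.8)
\emph{Proof proposal.} The plan is to run the sutured manifold machine while keeping careful track of the filling torus $P$, and then to invoke the foundational fact that the first decomposing surface of a taut sutured manifold hierarchy realizes the Thurston norm of its homology class.

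First, promote $N$ to a taut sutured manifold $(N,\gamma)$ by placing a single annular suture on each torus component of $\boundary N$, so that $R(\gamma)$ is a union of incompressible annuli; since $N$ is irreducible and boundary-irreducible — being Haken and atoroidal with nonempty toral boundary, leaving aside the trivial cases such as $T^2\times I$ — the pair $(N,\gamma)$ is taut, and $S$, after isotoping $\boundary S$ to be transverse to $\gamma$ and using that $S$ is norm minimizing with no sphere or disc components, represents its class as the first surface of a taut decomposition. By Gabai's existence theorem for taut sutured manifold hierarchies, there is a taut hierarchy
\[
\mc{H}\co (N,\gamma)=(N_0,\gamma_0) \rightsquigarrow (N_1,\gamma_1) \rightsquigarrow \cdots \rightsquigarrow (N_n,\gamma_n)
\]
whose first decomposition is along $S$.

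Next I would groom $\mc{H}$ relative to $P$: by the standard conditioning results, arrange that each decomposing surface of $\mc{H}$ meets the trace of $P$ in a collection of coherently oriented, parallel, essential simple closed curves, and — the key point — that the homology classes of the decomposing surfaces can be chosen so that all of these curves share a single slope $\rho$ on $P$. The first surface $S$ is disjoint from $P$ and contributes nothing here, so the content is entirely about routing the remaining surfaces of the hierarchy coherently and in one slope. This is the step I expect to be the main obstacle: ruling out two genuinely distinct slopes is exactly what upgrades ``at most finitely many exceptions'' to ``at most one,'' and it is where atoroidality is really used, via a homological comparison (a second degenerate slope would, after cutting the attached solid torus into meridian discs, yield in $N$ a competing norm minimizing representative of $[S]$ incompatible with $\mc{H}$).

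Finally, fix any filling slope $\rho'\neq\rho$ on $P$ and cap off $\mc{H}$: extend each decomposing surface across the attached solid torus $V$ by adjoining to its slope-$\rho$ boundary curves on $\boundary V$ a suitable family of annuli and parallel copies of a cabling annulus in $V$; since $\rho'\neq\rho$ none of these curves bounds a meridian disc of $V$, so no compressions are created and the capped-off surfaces $\wihat{S}_i$ are honest decomposing surfaces in $N(\rho')$. A bookkeeping check with the Thurston norm and the definition of tautness then confirms that the capped-off sequence is again a taut sutured manifold hierarchy, now of $(N(\rho'),\wihat{\gamma})$, and its first decomposing surface is $\wihat{S}_1 = S$ itself, since $S$ was already disjoint from $P$. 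Applying the foundational fact to this hierarchy shows that $S$ is norm minimizing in $N(\rho')$. Hence the only possible exceptional slope is $\rho$ — and there is none at all when no decomposing surface of $\mc{H}$ meets $P$ — which is the assertion; the remaining work beyond the grooming crux is the routine verification of the tautness bookkeeping and the excluded trivial cases.
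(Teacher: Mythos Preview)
Your approach diverges from Gabai's argument (which is what the paper sketches) exactly at the step you flag as the main obstacle, and that divergence is a genuine gap, not bookkeeping.

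In the paper's sketch the decomposing surfaces never meet $P$: one places $P$ in $T(\gamma)$ and builds a taut hierarchy \emph{relative to} $P$, decomposing until $H_2(N_n,\partial N_n - P)=0$. Atoroidality is used at this terminal stage --- not to groom slopes --- to force the component of $N_n$ containing $P$ to be $P\times[0,1]$; the sutures $\gamma_n$ on $P\times\{1\}$ then single out one slope. Filling $P$ by any other slope makes that piece a taut sutured solid torus, and tautness propagates back up the unchanged hierarchy (Theorem~\ref{Thm: Tautness Up}), so $(N(b),\gamma)$ and $S$ are taut. No capping is ever needed, because no surface ever touched $P$.

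Your route instead lets the $S_i$ hit $P$ in a common slope $\rho$ and then caps them through the filling torus $V$ for $\rho'\neq\rho$. The capping step fails outright for a homological reason: a conditioned $S_i$ meets the torus $P$ in coherently oriented curves, so $[\partial S_i\cap P]=n[\rho]$ in $H_1(\partial V)$ with $n\neq 0$; but any properly embedded surface in $V$ has boundary a multiple of the meridian $[\rho']$, so no collection of annuli or cabling annuli in $V$ can cap those curves when $\rho\neq\rho'$. Thus the only $S_i$ you can actually extend into $N(\rho')$ are those already disjoint from $P$ --- which is precisely Gabai's hypothesis, arranged from the outset by taking the hierarchy relative to $P$, not recovered by any after-the-fact grooming. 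The fix is to drop the grooming/capping plan and keep every decomposing surface disjoint from $P$.
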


\begin{theorem*}[{\cite[Theorem 1.4]{L1}}]
Suppose that $(N,\gamma)$ is a taut atoroidal sutured manifold. Let 
$P \subset \boundary N$ be a torus component disjoint from $\gamma$.

Suppose that $b$ is a slope on $P$ such that Dehn filling $N$ with slope $b$ creates a sutured manifold $(N(b),\gamma)$ that is not taut. Let $Q \subset N$ be an essential surface such that $\boundary Q$ intersects $b$ minimally and $|\boundary Q \cap b| \geq 1$. Then 
\[
 -2\chi(Q) + |\boundary Q \cap \gamma| \geq 2|\boundary Q \cap b|.
\]
\end{theorem*}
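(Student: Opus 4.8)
The plan is to run the argument inside Scharlemann's combinatorial version of sutured manifold theory, arranged so that the left-hand side of the inequality is literally the index of $Q$ viewed as a parameterizing surface, and the right-hand side is forced out of the failure of tautness after filling. Since $(N,\gamma)$ is taut (and atoroidal), Gabai's theorem provides a taut sutured manifold hierarchy
\[
(N,\gamma) = (N_0,\gamma_0) \to (N_1,\gamma_1) \to \cdots \to (N_n,\gamma_n),
\]
each arrow a decomposition along a conditioned surface and $(N_n,\gamma_n)$ a disjoint union of product-sutured balls; moreover its first decomposing surface may be prescribed to be any conditioned surface that realizes a taut decomposition. I would view $Q$ as a parameterizing surface, with index $I(Q) = -2\chi(Q) + |\boundary Q \cap \gamma|$, put it in good position with respect to all the decomposing surfaces, and carry it along, $Q = Q_0 \to Q_1 \to \cdots \to Q_n$. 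Scharlemann's index inequality gives $I(Q_0) \geq I(Q_1) \geq \cdots \geq I(Q_n)$, and $I(Q_n) \geq 0$ because every parameterizing surface in a product-sutured ball has non-negative index. Thus $-2\chi(Q) + |\boundary Q \cap \gamma| = I(Q_0) \geq 0$ for free, and the theorem is the assertion that this trivial bound can be improved by $2|\boundary Q \cap b|$.

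The non-taut filling is what supplies the required first decomposing surface. If $N(b)$ is reducible we are in a genuinely exceptional situation --- corresponding to the lens-space-summand conclusion of the full version of the theorem --- which the atoroidality of $(N,\gamma)$ and the essentiality of $Q$ are designed to exclude, so I would dispose of this case separately first. Otherwise $R(\gamma)$ is not norm-minimizing in $N(b)$; choose a conditioned surface $\wihat S \subset N(b)$ in the class of $R(\gamma)$ with $x(\wihat S) < x(R(\gamma))$, isotoped to meet the filling solid torus $V$ in meridian discs, and set $S = \wihat S \cap N$. Since the meridian discs do not affect the Thurston norm, $S$ is a conditioned surface in $N$ that is norm-minimizing in its class and realizes a taut decomposition of $(N,\gamma)$. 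The key point is that $\boundary S$ must meet $P$ in at least two parallel copies of $b$: it cannot be isotoped off $P$ --- that would exhibit $(N,\gamma)$ itself as non-taut, contrary to hypothesis --- and its algebraic intersection number with the core of $V$ is zero, so its geometric intersection number, being nonzero, is at least two. (This is the analogue of ``at most one exceptional slope'' in Gabai's theorem: the hypothesis that filling along $b$ destroys tautness says precisely that $b$ is that exceptional slope, so the obstruction surface genuinely exists.) I would then take the taut hierarchy above with $S_1 = S$ as its first decomposing surface.

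The remaining step is to show that decomposing $(N,\gamma)$ along $S$ drops the index of $Q$ by at least $2|\boundary Q \cap b|$. Both $\boundary Q$ and $\boundary S$ lie on the torus $P$; on $P$ the curve $\boundary S$ consists of at least two copies of $b$, while $\boundary Q$ meets $b$ in $|\boundary Q \cap b| \geq 1$ points, \emph{minimally}. Hence $\boundary Q \cap \boundary S$ contains at least $2|\boundary Q \cap b|$ points on $P$, and, after removing inessential intersection curves and arcs using the incompressibility and boundary-incompressibility of $Q$ (and of $S$), the intersection $Q \cap S$ contains at least $|\boundary Q \cap b|$ arcs that have an endpoint among those points and that do not cut a disc off $Q$. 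Cutting $Q$ along each such arc raises $\chi$ by $1$, so $-2\chi(Q_1) \leq -2\chi(Q_0) - 2|\boundary Q \cap b|$; and the arc-by-arc form of Scharlemann's index estimate shows that each such boundary-to-boundary arc, having its endpoints in $R(\gamma)$ along $P$, contributes at most $0$ to the change in the suture-intersection term, no cancellation being possible because $\boundary Q$ meets $b$ minimally. Therefore $I(Q_1) \leq I(Q_0) - 2|\boundary Q \cap b|$, and combining with $0 \leq I(Q_n) \leq I(Q_1)$ yields
\[
-2\chi(Q) + |\boundary Q \cap \gamma| = I(Q_0) \geq 2|\boundary Q \cap b|,
\]
as required.

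The main obstacle is this last, arc-by-arc index computation: verifying, through Scharlemann's analysis of how $\boundary Q$, $\boundary S$, and the reglued $R_\pm$ regions interact under the decomposition, that each arc of $Q \cap S$ straddling $b$ really does cost at least $2$ in index and that nothing elsewhere compensates --- this is where the minimality and essentiality hypotheses on $Q$ are genuinely used. Subsidiary technical points are confirming that $S$ can be taken conditioned in $N$ so that the first decomposition is taut, and cleanly isolating the reducible case so that it can be set aside under the stated hypotheses.
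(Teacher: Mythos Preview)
Your approach differs from the paper's sketch (which follows Lackenby), and the difference is where the gap lies. Lackenby's argument takes the taut hierarchy of $(N,\gamma)$ \emph{disjoint from} $P$, carries $Q$ through to $Q_n$ with $I(Q)\ge I(Q_n)$, and extracts the lower bound $I(Q_n)\ge 2|\boundary Q\cap b|$ only at the \emph{end}: the component $N'_n$ containing $P$ is $P\times[0,1]$, the non-tautness of the filled solid torus $N'_n(b)$ forces the sutures on the other boundary torus to meet $b$ at least twice in total, and the essential discs and annuli making up $Q_n\cap N'_n$ must cross those sutures. No special first surface is needed; the non-taut filling is used only to count sutures on $\boundary N'_n - P$.

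You instead try to harvest the $2|\boundary Q\cap b|$ at the \emph{first} decomposition, along a surface $S$ built from a witness to non-tautness in $N(b)$, and this does not work. The torus $P$ lies in $T(\gamma)$, not $R(\gamma)$; when you decompose along $S$ with $\boundary S\cap P$ consisting of $k\ge 2$ coherently oriented copies of $b$, the remnant of $P$ becomes $k$ annulus components of $A(\gamma_1)$, each with a core suture parallel to $b$. Every arc of $\boundary Q$ on that remnant now crosses exactly one of these new sutures, contributing $k|\boundary Q\cap b|$ new intersections to $|\boundary Q_1\cap\gamma_1|$. This increase exactly cancels the gain in $-2\chi$ from cutting $Q$ along the arcs of $Q\cap S$ with endpoints on $P$ (there are at most $k|\boundary Q\cap b|/2$ such arcs, each worth $2$), so the net change in index from the $P$-interaction is zero --- consistent with Scharlemann's $I(Q_1)\le I(Q)$, but not the strict drop of $2|\boundary Q\cap b|$ you claim. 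Your sentence ``each such boundary-to-boundary arc, having its endpoints in $R(\gamma)$ along $P$, contributes at most $0$ to the change in the suture-intersection term'' is precisely where this goes wrong: the endpoints are not in $R(\gamma)$, and after decomposition they sit in suture annuli, not in $R(\gamma_1)$. There is also a secondary issue --- you have not checked that $S$ actually gives a \emph{taut} decomposition of $(N,\gamma)$ --- but the index accounting is the essential gap.
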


Gabai's theorem is proved by taking a taut sutured manifold hierarchy for $N$ such that each decomposing surface in the hierarchy is disjoint from $P$. The first decomposing surface is the given surface $S$. The hierarchy ends at a taut sutured manifold $(N_n,\gamma_n)$ such that $H_2(N_n,\boundary N_n - P) = 0$. Our additional assumption that $N$ is atoroidal implies that $N_n$ consists of 3--balls and one additional component that is homeomorphic to $P \times [0,1]$. Dehn filling $N_n$ along a slope $b \subset P$ creates another sutured manifold which we call $(N_n(b),\gamma_n)$. An examination of the sutures $\gamma_n \cap \boundary N_n$ shows that for all but at most one choice of $b$, $(N_n(b),\gamma_n)$ remains taut. One of the fundamental theorems of sutured manifold theory \cite[Corollary 3.9]{MSc3} (see Theorem \ref{Thm: Tautness Up}
below) implies that, except for the exceptional slope, $(N(b),\gamma)$ and $S$ are taut. 

Equivalently, we can begin with the Dehn-filled $\beta$-taut sutured manifold $(N(b),\gamma,\beta)$ where $\beta$ is the core of the surgery solid torus. The hierarchy for $N$ is then a $\beta$-taut sutured manifold hierarchy for $(N(b),\gamma,\beta)$ where each decomposing surface is disjoint from $\beta$. We conclude that for all but at most one choice of $b$, the sutured manifold $(N(b),\gamma,\nil)$ and the surface $S$ are not only $\beta$--taut, but also $\nil$--taut. There are two advantages to this viewpoint. One is that it is possible to see that if the hierarchy is taut then $\beta$ has infinite order in the fundamental group of $N(b)$ \cite[Theorem A.6]{L2}. The other advantage is that, if the hierarchy of the filled manifold is taut, it is not difficult to see that $\beta$ can be isotoped to lie on the branched surface corresponding to the hierarchy. The analogous statment in Theorem \ref{Thm: Main Theorem C} is much harder to prove.

That was a sketch of the proof of Gabai's theorem. We now turn to Lackenby's theorem. The surface $Q$ in the statement of Lackenby's theorem is an example of what is called a ``parameterizing surface'' in $(N, \gamma)$. (Parameterizing surfaces are defined in Section \ref{Param}.) Associated to each parameterizing surface is a number called the index (or ``sutured manifold norm'' \cite{CC}). In the case of Lackenby's theorem, the index of $Q$ is defined to be:
\[
I(Q) = |\boundary Q \cap \gamma| -2\chi(Q).
\]

Suppose now that $b \subset P$ is the exceptional slope, so that $(N(b),\gamma-b)$ is not taut. Let $Q \subset N$ be a parameterizing surface so that $\boundary Q$ intersects $\gamma$ minimally and $|\boundary Q \cap b| > 0$.

In the sutured manifold $(N_n,\gamma_n)$, the surface $Q$ has decomposed into a parameterizing surface $Q_n$ with $I(Q) \geq I(Q_n)$. The component $N'_n$ of $N_n$ containing $P$ is homeomorphic to $P \times [0,1]$. Some components of $Q_n$ lie in $N'_n$. Since $Q$ is essential, $Q_n \cap N'_n$ is the union of discs with boundary on $\boundary N'_n - P$ and annuli with at least one boundary component on $\boundary N'_n - P$. These boundary components must cross the sutures on $\boundary N'_n - P$. Analyzing these intersections gives a lower bound on $I(Q_n)$ which is, therefore, a lower bound on $I(Q)$. This lower bound implies the inequality in Lackenby's theorem. As in Gabai's theorem, Lackenby's theorem can be rewritten as a theorem about a sutured manifold $(M,\gamma,\beta)$ with $\beta$ a knot in $M$. (The knot $\beta$ is the core of the surgery solid torus with slope $b$.) 

The point of this paper is to develop a theory whereby we can replace the knot $\beta$ in the the proof of Gabai's and Lackenby's theorems with an arc $\beta$. In Theorem \ref{Thm: Main Theorem B}, this arc is the cocore of a 2--handle added to $b \subset \boundary N$.

The proof of Theorem \ref{Thm: Main Theorem C} is inspired by the proof of Lackenby's theorem. For the time being, let $(M, \gamma') = (N[b], \gamma - b)$ and consider the arc $\beta \subset M$ which is the cocore of the attached 2--handle. If we could construct a useful hierarchy of $(M,\gamma',\beta)$ disjoint from $\beta$, we could adapt Lackenby's combinatorics to obtain a result similar to Theorem \ref{Thm: Main Theorem C}. However, it seems unlikely that such a hierarchy can exist, since although a sequence
\[
(M,\gamma',\beta) \stackrel{S_1}{\to} (M_1,\gamma_1,\beta_1) \stackrel{S_2}{\to} \hdots \stackrel{S_n}{\to} (M_n,\gamma_n,\beta_n)
\]
can be constructed so that each decomposing surface represents a given homology class, and although it is possible to find such surfaces representing the homology class that are disjoint from $\beta$, it may not be possible to find such surfaces giving a $\beta$-taut decomposition which are (in the terminology of \cite{MSc3}) ``conditioned''. Instead we develop the theory of ``band-taut sutured manifolds'' to give the necessary control over intersections between $\beta$ and the decomposing surfaces. Sections \ref{Band Taut Decomp} and \ref{Band Taut Hierarchies} are almost entirely devoted to proving that if $(M,\gamma',\beta)$ is a band-taut sutured manifold then there is a so-called ``band-taut'' sutured manifold hierarchy of $M$. Section \ref{Combinatorics} studies the combinatorics of parameterizing surfaces at the end of a band-taut hierarchy and proves a version of Theorem \ref{Thm: Main Theorem C} for band-taut sutured manifolds. Section \ref{Branched Surface} reviews Gabai's construction of the branched surface associated to a sequence of sutured manifold decompositions and sets up the technology to prove that the arc $\beta$ can sometimes be isotoped into the branched surface associated to a taut hierarchy.

In classical combinatorial sutured manifold theory, sutured manifold decompositions are usually constructed so that they ``respect'' a given parameterizing surface. The framework of ``band taut sutured manifolds'' requires that we have sutured manifold decompositions that respect each of two, not necessarily disjoint, parameterizing surfaces. Section \ref{Param} is devoted to explaining this mild generalization of the classical theory.

Sections \ref{arc taut to band taut} and \ref{Punctured Tori} convert the main theorem for the theory of band taut sutured manifolds into theorems for arc-taut and nil-taut sutured manifolds. Section \ref{Tunnels}  gives the application to tunnel number one knots and links.

\subsection*{Acknowledgements} This paper has its roots in my Ph.D. dissertation \cite{T1}, although none of the present work appears there. I am grateful to Qilong Guo who found a gap in \cite{T1}, which lead to the development of the concept of ``band taut sutured manifold''. I am grateful to Marty Scharlemann for his encouragement and helpful comments. Thanks also to the referees for their careful reading and suggestions.
  
\section{Sutured Manifolds}\label{Sec: SM}
A sutured manifold $(M,\gamma,\beta)$ consists of a compact orientable 3-manifold $M$, a collection of annuli $A(\gamma) \subset \boundary M$ whose cores are oriented simple closed curves $\gamma$, a collection of torus components $T(\gamma) \subset \boundary M$, and a 1-complex $\beta$ properly embedded in $M$. Furthermore, the closure of $\boundary M - (A(\gamma) \cup T(\gamma))$ is the disjoint union of two surfaces $R_- = R_-(\gamma)$ and $R_+ = R_+(\gamma)$. Each component of $A(\gamma)$ is adjacent to both $R_-$ and $R_+$. The surfaces $R_-$ and $R_+$ are oriented so that if $A$ is a component of $A(\gamma)$, then the curves $R_- \cap A$, $R_+ \cap A$ and $\gamma \cap A$ are all non-empty and are mutually parallel as oriented curves. We denote the union of components of $A(\gamma) - \gamma$ adjacent to $R_\pm$ by $A_\pm$. We let $R(\gamma) = R_- \cup R_+$. We use $R_\pm$ to denote $R_-$ or $R_+$. 

The orientation on $\boundary R_+$ gives an outward normal orientation to $R_+$ and the orientation on $\boundary R_-$ gives an inward normal orientation to $R_-$. We assign each edge of $\beta$ an orientation with the stipulation that if an edge has an endpoint in $R_- \cup A_-$ then it is the initial endpoint of the edge and if an edge has an endpoint in $R_+ \cup A_+$ then it is the terminal endpoint of the edge. We will only be considering 1-complexes $\beta$ where this stipulation on the orientation of edges can be attained. (That is, no edge of $\beta$ will have both endpoints in $R_\pm \cup A_\pm$.)

If $(M,\gamma,\beta)$ is a sutured manifold and if $S \subset M$ is a connected surface in general position with respect to $\beta$, the \defn{$\beta$-norm} of $S$ is \[x_\beta(S) = \max\{0,-\chi(S) + |S \cap \beta|\}.\] If $S$ is a disconnected surface in general position with respect to $\beta$, the $\beta$-norm is defined to be the sum of the $\beta$-norms of its components. The norm $x_\nil$ is called the \defn{Thurston norm}.

The surface $S$ is \defn{$\beta$-minimizing} if, out of all embedded surfaces with the same boundary as $S$ and representing $[S,\boundary S]$ in $H_2(M,\boundary S)$, the surface $S$ has minimal $\beta$-norm. $S$ is \defn{$\beta$-taut} if it is $\beta$-incompressible (i.e. $S - \beta$ is incompressible in $M - \beta$), $\beta$-minimizing, and any given edge of $\beta$ always intersects $S$ with the same sign.

A sutured manifold is \defn{$\beta$-taut} if:
\begin{enumerate}
\item[(T0)] $\beta$ is disjoint from $A(\gamma) \cup T(\gamma)$.
\item[(T1)] $M$ is $\beta$-irreducible
\item[(T2)] $R(\gamma)$ (equivalently $R_-$ and $R_+$) and $T(\gamma)$ are $\beta$-taut.
\end{enumerate}

If a 3--manifold or surface is $\nil$-taut, we say it is \defn{taut in the Thurston norm} or sometimes, simply, \defn{taut}.

The sutured manifold terminology up until now has been standard (see \cite{MSc3}). Here is the central new idea of this paper. We say that a sutured manifold $(M,\gamma,\beta)$ is \defn{banded} if 
\begin{enumerate}
\item[(B1)] there exists at most one edge $c_\beta \subset \beta$, having an endpoint in $A(\gamma)$. If $c_\beta \neq \nil$, one endpoint lies in $A_-$ and the other lies in $A_+$. The edge $c_\beta$ is called \defn{the core}.
\item[(B2)] If $c_\beta \neq \nil$, then there exits a disc $D_\beta$, which we think of as an octagon, having its boundary divided into eight arcs, $c_1, c_2, \hdots, c_8$ (in cyclic order), called the \defn{edges} of $D_\beta$. The arc $c_\beta$ is contained in $D_\beta$ and the interior of $D_\beta$ is otherwise disjoint from $\beta$. We require that:
\begin{itemize}
\item $c_1$ and $c_5$ are properly embedded in $R_- - \boundary \beta$,
\item $c_2$ and $c_6$ each are properly embedded in $A(\gamma)$, intersect $\gamma$ exactly once each, and each contains an endpoint of $c_\beta$,
\item $c_3$ and $c_7$ are properly embedded in $R_+ - \boundary \beta$,
\item $c_4$ and $c_8$ each either traverse an edge of $\beta - c_\beta$ or are properly embedded in $A(\gamma)$ and intersect $\gamma$ exactly once.
\end{itemize}

Define $e_\beta$ to be the union of edges of $\beta - c_\beta$ that are traversed by $\boundary D_\beta \cap (\beta - c_\beta)$. We have that $|e_\beta| \leq 2$. The disc $D_\beta$ is called \defn{the band} and the components of $e_\beta$ are called the \defn{sides} of the band. The sides of a band may lie on zero, one, or two edges of $\beta$. The arc $c_1 \cup c_2 \cup c_3$ is called the \defn{top} of the band and the arc $c_5 \cup c_6 \cup c_7$ is called the \defn{bottom} of the band. See Figure \ref{Fig: The band}. 
\end{enumerate}

\begin{figure}[ht!] 
\labellist \small\hair 2pt 
\pinlabel {$D_\beta$} at 108 180
\pinlabel {$\gamma$} [Bl] at 318 280
\pinlabel {$\gamma$} [b] at 167 337
\pinlabel {$\gamma$} [tr] at 65 5
\pinlabel {$c_2$} [br] at 131 291
\pinlabel {$c_3$} [Bl] at 211 258
\pinlabel {$c_\beta$} [l] at 155 180
\pinlabel {$R_+$} [Bl] at 253 322
\pinlabel {$R_+$} [tr] at 8 56
\pinlabel {$R_-$} [Br] at 80 298
\pinlabel {$R_-$} [tl] at 201 85
\pinlabel {$c_7$} [Bl] at 59 107
\endlabellist 
\centering 
\includegraphics[scale=.5]{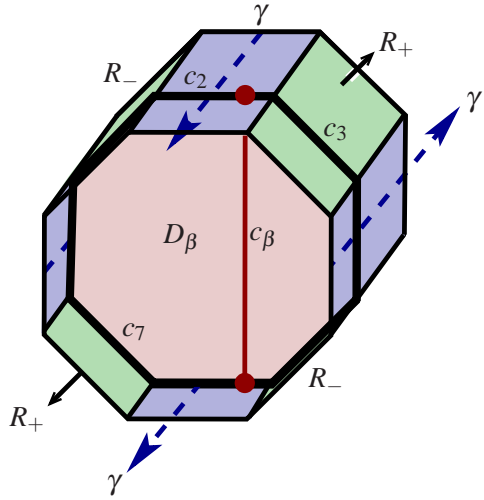}
\caption{The red disc is a band with $e_\beta = \nil$. The green surfaces are subsurfaces of $R(\gamma)$ and the blue surfaces are subsurfaces of $A(\gamma)$. The edges of the band are labelled clockwise $c_1$ through $c_8$ with $c_2$ containing the top endpoint of the red arc $c_\beta$.}
\label{Fig: The band}
\end{figure}

A banded sutured manifold $(M,\gamma,\beta)$ is \defn{band-taut} if $(M,\gamma,\beta - c_\beta)$ is $(\beta - c_\beta)$-taut. 

\begin{remark}
The core of the band $c_\beta$ is the arc we try to isotope onto the branched surface coming from a sutured manifold hierarchy. When building the hierarchy we will attempt to make each decomposing surface disjoint from $c_\beta$. The disc $D_\beta$ helps to guide the isotopy of (parts of) $c_\beta$ into the branched surface coming from a sutured manifold hierarchy. That the endpoints of $c_\beta$ lie in $A(\gamma)$ allow us to use the surface $R(\gamma)$ to modify decomposing surfaces so as to give them algebraic intersection number zero with $c_\beta$. Because we want to appeal to as much of the sutured manifold theory developed in \cite{MSc3} and \cite{MSc4} as possible, we need ways of appealing to results about taut sutured manifolds. The sides of the band allow us to make use of these results. 
\end{remark}

\section{Decompositions}\label{Band Taut Decomp}
In classical combinatorial sutured manifold theory, sutured manifolds are decomposed using so-called ``conditioned'' surfaces and a variety of ``product surfaces''. We review and expand the classical definitions and then discuss how the surfaces can give decompositions of band-taut sutured manifolds. 

\subsection{Sutured manifold decompositions}

\subsubsection{Decomposing surfaces} If $(M,\gamma,\beta)$ is a sutured manifold, a \defn{decomposing surface} (cf. \cite[Definition 2.3]{MSc3}) is a properly embedded surface $S \subset M$ transverse to $\beta$ such that:
\begin{enumerate}
\item[(D1)] $\boundary S$ intersects each component of $T(\gamma)$ in a (possibly empty) collection of coherently oriented circles.

\item[(D2)] $\boundary S$ intersects each component of $A(\gamma)$ in circles parallel to $\gamma$ (and oriented in the same direction as $\gamma$), in essential arcs, or not at all.

\item[(D3)] Each circle component of $\boundary S \cap A(\gamma)$ is disjoint from $\gamma$ and no arc component of $\boundary S \cap A(\gamma)$ intersects $\gamma$ more than once.
\end{enumerate}

If $S$ is a decomposing surface, there is a standard way of placing a sutured manifold structure on $M' = M - \inter{\eta}(S)$. The curves $\gamma'$ are the oriented double curve sum of $\gamma$ with $\boundary S$. Let $\beta' = \beta \cap M'$. The sutured manifold $(M',\gamma',\beta')$ is obtained by \defn{decomposing} $(M,\gamma,\beta)$ using $S$. We write $(M,\gamma,\beta) \stackrel{S}{\to} (M',\gamma',\beta')$. If $(M,\gamma,\beta)$ is $\beta$-taut and if $(M',\gamma',\beta')$ is $\beta'$-taut, then we say the decomposition is \defn{$\beta$-taut}.

If the annuli and tori $A(\gamma) \cup T(\gamma)$ are not disjoint from $\beta$, we need to be more precise about the formation of the annuli $A(\gamma')$ in $(M',\gamma',\beta')$. We form annuli $A(\gamma') = \eta(\gamma')$ by demanding that $(A(\gamma) \cup T(\gamma)) \cap M'$ is a subset of $A(\gamma') \cup T(\gamma')$. This requirement ensures that any endpoint of $\beta$ that lies in $A(\gamma) \cup T(\gamma)$ continues to lie in $A(\gamma') \cup T(\gamma')$. See Figure \ref{Fig: Decomposing Sutures}.

\begin{figure}[ht!] 
\labellist \small\hair 2pt 
\pinlabel {$\boundary S$} [t] at 130 192
\pinlabel {$\gamma$} [r] at 4 370
\pinlabel {$\boundary A(\gamma)$} [r] at 4 443
\pinlabel {$\boundary A(\gamma)$} [r] at 4 300
\pinlabel {$\gamma'$} [r] at 389 373
\pinlabel {$\boundary A(\gamma')$} [r] at 389 446
\pinlabel {$\boundary A(\gamma')$} [r] at 389 302
\pinlabel {$\boundary \beta$} [l] at 86 407
\pinlabel {$\boundary \beta$} [l] at 195 335
\endlabellist 
\centering 
\includegraphics[scale=.4]{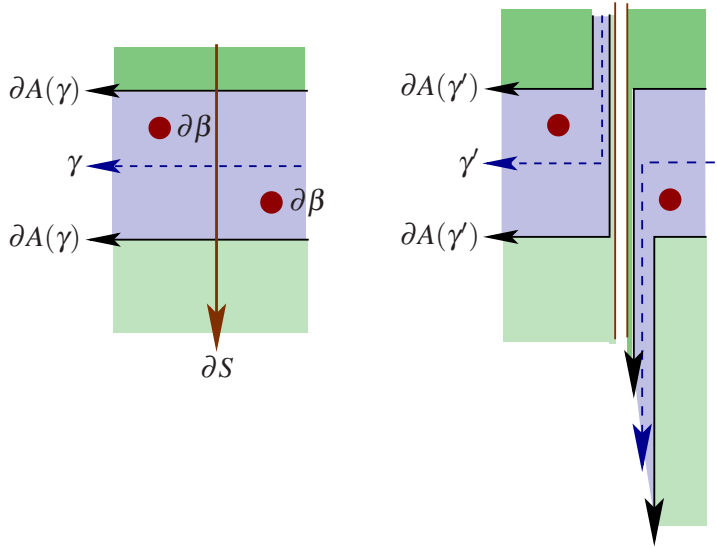}
\caption{Creating sutures in $M - \inter{\eta}(S)$.}
\label{Fig: Decomposing Sutures}
\end{figure}

\subsubsection{Product Surfaces} If $e \subset \beta$ is an edge with both endpoints in $R(\gamma)$, a \defn{cancelling} disc for $e$ is a disc properly embedded in $M - \inter{\eta}(\beta)$ having boundary running once across $e$ and once across $A(\gamma)$. See Figure \ref{Fig: Cancelling Disc}. A \defn{product disc} in a sutured manifold $(M,\gamma,\beta)$ is a rectangle $P$ properly embedded in $M$ such that $P \cap \beta = \nil$ and $\boundary P \cap A(\gamma)$ consists of two opposite edges of the rectangle each intersecting $\gamma$ once transversally. Notice that the frontier of a regular neighborhood of a cancelling disc is a product disc. A product disc $P$ is \defn{allowable} if no component of $\boundary P \cap R(\gamma)$ is $\beta$-inessential.

\begin{figure}[ht!] 
\labellist \small\hair 2pt 
\pinlabel {$e$} [b] at 286 75
\pinlabel {$D$} at 286 52
\endlabellist 
\centering
\includegraphics[scale=0.5]{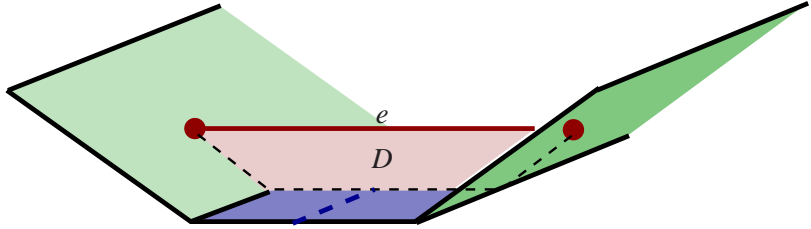}
\caption{A cancelling disc $D$ for an edge $e$}
\label{Fig: Cancelling Disc}
\end{figure}

An \defn{amalgamating disc} $D$ in $(M,\gamma,\beta)$ is a rectangle with two opposite edges lying on components of $\beta$ that are edges joining $R_-$ to $R_+$, one edge in $R_+$ and one edge in $R_-$. If $\boundary D$ traverses a single edge of $\beta$ twice, it is a \defn{self amalgamating disc}, otherwise it is a \defn{nonself amalgamating disc}. A self amalgamating disc is \defn{allowable} if both of the arcs $\boundary D \cap \boundary M$ are $\beta$-essential in $R(\gamma)$.

 If in $(M,\gamma,\beta)$ there is a cancelling disc $D$ for $e$, we say that the sutured manifold $(M,\gamma,\beta - e)$ is obtained from $(M,\gamma,\beta)$ by \defn{cancelling} the arc $e$. If in $(M,\gamma,\beta)$ there is a nonself amalgamating disc with boundary on components $\beta_1$ and $\beta_2$ of $\beta$, we say that the sutured manifolds $(M,\gamma,\beta - \beta_1)$ and $(M,\gamma,\beta - \beta_2)$ are obtained by \defn{amalgamating the arcs} $\beta_1$ and $\beta_2$.

Lemma 4.3 of \cite{MSc3} shows that if $(M,\gamma,\beta)$ is $\beta$-taut, then after cancelling arc $e$, the sutured manifold is still $(\beta - e)$-taut. The converse is also easily proven. By \cite[Lemma 4.2]{MSc3}, if $(M,\gamma,\beta)$ is taut, then so is the sutured manifold obtained by decomposing along a product disc in $(M,\gamma,\beta)$.  By \cite[Lemma 4.3 and Lemma 4.4]{MSc3}, if $(M,\gamma,\beta - \beta_1)$ is obtained by amalgamating arcs $\beta_1$ and $\beta_2$ in the $\beta$-taut sutured manifold $(M,\gamma,\beta)$, then $(M,\gamma,\beta - \beta_1)$ is $(\beta - \beta_1)$-taut. Later we will review a method for eliminating self amalgamating discs so that tautness is preserved.

 A \defn{product annulus} $P$ is an annulus properly embedded in $M$ that is disjoint from $\beta$ and that has one boundary component in $R_-$ and the other in $R_+$. (See \cite[Definition 4.1]{MSc3}.)  A product annulus is \defn{allowable} if $P$ is not the frontier of a regular neighborhood of an arc in $M$ (this is the same as being ``non-trivial'' in the sense of \cite[Definition 4.1]{MSc3}).  Notice that attaching the two edges of a self-amalgamating disc lying on $\beta$ produces a product annulus.

\subsubsection{Conditioned and rinsed surfaces} 

In addition to decomposing sutured manifolds along product surfaces, we will also need to decompose along more complicated surfaces. We require such surfaces to be ``conditioned''  \cite[Definition 2.4]{MSc3}. A \defn{conditioned} 1--manifold $C \subset \boundary M$ is an embedded oriented 1--manifold satisfying:
\begin{itemize}
\item[(C0)] All circle components of $C$ lying in the same component of $A(\gamma) \cup T(\gamma)$ are oriented in the same direction, and if they lie in $A(\gamma)$, they are oriented in the same direction as the adjacent component of $\gamma$.
\item[(C1)] All arcs of $C \cap A(\gamma)$ in any annulus component of $A(\gamma)$ are oriented in the same direction.
\item[(C2)] No collection of simple closed curves of $C \cap R(\gamma)$ is trivial in \linebreak[4] $H_1(R(\gamma),\boundary R(\gamma))$.
\end{itemize}
Notice that if $z \in H_1(\boundary M)$ is non-trivial, then there is a conditioned 1--manifold in $M$ representing $z$. Furthermore, if $C$ is a conditioned 1--manifold then the oriented double curve sum of $C$ with $\boundary R(\gamma)$ is also conditioned.

A decomposing surface $S \subset M$ is \defn{conditioned} if $\boundary S$ is conditioned and if, additionally,
\begin{itemize}
\item[(C3)] Each edge of $\beta$ intersects $S \cup R(\gamma)$ always with the same sign.
\end{itemize}

A surface $S$ in a banded 3-manifold $(M,\gamma,\beta)$ is \defn{rinsed} if $S$ is conditioned in $(M,\gamma,\beta - c_\beta)$, if $S$ has zero algebraic intersection with $c_\beta$, and if every separating closed component of $S$ bounds with a closed component of $R(\gamma)$ a product region intersecting $\beta$ in vertical arcs. 

\subsection{Band-taut decompositions}
An arbitrary decomposition of a banded sutured manifold may not create a banded sutured manifold. In this section, we show how certain surfaces can be used to usefully decompose band-taut sutured manifolds.

The easiest instance of such a decomposition is if $(M,\gamma,\beta)$ is a banded sutured manifold and if $E$ is a cancelling disc with interior disjoint from $D_\beta$ for a component $\beta_1$ of $e_\beta$. Let $E'$ be the product disc in $M$ that is the frontier of a regular neighborhood of $E$. The disc $E'$ intersects $D_\beta$ in either one or two arcs. Those arcs join the top of $D_\beta$ the bottom of $D_\beta$. If there are two arcs (which happens if $c_4$ and $c_8$ run along the same edge of $\beta$), one arc joins $c_1$ to $c_7$ and the other joins $c_3$ to $c_5$. If there is a single arc, it either joins $c_1$ to $c_7$ or joins $c_3$ to $c_5$. Let $(M',\gamma',\beta')$ be the result of decomposing $(M,\gamma,\beta)$ using $E'$. The disc $D_\beta$ is decomposed into 2 or 3 discs, one of which $D'_\beta$ contains $c_\beta = c_{\beta'}$. The disc $D'_\beta$ is clearly a band and $|e_{\beta}'| < |e_{\beta}|$. In effect, we have cancelled an edge of $e_\beta$ and the new band runs along a suture instead of along the edge. We call $E'$ a \defn{band-decomposing product disc}. Since cancelling an edge and decomposing along a product disc disjoint from $D_\beta$ preserve tautness, decomposing a band-taut sutured manifold along either a band-decomposing disc or a product disc disjoint from $D_\beta$ preserves band-tautness.

We also need ways of decomposing along other product surfaces or conditioned surfaces in ways that preserve band-tautness. To that end, suppose that a decomposing surface $S$ in a banded sutured manifold $(M,\gamma,\beta)$ is transverse to $D_\beta$. We say that $S$ is a \defn{band-decomposing surface} if it is either a band-decomposing product disc, a product disc disjoint from $D_\beta$ or if it satisfies:
\begin{itemize}
\item[(BD)] Either $e_\beta = \nil$ and $c_\beta$ is isotopic in $D_\beta$ relative to its endpoints into $\boundary M$ or all of the following are true:
\begin{enumerate}
\item there exists a properly embedded arc $c$ in $D_\beta$ joining the top of $D_\beta$ to the bottom of $D_\beta$ that is disjoint from $S$
\item each point of the intersection between $\boundary S$ and the top of $D_\beta$ has the same sign as the sign of intersection between $\gamma$ and $c_2$.
\item each point of the intersection between $\boundary S$ and the bottom of $D_\beta$ has the same sign as the sign of intersection between $\gamma$ and $c_6$.
\item each point of the intersection between $\boundary S$ and $c_4$ has the same sign.
\item each point of the intersection between $\boundary S$ and $c_8$ has the same sign.
\end{enumerate}
\end{itemize}

Suppose that $(M,\gamma,\beta) \stackrel{S}{\to} (M',\gamma',\beta')$ is a $(\beta - c_\beta)$-taut sutured manifold decomposition where $(M,\gamma,\beta)$ is a banded sutured manifold and $S$ is a band-decomposing surface. We say that the decomposition is \defn{band-taut} if $(M,\gamma,\beta)$ and $(M',\gamma',\beta')$ are each band-taut and one of the following holds:
\begin{enumerate}
\item[(BT1)] $e_\beta = \nil$, $c_\beta$ is isotopic (relative to its endpoints) in $D_\beta$ into $\boundary M$ and $D_{\beta'} = c_{\beta'} = \nil$, or
\item[(BT2)] $c_{\beta'}$ is a properly embedded arc in $D_\beta - S$, such that the initial endpoint of $c_{\beta'}$ lies in $A_-(\gamma') \cap \boundary D_{\beta}$ and the terminal endpoint of $c_{\beta'}$ lies in $A_+(\gamma') \cap \boundary D_{\beta}$ and $D_{\beta'}$ is the component of $D_\beta \cap M'$ containing $c_{\beta'}$.
\end{enumerate}

\begin{lemma}\label{Lem: Arc Positions}
Suppose that $(M,\gamma,\beta)$ is band-taut with $D_\beta \neq \nil$. If a band-decomposing surface $S$ satisfying (BD) has been isotoped relative to $\boundary S$ so as to minimize $|S \cap D_\beta|$ then every component of $S \cap D_\beta$ is an arc joining either the top or bottom of $D_\beta$ to $c_4$ or $c_8$.
\end{lemma}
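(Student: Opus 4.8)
The plan is to work with the $1$-manifold $S \cap D_\beta$ in the disc $D_\beta$. Its components are circles together with arcs whose endpoints lie on $\boundary D_\beta = c_1 \cup \cdots \cup c_8$; since the isotopy is rel $\boundary S$, every such endpoint lying in $\boundary M$ is fixed, and the only endpoints that can be created or destroyed are those on $c_4$ or $c_8$ when one of these edges lies on $\beta - c_\beta$. First I would eliminate the circles: an innermost circle $\sigma$ of $S \cap D_\beta$ bounds a subdisc $D' \subset D_\beta$ whose interior misses $S$, and a standard isotopy of $S$ supported near $D'$ removes $\sigma$ without creating new intersections with $D_\beta$. This isotopy is rel $\boundary S$ (it may carry $S$ across the part of $c_\beta$ in $D'$, but that does not matter here), so it contradicts minimality of $|S \cap D_\beta|$. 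Hence $S \cap D_\beta$ consists of arcs, and the task is to locate their endpoints among the four ``sectors'' of $\boundary D_\beta$: the top $c_1 \cup c_2 \cup c_3$, the bottom $c_5 \cup c_6 \cup c_7$, $c_4$, and $c_8$.

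Next I would rule out arcs having both endpoints in a single sector. Orient $S$; then $S \cap D_\beta$ acquires a transverse orientation within $D_\beta$, and at an endpoint of an arc this transverse orientation determines a direction along $\boundary D_\beta$ which, up to a single global convention, is precisely the one recorded by the sign of the intersection of $\boundary S$ (or of $S$, along $c_4$ or $c_8$ when these lie on $\beta$) with $\boundary D_\beta$ at that endpoint. The elementary observation is that an embedded arc in a disc carries its transverse orientation in \emph{opposite} directions along the boundary at its two endpoints. So an arc of $S \cap D_\beta$ with both endpoints on the top is impossible: by clause~(2) of~(BD) the two endpoints would have the same sign, hence the transverse orientation would point the same way along $\boundary D_\beta$ at both ends. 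The same argument, using clauses~(3),~(4),~(5) of~(BD), excludes arcs with both endpoints on the bottom, on $c_4$, or on $c_8$. (When $c_4$ or $c_8$ lies on an edge of $\beta - c_\beta$, such an arc can instead be removed by pushing $S$ across the subdisc of $D_\beta$ it cuts off and across that edge, which is again an isotopy rel $\boundary S$.)

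There remain arcs joining $c_4$ to $c_8$, and arcs joining the top to the bottom. For the first: the arc $c$ supplied by clause~(1) of~(BD) runs from the top to the bottom and is disjoint from $S$, and because the edges occur in the cyclic order $c_1, \dots, c_8$ this $c$ separates $c_4$ from $c_8$ in $D_\beta$; an arc of $S \cap D_\beta$ from $c_4$ to $c_8$ would then have to meet $c$, which is absurd. For the second: traversing $\boundary D_\beta$ in cyclic order, both the top and the bottom run from $R_-$ to $R_+$ and cross a suture in that direction (at $c_2$, respectively $c_6$), and the sutured-manifold orientation conventions --- that $\gamma \cap A$, $R_- \cap A$ and $R_+ \cap A$ be mutually parallel as oriented curves, with $R_\pm$ normally oriented into, respectively out of, $M$ --- force the signs $\gamma \cdot c_2$ and $\gamma \cdot c_6$ to record the \emph{same} direction along $\boundary D_\beta$. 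Hence, by clauses~(2) and~(3) of~(BD), an arc of $S \cap D_\beta$ joining the top to the bottom would carry its transverse orientation the same way along $\boundary D_\beta$ at both endpoints, contradicting the elementary observation. Every remaining arc therefore runs from the top or the bottom to $c_4$ or $c_8$, as required. In the case $e_\beta = \nil$, where (BD) instead asserts that $c_\beta$ is boundary-parallel in $D_\beta$ and forces $c_4, c_8 \subset A(\gamma)$, I would handle the claim separately, using this boundary-parallelism (indeed one can then reduce to $S \cap D_\beta = \nil$).

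The step I expect to be the main obstacle is the orientation bookkeeping underlying the last two paragraphs: pinning down precisely the correspondence between the sign of an intersection of $\boundary S$ with an edge of $\boundary D_\beta$ and the direction in which the transverse orientation of $S \cap D_\beta$ points along $\boundary D_\beta$, and then verifying that the orientation conventions built into a sutured manifold really do make the signs $\gamma \cdot c_2$ and $\gamma \cdot c_6$ compatible in the way required. By contrast, the innermost-disc move and the separation argument with the arc $c$ are routine; the only other point demanding consistent care is the distinction between those edges of $\boundary D_\beta$ that lie in $\boundary M$ and the edges $c_4$, $c_8$, which may instead lie on $\beta - c_\beta$.
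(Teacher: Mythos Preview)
Your proposal is correct and follows essentially the same route as the paper: innermost discs remove circles, condition~(1) of~(BD) (the arc $c$) rules out arcs joining $c_4$ to $c_8$, and the sign conditions~(2)--(5) combined with the observation that an embedded arc in a disc induces opposite directions along the boundary at its two endpoints handle everything else, with the top-to-bottom case reducing to precisely the compatibility of $\gamma\cdot c_2$ and $\gamma\cdot c_6$ that you correctly flag as the step needing care.

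Two minor differences worth noting. First, the paper rules out arcs with both endpoints on a \emph{single edge} $c_i$ separately, using only minimality of $|S\cap D_\beta|$ and the decomposing-surface axioms (an outermost such arc could be isotoped off), before invoking the sign conditions to handle arcs joining distinct edges within the same sector; your sector-wide sign argument subsumes this but the paper's extra step avoids some of the orientation bookkeeping. Second, the paper does not treat the $e_\beta=\varnothing$ clause of~(BD) separately at all --- it argues throughout as though conditions~(1)--(5) hold, which is consistent with how the lemma is actually invoked (only when (BT1) fails). Your remark that one could reduce to $S\cap D_\beta=\varnothing$ in that clause is not quite right, since $\boundary S$ may still meet $c_4$ or $c_8\subset A(\gamma)$ and the isotopy is rel $\boundary S$; but this does not affect the lemma's role in the paper.
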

\begin{proof}
That no component of $S \cap D_\beta$ is a circle follows from an innermost disc argument. By condition (1) of (BD), no arc component joins $c_4$ to $c_8$. By conditions (4) and (5) of (BD), no arc component joins $c_4$ to itself or joins $c_8$ to itself. Since $|S \cap D_\beta|$ is minimized and since $S$ is a decomposing surface, no arc of $S \cap D_{\beta}$ has both endpoints on the same edge of $D_\beta$.  By conditions (2) and (3), no arc joins the top of $D_\beta$ to itself and no arc joins the bottom of $D_\beta$ to itself. We need only show, therefore, that each arc joins the top or bottom of $D_\beta$ to $c_4$ or $c_8$.

Due to the orientations of $R_-$ and $R_+$, the orientations of $\gamma$ at $\gamma \cap c_2$ and $\gamma \cap c_6$ point in the same direction. Suppose that $\zeta$ is an arc of $S \cap D_\beta$ joining the top of $D_\beta$ to the bottom of $D_\beta$. The orientation of $S$ induces a normal orientation of $\zeta$ in $D_\beta$. The normal orientation of $S$ induces orientations of the endpoints of $\zeta$ that are normal to $D_\beta$ and point in opposite directions. This violates either condition (2) or (3) of (BD). Hence, no arc of $S \cap D_\beta$ joins the top of $D_\beta$ to the bottom of $D_\beta$.  
\end{proof}

\begin{lemma}\label{Lem: Defining band taut decomp}
Suppose that $(M,\gamma,\beta)$ is a band-taut sutured manifold and that $S$ is a band decomposing surface satisfying (BD) and defining a taut decomposition $(M,\gamma,\beta - c_\beta) \stackrel{S}{\to} (M',\gamma',\beta' - c_\beta)$. Then after an isotopy of $S$ (rel $\boundary S$) to minimize $|S \cap D_\beta|$, there are $D_{\beta'} \subset D_\beta$ and $c_{\beta'} \subset D_{\beta'}$ so that the decomposition is band-taut. Furthermore, if (BT1) does not hold, each component of $(D_\beta \cap M') - D_{\beta'}$ is a product disc or cancelling disc.
\end{lemma}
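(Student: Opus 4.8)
The plan is to read the new band and core directly off the intersection pattern $S\cap D_\beta$, which Lemma~\ref{Lem: Arc Positions} already constrains, together with the description of $\gamma'$ as the oriented double curve sum of $\gamma$ with $\boundary S$. First suppose the first alternative of (BD) holds, so $e_\beta=\nil$ and $c_\beta$ is isotopic in $D_\beta$ rel its endpoints into $\boundary M$. Then we take the outcome (BT1): $D_{\beta'}=c_{\beta'}=\nil$. Here $c_\beta$ is removable, and $(M',\gamma',\beta')$ is band-taut with empty band precisely because $(M',\gamma',\beta'-c_\beta)$ is taut, which is the hypothesis that the decomposition is taut; the ``furthermore'' is then vacuous. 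So from now on assume conditions (1)--(5) of (BD) hold, and aim at (BT2).

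Isotope $S$ rel $\boundary S$ to minimize $|S\cap D_\beta|$. By Lemma~\ref{Lem: Arc Positions}, every component of $S\cap D_\beta$ is then an arc joining the top or bottom of $D_\beta$ to $c_4$ or $c_8$; in particular no component joins the top to the bottom. Using the arc $c$ provided by (BD)(1) --- a top-to-bottom arc in $D_\beta$ disjoint from $S$ --- together with the sign conditions (2)--(5) (which force the arcs of $S\cap D_\beta$ to run ``parallel'' to $c$ rather than to separate $c_2$ from $c_6$ in $D_\beta$), one can reposition $c_\beta$ in $D_\beta$, sliding its endpoints along $c_2$ and $c_6$ if necessary, so that $c_\beta$ is disjoint from $S\cap D_\beta$. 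Since $c_\beta\subset D_\beta$, this makes $c_\beta$ disjoint from $S$, so it is not cut by the decomposition; set $c_{\beta'}:=c_\beta$ and let $D_{\beta'}$ be the component of $D_\beta\cap M'$ containing it.

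The main work is to check that $D_{\beta'}$ is a band for $(M',\gamma',\beta')$ and that every other component of $D_\beta\cap M'$ is a product disc or a cancelling disc. Write $\boundary D_{\beta'}$ as a cyclic concatenation of subarcs of $c_1,\dots,c_8$ with arcs of $S\cap D_\beta$; the latter, having interior in $\interior M$, become subarcs of $R_\pm(\gamma')$, so they amalgamate with the surviving pieces of $c_1,c_5$ (in $R_-$) and of $c_3,c_7$ (in $R_+$) into the edges $c'_1,c'_3,c'_5,c'_7$ of the new octagon lying in $R_-(\gamma')$ and $R_+(\gamma')$. The sign conditions of (BD) are exactly what make this go through: (2) (resp.\ (3)) guarantees that summing $\boundary S$ into $\gamma$ near $c_2$ (resp.\ $c_6$) does not cancel the point $\gamma\cap c_2$ (resp.\ $\gamma\cap c_6$), so the surviving subarc $c'_2\subset c_2$ (resp.\ $c'_6\subset c_6$) still lies in $A(\gamma')$, meets $\gamma'$ exactly once, and carries the initial (resp.\ terminal) endpoint of $c_{\beta'}$; conditions (2)--(5) also determine, consistently, which side of each arc of $S\cap D_\beta$ becomes part of $R_+(\gamma')$ rather than $R_-(\gamma')$, so that $c'_1\cup c'_2\cup c'_3$ really runs from $R_-(\gamma')$ across $A(\gamma')$ to $R_+(\gamma')$, and similarly for $c'_5\cup c'_6\cup c'_7$; and (4), (5) handle the edges $c'_4,c'_8$, which either traverse an edge of $\beta'-c_{\beta'}$ (when $c_4$ or $c_8$ lies on $e_\beta$) or lie in $A(\gamma')$ and meet $\gamma'$ once. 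This displays $D_{\beta'}$ as a band with $|e_{\beta'}|\le|e_\beta|\le2$ and the incidences required by (BT2). For a component $D'$ of $D_\beta\cap M'$ other than $D_{\beta'}$, the same edge analysis (now with $c_{\beta'}$ absent) shows that $\boundary D'$ consists of arcs of $S\cap D_\beta$ and subarcs of $\boundary D_\beta$ which, paired appropriately, exhibit $D'$ as a product disc when it is disjoint from $e_\beta$ and as a cancelling disc for the edge of $\beta$ traversed by $c_4$ or $c_8$ otherwise.

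It then follows that $(M',\gamma',\beta')$ is band-taut: $\beta'-c_{\beta'}=\beta'-c_\beta$ is $(\beta'-c_\beta)$-taut since the decomposition is taut, and $(M,\gamma,\beta)$ is band-taut by hypothesis, so (BT2) holds and the decomposition is band-taut. The step I expect to be the main obstacle is the bookkeeping in the third paragraph: matching the surviving pieces of $c_1,\dots,c_8$ with the arcs of $S\cap D_\beta$ to assemble the eight edges of $D_{\beta'}$, and in particular converting the four sign conditions of (BD) into the precise assertions that $\gamma'$ meets $c'_2$ and $c'_6$ exactly once, meets the edges of $D_{\beta'}$ lying in $R_\pm(\gamma')$ not at all, and places the endpoints of $c_{\beta'}$ in $A_-(\gamma')$ and $A_+(\gamma')$ on the correct sides; a secondary point is ensuring $c_\beta$ can be made disjoint from $S\cap D_\beta$ and that $A(\gamma')$ may be taken thin enough that no edge of $\beta'-c_{\beta'}$ acquires an endpoint in $A(\gamma')$, so that condition (B1) survives.
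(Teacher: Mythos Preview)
Your approach is essentially the paper's: invoke Lemma~\ref{Lem: Arc Positions} to see that all arcs of $S\cap D_\beta$ are corner-cutting (top or bottom to $c_4$ or $c_8$), identify $D_{\beta'}$ as the unique component containing both a piece of the top and a piece of the bottom, and then read off the band structure and the product/cancelling discs. The one place your write-up goes slightly wrong is in insisting that $c'_2\subset c_2$ and that the endpoints of $c_{\beta'}$ can be kept on $c_2,c_6$ by sliding only along those edges. In general $\partial S$ may meet $c_2$ (or $c_1,c_3$), so the top-piece of $\partial D_{\beta'}$ need not contain any of $c_2$; the correct statement (and the paper's) is that the sign condition (2) forces \emph{every} component of $d_T - S$ to meet $\gamma'$ exactly once, so whichever top-piece lies in $\partial D_{\beta'}$ furnishes the new $A(\gamma')$-crossing edge $c'_2$, and similarly for the bottom. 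With that correction your argument goes through and matches the paper's.
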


\begin{proof}
If $e_\beta = \nil$ and if $c_{\beta}$ is isotopic into $\boundary M$ in $D_\beta$, define $D_{\beta'} = c_{\beta'} = \nil$. Assume, therefore, that (BT1) does not hold. 

Let $d_T$ and $d_B$ be the top and bottom of $D_\beta$ respectively. Since all points of intersection of $S$ with $d_T$ have the same sign as the intersection of $\gamma$ with $c_2$, each component of  $d_T - S$ contains exactly one point of $\gamma' \cap D_\beta$. Similarly, each component of  $d_B - S$ contains exactly one point of $\gamma' \cap D_\beta$. By Lemma \ref{Lem: Arc Positions}, every component of $S \cap D_\beta$ joins the top or bottom of $D_\beta$ to $c_4$ or $c_8$. This implies both that each component of $e_\beta - S$ has an endpoint in both $R_-(\gamma')$ and $R_+(\gamma')$ and that there is exactly one component $D_{\beta'}$ of $D_\beta \cap M'$ containing both a point of $d_T$ and a point of $d_B$. It is not hard to see that $D_{\beta'}$ is a band containing an arc $c_\beta'$ satisfying the definition of core. See Figures \ref{Fig: New Core1} and \ref{Fig: New Core2}. 

Similarly, each component of $D_\beta - S$ other than $D_{\beta'}$ intersects $c_4 \cup c_8$ in at most one arc. If such a component does intersect $(c_4 \cup c_8)$ it is a product disc or cancelling disc (depending on whether or not $c_4$ or $c_8$ lies in $e_\beta$). If such a component does not intersect $(c_4 \cup c_8)$ then it is adjacent to exactly two arcs of $S \cap D_\beta$ and so is a product disc in $M'$.
\end{proof}

\begin{center}
\begin{figure}[ht]
\scalebox{0.5}{\includegraphics{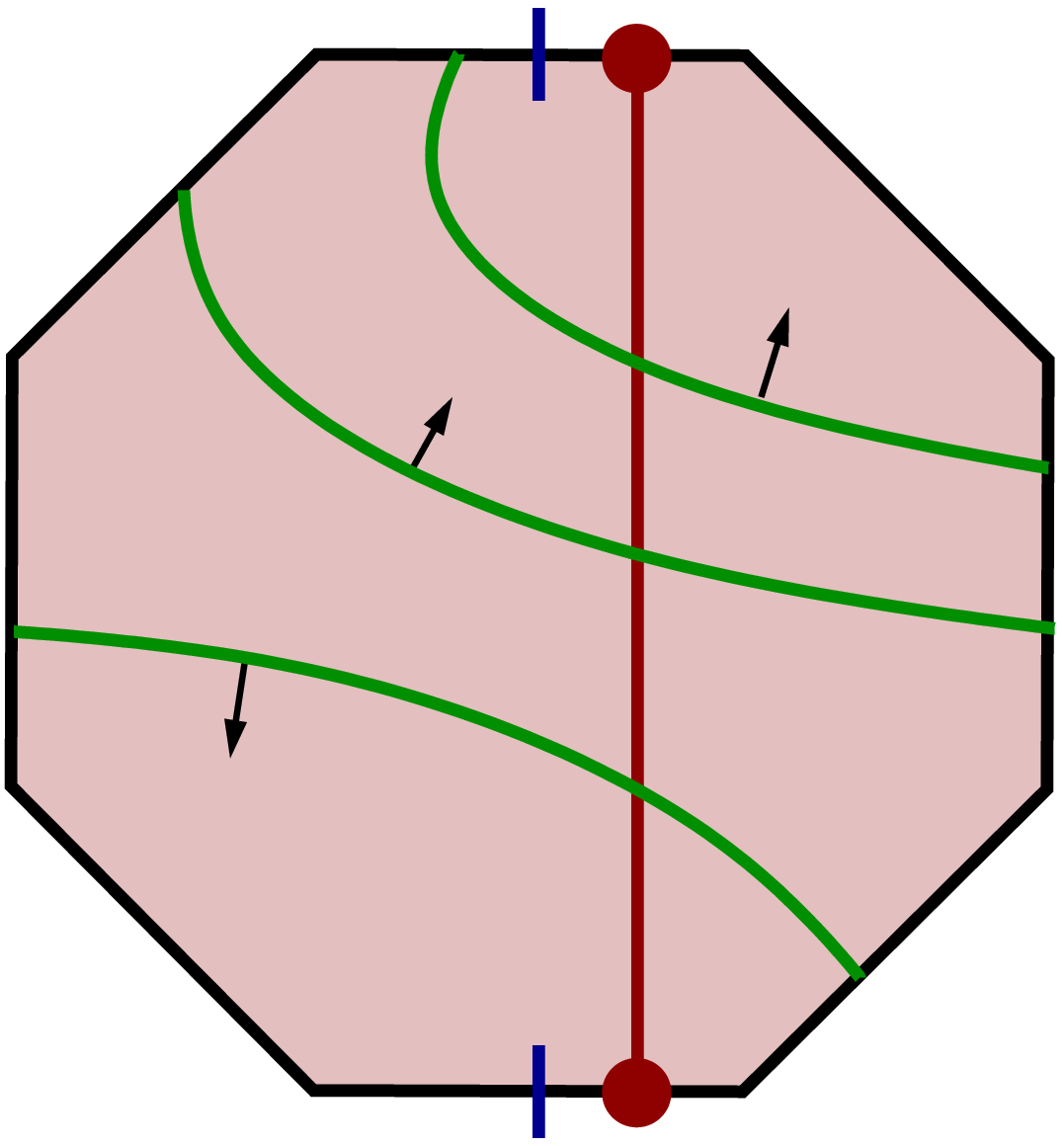}}
\scalebox{0.5}{\includegraphics{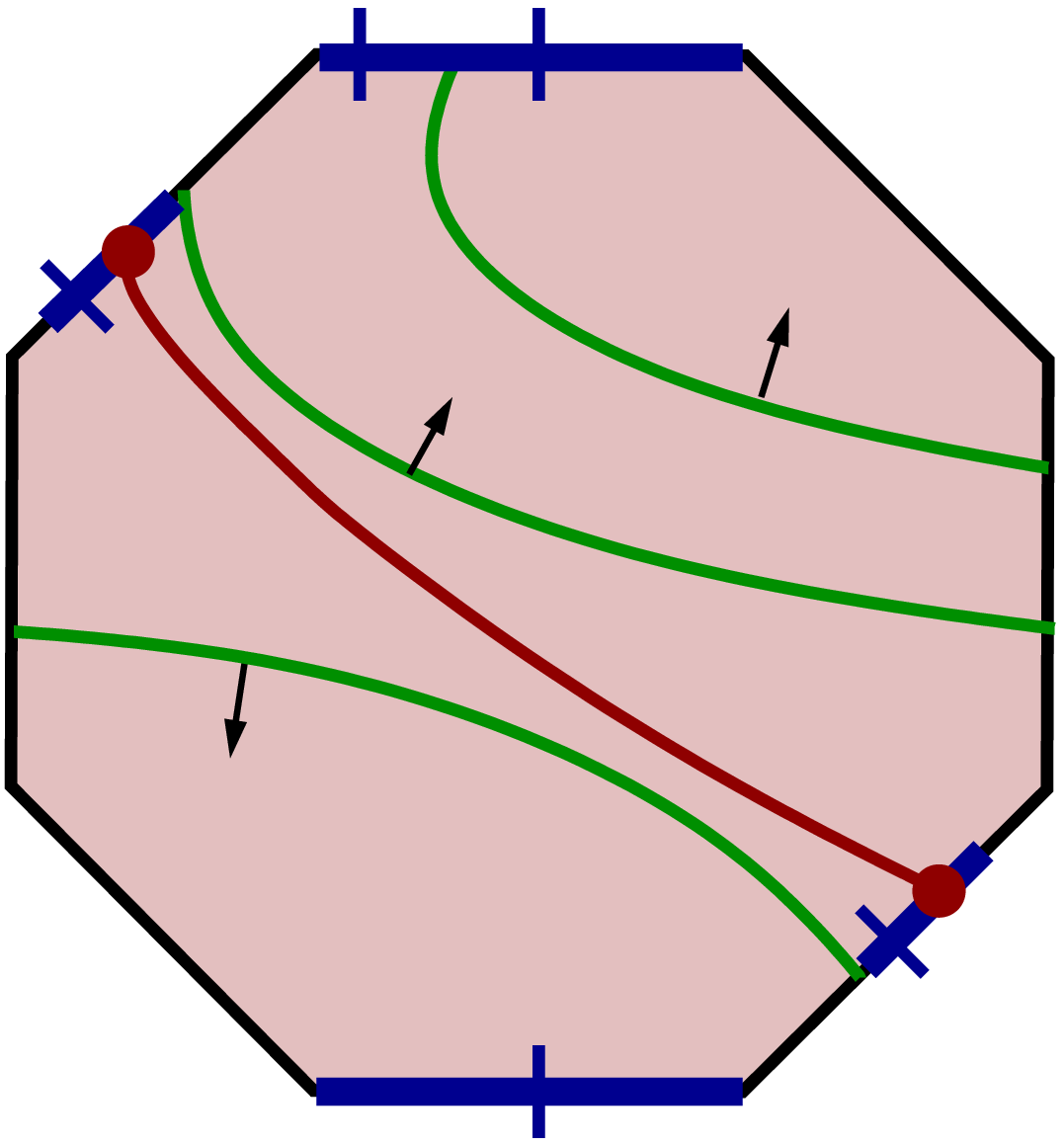}}
\caption{The left image shows the core $c_{\beta}$ and the right image shows the core $c_{\beta'}$. The core $c_{\beta}$ can be isotoped to an arc $c_{\beta'}$ disjoint from $S$ and with endpoints in $A(\gamma')$. The sutures $\gamma'$ are marked on $\boundary D_\beta'$ in the rightmost figure. The endpoint of $c_{\beta}$ lying in $A_\pm$ is isotoped to an endpoint of $c_{\beta'}$ lying in $A_\pm (\gamma')$. In the rightmost figure, the intersection of the annuli $A_\pm(\gamma')$ with $\boundary D_\beta$ are highlighted in blue.}
\label{Fig: New Core1}
\end{figure}
\end{center}

\begin{center}
\begin{figure}[ht]
\scalebox{0.5}{\includegraphics{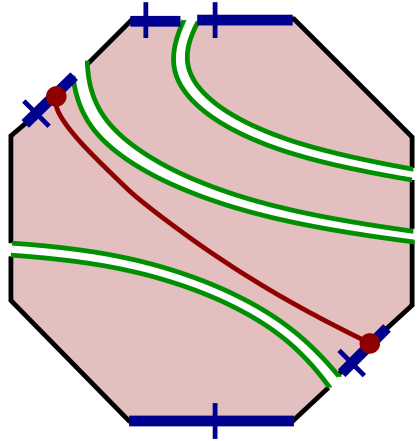}}
\caption{The band $D_{\beta'}$ is the component of $D_{\beta} \cap M'$ containing $c_{\beta'}$.}
\label{Fig: New Core2}
\end{figure}
\end{center}

We note the following:
\begin{lemma}\label{Lem: Isotoping Core}
Suppose that $(M,\gamma,\beta) \stackrel{S}{\to} (M',\gamma',\beta')$ is a band-taut decomposition. Then there is an isotopy of $c_\beta$ relative to its endpoints to an arc $c$ such that the closure of $c \cap \inter{M'}$ is $c_{\beta'}$. Furthermore, $c_{\beta'}$ joins the components of $c_{\beta} \cap \boundary D_{\beta'}$.
\end{lemma}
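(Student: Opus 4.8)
The plan is to split according to which of the alternatives (BT1), (BT2) in the definition of a band-taut decomposition holds. Case (BT1) is essentially vacuous: there $e_\beta=\nil$, $c_\beta$ is isotopic relative to its endpoints in $D_\beta$ into $\boundary M$, and $D_{\beta'}=c_{\beta'}=\nil$; performing such an isotopy yields an arc $c\subset\boundary D_\beta\subset\boundary M$, and since $\boundary M\cap\inter{M'}=\nil$ one has $\overline{c\cap\inter{M'}}=\nil=c_{\beta'}$, with nothing further to check. So assume (BT2). Write $v_\pm$ for the endpoint of $c_\beta$ in $A_\pm(\gamma)$ and $v'_\pm$ for the endpoint of $c_{\beta'}$ in $A_\pm(\gamma')$.

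I would use the picture supplied by (the proof of) Lemma~\ref{Lem: Defining band taut decomp}, with $S$ already isotoped relative to $\boundary S$ so that $|S\cap D_\beta|$ is minimal: $D_{\beta'}$ is a subdisc of $D_\beta$, $c_{\beta'}$ is a properly embedded arc in $D_{\beta'}\subset D_\beta$ with $v'_-\in A_-(\gamma')\cap\boundary D_\beta$ and $v'_+\in A_+(\gamma')\cap\boundary D_\beta$, and the components of $(D_\beta\cap M')-D_{\beta'}$ are product or cancelling discs. The points $v_\pm$ survive into $M'$ and, by the convention for forming $A(\gamma')$, lie in $A_\pm(\gamma')$; moreover the construction of the band $D_{\beta'}$ places $v'_\pm$ on the \emph{same} component of $A_\pm(\gamma')\cap\boundary D_\beta$ as $v_\pm$, namely the component of $d_T-S$, resp.\ $d_B-S$, carrying the relevant point of $\gamma'\cap D_\beta$ (compare Figures~\ref{Fig: New Core1}--\ref{Fig: New Core2}). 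Choosing an arc $\tau_-$ in that component of $A_-(\gamma')\cap\boundary D_\beta$ from $v'_-$ to $v_-$, and similarly $\tau_+$ from $v'_+$ to $v_+$, and noting that $\tau_-\cap\tau_+=\nil$ since $A_-(\gamma')\cap A_+(\gamma')=\nil$, I set $c:=\tau_-\cup c_{\beta'}\cup\tau_+$; this is an embedded arc in $D_\beta$ with the same endpoints $v_-,v_+$ as $c_\beta$.

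It then remains to verify three things. First, $c$ is isotopic to $c_\beta$ relative to its endpoints: after an arbitrarily small isotopy rel $\{v_-,v_+\}$ pushing the interior of $c$ off $\boundary D_\beta$ into $\inter{D_\beta}$, $c$ becomes a properly embedded arc in the disc $D_\beta$ with endpoints $v_-,v_+$, and since any two properly embedded arcs in a disc with common endpoints are isotopic rel those endpoints, this arc — and hence $c$ — is isotopic to $c_\beta$ rel endpoints inside $D_\beta\subset M$ (the crossings of $c_\beta$ with $S\cap D_\beta$ are absorbed into this isotopy). Second, $\overline{c\cap\inter{M'}}=c_{\beta'}$: the tails $\tau_\pm\subset\boundary D_\beta\subset\boundary M$ are disjoint from $\inter{M'}$, the interior of $c_{\beta'}$ lies in $\inter{D_{\beta'}}$, which is disjoint from $\boundary M$ and from $S$ and hence contained in $\inter{M'}$, and $v'_\pm\subset\boundary D_\beta$ lie outside $\inter{M'}$. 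Third, since $\tau_\pm$ is disjoint from $D_{\beta'}$ we have $c\cap D_{\beta'}=c_{\beta'}$ and $c\cap\boundary D_{\beta'}=\{v'_-,v'_+\}$, so $c_{\beta'}$ joins the two components of $c\cap\boundary D_{\beta'}$; transporting this along the isotopy back to $c_\beta$ gives the last assertion. The one step that needs genuine care is the claim that $v_\pm$ and $v'_\pm$ lie on the same components of $A_\pm(\gamma')\cap\boundary D_\beta$ (so that disjoint $\tau_\pm$ exist), which has to be read off from the explicit description of the sutures $\gamma'$ and of the band $D_{\beta'}$ near $\boundary D_\beta$ in the proof of Lemma~\ref{Lem: Defining band taut decomp}; everything else is formal manipulation of arcs in a disc and of collar neighborhoods.
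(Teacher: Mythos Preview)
Your argument is correct and follows the same line as the paper's proof, only spelled out in more detail: the paper simply observes that $c_\beta$ is isotopic to $c_{\beta'}$ by a proper isotopy in $D_\beta$ not using edges $c_4$ or $c_8$, and refers to Figure~\ref{Fig: Isotoping Core}, whereas you explicitly build the arc $c=\tau_-\cup c_{\beta'}\cup\tau_+$ and verify its properties.  One small point: the paper separates off, within case (BT2), the subcase where $S$ is a band-decomposing product disc or a product disc disjoint from $D_\beta$, noting that then $c_{\beta'}=c_\beta$ and no isotopy is required; you invoke Lemma~\ref{Lem: Defining band taut decomp} (which is stated only for surfaces satisfying (BD)), so strictly speaking you should either mention this trivial subcase or note that the properties of $D_{\beta'}$ and $c_{\beta'}$ you use hold in those cases as well (with $\tau_\pm$ degenerate).
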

\begin{proof}
By the definition of band-taut decomposition, $S$ is a band-decomposing surface. If the decomposition satisfies (BT1), then by definition, there is an isotopy of $c_{\beta}$ into $\boundary M$ and we have our conclusion. Suppose, therefore, that the decomposition satisfies (BT2).

If the decomposition is by a band decomposing disc or a product disc disjoint from the band then no isotopy of $c_{\beta}$ is necessary as $c_{\beta'} = c_\beta$. If the decomposition is by a surface satisfying (BD), this follows immediately from the observation that $c_{\beta}$ is isotopic to $c_{\beta'}$ by a proper isotopy in $D_\beta$ that does not move the endpoints of $c_{\beta}$ along edges $c_4$ or $c_8$ of $D_{\beta}$. See Figure \ref{Fig: Isotoping Core}. \end{proof}

\begin{center}
\begin{figure}[ht]
\labellist 
\pinlabel{$c_\beta$} [l] at 188 275
\pinlabel{$c_{\beta'}$} [tr] at 70 209
\endlabellist 
\centering 
\scalebox{0.5}{\includegraphics{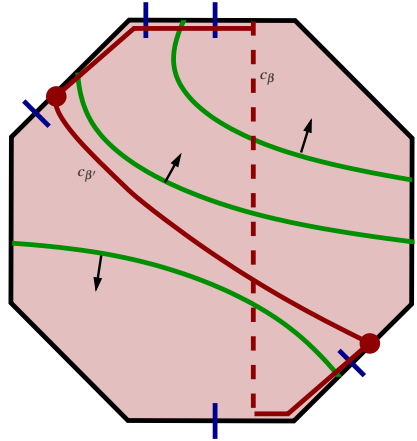}}
\caption{The isotopy of $c_{\beta}$ to $c_{\beta'}$ can be slightly modified to an isotopy of $c_\beta$ relative to its endpoints such that after the isotopy $c_\beta - c_{\beta'}$ lies in $\boundary D_{\beta} \cap \boundary M$. The solid arc is the union of two subarcs in $\boundary D_\beta$ and the arc $c_{\beta'}$. }
\label{Fig: Isotoping Core}
\end{figure}
\end{center}

\subsection{Cancelling discs, amalgamating discs, product discs and product annuli}
The previous section provided a some criteria for creating decompositions of banded sutured manifolds using surfaces that satisfy (BD). Product surfaces, however, may not satisfy (BD). This section shows how to create a band-taut decomposition if there is a cancelling disc, amalgamating disc, product disc, or product annulus in a band-taut sutured manifold.

\subsubsection{Finding disjoint product surfaces} We begin by showing that if there is a cancelling disc, amalgamating disc, product disc, or product annulus in a band-taut sutured manifold, then there is one disjoint from the band.

\begin{lemma}\label{Lem: Cancelling Discs Disjoint}
Suppose that $(M,\gamma,\beta)$ is band-taut. If $\beta - c_\beta$ has a 
\begin{enumerate}
\item cancelling disc or allowable product disc;
\item nonself amalgamating disc; or
\item allowable self amalgamating disc 
\end{enumerate}
then one of the following occurs:
\begin{itemize}
\item there is, respectively, a
\begin{enumerate}
\item cancelling disc or allowable product disc;
\item cancelling disc, allowable product disc, or nonself amalgamating disc; or
\item cancelling disc, allowable product disc, nonself amalgamating disc, or allowable self-amalgamating disc
\end{enumerate}
that is disjoint from $D_\beta$, or
\item $e_\beta = \nil$, the boundary of $D_\beta$ is a $(\beta - c_\beta)$-inessential circle in $\boundary M$, and $c_\beta$ is isotopic in $D_\beta$ into $\boundary M$ (rel $\boundary c_\beta$).
\end{itemize}
\end{lemma}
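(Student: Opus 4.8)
The plan is to take whichever product-type surface $P$ is given, make it transverse to $D_\beta$, and then remove the intersections $P \cap D_\beta$ one at a time by an innermost/outermost argument, at each stage either finding a new product-type surface of the same or a ``better'' type that meets $D_\beta$ in fewer components, or else detecting the exceptional conclusion that $e_\beta = \nil$ and $c_\beta$ can be pushed into $\boundary M$. First I would isotope $P$ rel $\boundary P$ to minimize $|P \cap D_\beta|$; if this is already zero we are done, so assume it is positive. Since $D_\beta$ is a disc and $P$ is incompressible (being a product disc/annulus or the frontier of a cancelling/amalgamating disc, all of which are $\pi_1$-injective in the relevant complement), no component of $P \cap D_\beta$ is a circle bounding a disc in $D_\beta$; and a circle of $P \cap D_\beta$ essential in $D_\beta$ is impossible since $D_\beta$ is a disc. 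So $P \cap D_\beta$ consists of arcs.

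The heart of the argument is the arc analysis, and this is where the case division by type of surface enters. An arc $a$ of $P \cap D_\beta$ has its endpoints on $\boundary D_\beta$, which is partitioned into the eight edges $c_1,\dots,c_8$; the edges $c_1,c_3,c_5,c_7$ lie in $R(\gamma)$, the edges $c_2,c_6$ (and possibly $c_4,c_8$) lie in $A(\gamma)$, and $c_4,c_8$ may traverse edges of $\beta - c_\beta$. Because $\boundary P$ is itself constrained — for a product disc it meets $A(\gamma)$ in two spanning arcs and lies in $R(\gamma)$ otherwise, and similarly for the others — the endpoints of $a$ on $\boundary D_\beta$ are forced into a short list of positions. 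Consider an outermost arc $a$ in $D_\beta$, cutting off a subdisc $\Delta \subset D_\beta$ with $\Delta \cap P = a$. Then $\Delta$ can be used to surger $P$: one cuts $P$ along $a$ and glues in two parallel copies of $\Delta$. The resulting surface (or one of its pieces) has fewer intersections with $D_\beta$, and I would check that its boundary still satisfies the defining conditions of a cancelling disc, product disc, (nonself or self) amalgamating disc, or product annulus — possibly of a type lower in the hierarchy listed in the conclusion (e.g. an amalgamating disc may become a cancelling disc when one of its $\beta$-edges gets pushed across $D_\beta$ onto a suture; a self amalgamating disc may become nonself). The key point is that surgering along $\Delta$, whose boundary arc $a$ runs between specified edges of $D_\beta$, exactly mimics an isotopy or a "band-slide" of $P$ past $c_\beta$, so $\beta$-essentiality of the boundary curves in $R(\gamma)$ and the spanning conditions in $A(\gamma)$ are preserved (or improved).

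The main obstacle — and the reason for the second bullet in the conclusion — is the degenerate case in which $e_\beta = \nil$ and every outermost arc of $P \cap D_\beta$ cuts off a subdisc whose boundary arc runs from the top $c_1\cup c_2\cup c_3$ to the bottom $c_5\cup c_6\cup c_7$ of $D_\beta$, so that no surgery reduces $|P\cap D_\beta|$ while keeping $P$ of an allowable type; in that situation the arc $a$ together with $c_\beta$ exhibits $D_\beta$ as split into halves, one of which provides an isotopy of $c_\beta$ rel endpoints into $\boundary D_\beta \subset \boundary M$, while the hypothesis that the original $P$ was allowable (or a cancelling disc) combined with band-tautness forces $\boundary D_\beta$ to be $(\beta - c_\beta)$-inessential in $\boundary M$. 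I would handle this by showing that if this degenerate configuration does \emph{not} occur then some outermost arc permits a strictly reducing surgery of the type described above, and then inducting on $|P \cap D_\beta|$. The bookkeeping of which surface-type is produced by each surgery, and verifying allowability is maintained at each step, is the most delicate part; everything else is a standard innermost-circle / outermost-arc cut-and-paste.
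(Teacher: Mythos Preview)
Your overall plan—minimize $|P\cap D_\beta|$, kill circles, then analyze arcs—is the right one, and for most arc types the ``surger $P$ along an outermost subdisc of $D_\beta$'' move works essentially as you describe.  The genuine gap is in the case where an arc $a$ of $P\cap D_\beta$ runs from the top of $D_\beta$ to the bottom, say from $c_3$ to $c_7$.  Here your proposed surgery fails: the outermost subdisc $\Delta\subset D_\beta$ cut off by such an arc has its boundary arc $\delta\subset\boundary D_\beta$ running through $c_4\cup c_5\cup c_6$ (or $c_8\cup c_1\cup c_2$), so when you cut $P$ along $a$ and glue in copies of $\Delta$, the new boundary picks up an arc traversing $R_+$, then $e_\beta$ or $A(\gamma)$ (at $c_4$), then $R_-$ (at $c_5$), then $A(\gamma)$ (at $c_6$), then $R_+$ again.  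The resulting disc is not a product disc, cancelling disc, or amalgamating disc in any recognizable sense, so the induction does not proceed.

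The paper's remedy is to reverse roles in exactly this case: since both endpoints of $a$ lie on $c_3\cup c_7\subset R_+$, they lie on the single $R_+$-arc of $\boundary P$, so an arc of this type that is \emph{outermost in $P$} cuts off a subdisc of $P$ whose boundary lies entirely in $R_+$.  One then boundary-compresses $D_\beta$ (not $P$) along this subdisc, splitting $D_\beta$ into two pieces $D_4$, $D_8$ (containing $c_4$, $c_8$ respectively).  Each piece has boundary crossing $A(\gamma)$ at $c_6$ or $c_2$, and crossing $A(\gamma)$ or $e_\beta$ at $c_4$ or $c_8$, so each is a cancelling disc or a product disc—and its interior is disjoint from $D_\beta$.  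If either is a cancelling disc or an allowable product disc, that is the disc the lemma asks for.  If both are non-allowable product discs then necessarily $e_\beta=\nil$, both $D_4$ and $D_8$ have $(\beta-c_\beta)$-inessential boundary, and reconstructing $D_\beta$ as their band sum along an arc in $R_+$ shows $\boundary D_\beta$ is $(\beta-c_\beta)$-inessential and $c_\beta$ is boundary-parallel in $D_\beta$.  Your sketch of the exceptional case (``the arc $a$ together with $c_\beta$ exhibits $D_\beta$ as split into halves, one of which provides an isotopy of $c_\beta$ into $\boundary M$'') does not work as stated, since $a$ lies in the interior of $M$, not in $\boundary M$; the boundary-compression of $D_\beta$ is what actually produces the parallelism.
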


\begin{proof}
The proofs with each of the three hypotheses are nearly identical, so we prove it only under hypothesis (3). Let $E$ be a cancelling disc, allowable product disc, nonself amalgamating disc, or allowable self amalgamating disc chosen so that out of all such discs, $D_\beta$ and $E$ intersect minimally. By an isotopy of $E$, we can assume that all intersections between $\boundary D_\beta$ and $\boundary E$ occur in $R(\gamma)$. An innermost circle argument shows that there are no circles of intersection between $D_\beta$ and $E$. Similarly, we may assume that if a component of $D_\beta \cap E$ intersects $c_\beta$ then it is an arc in $D_\beta$ joining distinct edges of $D_\beta$.

\textbf{Claim 1:} No component of $D_\beta \cap E$ joins an edge of $D_\beta$ to itself.

Suppose that there is such a component, and let $\xi$ be an outermost such arc in $D_\beta$ with $\Delta$ the disc it cobounds with a subarc of $D_\beta \cap R(\gamma)$. Boundary compress $E$ using $\Delta$ to obtain two discs $E_1$ and $E_2$. Since $E$ was a cancelling disc, product disc, or amalgamating disc, one of $E_1$ or $E_2$ is a cancelling disc, product disc, or amalgamating disc and the other one is a disc with boundary completely contained in $R(\gamma)$. Suppose that $E_2$ is this latter disc. Since $E_2$ is disjoint from $(\beta - c_\beta)$ and since $R(\gamma)$ is $(\beta-c_\beta)$-incompressible, the boundary of $E_2$ is $\beta$-inessential in $R(\gamma)$. Thus, $E$ can be isotoped in the complement of $\beta$ to $E_1$. This isotopy reduces $|D_\beta \cap E|$ and so we have contradicted our choice of $E$.

\textbf{Claim 2:} No arc component of $D_\beta \cap E$ joins edge $c_1$ to edge $c_3$ or edge $c_5$ to edge $c_7$.

Suppose that there is such an arc. Without loss of generality, we may assume that the arc joins side $c_1$ to $c_3$. Let $\xi$ be an arc that, out of all such arcs, is closest to $c_2$. Let $\Delta$ be the disc in $D_\beta$ that it cobounds with $c_2$. Boundary-compress $E$ using $\Delta$ to obtain the union $E'$ of two discs. In $E$, the arc $\xi$ joins $\boundary E \cap R_-$ to $\boundary E \cap R_+$. If $E$ is a cancelling disc, one component of $E'$ is a cancelling disc and the other is a product disc. If $E$ is a product disc, both components of $E'$ are product discs. If $E$ is an amalgamating disc, both components of $E'$ are cancelling discs. Each component of $E'$ intersects $D_\beta$ fewer times than does $E$, so we need only show that if $E$ is an allowable product disc, then at least one component of $E'$ is an allowable product disc. 

Assume that $E$ is an allowable product disc. This implies that it is a $(\beta - c_\beta)$-boundary compressing disc for $M$. Since $E'$ is obtained by boundary compressing $E$, at least one component $E_1$ of $E'$ is a $(\beta - c_\beta)$-boundary compressing disc for $\boundary M$. If it were not allowable, it could be isotoped in the complement of $(\beta - c_\beta)$ to have boundary lying entirely in $R(\gamma)$, this would contradict the fact that $R(\gamma)$ is $(\beta - c_\beta)$-incompressible. Thus, $E_1$ is an allowable product disc.

\textbf{Claim 3:} No arc component of $D_\beta \cap E$ joins edge $c_1$ to edge $c_7$ or edge $c_3$ to edge $c_5$.

Suppose that there is such an arc. Without loss of generality, we may assume that it joins edges $c_1$ and $c_7$. Out of all such arcs, choose one $\xi$ that is as close as possible to edge $c_8$. Boundary compress $E$ using the subdisc of $D_\beta$ cobounded by $c_8$ and $\xi$ to obtain $E'$. 

If $E$ is a cancelling disc, let $E_2$ be the component of $E'$ containing $c_8$ and let $E_1$ be the other component. If $c_8$ is an edge of $\beta$, then $E_2$ is an amalgamating disc and $E_1$ is a cancelling disc. In this case, note that $E_1$ (after a small isotopy to be transverse to $D_\beta$) intersects $D_\beta$ fewer times than does $E$. This contradicts our choice of $E$. If $c_8$ is not an edge of $\beta$, then $E_2$ is a cancelling disc and $E_1$ is a product disc.  If $E_1$ is not allowable, it can be isotoped in the complement of $(\beta - c_\beta)$ to have boundary contained entirely in $R(\gamma)$. Since $R(\gamma)$ is $(\beta - c_\beta)$-incompressible, the boundary of this disc must be $(\beta - c_\beta)$-inessential in $R(\gamma)$. It is then easy to see that there is an isotopy reducing the number of intersections between $E$ and $D_\beta$, a contradiction. 

If $E$ is an allowable product disc, then each component of $E'$ is either a cancelling disc or a product disc. As before, if a component of $E'$ is a product disc, it must be allowable.

If $E$ is an amalgamating disc, then each component of $E'$ is either an amalgamating disc or a cancelling disc. Let $E_1$ and $E_2$ be the components of $E'$. Suppose that each of $E_1$ or $E_2$ is a self amalgamating disc. We must show that at least one of them is allowable. Since $\Delta$ runs along $c_8$, each component of $E'$ runs along $c_8$ at least once. Since each component of $E'$ is a self amalgamating disc, this implies that $E$ is a self-amalgamating disc for $c_8$. By hypothesis, $E$ is allowable. Thus, at least one loop of $E' \cap R_-$ is $(\beta - c_\beta)$-essential and at least one loop of $E' \cap R_+$ is $(\beta - c_\beta)$-essential. If the union of these loops is either $\boundary E_1 \cap R(\gamma)$ or $\boundary E_2 \cap R(\gamma)$, then either $E_1$ or $E_2$ is allowable. Thus, we may assume that one arc of $E_i \cap R(\gamma)$ is $(\beta - c_\beta)$-essential and the other one is inessential for both $i = 1$ and $i = 2$. Gluing the two arcs of $\boundary E_i \cap c_8$ together we obtain a product annulus with one end $(\beta - c_\beta)$-essential and the other end $(\beta - c_\beta)$-essential. Capping the inessential end off, creates a disc disjoint from $(\beta - c_\beta)$ with boundary a $(\beta - c_\beta)$-essential loop in $R(\gamma)$. That is, the disc is a $(\beta - c_\beta)$ compressing disc for $R(\gamma)$, a contradiction. Hence, if both $E_1$ and $E_2$ are self-amalgamating discs at least one of them is allowable. Since each of $E_1$ and $E_2$ intersects $D_\beta$ fewer times than does $E$, we have contradicted our choice of $E$.

\textbf{Claim 4:} No arc component of $D_\beta \cap E$ joins side $c_3$ to side $c_7$ or side $c_1$ to side $c_5$. 

Suppose that there is such an arc. Without loss of generality, we may assume that the arc joins side $c_3$ to side $c_7$. Out of all such arcs choose one $\xi$ that is outermost on $E$. Boundary compress $D_\beta$ using the outermost disc in $E$ bounded by $\xi$. This converts $D_\beta$ into two discs, $D_4$ and $D_8$ containing $c_4$ and $c_8$ respectively. The disc $D_4$ also contains the edge $c_6$ and the disc $D_8$ also contains the edge $c_1$. Since both $c_2$ and $c_6$ are contained in $A(\gamma)$, the discs $D_4$ and $D_8$ are either cancelling discs or product discs. We must show that if they are both product discs, then at least one of them is allowable. 

Assume to the contrary, that both $D_4$ and $D_8$ are product discs that are not allowable. Since product discs that are not allowable can be isotoped in the complement of $(\beta - c_\beta)$ to have boundary lying in $R(\gamma)$, and since $R(\gamma)$ is $(\beta - c_\beta)$-incompressible, both $D_4$ and $D_8$ are discs with $(\beta - c_\beta)$-inessential boundary in $\boundary M$. The disc $D_\beta$ can be reconstructed by banding the discs $D_4$ and $D_8$ together using an arc lying entirely in $R_+$. Since $M$ is $(\beta - c_\beta)$-irreducible and since $c_\beta$ lies in $D_\beta$, the arc $c_\beta$ is isotopic in in $D_\beta$ into $\boundary M$ relative to its endpoints.
\end{proof}

We now study how cancelling discs, amalgamating discs, product discs, and product annuli give band-taut sutured manifold decompositions. 

\subsubsection{Eliminating cancelling discs, product discs, and nonself amalgamating discs}\label{Eliminate Cancelling discs}

\begin{lemma}\label{Lem: product disc band taut}
Suppose that $(M,\gamma,\beta)$ is a connected band taut sutured manifold other than a 3-ball containing a single suture in its boundary and a single arc of $\beta - c_\beta$. If $M$ contains a cancelling disc or allowable product disc, then there is a band-taut decompositions
\[
(M,\gamma,\beta)\stackrel{S}{\to} (M',\gamma',\beta')
\]
The decomposition is a decomposition by a product disc, possibly satisfying (BT1).
\end{lemma}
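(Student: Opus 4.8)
\section*{Proof proposal for Lemma \ref{Lem: product disc band taut}}

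The plan is to use Lemma~\ref{Lem: Cancelling Discs Disjoint} (under its first hypothesis) to move the given disc off the band $D_\beta$ or to recognize $D_\beta$ as essentially trivial, and then to feed the resulting product disc into the band-taut decomposition machinery of the preceding subsections. By Lemma~\ref{Lem: Cancelling Discs Disjoint}, one of the following holds: (i) there is a cancelling disc or allowable product disc disjoint from $D_\beta$; or (ii) $e_\beta = \nil$, the circle $\boundary D_\beta$ is $(\beta - c_\beta)$-inessential in $\boundary M$, and $c_\beta$ is isotopic in $D_\beta$ into $\boundary M$ relative to its endpoints.

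Suppose (i) holds and let $E$ be the resulting disc. If $E$ is an allowable product disc, put $S = E$, a product disc disjoint from $D_\beta$. If $E$ is a cancelling disc for an edge $e$ of $\beta - c_\beta$, let $S$ be the frontier of a small regular neighborhood of $E$; this is a product disc, and since it may be taken in an arbitrarily small neighborhood of $E$, it is either a band-decomposing product disc (when $e$ is a side of the band) or a product disc disjoint from $D_\beta$ (when $e$ is not a side). In every case $S$ is a product disc which is a band-decomposing surface, and $D_\beta$ (or, in the band-decomposing product disc case, the subdisc $D_{\beta'} \subset D_\beta \cap M'$ containing $c_\beta$) survives the decomposition. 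Since $(M,\gamma,\beta)$ is not the excepted solid ball, decomposing along $S$ is a $(\beta - c_\beta)$-taut decomposition of $(M,\gamma,\beta - c_\beta)$ by \cite[Lemma 4.2]{MSc3}, so by the remarks following the definition of band-decomposing product disc $(M',\gamma',\beta')$ is band-taut. As $c_{\beta'} = c_\beta$ lies in the surviving band $D_{\beta'}$ with endpoints in $A_\pm(\gamma')$, condition (BT2) holds, so the decomposition is band-taut and is by a product disc.

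Suppose instead (ii) holds. Then the first clause of condition (BD) is satisfied outright, so \emph{every} decomposing surface transverse to $D_\beta$ is a band-decomposing surface. The hypothesis of the lemma supplies a cancelling disc or allowable product disc $E_0$ in $M$; let $S = E_0$ if $E_0$ is an allowable product disc and let $S$ be the frontier of a regular neighborhood of $E_0$ if $E_0$ is a cancelling disc, so that in either case $S$ is a product disc. After isotoping $S$ to be transverse to $D_\beta$ and to minimize $|S \cap D_\beta|$, the surface $S$ is a band-decomposing surface satisfying (BD), and, since $M$ is not the excepted ball, decomposing $(M,\gamma,\beta - c_\beta)$ along $S$ is a taut decomposition. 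Lemma~\ref{Lem: Defining band taut decomp} then produces $D_{\beta'} \subset D_\beta$ and $c_{\beta'} \subset D_{\beta'}$ making the decomposition band-taut, and because $e_\beta = \nil$ and $c_\beta$ is isotopic in $D_\beta$ into $\boundary M$, the proof of that lemma sets $D_{\beta'} = c_{\beta'} = \nil$, i.e.\ we are in case (BT1). Thus the decomposition is band-taut, is by a product disc, and satisfies (BT1), as required.

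The step I expect to demand the most care is checking, in each subcase above, that the product disc $S$ is genuinely a band-decomposing surface in the precise sense defined earlier and that decomposing along it yields an honest band-taut sutured manifold rather than a degenerate one; this is exactly where the exclusion of the solid ball carrying a single suture and a single arc of $\beta - c_\beta$ is used, since a product-disc decomposition of that manifold can fail to produce a valid sutured manifold. A secondary technical point is confirming that replacing a cancelling disc by the frontier of its regular neighborhood preserves disjointness from $D_\beta$ and, at the level of tautness, amounts to cancelling the corresponding edge of $\beta - c_\beta$.
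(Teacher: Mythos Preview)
Your proof is correct and follows essentially the same route as the paper's: both invoke Lemma~\ref{Lem: Cancelling Discs Disjoint} to either obtain a cancelling disc or allowable product disc disjoint from $D_\beta$, or to land in the degenerate situation where $e_\beta=\nil$ and $c_\beta$ is parallel into $\boundary M$; in the former case both take the frontier of the cancelling disc (or the disc itself if already a product disc), and in the latter case both observe that any product disc gives a (BT1) decomposition. The only organizational difference is that the paper disposes of the degenerate case in one sentence before applying Lemma~\ref{Lem: Cancelling Discs Disjoint}, whereas you apply that lemma first and then treat the degenerate case via an explicit appeal to Lemma~\ref{Lem: Defining band taut decomp}; and the paper records that the frontier $S$ is \emph{allowable} (using the excluded-ball hypothesis), which you do not explicitly note but which is not needed for tautness of the decomposition itself.
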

\begin{proof}
If $e_\beta = \nil$ and if $c_\beta$ is parallel into $\boundary M$ along $D_\beta$, let $D_{\beta'} = c_{\beta'} = \nil$. The decomposition by the given cancelling disc or allowable product disc is then band-taut. Thus, by Lemma \ref{Lem: Cancelling Discs Disjoint}, we may assume that there is a cancelling disc or allowable product disc $P$ that is disjoint from $D_\beta$. If $P$ is a cancelling disc, let $S$ be the frontier of a regular neighborhood of $P$ in $M$. Notice that $S$ is an allowable product disc since $M$ is not a 3-ball with a single suture in its boundary and a single arc in $\beta - c_\beta$. If $P$ was a cancelling disc for a component of $e_\beta$, then $S$ is a band-decomposing product disc. If $P$ is an allowable product disc, let $S = P$. Decomposing a taut sutured manifold using a product disc is a taut decomposition, so it is evident that the decomposition $(M,\gamma,\beta) \stackrel{S}{\to} (M',\gamma',\beta')$ is band-taut.
\end{proof}

Notice that if $(M,\gamma,\beta)$ has a cancelling disc for an edge $e \subset \beta$, then the decomposition given by Lemma \ref{Lem: product disc band taut} cuts off from $M$ a 3--ball having a single suture in its boundary and containing the edge $e$ and the cancelling disc. We then cancel the arc $e$. (The reason for decomposing along $S$ is that at the end of the hierarchy we will want to ignore all arc cancellations. Decomposing along the product disc $S$ before cancelling ensures that the cancellable arc is in its own component of the sutured manifold at the end of the hierarchy.)

As a final remark in this subsection, we note that if $(M,\gamma,\beta)$ is a band-taut sutured manifold and if there is a nonself amalgamating disc $P$ disjoint from $D_\beta$ we can eliminate a component of $P \cap (\beta - e_\beta)$ from $\beta$ and preserve band tautness. If $P$ runs across two components of $e_\beta$, then we view the elimination of one of the components of $e_\beta$ as a melding together of the two sides of $D_\beta$. That is, the band $D_\beta$ is isotoped so that both sides run across the same component of $e_\beta$ and the other component of $e_\beta$ is eliminated. Thus, amalgamating arcs preserves band-tautness.

\subsubsection{Eliminating allowable self amalgamating discs}\label{Section: conversion}
In the construction of a band-taut hierarchy, it will be necessary to eliminate allowable self amalgamating discs as in \cite[Lemma 2.4]{MSc4}. We briefly recall the essentials.

We make use of a trick which allows us to convert between arcs and sutures \cite[Definition 2.2]{MSc4}. If $e \subset \beta$ is an edge with one endpoint in $R_-$ and one in $R_+$, then we \defn{convert} $e$ to a suture $\gamma_e$ by letting $M' = M - \inter{\eta}(e)$ and letting $\gamma_e \subset \boundary M'$ be a meridian of $e$. Lemma 2.2 of \cite{MSc4} shows that $(M,\gamma,\beta)$ is $\beta$-taut if and only if $(M',\gamma \cup \gamma_e, \beta - e)$ is $(\beta - e)$-taut. 

Suppose that $P$ is an allowable self amalgamating disc in $(M,\gamma,\beta)$ disjoint from $D_\beta$. Gluing the components of $\boundary P \cap \beta$ together along the edge of $\beta$ they traverse and isotoping it off $\beta$ creates a product annulus $P_A$. Notice that since $P$ was disjoint from $D_\beta$, the annulus $P_A$ can be created so that it is disjoint from $D_\beta$. Furthermore, if a boundary component of $P_A$ is inessential in $R(\gamma)$, the disc in $R(\gamma)$ it bounds is also disjoint from $D_\beta$.

If both components of $\boundary P_A$ are essential in $R(\gamma)$, we decompose along $P_A$. The parallelism of the edge $P \cap \beta$ into $P_A$ becomes a cancelling disc in the decomposed manifold and we cancel the arc $P \cap \beta$ as in Subsection \ref{Eliminate Cancelling discs}.

If a component of $\boundary P_A$ is inessential in $R(\gamma)$, we choose one such component $\delta$ and let $\Delta$ be the disc in $R(\gamma)$ that it bounds. Let $D$ be the pushoff of $P_A \cup \Delta$ so that it is properly embedded. We decompose along $D$. As described in \cite[Lemma 2.4]{MSc4}, after amalgamating arcs and converting an arc to a suture, the decomposed sutured manifold is equivalent to the sutured manifold obtained by decomposing along $P_A$. By \cite[Lemma 4.2]{MSc3} and \cite[Lemmas 2.3 and 2.4]{MSc4}, if $(M,\gamma,\beta)$ is band taut, so is the decomposed sutured manifold. 

\subsection{Decomposing by rinsed surfaces}

In the previous sections, we have seen how surfaces satisfying (BD) can be used to construct decompositions of banded sutured manifolds and how the presence of a product surface can be used to construct a band-taut decomposition of a band taut sutured manifold. In this section we show that, in the presence of non-trivial second homology, we can find a rinsed surface giving a band-taut decomposition of a band-taut sutured manifold. We begin with some preliminary lemmas that simplify the search for such a surface.

\begin{lemma}\label{Lem: Using S_k}
Suppose that $S$ is a conditioned or rinsed surface in $(M,\gamma,\beta)$. Then the surface $S_k$ obtained by double curve summing $S$ with $k$ copies of $R(\gamma)$ for any $k \geq 0$, is conditioned or rinsed, respectively. Furthermore, if $S$ is rinsed and satisfies conditions (2) and (3) of (BD) in the definition of band decomposing surface, then $S_k$ does also.
\end{lemma}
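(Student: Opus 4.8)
The plan is to verify the defining conditions of ``conditioned'' (C0)--(C3) and ``rinsed'' directly for $S_k$, using the fact that double curve summing with $R(\gamma)$ affects the boundary only in a controlled way and does not introduce new intersections with $\beta - c_\beta$. First I would recall that $\boundary S_k$ is the oriented double curve sum of $\boundary S$ with $k$ parallel copies of $\boundary R(\gamma)$; since $\boundary R(\gamma)$ consists of curves parallel to $\gamma$ inside $A(\gamma)$ and is disjoint from $T(\gamma)$, the intersection of $\boundary S_k$ with any component of $A(\gamma) \cup T(\gamma)$ differs from that of $\boundary S$ only by the addition of $k$ copies of the corresponding $\boundary R$ curves, all coherently oriented with $\gamma$. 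This immediately gives (C0) and (C1) for $S_k$ from the same properties of $S$, because the added curves are oriented compatibly. For (C2): in $H_1(R(\gamma), \boundary R(\gamma))$, the curves $\boundary S_k \cap R(\gamma)$ are obtained from $\boundary S \cap R(\gamma)$ by a double curve sum with curves lying on $\boundary R(\gamma)$, hence by a homology that is trivial in $H_1(R(\gamma),\boundary R(\gamma))$; so a subcollection of $S_k$-curves is trivial in that relative homology iff the corresponding subcollection of $S$-curves is, and (C2) is inherited. (Here one uses the standard fact, which I would cite from \cite{MSc3}, that double curve summing with $\boundary R(\gamma)$ realizes the oriented sum $[S] + k[R(\gamma)]$ in $H_2(M,\boundary M)$ and does not create homologically trivial subcollections.)

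Next I would handle (C3), the condition on signs of intersection of edges of $\beta$ with $S \cup R(\gamma)$. The key observation is that double curve summing $S$ with a copy of $R(\gamma)$ produces a surface whose intersection with each edge $e$ of $\beta$ is, up to isotopy rel endpoints of $e$, exactly the union of the intersection points of $e$ with $S$ and the intersection points of $e$ with $R(\gamma)$ --- the surgery that forms the double curve sum happens in a neighborhood of $\boundary R(\gamma) \subset A(\gamma)$, which by (T0) is disjoint from $\beta$. Hence $e \cap (S_k \cup R(\gamma))$ has the same set of signed intersection points as $e \cap (S \cup R(\gamma))$ (each point of $e \cap R(\gamma)$ now simply appears with higher multiplicity but the same sign), so ``always the same sign'' for $S$ passes to $S_k$. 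For the ``rinsed'' case, I must additionally check: (i) $S_k$ is conditioned in $(M,\gamma,\beta - c_\beta)$, which is exactly what the previous paragraph and this one establish with $\beta$ replaced by $\beta - c_\beta$; (ii) $S_k$ has zero algebraic intersection with $c_\beta$ --- this is because $\langle S_k, c_\beta\rangle = \langle S, c_\beta\rangle + k\langle R(\gamma), c_\beta\rangle$, and $\langle R(\gamma), c_\beta\rangle = 0$ since $c_\beta$ is an arc with one endpoint in $A_-$ and one in $A_+$, hence algebraically crosses $R_-$ and $R_+$ with opposite signs, giving total algebraic intersection zero with $R(\gamma) = R_- \cup R_+$; and (iii) the condition on separating closed components bounding product regions with closed components of $R(\gamma)$. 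For (iii) I note that double curve summing with $R(\gamma)$ does not change the closed components of $S$ at all (the operation only modifies components of $S$ meeting $\boundary M$), so the closed separating components of $S_k$ are identical to those of $S$ and the product-region condition is automatic.

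Finally, for the last sentence of the lemma, I need that if $S$ is rinsed and satisfies conditions (2) and (3) of (BD), then so does $S_k$. Conditions (2) and (3) of (BD) concern the signs of intersection of $\boundary S$ with the top $d_T$ and bottom $d_B$ of $D_\beta$, relative to the signs of $\gamma \cap c_2$ and $\gamma \cap c_6$. The top and bottom of the band meet $R(\gamma)$ in the arcs $c_1 \cup c_3$ and $c_5 \cup c_7$; double curve summing $\boundary S$ with $\boundary R(\gamma)$ adds, near $c_2$ and $c_6$ (which lie in $A(\gamma)$ and each meet $\gamma$ once), new intersection points of $\boundary S_k$ with $d_T$ and $d_B$, and these new points are precisely copies of the intersection of $\boundary R(\gamma)$ with the band, all carrying the sign of $\gamma \cap c_2$ (respectively $\gamma \cap c_6$) --- because $\boundary R_+$ is oriented as the outward-normal boundary and $\boundary R_-$ with the inward-normal orientation, which is exactly the orientation of $\gamma$ in $c_2$ (respectively $c_6$). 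Thus the new intersection points are sign-compatible with the existing ones, so (2) and (3) are preserved. The main obstacle I anticipate is making the bookkeeping of orientations in this last paragraph fully rigorous: one must be careful that the double-curve-sum surgery near $\boundary R(\gamma)$ genuinely introduces intersections with $d_T, d_B$ of the correct sign rather than of mixed sign, and this requires tracing through the orientation conventions for $R_\pm$, $\gamma$, and the edges $c_2, c_6$ of the band set up in Section \ref{Sec: SM}; everything else is a routine check that the relevant quantities are additive under double curve sum and that the added summand ($k$ copies of $R(\gamma)$) contributes nothing problematic.
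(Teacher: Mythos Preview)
Your overall strategy matches the paper's, and most of the checks are correct, but there is a genuine gap in your treatment of condition (iii) of ``rinsed'' (the condition on separating closed components). You assert that ``double curve summing with $R(\gamma)$ does not change the closed components of $S$ at all \ldots\ so the closed separating components of $S_k$ are identical to those of $S$.'' This is false: $R(\gamma)$ may itself have closed components (tori in $\partial M$ lying in $R_\pm$ rather than $T(\gamma)$), and after pushing these slightly into $M$ for the double curve sum they become new closed separating components of $S_k$. You therefore need to check that these new components satisfy the product-region condition. The paper handles this by observing (using (C2)) that any closed component of $S_1$ is parallel to a closed component of $R(\gamma)$ and hence bounds a product region meeting $\beta$ in vertical arcs; the result for general $k$ then follows by induction. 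Your argument, as written, simply does not address these extra components.

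A more minor point: your computation of $\langle R(\gamma), c_\beta\rangle$ is both over-complicated and slightly off. The core $c_\beta$ has both endpoints in $A(\gamma)$, so it does not cross $R_-$ or $R_+$ at all; $R(\gamma)$ is geometrically disjoint from $c_\beta$, which is what the paper uses. Your phrasing ``algebraically crosses $R_-$ and $R_+$ with opposite signs'' suggests a misreading of where $\partial c_\beta$ lies. The remaining parts of your argument (the boundary conditions (C0)--(C2), condition (C3), and the preservation of (2) and (3) of (BD)) are essentially the same as the paper's, only spelled out in more detail.
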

\begin{proof}
By induction, it suffices to prove the lemma when $k = 1$. We have already observed that $\boundary S_k$ is conditioned. Since $S$ satisfies (C3), it is obvious that $S_k$ does also.

If $S$ is rinsed, then the algebraic intersection number of $S$ with $c_\beta$ is zero. Since $R(\gamma)$ is disjoint from $c_\beta$, the surface $S_k$ also has this property. Suppose that $F$ is a closed component of $S_k$ (with $k =1$). Since $S$ satisfies condition (C2), no component of $S_k$ is a separating closed surface intersecting $S$. Any closed component of $S_k$ must, therefore, be parallel to a component of $R(\gamma)$ and bounds a region of parallelism intersecting $\beta$ only in vertical arcs. Consequently, if $S$ is rinsed, so is $S_k$. Finally, if $S$ satisfies conditions (2) and (3) of (BD), it follows immediately from (C0) and (C1) that $S_k$ also satisfies (2) and (3) of (BD). \end{proof}

\begin{lemma}\label{Lem: Finding c}
Suppose that $(M,\gamma,\beta)$ is band-taut and that $S$ is a rinsed surface satisfying conditions (2) and (3) of (BD) in the definition of band-decomposing surface. Then after an isotopy relative to $\boundary S$ to minimize the pair $(|S \cap D_\beta|,|S \cap c_\beta|)$ with respect to lexicographic order, the surface $S$ is a band decomposing surface satisfying (BD).
\end{lemma}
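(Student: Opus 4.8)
The plan is to verify each clause of (BD) for the minimized surface $S$: conditions (4) and (5) will come from the fact that a rinsed surface is conditioned in $(M,\gamma,\beta-c_\beta)$, condition (1) (or the first alternative of (BD)) from the vanishing of the algebraic intersection $[S]\cdot[c_\beta]$, and the remaining clauses (2) and (3) are hypotheses.

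First I would clean up $S\cap D_\beta$. After the isotopy minimizing $(|S\cap D_\beta|,|S\cap c_\beta|)$ lexicographically, an innermost circle of $S\cap D_\beta$ bounds a subdisc of $D_\beta$ whose interior meets $\beta$ at most in $c_\beta$; since $(M,\gamma,\beta)$ is band-taut, $(M,\gamma,\beta-c_\beta)$ is $(\beta-c_\beta)$-taut, so $M$ is $(\beta-c_\beta)$-irreducible and $R(\gamma)$ is $(\beta-c_\beta)$-incompressible. Hence such a circle also bounds a disc on $S$ and may be removed without increasing $|S\cap c_\beta|$, contradicting minimality. So $S\cap D_\beta$ is a disjoint union of arcs with endpoints on $\boundary D_\beta$.

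Next, conditions (4) and (5). Because $S$ is rinsed it is conditioned in $(M,\gamma,\beta-c_\beta)$, so by (C3) each edge of $\beta-c_\beta$ meets $S\cup R(\gamma)$ always with the same sign. If $c_4$ traverses an edge $e$ of $\beta-c_\beta$, then the endpoints of $e$ lie in $R_-$ and $R_+$, and by the orientation conventions on $R_\pm$ together with the prescribed orientation of $e$, the edge $e$ meets $R(\gamma)$ with the same sign at both endpoints; hence every point of $S\cap e$ carries that common sign, which is exactly (4). If instead $c_4$ is properly embedded in $A(\gamma)$ and meets $\gamma$ once, it is an essential arc of that annulus, and conditions (C0) and (C1) — with the standing requirement that $\boundary S$ meet $A(\gamma)$ in standard position with respect to $\gamma$ — force every circle and every essential arc of $\boundary S\cap A(\gamma)$ to cross $c_4$ with a common sign; again this gives (4). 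Condition (5) is identical with $c_8$ in place of $c_4$. Since (2) and (3) hold by hypothesis, the last paragraph of the proof of Lemma \ref{Lem: Arc Positions} applies verbatim and shows that no arc of $S\cap D_\beta$ joins the top of $D_\beta$ to its bottom.

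It remains to produce the arc required by (1), or else to land in the first alternative of (BD). The existence of such an arc $c$ is equivalent to $S\cap D_\beta$ not separating the top of $D_\beta$ from the bottom, so I would argue by contradiction: if $S\cap D_\beta$ does separate them then, using that no arc of $S\cap D_\beta$ joins the top to the bottom and that every properly embedded arc of $D_\beta$ separating top from bottom has one endpoint on $c_4$ and one on $c_8$, a combinatorial argument on the planar picture produces an arc $\xi$ of $S\cap D_\beta$ running from $c_4$ to $c_8$. This $\xi$ separates the endpoints of $c_\beta$ (on $c_2$ and $c_6$) inside $D_\beta$, so $c_\beta$ meets $\xi$, and by minimality of $|S\cap c_\beta|$ this intersection cannot be cancelled (an innermost bigon between $c_\beta$ and an arc of $S\cap D_\beta$ would let one isotope $S$ to reduce $|S\cap c_\beta|$ without changing $|S\cap D_\beta|$). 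On the other hand, since $c_\beta$ is a properly embedded arc of the disc $D_\beta$ it is homologous rel its endpoints in $D_\beta$ to either arc of $\boundary D_\beta$ joining those endpoints, so the algebraic intersection $[S]\cdot[c_\beta]$ — which is $0$ because $S$ is rinsed — equals, up to sign, a signed count of points of $\boundary S$ on such a boundary arc. Feeding in the sign data from (2)--(5), the agreement of the orientations of $\gamma$ at $\gamma\cap c_2$ and $\gamma\cap c_6$ (established in the proof of Lemma \ref{Lem: Arc Positions}), and the uncancellable crossing with $\xi$, one finds $[S]\cdot[c_\beta]\neq 0$, a contradiction — unless the hypothesised separating configuration is degenerate, which can only occur when $e_\beta=\nil$ and $c_\beta$ is isotopic in $D_\beta$ (rel its endpoints) into $\boundary M$; but then the first alternative of (BD) holds and $S$ satisfies (BD) in any case.

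The main obstacle is this last step: carrying out the sign bookkeeping around $\boundary D_\beta$ carefully enough that the vanishing of $[S]\cdot[c_\beta]$ is genuinely incompatible with the existence of a $c_4$-to-$c_8$ arc of $S\cap D_\beta$, and correctly isolating the single degenerate configuration that is instead absorbed by the first alternative of (BD). Everything else is routine innermost-disc arguments together with unwinding the definitions of ``conditioned'' and ``rinsed''.
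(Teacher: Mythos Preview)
Your outline is on the right track and parallels the paper's argument, but the step you flag as the ``main obstacle'' is precisely where the paper's proof is cleanest, and your framing makes it harder than it needs to be. Two concrete points:

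\textbf{Drop the separation detour and the degenerate case.} You argue by contradiction that if $S\cap D_\beta$ separates top from bottom then some arc runs from $c_4$ to $c_8$, and you reserve the first alternative of (BD) as an escape hatch. Neither is needed. Once (2)--(5) hold and $|S\cap D_\beta|$ is minimal, the arguments of Lemma~\ref{Lem: Arc Positions} that use only (2)--(5) and minimality already rule out arcs of type top--top, bottom--bottom, top--bottom, $c_4$--$c_4$, $c_8$--$c_8$, and same-edge. So you may simply assume some arc $\zeta$ joins $c_4$ to $c_8$ and seek a contradiction; once that is excluded, every arc joins top or bottom to $c_4$ or $c_8$, and the arc $c$ required by (1) exists. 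No appeal to the clause ``$e_\beta=\nil$ and $c_\beta$ parallel into $\boundary M$'' is ever needed: conditions (1)--(5) hold unconditionally.

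\textbf{The sign bookkeeping is local, not homological.} Your plan to compute $[S]\cdot[c_\beta]$ by pushing $c_\beta$ to an arc of $\boundary D_\beta$ tangles the endpoints of the arcs $S\cap D_\beta$ into the count. The paper's argument avoids this: the arc $\zeta$ crosses $c_\beta$ once, so since the total algebraic intersection with $c_\beta$ is zero there is another arc $\zeta'$ crossing $c_\beta$ with the opposite sign. By the arc-type restrictions above, $\zeta'$ must have an endpoint on $c_4$ or $c_8$, say $c_4$. But both $\zeta$ and $\zeta'$ meet $c_4$ with the same sign by (4), and for any arc with an endpoint on $c_4$ that crosses $c_\beta$, the sign at $c_\beta$ is determined by the sign at $c_4$ (the normal orientation of $S$ along the arc fixes both, and $c_4$ lies on a single side of $c_\beta$). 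Hence $\zeta$ and $\zeta'$ meet $c_\beta$ with the same sign, contradicting the choice of $\zeta'$. This is the whole argument for (1); no global intersection computation on $\boundary D_\beta$ is needed.
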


\begin{proof}
If $c_4$ or $c_8$ lies in $A(\gamma)$, since $\boundary S$ is conditioned, condition (4) or (5) of (BD) is satisfied for that component. If $c_4$ or $c_8$ lies in $e_\beta$, then by condition (C3) in the definition of rinsed, condition (4) or (5) of (BD) is satisfied for that component. Thus, we need only show that $S$ satisfies condition (1) of (BD). 

Each arc of $S \cap D_\beta$ intersects $c_\beta$ at most once, by our initial isotopy of $S$. Suppose that a component $\zeta$ of $S \cap D_\beta$ joins $c_4$ to $c_8$. Since $S$ has algebraic intersection number zero with $c_\beta$, there exists another arc $\zeta'$ intersecting $c_\beta$ but with opposite sign.  By conditions (2) and (3) of (BD), at least one endpoint of $\zeta'$ must lie on $c_4$ or $c_8$. Without loss of generality, assume it to be $c_4$. Since $S$ always intersects $c_4$ with the same sign, $\zeta$ and $\zeta'$ intersect $c_4$ with the same sign. Since the signs of intersection of each of $\zeta$ and $\zeta'$ with $c_\beta$ are the same or opposite of their intersections with $c_4$, and since they intersect $c_4$ with the same sign, $\zeta$ and $\zeta'$ intersect $c_\beta$ with the same sign. This contradicts the choice of $\zeta'$. Hence no arc joins $c_4$ to $c_8$.  Thus, every arc joins either the top or bottom of $D_\beta$ to either $c_4$ or $c_8$. A similar argument shows that if $\zeta$ and $\zeta'$ are arcs each with an endpoint on $c_8$ (or each with an endpoint on $c_4$) and each intersecting $c_\beta$ then they intersect $c_\beta$ with the same sign. It follows that if $S \cap c_\beta$ is non-empty, then precisely one of the following happens:
\begin{enumerate}
\item There are equal numbers of arcs joining $c_5$ to $c_8$ as there are arcs joining $c_1$ to $c_4$ and there are no other arcs.
\item There are equal numbers of arcs joining $c_3$ to $c_8$ as there are arcs joining $c_7$ to $c_4$ and there are no other arcs.
\end{enumerate}
It follows that conclusion (1) of (BD) holds and so $S$ is a band-decomposing surface.
\end{proof}

The previous two lemmas produce a rinsed band decomposing surface from a given rinsed surface. The next lemma produces a rinsed band decomposing surface from a given homology class.

\begin{lemma}\label{Lem: Constructing Rinsed Surface}
Suppose that $(M,\gamma,\beta)$ is a band-taut sutured manifold and that $y \in H_2(M,\boundary M)$ is non-zero. Then there exists a rinsed band-decomposing surface $S$ in $M$ representing $\pm y$.
\end{lemma}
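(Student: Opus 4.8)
The plan is to construct a conditioned surface in $(M,\gamma,\beta-c_\beta)$ representing $y$, upgrade it to a rinsed surface satisfying conditions (2) and (3) of (BD), and then apply Lemma~\ref{Lem: Finding c}. For the first step I would follow the construction of conditioned surfaces in \cite{MSc3}: realize $\boundary y\in H_1(\boundary M)$ by a conditioned $1$--manifold $C$ (possibly empty), cap it off by a properly embedded surface $S$ representing $y$, and then achieve (C0)--(C3) by discarding trivial circles of $\boundary S$, removing (by the methods of \cite{MSc3}) collections of closed curves of $\boundary S\cap R(\gamma)$ that are trivial in $H_1(R(\gamma),\boundary R(\gamma))$, and tubing $S$ along subarcs of the edges of $\beta-c_\beta$, together with double-curve summing copies of $R(\gamma)$, so that every edge of $\beta-c_\beta$ meets $S\cup R(\gamma)$ coherently. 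Since every separating closed component of $S$ is null-homologous in $H_2(M,\boundary M)$, I can delete all of them; the closed-component clause in the definition of ``rinsed'' is then vacuous.

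Next I would arrange conditions (2) and (3) of (BD). Isotope $S$ so that every arc of $\boundary S\cap A(\gamma)$ is disjoint from $c_2$ and $c_6$, and from $c_4,c_8$ when those lie in $A(\gamma)$; this is possible because such arcs are essential, hence parallel to $c_2,c_6$. Then every point of $\boundary S$ on the $A(\gamma)$--part of the top (resp.\ bottom) of $D_\beta$ comes from a circle of $\boundary S$ parallel to $\gamma$, and by (C0) such a circle is co-oriented with $\gamma$, so it crosses $c_2$ (resp.\ $c_6$) with exactly the sign of $\gamma\cap c_2$ (resp.\ $\gamma\cap c_6$). The sign control along the $R(\gamma)$--portions $c_1,c_3,c_5,c_7$ of the top and bottom I would build into the choice of $C$: the construction of \cite{MSc3} leaves enough freedom in the isotopy class of $C$, subject only to $[C]=\boundary y$, to take $C\cap R(\gamma)$ meeting $c_1,c_3$ (resp.\ $c_5,c_7$) coherently with $\gamma\cap c_2$ (resp.\ $\gamma\cap c_6$). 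Double-curve summing with copies of $R(\gamma)$, needed later via Lemma~\ref{Lem: Using S_k}, preserves all of these properties.

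It remains to force $S\cdot c_\beta=0$; this is the step that genuinely uses $\boundary c_\beta\subset A(\gamma)$, as anticipated in the Remark of Section~\ref{Sec: SM}. First tube $S$ along subarcs of $c_\beta$ joining adjacent intersection points of opposite sign; each such tubing preserves $[S]$ and lowers $|S\cap c_\beta|$, so I may assume all points of $S\cap c_\beta$ share a common sign, their number being $|S\cdot c_\beta|$. If this is nonzero, let $q$ be the intersection point nearest along $c_\beta$ to an endpoint $p$ of $c_\beta$, say $p\in A_\pm(\gamma)$; push a connected copy $R'$ of a component of $R_\pm(\gamma)$ slightly into $M$, and tube $S$ to $R'$ along the subarc of $c_\beta$ from $q$ to $p$, continued by a short arc in $A_\pm(\gamma)$ from $p$ to $\boundary R_\pm$. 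Since $R'$ is isotopic into $\boundary M$, it is null-homologous in $H_2(M,\boundary M)$, so this tubing does not change $[S]$; with $R'$ taken close to $\boundary M$ it removes $q$ and creates no new point of $S\cap c_\beta$, lowering $|S\cdot c_\beta|$ by one. One checks that $S$ remains a conditioned decomposing surface — the new boundary curves are $\gamma$--parallel in $A(\gamma)$ and correctly oriented, and the tube is disjoint from $\beta-c_\beta$ and from the cores $\gamma$ — and that (2) and (3) of (BD) still hold. Iterating produces a rinsed surface representing $\pm y$ and satisfying (2) and (3) of (BD), whereupon Lemma~\ref{Lem: Finding c} gives, after an isotopy rel $\boundary S$, the desired rinsed band-decomposing surface.

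The main difficulty I anticipate is bookkeeping: the conditions (C0)--(C2), (C3), the band sign conditions (BD)(2),(3), and $S\cdot c_\beta=0$ interact, and the available moves — tubing along edges of $\beta-c_\beta$, tubing into pushed-off copies of $R_\pm(\gamma)$, deleting trivial or closed components, and double-curve summing with $R(\gamma)$ — must be ordered so that securing one condition does not spoil another. In particular $S\cdot c_\beta=0$ cannot be obtained by double-curve summing with $R(\gamma)$, which is disjoint from $c_\beta$, and requires the more delicate tube-into-$R_\pm(\gamma)$ construction built on $\boundary c_\beta\subset A(\gamma)$.
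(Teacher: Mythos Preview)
Your overall strategy matches the paper's: build a conditioned surface representing $\pm y$, arrange conditions (2) and (3) of (BD), force algebraic intersection zero with $c_\beta$, then invoke Lemma~\ref{Lem: Finding c}. The genuine gap is in the step where you force $S\cdot c_\beta = 0$.

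The problem is this: a pushed-off copy $R'$ of a component of $R_\pm(\gamma)$ is \emph{disjoint} from $c_\beta$, since the endpoints of $c_\beta$ lie in $A(\gamma)$, not in $R(\gamma)$. Tubing $S$ to such an $R'$ along an interior arc therefore cannot change the algebraic intersection with $c_\beta$: the tube annulus links $c_\beta$ without meeting it, $R'$ contributes nothing, and the only effect is to remove the point $q$---which is inconsistent with the homology unless the tube or the new boundary compensates. Your tube arc is partly in $\boundary M$ (the segment in $A_\pm$ from $p$ to $\boundary R_\pm$); if you push that segment into the interior, the difficulty persists, and if you let the tube genuinely meet $\boundary M$ near $p$, the resulting surface acquires a small inessential boundary circle about $p$ in $A(\gamma)$, violating (D2). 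Your assertion that ``the new boundary curves are $\gamma$-parallel in $A(\gamma)$ and correctly oriented'' is exactly the point that fails.

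The paper's remedy is different in two ways. First, if the initial algebraic intersection $i$ is positive, it \emph{reverses orientation} of $\Sigma$ (and reconditions the boundary by cut-and-paste with subsurfaces of $R(\gamma)$); this is why the statement says $\pm y$. Second, with $i\le 0$ in hand, it takes the oriented double curve sum of $\Sigma_k$ with $|i|$ copies of $\rho_- = R_- \cup (A_- \setminus \inter{\eta}(\gamma))$, \emph{not} with $R_-$. The point is that $\rho_-$ extends across $A_-$ and hence, once pushed into the interior, meets $c_\beta$ exactly once with sign $+1$; each copy raises $S\cdot c_\beta$ by one, bringing it to zero. Because $\boundary\rho_-$ is (parallel to) $\gamma$, the new boundary components are automatically $\gamma$-parallel and correctly oriented, so the surface stays conditioned. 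Your instinct that the key is $\boundary c_\beta \subset A(\gamma)$ is exactly right, but it is the $A_-$-collar in $\rho_-$ that makes it work, not $R_-$ alone.

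A smaller point: your treatment of (BD)(2) and (3) (``the construction of \cite{MSc3} leaves enough freedom in the isotopy class of $C$\dots'') is vaguer than the paper's. The paper achieves the sign conditions on the top and bottom of $D_\beta$ by first isotoping circle components of $\Sigma\cap A(\gamma)$ off $\boundary c_\beta$, then explicitly banding together points of $\boundary\Sigma$ on $c_1$ or $c_3$ (resp.\ $c_5$ or $c_7$) of opposite sign, after summing with enough copies of $R(\gamma)$ to make room.
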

\begin{proof}
Let $C$ be a conditioned 1--manifold in $\boundary M$ representing $\boundary y$. Isotope $C$ so that 
\begin{enumerate}
\item[(a)] Each circle component of $C \cap A(\gamma)$ is contained in a collar of $\boundary R(\gamma)$ that is disjoint from $\boundary c_\beta$
\item[(b)] Each arc component of $C \cap A(\gamma)$ is disjoint from $\boundary D_\beta \cap A(\gamma)$.
\end{enumerate}

Let $\Sigma$ be a surface representing $y$ with $\boundary \Sigma = C$. Discard any separating closed component of $\Sigma$. The surface $\Sigma$ is a decomposing surface. We now proceed to modify it to obtain the surface we want. We begin by arranging for the surface to have non-positive algebraic intersection number with $c_\beta$.

Let $i$ be the algebraic intersection number of $\Sigma$ with $c_\beta$. If $i > 0$, let $\ob{\Sigma}$ be the result of reversing the orientation of $\Sigma$ and let $\ob{C} = \boundary \ob{\Sigma}$. Notice that $\Sigma$ represents $-y$ and that the algebraic intersection between $\ob{\Sigma}$ and $c_\beta$ is non-positive. If $\ob{C}$ is not conditioned, perform cut and paste operations of $\ob{\Sigma}$ with copies of subsurfaces of $R(\gamma)$ to produce a surface $\Sigma'$ having conditioned boundary $C'$ and satisfying (a) and (b). Since $R(\gamma)$ is disjoint from $c_\beta$, the algebraic (and geometric) intersection number of $\Sigma'$ with $c_\beta$ is the same as the algebraic (and geometric) intersection number of $\ob{\Sigma}$ with $c_\beta$. This number is, therefore, negative.

We may, therefore, assume without loss of generality that we have a surface $\Sigma$ such that $C = \boundary \Sigma$ satisfies (a) and (b), and:
\begin{enumerate}
\item[(c)] $\Sigma$ is a conditioned surface representing $\pm y$ 
\item[(d)] The algebraic intersection number $i$ of $\Sigma$ with $c_\beta$ is non-positive.
\end{enumerate}

By Lemma \ref{Lem: Using S_k} and the fact that $R(\gamma)$ is disjoint from $c_\beta$, replacing $\Sigma$ with the double curve sum $\Sigma_k$ of $\Sigma$ with $k$ copies of $R(\gamma)$ does not change (a), (b), (c), or (d). 

We now explain why we may also assume that $\Sigma$ satisfies conditions (2) and (3) of (BD). Suppose that $\boundary \Sigma$ intersects the top of $D_\beta$ in points of opposite intersection number. At least one of those points must lie in $c_1$ or $c_3$. After possibly increasing $k$ and isotoping a circle component of $\Sigma \cap A(\gamma)$ into $R(\gamma)$ (not allowing it to pass through $c_\beta$) we may band together points of $\boundary \Sigma \cap c_1$ or $\boundary \Sigma \cap c_3$ having opposite intersection number to guarantee that all points of $\Sigma \cap (c_1 \cup c_2 \cup c_3)$ have the same intersection number as $\gamma \cap c_2$. Perform any additional necessary cut and paste operations with subsurfaces of $R(\gamma)$ to ensure that $\Sigma$ is conditioned. Since $R(\gamma)$ is disjoint from $c_\beta$, this does not change $i$. Since each intersection point of $\boundary R(\gamma)$ with the top of $D_\beta$ has the same sign as $\gamma \cap c_2$, we still have the property that $\boundary \Sigma$ intersects the top of $D_\beta$ with the same sign as $\gamma \cap c_2$. Similarly, we can guarantee that $\boundary \Sigma$ also always intersects the bottom of $D_\beta$ with the same sign as $\gamma \cap c_6$. This implies that we may assume that $\Sigma$ and $\Sigma_k$ (for $k \geq 1$) satisfies conditions (2) and (3) of (BD) in the definition of band-decomposing surface.  By tubing together points of opposite intersection number, we may also assume that the geometric intersection number of $\Sigma$ and $\Sigma_k$ with each edge of $\beta$ is equal to the algebraic intersection number. From $\Sigma$ discard every closed separating component that does not bound a product region with $R(\gamma)$ intersecting $\beta$ in vertical arcs. Thus, by Lemma \ref{Lem: Finding c}, we may assume that $\Sigma$ and $\Sigma_k$ (for $k \geq 0$), in addition to satisfying (a) - (d), satisfy every requirement for being a rinsed band decomposing surface except that the algebraic intersection number of $\Sigma$ and $\Sigma_k$ with $c_\beta$ may possibly be negative. We now show how to trade (a) for the property that $\Sigma$ has algebraic intersection number 0 with $c_\beta$. We will then have proved our lemma.

Let $\rho_\pm$ be the surface $R_\pm \cup (A_\pm - \inter{\eta}(\gamma))$. Let $S$ be the surface obtained by taking the double curve sum of $\Sigma_k$ with $i$ copies of $\rho_-$. Since $A_-$ has intersection with $c_\beta$ consisting of a single point with sign $+1$, $S$ has zero algebraic intersection number with $c_\beta$. Tube together points with opposite intersection number in the intersection of $S$ with each component of $\beta - c_\beta$. Discard any closed separating component. Isotope $S$ slightly so that all circle components of $S \cap A(\gamma)$ are disjoint from $\gamma$. After discarding any closed separating components of $S$, we have constructed a rinsed band-decomposing surface representing $\pm y$.
\end{proof}
 
Our next two results, which are based on \cite[Theorems 2.5 and 2.6]{MSc3}, are the key to constructing band-taut decompositions. As usual, we let $S_k$ denote the oriented double curve sum of $S$ with $k$ copies of $R(\gamma)$. Recall that $S$ and $S_k$ (for any $k \geq 0$) represent the same class in $H_2(M,\boundary M)$. For reference, we begin by stating \cite[Theorem 2.5]{MSc3}. (As this is an important theorem for us, we note that the surface $R$ in Scharlemann's theorem need not be $R(\gamma)$. Indeed,  the statement of the theorem does not mention sutured manifolds.)

\begin{theorem*}[Theorem 2.5 of \cite{MSc3}]\label{Sch. Thm}
Given:
\begin{enumerate}
\item[(a)] A $\beta$-taut surface $(R,\boundary R)$ in a $\beta$-irreducible 3--manifold $(M,\boundary M)$
\item[(b)] a properly embedded family $C$ of oriented arcs and circles in $\boundary M - \eta(\boundary R)$ which is in the kernel of the map
\[H_1(\boundary M, \eta(\boundary R)) \to H_1(M,\eta(\boundary R))\]
induced by inclusion.
\item[(c)] $y$ in $H_2(M,\boundary M)$ such that $\boundary y = [C],$
\end{enumerate}
then there is a surface $(S,\boundary S)$ in $(M,\boundary M)$ such that
\begin{enumerate}
\item[(i)] $\boundary S - \eta(\boundary R) = C$
\item[(ii)] for some integer $m$, $[S,\boundary S] = y + m[R,\boundary R]$ in $H_2(M,\boundary M)$,
\item[(iii)] for any collection $R'$ of parallel copies of components of $R$ (similarly oriented), the double curve sum of $S$ with $R'$ is $\beta$-taut,
\item[(iv)] any edge of $\beta$ which intersects both $R$ and $S$ intersects them with the same sign.
\end{enumerate}
\end{theorem*}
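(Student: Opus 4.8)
The plan is to produce $S$ by a two-stage minimization over a suitable family of surfaces and then to read off conclusions (i), (ii), (iv) almost for free, leaving the tautness of every double curve sum (conclusion (iii)) as the substantive point. First I would set up the family to minimize over. Hypothesis (b) says $[C]$ vanishes in $H_1(M,\eta(\partial R))$ and hypothesis (c) says $y\in H_2(M,\partial M)$ has $\partial y=[C]$; chasing the long exact sequence of the pair $(M,\eta(\partial R))$ (equivalently, Poincar\'e--Lefschetz duality) produces an embedded surface $S_0$ with $\partial S_0-\eta(\partial R)=C$ and $[S_0,\partial S_0]=y+m_0[R,\partial R]$ for some $m_0\in\Z$. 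Let $\mathcal F$ be the family of all embedded surfaces $S$ with $\partial S-\eta(\partial R)=C$ and $[S,\partial S]=y+m[R,\partial R]$ for some $m\in\Z$; it is nonempty, and since $x_\beta$ takes values in $\Z_{\geq 0}$ we may choose $S\in\mathcal F$ minimizing $x_\beta(S)$ and, among those, minimizing $|S\cap R|$ after making $S$ transverse to $R\cup\beta$.

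Next I would put $S$ in coherent position with respect to $R$. An innermost-disc argument on the circles of $S\cap R$ --- using $\beta$-irreducibility of $M$ and $\beta$-incompressibility of $R$ --- together with oriented cut-and-paste along any incoherently oriented component of $S\cap R$ removes such a component: the oriented double curve sum of $S$ with the relevant subsurface of $R$ does not increase $x_\beta$, since the tautness of $R$ prevents the exchanged pieces of $R$ from hiding a norm-reducing surgery or a compression, and it strictly lowers $|S\cap R|$, contradicting the choice of $S$. Hence $S\cap R$ is coherent. Similarly, a local exchange with $R$ along an edge of $\beta$ that met both $R$ and $S$ with opposite signs would reduce $|S\cap\beta|$, hence $x_\beta(S)$; so after the minimization every edge of $\beta$ meeting both $R$ and $S$ meets them with the same sign, which is conclusion (iv), and this survives replacing $R$ by parallel copies since a parallel copy of $R$ meets $\beta$ with the same signs as $R$.

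Now for $j\geq 0$ let $S_j$ be the oriented double curve sum of $S$ with $j$ parallel copies of $R$, formed using the coherent resolution just obtained; for an arbitrary collection $R'$ of $j$ similarly oriented parallel copies this is exactly the surface named in (iii). Then $\partial S_j-\eta(\partial R)=C$, because the new boundary components are copies of $\partial R$ and are absorbed into $\eta(\partial R)$; this gives (i). Also $[S_j,\partial S_j]=y+(m_0+j)[R,\partial R]$, giving (ii) with $m=m_0$, and (iv) holds for $S_j$ by the previous paragraph. So what remains, and is the heart of the matter, is (iii): that each $S_j$ is $\beta$-taut, i.e.\ $\beta$-incompressible, $\beta$-minimizing, and coherent along each edge of $\beta$; the last is (iv).

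The main obstacle is the first two conditions, and I would attack them by an ``un-summing'' argument. The key identity is $x_\beta(S_j)=x_\beta(S)+j\,x_\beta(R)$, which I expect to extract from the convexity of the function $m\mapsto\min\{x_\beta(T):[T,\partial T]=y+m[R,\partial R],\ \partial T-\eta(\partial R)=C\}$ --- convexity coming from subadditivity of $x_\beta$ under coherent oriented sums of two representatives --- together with the fact that $S$ realizes the minimum of this function over all $m$. Granting this, if some $S_j$ had a $\beta$-compressing disc, or failed to minimize $x_\beta$ in its homology class rel its own boundary, one takes the offending lower-norm surface $T'$ with $\partial T'=\partial S_j$, forms its oriented double curve sum with $j$ oppositely oriented parallel copies of $R$ to land back in $\mathcal F$, and uses subadditivity together with the identity above to conclude that the resulting surface has $x_\beta<x_\beta(S)$, contradicting the choice of $S$. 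I expect the delicate bookkeeping here --- cancelling the copies of $\partial R$ in the boundary modulo $\eta(\partial R)$, handling the degenerate case $x_\beta(R)=0$, and checking that each cut-and-paste really does not raise $x_\beta$ --- to be precisely where the full strength of the hypotheses ($R$ being $\beta$-taut and not merely $\beta$-incompressible, and $M$ being $\beta$-irreducible) is used.
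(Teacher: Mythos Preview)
The paper does not prove this statement at all: it is quoted verbatim from \cite{MSc3} as background for Theorem~\ref{Thm: Rinsed Decomp Exist 1}, with only the remark that conclusion (iv) follows immediately from conclusion (iii). So there is no argument in the paper to compare your proposal against; you are reconstructing Scharlemann's original proof.

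Your outline follows the correct strategy --- minimize $x_\beta$ over surfaces with prescribed boundary modulo $\eta(\partial R)$ and the right homology, then argue that the minimizer sums well with copies of $R$ --- and this is indeed how the argument in \cite{MSc3} proceeds. The soft parts (existence of $S_0$, conclusions (i) and (ii), and that (iv) follows from either the minimization or from (iii)) are fine. However, your treatment of the core step (iii) has a genuine gap. The ``un-summing'' move you describe --- taking a hypothetical $T'$ with $\partial T'=\partial S_j$ and $x_\beta(T')<x_\beta(S_j)$, then summing with $j$ \emph{oppositely oriented} copies of $R$ to land back in $\mathcal F$ --- does not work as stated: oriented double curve sum with an oppositely oriented copy of $R$ does not cancel a copy of $R$, and the boundary bookkeeping in $\eta(\partial R)$ does not come out right. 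What Scharlemann actually does is compare $x_\beta(S_j)$ directly to the minimum of $x_\beta$ over representatives of $[S_j,\partial S_j]$ rel $\partial S_j$, and shows that any gap would let him cut $T'$ along $j$ parallel copies of $R$ (not sum with reversed copies) to produce either a surface in $\mathcal F$ beating $S$ or a surface beating $R$ in its own class, contradicting the $\beta$-tautness of $R$. The identity $x_\beta(S_j)=x_\beta(S)+j\,x_\beta(R)$ is then a consequence, not an input, and the ``convexity'' you invoke is really the conjunction of subadditivity with the two separate minimality hypotheses on $S$ and on $R$. You should also be explicit that $\beta$-incompressibility of $S_j$ is handled by the same mechanism (a compression lowers $x_\beta$ in the same class), rather than treated as a separate obstacle.
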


Notice that conclusion (iv) actually follows immediately from conclusion (iii). 

\begin{theorem}\label{Thm: Rinsed Decomp Exist 1}
Suppose that $(M,\gamma,\beta)$ is a band taut sutured manifold and that $y \in H_2(M,\boundary M)$ is non-zero. Then there exists a rinsed band-decomposing surface $(S,\boundary S) \subset (M,\boundary M)$ satisfying (BD) such that:
\begin{enumerate}
\item[(i)] $S$ represents $\pm y$ in $H_2(M,\boundary M)$.
\item[(ii)] $[S,\boundary S] = \pm y + k[R(\gamma),\boundary R(\gamma)]$ for some $k \geq 0$.
\item[(iii)] for any collection $R'$ of parallel copies of components of $R(\gamma)$, the double curve sum of $S$ with $R'$ is $(\beta - c_\beta)$-taut.
\end{enumerate}
\end{theorem}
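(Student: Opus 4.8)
The plan is to feed the surface produced by Lemma~\ref{Lem: Constructing Rinsed Surface} into Theorem~2.5 of \cite{MSc3}, applied with $R = R(\gamma)$ and with $\beta - c_\beta$ in the role of the $1$-complex, and then to restore, by double curve sums, the features of a rinsed band-decomposing surface that Theorem~2.5 does not by itself preserve.

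First I would invoke Lemma~\ref{Lem: Constructing Rinsed Surface} to get a rinsed band-decomposing surface $S_0 \subset M$ satisfying (BD) with $[S_0,\boundary S_0] = \pm y$. Since $(M,\gamma,\beta)$ is band-taut, $(M,\gamma,\beta - c_\beta)$ is $(\beta-c_\beta)$-taut, so $M$ is $(\beta-c_\beta)$-irreducible and $R(\gamma)$ is a $(\beta-c_\beta)$-taut surface. Fix a collar $\eta(\boundary R(\gamma))$ of $\boundary R(\gamma)$ in $\boundary M$ disjoint from $\boundary c_\beta$ and from the endpoints of $\beta - c_\beta$; isotope $\boundary S_0$ to meet it transversally and set $C = \boundary S_0 - \eta(\boundary R(\gamma))$. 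Then $[C]$ lies in the kernel of $H_1(\boundary M,\eta(\boundary R(\gamma))) \to H_1(M,\eta(\boundary R(\gamma)))$ (since $C$ is the part of $\boundary S_0$ outside the collar and $S_0$ is a relative cycle) and $\boundary([S_0,\boundary S_0]) = [C]$. Theorem~2.5 of \cite{MSc3} then produces a surface $S_1$ with $\boundary S_1 - \eta(\boundary R(\gamma)) = C$, with $[S_1,\boundary S_1] = \pm y + m[R(\gamma),\boundary R(\gamma)]$ for some integer $m$, such that the double curve sum of $S_1$ with any collection of parallel copies of components of $R(\gamma)$ is $(\beta-c_\beta)$-taut (so, taking no copies, $S_1$ itself is $(\beta-c_\beta)$-taut), and such that each edge of $\beta - c_\beta$ meeting both $R(\gamma)$ and $S_1$ meets them with the same sign.

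Next I would repair $S_1$ using double curve sums with copies of the components of $R(\gamma)$ and, where needed, with copies of $\rho_\pm = R_\pm \cup (A_\pm - \inter{\eta}(\gamma))$, checking that these repairs do not interfere. Adjoining enough coherently oriented copies of the components of $R(\gamma)$ absorbs the winding of $\boundary S_1$ inside $\eta(\boundary R(\gamma))$ and makes the boundary conditioned in $(M,\gamma,\beta-c_\beta)$; this is the passage from Theorem~2.5 to Theorem~2.6 of \cite{MSc3}. Since $\boundary R_\pm$ meets the top and bottom of $D_\beta$ only along $c_2$ and $c_6$, and there with the signs of $\gamma\cap c_2$ and $\gamma\cap c_6$, these copies --- and the $\rho_\pm$-copies below --- preserve conditions (2) and (3) of (BD), just as in Lemma~\ref{Lem: Using S_k}. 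Because $\boundary c_\beta$ lies outside $\eta(\boundary R(\gamma))$ while $\rho_-$ meets $c_\beta$ transversally in one point of fixed sign, a suitable number of copies of $\rho_-$ (or $\rho_+$) then makes the algebraic intersection of the surface with $c_\beta$ zero --- the $R(\gamma)$-copies are disjoint from $c_\beta$ and leave this count alone --- and these $\rho_\pm$ double curve sums preserve $(\beta-c_\beta)$-tautness by the tautness clause of Theorem~2.5, since $\rho_\pm$ is isotopic in $M - (\beta-c_\beta)$ to the component $R_\pm$ of $R(\gamma)$. Further coherently oriented copies of $R(\gamma)$, if $m < 0$, then make the coefficient $k$ of $R(\gamma)$ nonnegative, while the homology class remains $\pm y$ because $[R_\pm,\boundary R_\pm] = 0$ in $H_2(M,\boundary M)$; and discarding any closed separating component that does not cobound a product region with $R(\gamma)$ meeting $\beta$ in vertical arcs affects nothing above. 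The surface is now rinsed and satisfies conditions (2) and (3) of (BD), so by Lemma~\ref{Lem: Finding c}, after an isotopy rel boundary minimizing $(|S\cap D_\beta|,|S\cap c_\beta|)$ it is a band-decomposing surface satisfying (BD); that isotopy fixes $\boundary S$ and so preserves the homology class, conditions (2) and (3) of (BD), the vanishing $c_\beta$-intersection, and the tautness (up to isotopy) of all double curve sums with copies of $R(\gamma)$. Hence (i), (ii) and (iii) hold.

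The hard part will be this third step: one must confirm that the repairs are genuinely independent and --- above all --- that the $\rho_\pm$ double curve sums used to annihilate the $c_\beta$-intersection do not spoil the $(\beta-c_\beta)$-tautness supplied by Theorem~2.5, which is exactly why one wants $\rho_\pm$ to be isotopic to a component of $R(\gamma)$ in the complement of $\beta - c_\beta$; one must also check that the winding of $\boundary S_1$ around $R(\gamma)$ is of a single sign, so that coherently oriented copies of $R(\gamma)$ suffice to absorb it.
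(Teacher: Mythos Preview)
Your outline follows the paper's route through the first two moves --- Lemma~\ref{Lem: Constructing Rinsed Surface} followed by Theorem~2.5 of \cite{MSc3} with $R=R(\gamma)$ and the $1$-complex $\beta-c_\beta$ --- but then takes an unnecessary detour that creates exactly the difficulty you flag at the end.

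The $\rho_\pm$ correction is not needed. The surface $\Sigma$ coming from Lemma~\ref{Lem: Constructing Rinsed Surface} already has algebraic intersection zero with $c_\beta$, and the surface $S_1$ produced by Theorem~2.5 is homologous to $\Sigma$ in $H_2\big(M,\boundary\Sigma\cup\eta(\boundary R(\gamma))\big)$. The endpoints of $c_\beta$ lie in $A(\gamma)$ and were arranged (conditions (a) and (b) in the proof of Lemma~\ref{Lem: Constructing Rinsed Surface}) to miss $\boundary\Sigma\cup\eta(\boundary R(\gamma))$; hence the algebraic intersection with $c_\beta$ is a homological invariant of that relative class, and $S_1\cdot c_\beta = \Sigma\cdot c_\beta = 0$ automatically. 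This is the observation the paper makes, and it dissolves precisely the ``hard part'' you isolate: there is no need to sum with $\rho_\pm$, so no need to argue that such sums preserve $(\beta-c_\beta)$-tautness.

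For conditions (2) and (3) of (BD) the paper simply notes that $\boundary S_1$ agrees with $\boundary\Sigma$ outside $\eta(\boundary R(\gamma))$ (that is conclusion (i) of Theorem~2.5) and that inside this collar $\boundary S_1$ is oriented coherently with $\boundary R(\gamma)$; since $\Sigma$ already satisfied (2) and (3), so does $S_1$. After discarding closed separating components and applying Lemma~\ref{Lem: Finding c}, you are done. The remaining repairs you propose --- absorbing winding with copies of $R(\gamma)$, forcing $k\geq 0$ --- are harmless (they are covered by conclusion (iii) of Theorem~2.5 and by Lemma~\ref{Lem: Using S_k}), but the $\rho_\pm$ step should simply be deleted.
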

\begin{proof}
Let $\Sigma$ be the rinsed band-decomposing surface obtained from Lemma \ref{Lem: Constructing Rinsed Surface}. Let $S$ be the surface given by \cite[Theorem 2.5]{MSc3}. (To apply it we let $\beta - c_\beta$ be the 1--complex in the hypothesis of that theorem and  $R = R(\gamma)$ and $C = \boundary \Sigma - \inter{\eta}(\boundary R)$.) 

Discard any closed separating component of $S$ and isotope $S$ relative to its boundary to minimize $|S \cap D_\beta|$. Our conclusions (i) - (iiii) coincide with conclusions (i) - (iii) of Scharlemann's theorem. Since $\Sigma$ and $S$ are homologous in $H_2(M,\boundary \Sigma \cup \eta(\boundary R(\gamma))$, $S$ has algebraic intersection number 0 with $c_\beta$. The criteria for $S$ to be a rinsed, band decomposing surface follow easily from the construction, Lemma \ref{Lem: Finding c}, and that $\boundary S = \boundary \Sigma$ outside a small neighborhood of $\boundary R(\gamma)$ and inside $\boundary S$ is oriented the same direction as $\boundary R(\gamma)$.
\end{proof}

\begin{corollary}\label{Cor: Rinsed Decomp Exist}
Suppose that $(M,\gamma,\beta)$ is a band-taut sutured manifold with $y \in H_2(M,\boundary M)$ non-zero. Then there exists a rinsed band-decomposing surface $S \subset M$ representing $\pm y$ such that for all non-negative $k \in \Z$, the surface $S_k$ gives a band-taut decomposition $(M,\gamma,\beta) \stackrel{S_k}{\to} (M',\gamma',\beta')$. Furthermore, if (BT1) does not hold, then each component of $(D_\beta \cap M') - D_{\beta'}$ is a product disc or cancelling disc.
\end{corollary}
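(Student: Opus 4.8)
The plan is to assemble this corollary from Theorem \ref{Thm: Rinsed Decomp Exist 1}, the combinatorial version of Gabai's decomposition theorem \cite[Theorem 2.6]{MSc3}, and Lemmas \ref{Lem: Using S_k}, \ref{Lem: Finding c}, and \ref{Lem: Defining band taut decomp}. Essentially all of the geometric content already lives in those results; the point here is to check that their hypotheses fit together and that a \emph{single} surface $S$ works for all $k$ simultaneously.

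First I would apply Theorem \ref{Thm: Rinsed Decomp Exist 1} to the non-zero class $y$ to produce a rinsed band-decomposing surface $S \subset M$ satisfying (BD) and representing $\pm y$, with the property that for every collection $R'$ of parallel copies of components of $R(\gamma)$ the double curve sum of $S$ with $R'$ is $(\beta - c_\beta)$-taut as a surface. Since $(M,\gamma,\beta)$ is band-taut, $(M,\gamma,\beta - c_\beta)$ is $(\beta - c_\beta)$-taut, and for each $k \geq 0$ the surface $S_k$ is a $(\beta - c_\beta)$-taut, conditioned (indeed rinsed) surface in it. Because $S$ was manufactured from the output of \cite[Theorem 2.5]{MSc3}, its conclusion (iii) is exactly the input needed for the combinatorial Gabai theorem \cite[Theorem 2.6]{MSc3}, which then shows that for every $k \geq 0$ the decomposition of $(M,\gamma,\beta - c_\beta)$ along $S_k$ is $(\beta - c_\beta)$-taut. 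This is the one step requiring genuine care: we need taut \emph{decompositions} (not merely taut surfaces) and we need them uniformly in $k$, which is precisely why one works with the surface produced by Theorems 2.5 and 2.6 of \cite{MSc3} rather than an arbitrary conditioned representative of $\pm y$.

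It remains to upgrade each of these decompositions to a band-taut decomposition, and the required bookkeeping is already available. By Lemma \ref{Lem: Using S_k}, $S_k$ is rinsed, and it inherits conditions (2) and (3) of (BD) from $S$; Lemma \ref{Lem: Finding c} then shows that after an isotopy of $S_k$ rel $\boundary S_k$ minimizing the pair $(|S_k \cap D_\beta|, |S_k \cap c_\beta|)$ lexicographically, $S_k$ is a band-decomposing surface satisfying (BD) (in the degenerate case $e_\beta = \nil$ with $c_\beta$ already isotopic into $\boundary M$ in $D_\beta$, the surface $S_k$ satisfies (BD) directly and the resulting decomposition will be of type (BT1)). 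Such an isotopy in particular minimizes $|S_k \cap D_\beta|$ and, being rel boundary, leaves the decomposition $(\beta - c_\beta)$-taut, so Lemma \ref{Lem: Defining band taut decomp} applies to $S_k$: it produces a sub-band $D_{\beta'} \subset D_\beta$ with a core $c_{\beta'}$ making $(M,\gamma,\beta) \stackrel{S_k}{\to} (M',\gamma',\beta')$ a band-taut decomposition, together with the ``furthermore'' statement that every component of $(D_\beta \cap M') - D_{\beta'}$ is a product disc or cancelling disc whenever (BT1) does not hold. Since the surface $S$ itself is fixed independently of $k$ --- only the final isotopy into $D_\beta$ depends on $k$ --- this is the assertion of the corollary, and I expect no obstacle beyond the uniformity point flagged above.
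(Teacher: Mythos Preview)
Your proposal is correct and follows essentially the same route as the paper. The only cosmetic difference is that where you cite \cite[Theorem 2.6]{MSc3} to pass from ``$S_k$ is $(\beta - c_\beta)$-taut for all $k$'' to ``the decomposition along $S_k$ is $(\beta - c_\beta)$-taut,'' the paper instead unpacks this step directly by noting that $R(\gamma')$ in the decomposed manifold is precisely $S_{k+1}$, which is $(\beta - c_\beta)$-taut by conclusion (iii) of Theorem \ref{Thm: Rinsed Decomp Exist 1}; this is the same argument with the citation made explicit.
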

\begin{proof}
Let $S$ be the surface provided by Theorem \ref{Thm: Rinsed Decomp Exist 1}. Let $(M',\gamma',\beta')$ be the result of decomposing $(M,\gamma,\beta)$ using $S_k$. By Lemmas \ref{Lem: Using S_k} and \ref{Lem: Finding c}, $S_k$ is a rinsed band-decomposing surface satisfying (BD).

Since the double curve sum of $S$ with $(k+1)$ copies of $R(\gamma)$ is $\beta$-taut, $R(\gamma')$ is $(\beta'-c_{\beta})$-taut,  the decomposition
\[
(M,\gamma,\beta) \stackrel{S_k}{\to} (M',\gamma',\beta')
\]
is $(\beta - c_\beta)$-taut. By Lemma \ref{Lem: Defining band taut decomp}, the decomposition is band-taut and if (BT1) does not hold then each component of $(D_\beta \cap M') - D_{\beta'}$ is a product disc or cancelling disc. 
\end{proof}

\section{Parameterizing Surfaces}\label{Param}

Let $(M,\gamma,\beta)$ be a sutured manifold with $\beta$ having endpoints disjoint from $A(\gamma) \cup T(\gamma)$. (That is, $(M,\gamma,\beta)$ satisfies (T0).) A \defn{parameterizing surface} is an orientable surface $Q$ properly embedded in $M - \inter{\eta}(\beta)$ satisfying:
\begin{itemize}
\item[(P1)] $\boundary Q \cap A(\gamma)$ consists of spanning arcs each intersecting $\gamma$ once
\item[(P2)] no component of $Q$ is a sphere or disc disjoint from $\beta \cup  \gamma$. 
\end{itemize}

For a parameterizing surface $Q$, let $\mu(Q)$ denote the number of times that $\boundary Q$ traverses an edge of $\beta$. Define the \defn{index} of $Q$ to be:
\[
I(Q) = \mu(Q) + |\boundary Q \cap \gamma| - 2\chi(Q).
\]

\begin{remark}
In the definition of index given in \cite[Definition 7.4]{MSc3}, there is also a term denoted $\mc{K}$ that is the sum of values of a function defined on the interior vertices of $\beta$. As Scharlemann remarks, that the function can be chosen arbitrarily, and in this paper we will always choose it to be identically zero. Also, Scharlemann allows parameterizing surfaces to contain spherical components. No harm is done to \cite{MSc3} by forbidding them and some simplicity is gained since spherical components have negative index. Lackenby in \cite{L1} has a similar convention.
\end{remark}

We define a parameterizing surface in a banded sutured manifold $(M,\gamma,\beta)$ to be a parameterizing surface $Q$ in $(M,\gamma,\beta - c_\beta)$.

If $Q$ is a parameterizing surface and if $S \subset M$ is a decomposing surface, we say that $S$ and $Q$ are \defn{normalized} if they have been isotoped in a neighborhood of $A(\gamma) \cup T(\gamma)$ to intersect minimally and if no component of $S \cap Q$ is an inessential circle on $Q$. It is clear that if $S$ is $\beta$-taut then $S$ and $Q$ can be normalized without increasing the index of $Q$. Furthermore, it is not difficult to see that if $Q_1, \hdots, Q_n$ are parameterizing surfaces, not necessarily disjoint, then a $\beta$-taut decomposing surface $S$ and $Q_1, \hdots, Q_n$ can be simultaneously normalized by an isotopy of $S$ and each $Q_i$. 

Suppose that $(M,\gamma,\beta) \stackrel{S}{\to} (M',\gamma',\beta')$ is a $\beta$-taut decomposition and that $Q \subset M$ is a parameterizing surface. If $S$ is a conditioned surface normalized with respect to $Q$, we say that the decomposition \defn{respects} $Q$ if $Q \cap M'$ is a parameterizing surface. 

The next lemma is a simple extension of \cite[Section 7]{MSc3}. Recall that $S_k$ denotes the oriented double curve sum of $S$ with $k$ copies of $R(\gamma)$.

\begin{lemma}\label{Lem: Conditioned surfaces respect}
Suppose that $(M,\gamma,\beta) \stackrel{S_k}{\to}(M',\gamma',\beta')$ is $\beta$-taut decomposition with $S$ a conditioned surface and that $Q_1, \hdots, Q_n$ are parameterizing surfaces in $(M,\gamma,\beta)$ such that $S_k$ and each $Q_i$ are normalized. Then for $k$ large enough, the decomposition of $(M,\gamma,\beta)$ using $S_k$ respects each $Q_i$ and the index of each $Q_i$ does not increase under the decomposition. 
\end{lemma}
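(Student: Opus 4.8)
The plan is to follow, essentially verbatim, Scharlemann's analysis of a single parameterizing surface under a conditioned decomposition in \cite[Section 7]{MSc3}; the only new ingredient is that the argument must be run for $Q_1,\dots,Q_n$ at once and with a single value of $k$. To begin, use the observation recorded just before the lemma to simultaneously normalize $S_k$ with each of $Q_1,\dots,Q_n$ by an isotopy of $S_k$ and of the $Q_i$. Since the decomposition is $\beta$-taut the surface $S_k$ is $\beta$-taut, so this can be arranged without increasing any $I(Q_i)$; afterwards no circle of $S_k\cap Q_i$ is inessential on $Q_i$, and $S_k$ and $Q_i$ meet minimally near $A(\gamma)\cup T(\gamma)$. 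Note that $Q_i\cap M'$ is literally $Q_i$ cut along $S_k\cap Q_i$, so each of its components is a subsurface of $Q_i$; this makes several of the checks below transparent.

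Next I would verify that, for $k$ sufficiently large, the decomposition respects each $Q_i$, i.e. that $Q_i\cap M'$ satisfies (P1) and (P2). The new sutures $\gamma'$ are the oriented double-curve sum of $\gamma$ with $\boundary S_k$, and $\boundary S_k$ meets each annulus component $A$ of $A(\gamma)$ in roughly $k$ coherently oriented copies of the core of $A$ together with the fixed, coherently oriented arcs of $\boundary S$. Choosing $k$ larger than $|\boundary Q_i\cap A|$ for every $i$ and every such $A$ forces a copy of $\gamma$ between any two consecutive points of $\boundary Q_i$ on $A$, which is exactly what is needed to guarantee that every arc of $\boundary(Q_i\cap M')$ lying on $A(\gamma')$ — whether it descends from a spanning arc of $\boundary Q_i$ or from an arc of $S_k\cap Q_i$ pushed onto $A(\gamma')$ — is a spanning arc meeting $\gamma'$ exactly once; this gives (P1). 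For (P2): a closed component of $Q_i\cap M'$ is a closed component of $Q_i$, hence by (P2) for $Q_i$ not a sphere; a component of $Q_i\cap M'$ bounded only by circles of $S_k\cap Q_i$ is a subsurface of $Q_i$ with essential boundary circles, hence neither a sphere nor a disc (a disc would force one of those circles to bound a disc in $Q_i$); and a component with an arc of $S_k\cap Q_i$ in its boundary also has an arc of $\boundary Q_i$ in its boundary, which for $k$ large meets $\gamma'$ once, so it is not disjoint from $\gamma'$. The remaining case, a disc component disjoint from $\beta'\cup\gamma'$ with mixed boundary, is excluded exactly as in \cite[Section 7]{MSc3}, using that the $\beta$-taut surfaces $S$ and $R(\gamma)$ are incompressible in the complement of $\beta$, contradicting the normalization.

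It then remains to see that $I(Q_i\cap M')\le I(Q_i)$, which is again the computation of \cite[Section 7]{MSc3}. Write $C=S_k\cap Q_i$. Cutting $Q_i$ along $C$ leaves $\chi$ unchanged along circle components of $C$ and raises it by one along each arc component, so the term $-2\chi$ drops by $2$ for each arc of $C$. Each such arc contributes two parallel copies on the two sides of $S$ in $\boundary M'$, sitting in a collar of $\boundary R(\gamma')$; coherence of $\boundary S_k$ on the annuli together with condition (C3) for the conditioned surface $S$ bound by at most $2$ the number of new intersections these copies create with $\gamma'$, and the $\mu$-term is controlled in the same way, so the net change in $\mu(Q_i)+|\boundary Q_i\cap\gamma|$ is at most $2$ per arc of $C$. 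Summing over the components of $C$ gives $I(Q_i\cap M')\le I(Q_i)$. Taking $k$ to be the maximum of the finitely many lower bounds produced in the previous step completes the argument.

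The step I expect to be the main obstacle is the verification that the decomposition respects each $Q_i$, together with the intersection count with $\gamma'$ in the index comparison: these are precisely the points where Scharlemann's Section 7 does its real work, and here they carry the extra organizational burden that one and the same $k$ must serve for all of $Q_1,\dots,Q_n$ simultaneously.
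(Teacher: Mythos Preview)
Your proposal is correct and takes essentially the same approach as the paper: both arguments reduce to applying \cite[Lemma 7.5]{MSc3} to each $Q_i$ individually to obtain a threshold $m_i$, and then choosing $k\ge\max(m_1,\dots,m_n)$, using the observation that $S_k$ can be simultaneously normalized with all the $Q_i$. The only difference is that the paper simply cites Scharlemann's lemma as a black box, whereas you sketch its internal workings; your extra detail is consistent with what that lemma actually proves.
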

\begin{proof}
Scharlemann \cite[Lemma 7.5]{MSc3} shows that for each $i$, there exists $m_i \in \N$ such that if $k_i \geq m_i$, and if $S_{k_i}$ is normalized with respect to a parameterizing surface $Q_i$, then $Q_i \cap M'$ is a parameterizing surface with index no larger than $Q_i$. Since for each $k$, $S_k$ can be normalized simultaneously with $Q_1, \hdots, Q_n$, we simply need to choose $k \geq \max(m_1,\hdots, m_n)$.
\end{proof}

Suppose that $S$ is a product disc, product annulus, or disc with boundary in $R(\gamma)$.  We say that a parameterizing surface $Q^c \subset M$ is obtained by \defn{modifying} $Q$ relative to $S$ if $Q^c$ is obtained by completely boundary compressing $Q$ using outermost discs of $S - Q$ bounded by outermost arcs having both endpoints in $R_\pm$, normalizing $Q$ and $S$, and then removing all disc components with boundary completely contained in $R(\gamma)$. Scharlemann proves \cite[Lemma 7.6]{MSc3} that modifying a parameterizing surface does not increase index. Lackenby \cite{L1} points out that if $Q^c$ is compressible so is $Q$. 

\begin{lemma}\label{Lem: Product Respecting}
Suppose that $(M,\gamma,\beta) \stackrel{S}{\to} (M',\gamma',\beta')$ is a $\beta$-taut decomposition with $S$ a product disc, product annulus, or disc with boundary in $R(\gamma)$. Let $Q_1, \hdots, Q_n$ be parameterizing surfaces. Then after replacing each $Q_i$ with $Q^c_i$, each of the surfaces $Q^c_i \cap M'$ is a parameterizing surface in $M'$ with index no larger than the index of $Q_i$.
\end{lemma}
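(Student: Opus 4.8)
The plan is to reduce Lemma \ref{Lem: Product Respecting} to Scharlemann's results \cite[Lemma 7.6]{MSc3} together with the earlier observation that decomposing a $\beta$-taut sutured manifold along a product disc, product annulus, or disc with boundary in $R(\gamma)$ is again $\beta$-taut (this last fact for the disc-with-boundary-in-$R(\gamma)$ case following from \cite[Lemma 4.2]{MSc3} applied to the pushoff, exactly as in Subsection \ref{Section: conversion}). The key point is that ``modifying $Q$ relative to $S$'' is precisely set up so that $Q_i^c$ is disjoint from $S$ up to the normalization isotopy, so that $Q_i^c$ survives into $M'$ as a properly embedded surface.

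First I would fix a single copy of $S$ and simultaneously normalize $S$ with respect to all of $Q_1, \ldots, Q_n$ at once; this is possible by the remark preceding Lemma \ref{Lem: Conditioned surfaces respect}, since $S$ is $\beta$-taut. Next, for each $i$, I form $Q_i^c$ by completely boundary-compressing $Q_i$ along the outermost discs of $S - Q_i$ cut off by outermost arcs of $S \cap Q_i$ with both endpoints in $R_\pm$, then renormalizing, then discarding disc components with boundary entirely in $R(\gamma)$. By \cite[Lemma 7.6]{MSc3}, $I(Q_i^c) \leq I(Q_i)$: boundary-compressing along a disc meeting $A(\gamma)$ in at most the product-disc pattern does not raise $\mu(Q_i) + |\boundary Q_i \cap \gamma| - 2\chi$, and discarding closed-in-$R(\gamma)$ discs only decreases index (as in the convention adopted in our Remark that spherical and $R(\gamma)$-trivial disc components have nonnegative or zero index contributions that can be removed). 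Finally, since after complete boundary compression $Q_i^c$ can be isotoped off $S$, we have $Q_i^c \subset M - \inter{\eta}(S)$ up to isotopy, so $Q_i^c \cap M' = Q_i^c$ is a properly embedded surface in $M'$; it satisfies (P1) because its boundary arcs in $A(\gamma')$ are, away from $\boundary S$, exactly the arcs of $\boundary Q_i^c$ in $A(\gamma)$, which are spanning and cross $\gamma$ once, and the double-curve-sum nature of $\gamma'$ does not introduce new intersections with these arcs; it satisfies (P2) because any sphere or $\gamma$-and-$\beta$-disjoint disc component of $Q_i^c \cap M'$ would already be such a component of $Q_i^c$, which was removed (spheres) or would force $Q_i$ itself to have had such a component (by Lackenby's observation that compressibility of $Q_i^c$ pulls back to $Q_i$, suitably adapted to the product-region here). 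Hence $Q_i^c \cap M'$ is a parameterizing surface in $(M', \gamma', \beta')$ with index at most that of $Q_i$.

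The main obstacle, and the only place requiring genuine care rather than citation, is the verification that $Q_i^c \cap M'$ still satisfies (P1) and (P2) after the decomposition — in particular that no spanning arc of $\boundary Q_i^c$ in $A(\gamma)$ gets broken or acquires extra intersections with $\gamma'$ when $\gamma$ is replaced by the oriented double-curve sum with $\boundary S$, and that no new trivial disc or sphere component is created by the cut along $S$. Both follow from normalization: because $S$ and $Q_i^c$ have been isotoped to meet minimally near $A(\gamma) \cup T(\gamma)$ and no component of $S \cap Q_i^c$ is a trivial circle on $Q_i^c$, the cut along $S$ only divides $Q_i^c$ along arcs and essential circles, and near $A(\gamma)$ the spanning arcs of $\boundary Q_i^c$ run parallel to $\boundary S$ rather than crossing it, so they survive into $A(\gamma')$ unchanged. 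I would spell this out by a local picture in a collar of $A(\gamma) \cup T(\gamma)$, exactly parallel to the discussion of normalized surfaces in Section \ref{Param} and to \cite[Section 7]{MSc3}, and note that the index estimate is literally \cite[Lemma 7.6]{MSc3} once $Q_i^c$ is known to be disjoint from $S$.
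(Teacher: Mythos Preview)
Your approach is essentially the paper's: the proof there is a one-line citation of \cite[Lemma~7.6]{MSc3}, exactly as you propose, noting the argument parallels that of Lemma~\ref{Lem: Conditioned surfaces respect}.

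One overstatement worth correcting: you claim that after complete boundary compression $Q_i^c$ can be isotoped off $S$, so that $Q_i^c \cap M' = Q_i^c$. This is false in general. The modification only eliminates outermost arcs of $S \cap Q_i$ with both endpoints in the same $R_\pm$; arcs that span from $R_-$ to $R_+$ (for a product disc, say) survive, so $Q_i^c$ typically still meets $S$. Fortunately this does not matter: \cite[Lemma~7.6]{MSc3} is stated for the decomposed surface $Q_i^c \cap M'$ directly, and handles both the parameterizing-surface conditions and the index bound without any need for $Q_i^c$ to be disjoint from $S$. Your subsequent verification of (P1) and (P2) ``because $Q_i^c \cap M' = Q_i^c$'' therefore rests on a false premise, though the conclusions themselves are correct and follow from Scharlemann's argument. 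Drop the disjointness claim and cite \cite[Lemma~7.6]{MSc3} for the whole package, and your proof matches the paper's.
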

\begin{proof}
This is nearly identical to the proof of Lemma \ref{Lem: Conditioned surfaces respect}, but uses \cite[Lemma 7.6]{MSc3}.
\end{proof}

We say that the decomposition described in Lemma \ref{Lem: Product Respecting} \defn{respects} $Q$.

We now assemble some of the facts we have collected.

\begin{theorem}\label{Thm: Rinsed Decomp Respecting}
Suppose that $(M,\gamma,\beta)$ is a band-taut sutured manifold and that $y \in H_2(M,\boundary M)$ is non-zero. Suppose that $Q_1,\hdots, Q_n$ are parameterizing surfaces in $M$. Then there exists a band-taut decomposition 
\[
(M,\gamma,\beta) \stackrel{S}{\to} (M',\gamma',\beta')
\]
respecting each $Q_i$ with $S$ a rinsed band-decomposing surface representing $\pm y$. 
\end{theorem}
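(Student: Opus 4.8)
The plan is to combine the existence result for rinsed band-decomposing surfaces (Corollary \ref{Cor: Rinsed Decomp Exist}) with the index-control lemmas for parameterizing surfaces (Lemma \ref{Lem: Conditioned surfaces respect} and Lemma \ref{Lem: Product Respecting}), exploiting the freedom in Corollary \ref{Cor: Rinsed Decomp Exist} to replace the surface $S$ by any double curve sum $S_k$. First I would apply Corollary \ref{Cor: Rinsed Decomp Exist} to $(M,\gamma,\beta)$ and the class $y$ to obtain a rinsed band-decomposing surface $S$ representing $\pm y$ such that, for \emph{every} $k \geq 0$, the surface $S_k$ gives a band-taut decomposition $(M,\gamma,\beta) \stackrel{S_k}{\to} (M',\gamma',\beta')$, with each component of $(D_\beta \cap M') - D_{\beta'}$ a product disc or cancelling disc unless (BT1) holds. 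The point of having the conclusion for all $k$ simultaneously is precisely that we have not yet fixed $k$, and the parameterizing surfaces will dictate how large $k$ must be.

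Next I would bring in the parameterizing surfaces $Q_1, \hdots, Q_n$. Since $S$ (hence each $S_k$) is a conditioned surface in $(M,\gamma,\beta - c_\beta)$, and since a $\beta$-taut (here $(\beta-c_\beta)$-taut) decomposing surface can be simultaneously normalized with any finite collection of parameterizing surfaces without increasing their indices, I would normalize $S_k$ with each $Q_i$. Then Lemma \ref{Lem: Conditioned surfaces respect} applies: there is some threshold $k_0$ (the maximum of the $m_i$ from Scharlemann's Lemma 7.5) such that for all $k \geq k_0$, the decomposition by $S_k$ respects each $Q_i$ and does not increase the index of any $Q_i$. So I fix any $k \geq k_0$ and set $S := S_k$; by Corollary \ref{Cor: Rinsed Decomp Exist} this is still a band-taut decomposition of the desired homological type, and now it also respects each $Q_i$ in the sense of Section \ref{Param}.

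The one subtlety — and the step I expect to require the most care — is bookkeeping around the residual product and cancelling discs: Corollary \ref{Cor: Rinsed Decomp Exist} tells us that $(D_\beta \cap M') - D_{\beta'}$ breaks into product discs and cancelling discs, and to genuinely land in a \emph{band-taut} sutured manifold with a well-defined band $D_{\beta'}$ we may need to further decompose along these product discs and cancel the relevant arcs as described in Subsection \ref{Eliminate Cancelling discs}. Each such move must be checked to respect the $Q_i$: decomposition along a product disc respects parameterizing surfaces after replacing each $Q_i$ by $Q_i^c$ (Lemma \ref{Lem: Product Respecting}), which again does not increase index, and cancelling an arc does not affect the parameterizing surfaces at all since they are already disjoint from $\beta$. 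Composing finitely many such index-nonincreasing, band-tautness-preserving, $Q_i$-respecting moves yields the claimed decomposition, with the final first decomposing surface still representing $\pm y$ in $H_2(M, \boundary M)$ because product-disc decompositions and arc cancellations do not change the relevant homology class. Assembling these observations in order gives the theorem.
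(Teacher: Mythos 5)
Your proposal is correct and follows essentially the same route as the paper: apply Corollary \ref{Cor: Rinsed Decomp Exist} to get a rinsed band-decomposing surface representing $\pm y$ whose double curve sums $S_k$ all give band-taut decompositions, then invoke Lemma \ref{Lem: Conditioned surfaces respect} to choose $k$ large enough that the decomposition respects each $Q_i$. The only difference is your final paragraph: no further decomposition along the residual product or cancelling discs is needed here, since Lemma \ref{Lem: Defining band taut decomp} already guarantees the decomposition by $S_k$ is band-taut with $D_{\beta'}$ the component of $D_\beta \cap M'$ containing $c_{\beta'}$ (those residual discs are dealt with only later, in the construction of the hierarchy).
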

\begin{proof}
By Corollary \ref{Cor: Rinsed Decomp Exist}, there exists a band-taut decomposition
\[
(M,\gamma,\beta) \stackrel{S}{\to} (M',\gamma',\beta')
\]
with $S$ a rinsed band-decomposing surface representing $\pm y$. By Lemma \ref{Lem: Conditioned surfaces respect}, if we replace $S$ with $S_k$ for large enough $k$, we may assume that the decomposition respects each $Q_i$. 
\end{proof}

Similarly we have:
\begin{theorem}\label{Thm: Product Decomp}
Suppose that $(M,\gamma,\beta)$ is a band taut sutured manifold and that there exists an allowable product disc or allowable product annulus in $M'$. Let $Q_1, \hdots, Q_n$ be parameterizing surfaces in $M$. Then there exists an allowable product disc or allowable product annulus $P$, such that, after modifying each $Q_i$, the decomposition given by $P$ is band-taut and respects each $Q_i$. 
\end{theorem}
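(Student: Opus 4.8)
The plan is to follow the template of Theorem \ref{Thm: Rinsed Decomp Respecting}: first produce a band-taut decomposition using the given product surface, and then upgrade it, at the cost of replacing each $Q_i$ by its modification $Q_i^c$, so that it respects every parameterizing surface. Throughout, ``band-taut'' means $(\beta - c_\beta)$-taut, so when we quote Lemmas \ref{Lem: Product Respecting} and \ref{Lem: Conditioned surfaces respect} we apply them with the $1$-complex ``$\beta$'' taken to be $\beta - c_\beta$.

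\textbf{Step 1: push the product surface off the band.} If $M$ contains an allowable product disc, Lemma \ref{Lem: Cancelling Discs Disjoint} gives one disjoint from $D_\beta$, unless we are in the exceptional situation where $e_\beta = \nil$ and $c_\beta$ is isotopic in $D_\beta$ into $\boundary M$, which we dispose of at once by setting $D_{\beta'} = c_{\beta'} = \nil$ as in Lemma \ref{Lem: product disc band taut}. If instead $M$ contains an allowable product annulus $P_A$, I would run the same innermost-circle/outermost-arc argument used in the proof of Lemma \ref{Lem: Cancelling Discs Disjoint} on $P_A \cap D_\beta$: circles are removed by $(\beta - c_\beta)$-irreducibility, and each outermost arc of $P_A \cap D_\beta$ cutting a disc off $D_\beta$ lets us boundary-compress $P_A$ into product discs and discs with boundary in $R(\gamma)$; since $R(\gamma)$ is $(\beta - c_\beta)$-incompressible, each such move either strictly reduces $|P_A \cap D_\beta|$ or lands in the same exceptional isotopy alternative. (Alternatively one can invoke the self-amalgamating-disc correspondence of Section \ref{Section: conversion} together with Lemma \ref{Lem: Cancelling Discs Disjoint}(3).) Note a product disc or annulus is automatically disjoint from $\beta$, hence from $c_\beta$, so the only thing needing to be pushed off is $D_\beta$.

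\textbf{Step 2: the decomposition is band-taut.} With $P$ (an allowable product disc or product annulus, or --- if a boundary circle of $P_A$ is inessential in $R(\gamma)$ --- the disc obtained by capping that circle off with the disc it bounds in $R(\gamma)$ and pushing off) disjoint from $D_\beta$, the band is untouched: $D_{\beta'} := D_\beta$, $c_{\beta'} := c_\beta$, $e_{\beta'} := e_\beta$ still satisfy (B1), (B2), so $(M',\gamma',\beta')$ is banded, and (BT2) holds (or (BT1) in the exceptional case). By \cite[Lemmas 4.1 and 4.2]{MSc3}, decomposing a taut sutured manifold along a product disc, an allowable (non-trivial) product annulus, or a disc with boundary in $R(\gamma)$ preserves tautness, so $(M',\gamma',\beta'-c_\beta)$ is $(\beta'-c_\beta)$-taut and both sutured manifolds are band-taut; when $P$ came from a cancelling disc we first split off the associated $3$-ball and then cancel the arc, exactly as after Lemma \ref{Lem: product disc band taut}. (If $M$ were a $3$-ball with one suture and one arc of $\beta - c_\beta$ it would contain no allowable product surface, so that degenerate case does not arise.)

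\textbf{Step 3: respect the parameterizing surfaces.} The surface $P$ is precisely of the type covered by Lemma \ref{Lem: Product Respecting} --- a product disc, product annulus, or disc with boundary in $R(\gamma)$ --- so after replacing each $Q_i$ by the modified surface $Q_i^c$, each $Q_i^c \cap M'$ is a parameterizing surface in $M'$ of no larger index; that is, the decomposition respects every $Q_i$. As in Lemma \ref{Lem: Conditioned surfaces respect}, the modifications for the finitely many surfaces $Q_1,\dots,Q_n$ can be carried out simultaneously, so one choice of $P$ works for all of them, completing the proof. The step I expect to be the main obstacle is Step 1: Lemma \ref{Lem: Cancelling Discs Disjoint} is phrased only for discs, so pushing a \emph{product annulus} off the band requires genuinely new (though routine) cut-and-paste bookkeeping, and one must take care to track the exceptional ``$c_\beta$ isotopic into $\boundary M$'' alternative correctly through that argument.
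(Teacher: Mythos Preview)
Your approach is the same as the paper's but far more detailed. The paper's entire proof is one sentence: it invokes Lemma~\ref{Lem: product disc band taut} (to produce a band-taut decomposition along a product disc) and Lemma~\ref{Lem: Product Respecting} (to make it respect the $Q_i$). Your Steps~1--3 for the product-disc case are exactly an unpacking of that sentence.

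You are right to flag the product-annulus case as the obstacle: Lemma~\ref{Lem: product disc band taut} says nothing about annuli, and the paper's one-line proof does not fill that gap either. In practice the paper never needs the general case. In the hierarchy construction (Theorem~\ref{Thm: Hierarchies Exist}, step~(5)) the only product annuli that occur are those built from an allowable self-amalgamating disc that has \emph{already} been pushed off $D_\beta$ via Lemma~\ref{Lem: Cancelling Discs Disjoint}(3); the resulting annulus (and, if needed, the capped-off disc) is then automatically disjoint from $D_\beta$, and band-tautness follows from Section~\ref{Section: conversion}. So the annulus clause in the statement should be read in that restricted sense. Your proposed innermost/outermost argument to push a \emph{general} product annulus off $D_\beta$ is plausible but, as you note, would need genuine bookkeeping (the outermost subdisc of $D_\beta$ may contain $c_\beta$ or an endpoint of $e_\beta$); the paper simply does not attempt it, and you need not either.
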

\begin{proof}
This follows immediately from Lemma \ref{Lem: product disc band taut} and Lemma \ref{Lem: Product Respecting}.
\end{proof}

\section{Sutured Manifold Decompositions and Branched Surfaces}\label{Branched Surface}

In \cite[Construction 4.16]{G3}, Gabai explains how to build a branched surface $B(\mc{H})$ from a sequence of sutured manifold decompositions
\[
\mc{H}: (M,\gamma,\beta) \stackrel{S_1}{\to} (M_1,\gamma_1,\beta_1)\stackrel{S_2}{\to} \hdots \stackrel{S_n}{\to} (M_n,\gamma_n,\beta_n)
\]
Essentially, the branched surface is the union $\bigcup\limits_{i=1}^n S_i$ with the intersections smoothed. We will call $B(\mc{H})$ the branched surface \defn{associated} to the sequence $\mc{H}$.

\begin{lemma}\label{Lem: Branched Surface}
If $\mc{H}$ is a sequence of band-taut sutured manifold decompositions, there is an isotopy of $c_\beta$ (relative to $\boundary c_\beta$) to an arc $a$ such that the closure of $a \cap \inter{M_n}$ is $c_{\beta_n}$ and so that $a - c_{\beta_n}$ is embedded in $\boundary M \cup B(\mc{H})$. Furthermore, there is a proper isotopy of $c_\beta$ in $M$ to $(a \cap B(\mc{H})) \cup c_{\beta_n}$.
\end{lemma}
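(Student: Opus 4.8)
The plan is to induct on the length $n$ of the sequence $\mc{H}$, using Lemma~\ref{Lem: Isotoping Core} and Lemma~\ref{Lem: Branched Surface} itself as the inductive hypothesis applied to the tail of the sequence. Recall from Lemma~\ref{Lem: Isotoping Core} that a single band-taut decomposition $(M,\gamma,\beta)\stackrel{S_1}{\to}(M_1,\gamma_1,\beta_1)$ supplies an isotopy of $c_\beta$ (rel $\boundary c_\beta$) to an arc $a_1$ with $\cls(a_1\cap\inter{M_1})=c_{\beta_1}$, and moreover $c_{\beta_1}$ joins the two components of $c_\beta\cap\boundary D_{\beta_1}$, so $a_1-c_{\beta_1}$ consists of two subarcs lying in $\boundary D_{\beta_1}\subset\boundary M\cup S_1$ (by the explicit description in Lemma~\ref{Lem: Defining band taut decomp} and Figure~\ref{Fig: Isotoping Core}: the endpoints of $c_{\beta_1}$ sit on $A(\gamma_1)\cap\boundary D_{\beta_1}$ and the two complementary arcs of $a_1$ run along $\boundary D_\beta\cap(\boundary M\cup S_1)$). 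In the (BT1) case $c_{\beta_1}=\nil$ and the whole arc $a_1$ has been pushed into $\boundary M$, which is consistent with the statement.

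The inductive step: apply the lemma to the subsequence $\mc{H}':(M_1,\gamma_1,\beta_1)\stackrel{S_2}{\to}\cdots\stackrel{S_n}{\to}(M_n,\gamma_n,\beta_n)$, which is again a sequence of band-taut decompositions of one-smaller length, to get an isotopy of $c_{\beta_1}$ (rel endpoints) to an arc $a'$ with $\cls(a'\cap\inter{M_n})=c_{\beta_n}$ and $a'-c_{\beta_n}\subset\boundary M_1\cup B(\mc{H}')$. I would then extend this isotopy of $c_{\beta_1}$ to an ambient isotopy of $M_1$ fixing $\boundary M_1$ pointwise in a neighborhood where $a_1-c_{\beta_1}$ meets $\boundary M_1$, and compose it with the isotopy from the base step; concatenating the two pushed arcs yields an isotopy of $c_\beta$ in $M$ to an arc $a$ with $\cls(a\cap\inter{M_n})=c_{\beta_n}$. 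The point is that $\boundary M_1$ is obtained from $\boundary M$ and $S_1$ by cut-and-paste, so $\boundary M_1\subset\boundary M\cup S_1$; combined with $B(\mc{H}')=S_2\cup\cdots\cup S_n$ smoothed, we get $a-c_{\beta_n}\subset\boundary M\cup S_1\cup\cdots\cup S_n$, and after the smoothing that defines $B(\mc{H})$ we may isotope $a-c_{\beta_n}$ so that it lies on $\boundary M\cup B(\mc{H})$; this is the first assertion.

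For the ``furthermore'' clause — a \emph{proper} isotopy of $c_\beta$ in $M$ (not merely rel $\boundary c_\beta$) to $(a\cap B(\mc{H}))\cup c_{\beta_n}$ — I would take the portions of $a-c_{\beta_n}$ that lie in $\boundary M$ rather than in $B(\mc{H})$ and push them slightly into the interior of $M$, sliding their endpoints along $\boundary M$. Concretely, the endpoints of $c_\beta$ lie in $A(\gamma)$ (condition (B1)); each boundary-parallel subarc of $a$ is a collar arc in $\boundary M$ or in a disc region of $R(\gamma)$ cut off by $B(\mc{H})$, and such an arc can be isotoped rel its far endpoint into $B(\mc{H})$ or contracted to a point, which is exactly the operation that turns $a$ into $(a\cap B(\mc{H}))\cup c_{\beta_n}$. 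One must check that this is a legitimate proper isotopy of arcs in $M$ — the endpoints stay on $\boundary M$ throughout and the arc stays embedded — which follows because at each stage the relevant subarc cobounds an embedded bigon with $\boundary M$.

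The main obstacle I anticipate is bookkeeping at the interfaces between the stages: ensuring that the arc $a_1-c_{\beta_1}$ produced by the base step actually lies in $\boundary M_1\cup S_1$ in a way compatible with the cut-and-paste description of $\boundary M_1$, so that the inductive isotopy of $c_{\beta_1}$ (which is allowed to move $\cls(a_1\cap\inter{M_1})$ but fixes $\boundary c_{\beta_1}$) can be carried out without disturbing the already-placed boundary-parallel subarcs of $a_1$ — or, if it does disturb them, that the disturbance is itself a boundary-parallel isotopy that can be absorbed. Handling the various cases of Lemma~\ref{Lem: Defining band taut decomp} — (BT1) versus (BT2), and whether the sides of the band run along edges of $\beta$ or along $A(\gamma)$, which controls exactly how $\boundary D_{\beta_1}$ sits relative to $\boundary M_1$ and $S_1$ — is where the care is needed, but each case is routine once the general framework is in place.
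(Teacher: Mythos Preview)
Your approach is essentially the same as the paper's: both argue by induction using Lemma~\ref{Lem: Isotoping Core} at each stage and then concatenate the resulting isotopies. The paper organizes the induction slightly differently (it concatenates all the single-step isotopies into a homotopy and then inductively repairs embeddedness at the first stage where it fails), but the content is the same.

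You correctly identify the one genuine issue---embeddedness at the interfaces---but your treatment of it is where the proposal is thin. The paper is explicit here: the subarcs $a_{i-1}-c_{\beta_i}$ and $a_i-c_{\beta_{i+1}}$ both lie in $\boundary D_{\beta_i}$, and since $\boundary D_{\beta_i}$ is built from portions of $\boundary D_{\beta_{i-1}}$ together with arcs of $S_i\cap D_{\beta_{i-1}}$, any overlap between them is a union of at most two closed intervals in $\boundary D_{\beta_{i-1}}\cap\boundary D_{\beta_i}$, each having one endpoint on $\boundary S_i$. Such an interval can be shrunk to a point on $\boundary S_i\cap\boundary D_\beta$, restoring embeddedness. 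Your phrase ``the disturbance is itself a boundary-parallel isotopy that can be absorbed'' is pointing at this, but you should state the structural reason (the overlap sits in $\boundary D_{\beta_{i-1}}\cap\boundary D_{\beta_i}$ and abuts $\boundary S_i$) rather than leave it as a plausibility claim.
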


\begin{proof}
By the definition of ``band-taut'' decomposition, the decomposition 
\[(M_i, \gamma_i, \beta_i) \stackrel{S_{i+1}}{\to} (M_{i+1}, \gamma_{i+1}, \beta_{i+1})\]
defines an isotopy $\phi_i$ in $D_{\beta_i}$ of $c_{\beta_i}$  (relative to its endpoints) to an arc $a_i$ such that the intersection of $a_i$ with the interior of $M_{i+1}$ is the core $c_{\beta_{i+1}}$. If the decomposition is of the form (BT1), then the isotopy moves $c_{\beta_i}$ into $\boundary M \cup B(\mc{H})$.  By Lemma \ref{Lem: Isotoping Core}, the intersection of $a_i$ with $\boundary D_{\beta_i}$ consists of arcs, each joining an endpoint of $c_{\beta_i}$ to an endpoint of $c_{\beta_{i+1}}$. Each of these arcs, if not a single point, intersects $D_{\beta_{i+1}}$ in an arc with one endpoint on $\boundary c_{\beta_{i+1}}$ and the other on a point of $\boundary S_{i+1} \cap \boundary D_{\beta_i}$.

Each $\phi_i$ is also a homotopy of $c_{\beta}$. Their concatanation is a homotopy $\phi$ of $c_\beta$. We desire to show $\phi$ can be homotoped to provide an isotopy $\phi'$ in $D_\beta$ of the arc $c_\beta$ (relative to $\boundary c_\beta$) to an arc $a$ so that $a$ intersects the interior of $M_n$ in $c_{\beta_n}$ and $a - c_{\beta_n}$ is embedded in $B(\mc{H})$.

To that end, suppose that $i$ is the smallest index such that the isotopy of $c_{\beta_i}$ to $c_{\beta_{i+1}}$ makes $c_\beta$ non-embedded. This implies that $a_i$ intersects $a_{i-1}$. The arc $a_{i-1}$ lies in $\boundary D_{\beta_{i-1}}$ and the arc $a_i$ lies in $\boundary D_{\beta_i}$. The boundary of $D_{\beta_i}$ is the union of portions of $\boundary D_{\beta_{i-1}}$ with components of $S_i \cap D_{\beta_{i-1}}$. The arcs $a_{i-1}$ and $a_i$, therefore, intersect in closed intervals lying in $\boundary D_{\beta_{i-1}} \cap \boundary D_{\beta_i}$. There are at most two intervals of overlap and each interval of overlap has one endpoint lying on $\boundary S_i$. These intervals of overlap can each be homotoped to be a point of $\boundary S_i \cap \boundary D_\beta$. This homotopy deforms the concatenation of the isotopy from $c_{\beta_{i-1}}$ to $c_{\beta_i}$ with the isotopy from $c_{\beta_i}$ to $c_{\beta_{i+1}}$ to be an isotopy of $c_\beta$ such that $c_{\beta}$ intersects the interior of $M_{i+1}$ in $c_{\beta_{i+1}}$ and, after the isotopy, $c_{\beta} - c_{\beta_{i+1}}$ is embedded in $B(\mc{H})$. By induction on $i$, we create the desired isotopy of $c_\beta$ to $a$. 

The intersection $a \cap \boundary M$ consists of at most two arcs, each with an endpoint at $\boundary c_\beta$ and with the other endpoint at $\boundary c_{\beta_1}$. There is, therefore, also a proper isotopy of $c_\beta$ to $(a \cap B(\mc{H})) \cup c_{\beta_n}$. 
\end{proof}

\section{Band Taut Hierarchies}\label{Band Taut Hierarchies}

Let $(M,\gamma,\beta)$ be a $\beta$--taut sutured manifold and suppose that $U \subset T(\gamma)$. A $\beta$-taut sutured manifold \defn{hierarchy} (cf. \cite[Definition 2.1]{MSc4}) relative to $U$ is a finite sequence
\[
\mc{H}: (M,\gamma) \stackrel{S_1}{\to} (M_1,\gamma_1) \stackrel{S_2}{\to} \hdots \stackrel{S_n}{\to} (M_n,\gamma_n) 
\]
of $\beta$-taut decompositions for which 
\begin{enumerate}
\item[(i)] each $S_i$ is either a conditioned surface, a product disc, a product annulus whose ends are essential in $R(\gamma_{i-1})$, or a disc whose boundary is $\beta$-essential in $R(\gamma_{i-1})$ and each $S_i$ is disjoint from $U$.
\item[(ii)] $H_2(M_n,\boundary M_n - U) = 0$.
\end{enumerate}
If $U = \nil$, then we simply call it a $\beta$-taut sutured manifold hierarchy.

We say that the hierarchy \defn{respects} a parameterizing surface $Q \subset M$ if each decomposition in $\mc{H}$ respects $Q$. (Implicitly, we assume that $Q$ may be modified by isotopies and $\boundary$-compressions during the decompositions as in Section \ref{Param}.)

Suppose that $(M,\gamma,\beta)$ is a band-taut sutured manifold. A \defn{band-taut hierarchy} for $M$ is a $(\beta - c_\beta)$-taut sutured manifold hierarchy $\mc{H}$ for $(M,\gamma,\beta - c_\beta)$ with each decomposition $(M_{i-1},\gamma_{i-1},\beta_{i-1}) \stackrel{S_i}{\to} (M_i,\gamma_i,\beta_i)$ a band-taut decomposition.

\begin{theorem}\label{Thm: Hierarchies Exist}
Suppose that $(M,\gamma,\beta)$ is a band-taut sutured manifold and that $Q_1, \hdots, Q_n$ are parameterizing surfaces in $M$. Then the following are all true:
\begin{enumerate}
\item there exists a band-taut sutured manifold hierarchy 
\[
\mc{H}: (M,\gamma) \stackrel{S_1}{\to} (M_1,\gamma_1) \stackrel{S_2}{\to} \hdots \stackrel{S_n}{\to} (M_n,\gamma_n) 
\]
for $M$ respecting each $Q_i$. 
\item Each surface $S_i$ is a band-decomposing surface and if $S_i$ is conditioned then it is also rinsed.
\item If $y \in H_2(M,\boundary M)$ is non-zero, $S_1$ may be taken to represent $\pm y$
\item There is a proper isotopy of $c_\beta$ in $M$ to an arc disjoint from $S_1$.
\item Let $B(\mc{H})$ be the branched surface associated to $\mc{H}$. There is an isotopy of $c_\beta$ in $D_\beta$ relative to $\boundary c_\beta$ to an arc $a$ such that the the closure of the arc $a \cap \inter{M}_n$ is $c_{\beta_n}$, the arc $a - c_{\beta_n}$ is embedded in $\boundary M \cup B(\mc{H})$. Furthermore, there is a proper isotopy of $c_\beta$ in $D_\beta$ to an embedded arc in $B(\mc{H})$. 
\end{enumerate}
\end{theorem}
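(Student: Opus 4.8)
The plan is to adapt, to the band‑taut category, the construction of a taut sutured manifold hierarchy due to Gabai and Scharlemann (\cite[Section 4]{G3}, \cite{MSc4}): I would run the classical construction on the underlying $(\beta - c_\beta)$‑taut sutured manifold $(M,\gamma,\beta - c_\beta)$, but at every step replace the arbitrary classical $(\beta - c_\beta)$‑taut decomposition by the band‑taut refinement furnished in Sections \ref{Band Taut Decomp} and \ref{Param}, which simultaneously transports the band $D_{\beta_i}$ and the core $c_{\beta_i}$ and keeps every parameterizing surface respected.

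In detail I would argue inductively. Given a sequence $(M,\gamma,\beta)\stackrel{S_1}{\to}\cdots\stackrel{S_i}{\to}(M_i,\gamma_i,\beta_i)$ of band‑taut decompositions respecting each $Q_j$, with every $S_\ell$ a band‑decomposing surface and every conditioned $S_\ell$ rinsed, I would extend it as follows. If $(M_i,\gamma_i,\beta_i - c_{\beta_i})$ contains one of the configurations used to make a classical complexity‑reducing step of ``product type'' --- a cancelling disc, an allowable product disc or product annulus, or an amalgamating disc --- then I would perform the corresponding band‑taut move using Lemma \ref{Lem: product disc band taut}, Theorem \ref{Thm: Product Decomp}, or the procedure of Section \ref{Section: conversion} (followed by the arc cancellation of Section \ref{Eliminate Cancelling discs} where appropriate); by Lemma \ref{Lem: Cancelling Discs Disjoint} the configuration may be chosen disjoint from $D_{\beta_i}$ (or else $c_{\beta_i}$ is already isotopic into $\boundary M_i$ inside $D_{\beta_i}$, giving a (BT1) step), and the move respects each $Q_j$ by Lemma \ref{Lem: Product Respecting}. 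Otherwise, if $H_2(M_i,\boundary M_i)\neq 0$, I would choose a nonzero class $y_{i+1}\in H_2(M_i,\boundary M_i)$ exactly as in Scharlemann's argument --- so that the underlying classical conditioned decomposition strictly lowers his complexity --- and apply Theorem \ref{Thm: Rinsed Decomp Respecting} to obtain a rinsed band‑decomposing surface $S_{i+1}$ representing $\pm y_{i+1}$ and a band‑taut decomposition respecting each $Q_j$. (The requirement that the decomposition respect the $Q_j$ forces $S_{i+1}$ to be a double curve sum with many copies of $R(\gamma_i)$; since $R(\gamma_i)\subset\boundary M_i$ is null‑homologous rel boundary this does not disturb the homology class, consistently with Theorem \ref{Thm: Rinsed Decomp Respecting}.) At every step the underlying sequence in $(M_i,\gamma_i,\beta_i - c_{\beta_i})$ is a bona fide instance of the classical construction, and Scharlemann's complexity --- which is blind to $c_{\beta_i}$, $D_{\beta_i}$ and the surfaces $Q_j$ (whose indices can only decrease, by Lemmas \ref{Lem: Conditioned surfaces respect} and \ref{Lem: Product Respecting}) --- strictly drops at each nontrivial step, so the process terminates at some $(M_n,\gamma_n,\beta_n)$ with $H_2(M_n,\boundary M_n)=0$ and no remaining configuration --- condition (ii) in the definition of hierarchy. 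This would prove (1), and (2) holds by construction. For (3), when a nonzero $y\in H_2(M,\boundary M)$ is prescribed I would let the first decomposition be the one produced by Theorem \ref{Thm: Rinsed Decomp Respecting} for this $y$; as in the classical case, prescribing the first surface does not affect finiteness of the hierarchy, the complexity‑reducing choices being made from the second step on.

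Conclusion (4) is a statement about the band alone. By (2) the first surface $S_1$ is a band‑decomposing surface. If it gives a (BT1) step then $c_\beta$ is isotopic in $D_\beta$ into $\boundary M$, hence (after a tiny push off $\boundary S_1$) to an arc disjoint from $S_1$. Otherwise condition (1) of (BD) provides a properly embedded arc $c\subset D_\beta$ joining the top of $D_\beta$ to the bottom of $D_\beta$ and disjoint from $S_1$; since $D_\beta$ is a disc whose top and bottom lie in $R(\gamma)\cup A(\gamma)\subset\boundary M$, there is a proper isotopy of $c_\beta$ in $M$ --- sliding the endpoints of $c_\beta$ within the top and bottom of $D_\beta$ --- carrying $c_\beta$ onto $c$ (one may also apply Lemma \ref{Lem: Isotoping Core} to the first decomposition and push the resulting arc off the copies of $S_1$ in $\boundary M_1$). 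For conclusion (5) I would apply Lemma \ref{Lem: Branched Surface} to the hierarchy $\mc{H}$ just constructed, which already yields the isotopy to an arc $a$ with $\overline{a\cap\inter{M}_n}=c_{\beta_n}$ and $a-c_{\beta_n}$ embedded in $\boundary M\cup B(\mc{H})$, together with a proper isotopy of $c_\beta$ to $(a\cap B(\mc{H}))\cup c_{\beta_n}$. To promote the latter to an arc lying entirely in $B(\mc{H})$, I would arrange for the final decomposition of $\mc{H}$ to be of type (BT1) --- available once the residual band $D_{\beta_n}$ has been isotoped to be boundary‑parallel, which Lemma \ref{Lem: Cancelling Discs Disjoint} permits in view of the simplicity of $M_n$ at the end of the classical construction --- so that $c_{\beta_n}=\nil$.

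The point that needs genuine care is that \emph{every} move of the classical hierarchy construction admits a band‑taut refinement that at once (a) respects all of $Q_1,\dots,Q_n$, (b) yields a band‑decomposing surface, so that $D_{\beta_i}$ and $c_{\beta_i}$ propagate coherently (Lemmas \ref{Lem: Defining band taut decomp} and \ref{Lem: Isotoping Core}), and (c) still realizes the classical complexity drop even when one is forced to use a large double curve sum $S_k$, or a product or amalgamating surface disjoint from $D_{\beta_i}$ rather than the obvious one. Parts (a) and (b) are precisely what Sections \ref{Band Taut Decomp} and \ref{Param} were built to supply, and (c) holds because $S$ and $S_k$ are homologically and complexity‑theoretically interchangeable while product and amalgamating moves reduce complexity irrespective of position. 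The hard part is therefore this reconciliation of ``respect the $Q_j$'' with ``reduce the complexity'', and it is dissolved by the invariance of the homology class under double curve summing with $R(\gamma_i)$; after that, the theorem is an assembly of the results established above together with the classical termination quoted from \cite{G3} and \cite{MSc4}.
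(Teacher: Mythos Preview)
Your proposal is correct and takes essentially the same approach as the paper: inductively extend a sequence of band-taut decompositions using the product-type moves of Section~\ref{Band Taut Decomp} (via Lemmas~\ref{Lem: Cancelling Discs Disjoint} and~\ref{Lem: product disc band taut}, Theorem~\ref{Thm: Product Decomp}, and Section~\ref{Section: conversion}) or, when none apply, a rinsed band-decomposing surface from Theorem~\ref{Thm: Rinsed Decomp Respecting}, and then invoke \cite[Theorem 4.19]{MSc3} and \cite[Theorem 2.5]{MSc4} for termination and Lemma~\ref{Lem: Branched Surface} for (4) and (5). One small point: your appeal to Lemma~\ref{Lem: Cancelling Discs Disjoint} to force a terminal (BT1) step is not quite what that lemma provides --- the paper instead builds the (BT1) step directly into its list of moves (as the first option), to be taken whenever $e_{\beta_i}=\nil$ and $D_{\beta_i}$ is boundary-parallel in $M_i-(\beta_i-c_{\beta_i})$.
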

\begin{proof}

Let $S_1$ be the surface provided by Theorem \ref{Thm: Rinsed Decomp Respecting} representing $\pm y$ and giving a band-taut sutured manifold decomposition $(M,\gamma,\beta) \stackrel{S_1}{\to} (M_1,\gamma_1,\beta_1)$ respecting $Q$. Let $Q_1$ be the parameterizing surface in $(M_1,\gamma_1,\beta_1)$ resulting from $Q$.

If $H_2(M_1,\boundary M_1 - U) = 0$, we are done. Otherwise, define $S_2$ according to the instructions below. In the description below, it should always be assumed that if $S$ is chosen at step ($i$), then step ($k$) for all $k > i$ will not be applied. 

\begin{enumerate}

\item If $e_{\beta_1} = \nil$ and if $D_{\beta_1}$ is a boundary parallel disc in $M - (\beta - c_\beta)$, then let $S_2 = c_{\beta_2} = D_{\beta_2} = \nil$. The decomposition by $S_2$ is of the form (BT1).

\item If $(M_1,\gamma_1,\beta_1 - c_{\beta_1})$ contains a cancelling disc or product disc, it contains one disjoint from $D_{\beta_1}$ (Lemma \ref{Lem: Cancelling Discs Disjoint}). Let $S_2$ be either a product disc disjoint from $D_{\beta_1}$ or the frontier of a regular neighborhood of a cancelling disc disjoint from $D_{\beta_1}$. If $S_2$ is the frontier of a cancelling disc, after decomposing along $S_2$, cancel the edge of $\beta_1$ adjacent to the cancelling disc. 

\item If $(M_1,\gamma_1,\beta_1 - c_{\beta_1})$ contains a nonself amalgamating disc, amalgamate an arc component of $\beta_1$ and let $S_2 = \nil$. This does not affect the fact that $(M_1,\gamma_1,\beta_1)$ is a band-taut sutured manifold by \cite[Lemmas 4.3 and 4.4]{MSc3}.

\item If $(M_1,\gamma_1,\beta_1 - c_{\beta_1})$ contains an allowable product disc choose one $S_2$ that is disjoint from $D_{\beta_1}$ (Lemma \ref{Lem: Cancelling Discs Disjoint}). Modify $Q_1$ so that $S_2$ respects $Q_1$. Decomposing along $S_2$ gives a band-taut decomposition by Theorem \ref{Thm: Product Decomp}.

\item If $(M_1,\gamma_1,\beta_1 - c_{\beta_1})$ has an allowable self amalgamating disc, choose one that is disjoint from $D_{\beta_1}$. This is possible by Lemma \ref{Lem: Cancelling Discs Disjoint}. If the associated product annulus has both ends essential in $R(\gamma)$, let $S_2$ be that annulus. Otherwise, let $S_2$ be the disc obtained by isotoping the disc obtained by capping off the annulus with a disc in $R(\gamma)$ so that it is properly embedded in $M$.  Modify $Q_1$ so that $S_2$ respects $Q_1$. 

\item If $(M_1,\gamma_1,\beta_1 - c_{\beta_1})$ has no product discs, cancelling discs, or allowable nonself amalgamating discs, let $S_2$ be the surface obtained by applying Theorem \ref{Thm: Rinsed Decomp Respecting} to an nontrivial element of $H_2(M_2,\boundary M_2 - U)$.
\end{enumerate}

Decompose $(M_1,\gamma_1,\beta_1)$ using $S_2$ to obtain $(M',\gamma'_2, \beta'_2)$. If $S_2$ was a disc with boundary in $R(\gamma_1)$, amalgamate arcs and convert an arc to a suture as in Section \ref{Section: conversion}. By the results of that section, this elimination of nonself amalgamating discs preserves the fact that the resulting sutured manifold $(M_2,\gamma_2,\beta_2)$ is band-taut. Let $Q_2$ be the resulting parameterizing surface in $M_2$. 

If $H_2(M_2,\boundary M_2 - U) = 0$ we are done. Otherwise, a sutured manifold $(M_3,\gamma_3,\beta_3)$ can be obtained from $(M_2,\gamma_2,\beta_2)$ by a method analogous to how we obtained $(M_2,\gamma_2,\beta_2)$ from $(M_1,\gamma_1,\beta_1)$. Repeating this process creates a sequence of band-taut sutured manifold decompositions
\[
\mc{H}: (M,\gamma,\beta) \stackrel{S_1}{\to} (M_1,\gamma_1,\beta_1) \stackrel{S_2}{\to}  (M_2,\gamma_2,\beta_2) \stackrel{S_3}{\to} \hdots
\]
respecting $Q$. 

By the proofs of \cite[Theorem 4.19]{MSc3} and \cite[Theorem 2.5]{MSc4}, the sequence
\[
(M_1,\gamma_1,\beta_1 - c_{\beta_1}) \stackrel{S_2}{\to} (M_2,\gamma_2,\beta_2 - c_{\beta_2}) \stackrel{S_3}{\to} \hdots
 \]
must terminate in $(M_n,\gamma_n,\beta_n - c_{\beta_n})$ with $H_2(M_n,\boundary M_n - U) = 0$. Consequently, $\mc{H}$ is finite. This sequence with all arc cancellations and amalgamations ignored is the desired hierarchy. If $c_{\beta_i} \neq \nil$ but $c_{\beta_{i+1}} = \nil$ then, by the definition of band taut decomposition, $c_{\beta_i}$ can be isotoped in $D_{\beta_i}$ (rel $\boundary c_{\beta_i}$) in $\boundary M_i$. Conclusions (3) and (4) follow from Lemma \ref{Lem: Branched Surface} \end{proof}

Many arguments in sutured manifold theory require showing that a hierarchy remains taut after removing some components of $\beta$. We will need the following theorem, which is a slight generalization of what is stated in \cite{MSc4} (and is implicit in that paper and in \cite{MSc3}).

\begin{theorem}[{\cite[Lemma 2.6]{MSc4}}]\label{Thm: Tautness Up}
Suppose that 
\[
(M,\gamma,\beta)=(M_0,\gamma_0,\beta_0) \stackrel{S_1}{\to} (M_1,\gamma_1,\beta_1) \stackrel{S_2}{\to} \hdots \stackrel{S_n}{\to} (M_n,\gamma_n,\beta_n)
\]
is a sequence of $\beta$-taut sutured manifold decompositions in which
\begin{enumerate}
\item no component of $M$ is a solid torus disjoint from $\gamma \cup \beta$
\item each $S_i$ is either a conditioned surface, a product disc, a product annulus with each boundary component essential in $R(\gamma_{i-1})$, or a disc $D$ such that
\begin{enumerate}
\item $\boundary D \subset R(\gamma_{i-1})$
\item If $\boundary D$ is $\beta$-inessential in $R(\gamma_{i-1})$ then $D$ is disjoint from $\beta$.
\end{enumerate}
\item If a closed component of $S_i$ separates, then it bounds a product region with a closed component of $R(\gamma)$ intersecting $\beta$ in vertical arcs.
\end{enumerate}

Then if $(M_n,\gamma_n,\beta_n)$ is $\beta_n$-taut so is every decomposition in the sequence.
\end{theorem}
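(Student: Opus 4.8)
The plan is to peel the sequence off one decomposition at a time and reduce to a single step, and then to verify the three conditions (T0)--(T2) defining $\beta$-tautness directly, using Scharlemann's analysis of how the $\beta$-norm and tautness behave under decomposition along each of the four permitted kinds of surface. Concretely, I would argue by downward induction on $i$, so that it suffices to prove the following single-step statement: if $(M_{i-1},\gamma_{i-1},\beta_{i-1})\stackrel{S_i}{\to}(M_i,\gamma_i,\beta_i)$ is a decomposition in which $S_i$ is of one of the types allowed by (2), in which each closed separating component of $S_i$ bounds a product region with a closed component of $R(\gamma_{i-1})$ meeting $\beta_{i-1}$ in vertical arcs as in (3), and in which no component of $M_{i-1}$ is a solid torus disjoint from $\gamma_{i-1}\cup\beta_{i-1}$, then $\beta_i$-tautness of $(M_i,\gamma_i,\beta_i)$ forces $\beta_{i-1}$-tautness of $(M_{i-1},\gamma_{i-1},\beta_{i-1})$ (so that the decomposition is a $\beta$-taut decomposition), and moreover $S_i$ is $\beta_{i-1}$-minimizing. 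The auxiliary hypothesis (1) is the one used in \cite{MSc4} to exclude degenerate cases in this single step; I would check it is inherited along the induction, or, where a permitted decomposition could in principle create an offending solid-torus component, note that such a component is already excluded by the $\beta_i$-tautness of $(M_i,\gamma_i,\beta_i)$. From here I write $(M,\gamma,\beta)=(M_{i-1},\gamma_{i-1},\beta_{i-1})$, $S=S_i$, $(M',\gamma',\beta')=(M_i,\gamma_i,\beta_i)$.

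Next I would verify (T0)--(T2) for $(M,\gamma,\beta)$. Condition (T0) is immediate, since $\beta$ meets $A(\gamma)\cup T(\gamma)$ in the same set of points at which $\beta'$ meets $A(\gamma')\cup T(\gamma')$, which is empty by (T0) for $(M',\gamma',\beta')$. For (T1) (that $M$ is $\beta$-irreducible) and the $\beta$-incompressibility half of (T2), I would run innermost-disc and innermost-circle arguments: a $\beta$-essential $2$-sphere in $M$, or a $\beta$-compressing disc for $R(\gamma)$, isotoped to meet $S$ minimally, is either disjoint from $S$ (and so survives into $M'$, contradicting $\beta'$-irreducibility of $M'$ or $\beta'$-incompressibility of $R(\gamma')$) or may be surgered along an innermost disc of $S$ or along a compression on itself, contradicting minimality. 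Making this work requires knowing $S$ is $\beta$-incompressible, which in turn holds because after decomposition $S$ reappears as two parallel subsurfaces of $R(\gamma')$, so a $\beta$-compressing disc for $S$ would, using (C2) to see its boundary stays essential, yield a $\beta'$-compressing disc for $R(\gamma')$. When $S$ is a product disc, a product annulus with essential ends, or a $\beta$-essential boundary disc, all of this is already recorded, in both directions, by the relevant lemmas of \cite{MSc3} (Lemmas~4.2--4.4) and \cite{MSc4} (Lemmas~2.3--2.4); the new closed separating components play no role in these arguments because, by (3), they may be discarded and reinstated at will without changing anything.

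The crux is the remaining half of (T2): that $R(\gamma)$ is $\beta$-minimizing. I would argue by contradiction. Suppose $R''$ has the same boundary as $R(\gamma)$, represents $[R(\gamma),\boundary R(\gamma)]$ in the appropriate relative homology group, and has strictly smaller $\beta$-norm. Put $R''$ in general position with respect to $S$ and decompose it along $S$ by oriented double-curve sum exactly as in the proof of \cite[Theorem~2.5]{MSc3}; because $S$ is conditioned, conditions (C0)--(C3) are precisely what force all the relevant signs of intersection to be coherent, so that $\chi$ and the intersection count with $\beta$ behave additively across the sum. The resulting surface $R'''\subset M'$ then has $\boundary R'''=\boundary R(\gamma')$, represents the class of $R(\gamma')$, and satisfies $x_{\beta'}(R''')\le x_\beta(R'')<x_\beta(R(\gamma))$. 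Comparing this with the companion relation between $x_{\beta'}(R(\gamma'))$, $x_\beta(R(\gamma))$ and $x_\beta(S)$ obtained by applying the same cut-and-paste bookkeeping to $R(\gamma)$ itself, one contradicts the $\beta'$-minimality of $R(\gamma')$; the sign condition in the definition of $\beta$-tautness (each edge of $\beta$ meets $R(\gamma)$ always with the same sign) falls out along the way from conclusion~(iv) of \cite[Theorem~2.5]{MSc3} together with (C3).

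I expect the main obstacle to be exactly this conditioned-surface case: getting the $\beta$-norm bookkeeping under oriented cut-and-paste to come out monotone in the presence of the permitted closed separating product components (so that the ``slight generalization'' of (3) is genuinely harmless), and in particular pinning down the companion relation between $x_{\beta'}(R(\gamma'))$ and $x_\beta(R(\gamma))$ precisely enough to run the contradiction. The product-disc, product-annulus, and boundary-disc cases are comparatively routine appeals to \cite{MSc3} and \cite{MSc4}, and the reduction to a single decomposition is bookkeeping once hypothesis (1) is confirmed to be inherited.
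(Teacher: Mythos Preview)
Your plan is essentially a blueprint for reproving Scharlemann's \cite[Lemma~2.6]{MSc4} from scratch, and the outline is broadly sound. But the paper's own proof is a two-sentence argument: it simply cites \cite[Lemma~2.6]{MSc4} directly and observes that the only new feature here --- condition~(3), allowing closed separating components of $S_i$ that bound product regions with closed components of $R(\gamma)$ --- is harmless, because decomposing along such a component produces a sutured manifold equivalent to the original, so tautness passes back trivially across that step.

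So you are doing vastly more work than needed. The downward induction, the innermost-disc arguments for (T1) and $\beta$-incompressibility, and the double-curve-sum bookkeeping for norm-minimality in the conditioned case are all the \emph{content} of Scharlemann's lemma, not something the present paper reproves. What the paper actually contributes is exactly the observation you bury in passing: ``the new closed separating components play no role \ldots\ because, by~(3), they may be discarded and reinstated at will.'' That sentence \emph{is} the proof. Your identification of the conditioned-surface norm argument as the ``main obstacle'' is misplaced in this context --- that obstacle was already surmounted in \cite{MSc3,MSc4}, and the present theorem is stated precisely so that one can invoke that work as a black box.

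One small caution if you were to carry out your plan in full: hypothesis~(1) is stated only for the initial manifold $M$, not for each $M_{i-1}$, and your single-step reduction wants to use it at every stage. You flag this (``I would check it is inherited \ldots''), but you should be aware that in Scharlemann's treatment this is handled globally rather than step-by-step; if you insist on a purely inductive framing you would need to verify inheritance carefully, and this is not entirely automatic.
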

\begin{proof}
The only difference between this and what is found in \cite{MSc4} is that we allow closed components of $S_i$ to be parallel to closed components of $R(\gamma)$. Decomposing along such a component creates a sutured manifold equivalent to the original and so if the sutured manifold after the decomposition is $(\beta_{i+1})$-taut, the original must be $(\beta_i)$-taut.
\end{proof}

\begin{remark}
The reason for stating this generalization of \cite[Lemma 2.6]{MSc4} is that in creating a sutured manifold hierarchy that respects a parameterizing surface we may need to decompose along the double curve sum $S_k$ of a conditioned surface $S$ with some number of copies of $R(\gamma)$. If $S$ is disjoint from a closed component of $R(\gamma)$ then some components of $S_k$ will be closed and separating.
\end{remark}
The next corollary is immediate:
\begin{corollary}\label{Cor: Tautness Up}
Suppose that $(M,\gamma,\beta)$ is a band-taut sutured manifold such that no component of $M$ is solid torus disjoint from $(\beta - c_\beta) \cup \gamma$ and that 
\[
\mc{H}: (M,\gamma) \stackrel{S_1}{\to} (M_1,\gamma_1) \stackrel{S_2}{\to} \hdots \stackrel{S_n}{\to} (M_n,\gamma_n) 
\] 
is the band-taut sutured manifold hierarchy given by Theorem \ref{Thm: Hierarchies Exist}. If $(M_n,\gamma_n)$ is $\beta_n - (e_{\beta_n} \cup c_{\beta_n})$-taut, then $(M,\gamma)$ is $\beta - (e_\beta \cup c_\beta)$-taut.
\end{corollary}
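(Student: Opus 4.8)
The plan is to deduce the corollary by a single application of Theorem~\ref{Thm: Tautness Up}, taking for the $1$--complex called ``$\beta$'' in that theorem the complex $\beta - (e_\beta \cup c_\beta)$. One tracks this complex through the decompositions of $\mc{H}$; by Lemmas~\ref{Lem: Isotoping Core} and~\ref{Lem: Defining band taut decomp} the core and sides of the decomposed band at each stage are precisely the images of $c_\beta$ and $e_\beta$, so the tracked complex at stage $i$ is $\beta_i - (e_{\beta_i}\cup c_{\beta_i})$ (the arc cancellations and amalgamations suppressed in the passage to $\mc{H}$ occur in components where they change neither tautness nor, up to isotopy, the relevant complex, so they may be ignored). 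Granting this, the hypothesis that $(M_n,\gamma_n)$ is $(\beta_n - (e_{\beta_n}\cup c_{\beta_n}))$--taut is exactly the bottom hypothesis of Theorem~\ref{Thm: Tautness Up} for the tracked complex, so that theorem yields that every sutured manifold in the sequence, and in particular $(M,\gamma)$, is taut with respect to the tracked complex --- which is the assertion.

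It then remains only to check the three numbered hypotheses of Theorem~\ref{Thm: Tautness Up}. For hypothesis (1): if a component $M'$ of $M$ were a solid torus disjoint from $\gamma \cup (\beta - (e_\beta \cup c_\beta))$, then, being disjoint from $\gamma$, it would meet no part of $A(\gamma)$, hence no part of the band $D_\beta$ (two of whose edges lie in $A(\gamma)$), hence --- as $D_\beta$ is connected and contains both $c_\beta$ and $e_\beta$ --- no part of $c_\beta \cup e_\beta$ either; so $M'$ would be a solid torus component disjoint from $(\beta - c_\beta)\cup\gamma$, contradicting the hypothesis of the corollary. For hypotheses (2) and (3): by Theorem~\ref{Thm: Hierarchies Exist}(2) each $S_i$ is a band-decomposing surface, and any conditioned $S_i$ is rinsed. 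A rinsed surface is conditioned in $(M_{i-1},\gamma_{i-1},\beta_{i-1}-c_{\beta_{i-1}})$; deleting the edges $e_{\beta_{i-1}}$ only weakens (C3), so $S_i$ remains conditioned with respect to $\beta_{i-1}-(e_{\beta_{i-1}}\cup c_{\beta_{i-1}})$, and its closed separating components bound product regions with closed components of $R(\gamma_{i-1})$ meeting $\beta$ in vertical arcs by the definition of ``rinsed'', which is hypothesis (3). The remaining $S_i$ are product discs, product annuli with essential ends, or discs with boundary in $R(\gamma_{i-1})$; the first two are disjoint from all of $\beta$ by definition, and the discs with boundary in $R(\gamma_{i-1})$ arising in Section~\ref{Section: conversion} are pushoffs of a product annulus capped off in $R(\gamma_{i-1})$, hence also disjoint from $\beta$, so the disc alternative (2)(b) of Theorem~\ref{Thm: Tautness Up} holds vacuously.

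The main obstacle is the bookkeeping asserted in the first paragraph: verifying that at each stage the images of the core and sides of $D_\beta$ really are the core and sides of the decomposed band, and confirming that the arc cancellations and amalgamations hidden in $\mc{H}$ interact harmlessly with the complex $\beta-(e_\beta\cup c_\beta)$. Everything past that point is a routine inspection that rinsed surfaces (after forgetting $e_\beta$) and the various product surfaces meet Scharlemann's hypotheses, so that the corollary drops out of Theorem~\ref{Thm: Tautness Up}.
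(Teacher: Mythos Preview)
Your approach is exactly what the paper does: it declares the corollary ``immediate'' from Theorem~\ref{Thm: Tautness Up} and gives no further argument, so your verification of hypotheses~(1)--(3) with respect to the reduced complex $\beta-(e_\beta\cup c_\beta)$ is already more detailed than the paper's own treatment. The only point worth flagging is your claim that the discs from Section~\ref{Section: conversion} are automatically disjoint from $\beta$: the capping disc $\Delta\subset R(\gamma)$ can contain endpoints of $\beta$, so the pushoff $D$ may in fact meet edges of $\beta$; what you actually need for~(2)(b) is the weaker fact that if $\partial D$ is $(\beta-(e_\beta\cup c_\beta))$--inessential then $D$ misses that smaller complex, which follows from the construction but not quite for the reason you gave.
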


Before analyzing the parameterizing surface at the end of the hierarchy, we present one final lemma in this section. Recall that if $b \subset \boundary M$ is a simple closed curve and if $Q \subset M$ is a surface, then a $b$-boundary compressing disc for $Q$ is a disc whose boundary consists of an arc on $Q$ and a sub-arc of $b$. Suppose that $\beta \subset M$ is an edge with both endpoints on $\boundary M$ and that $b$ is a meridian of $\beta$ in the boundary of $M - \inter{\eta}(\beta)$. If $Q$ is a surface in $M - \inter{\eta}(\beta)$, we define a \defn{$\beta$-boundary compressing disc} for $Q$ in $M$ to be a $b$-boundary compressing disc for $Q$ in $M - \inter{\eta}(\beta)$.

The next lemma gives a criterion for determining when a compressing disc or $\beta$-boundary compressing disc for a parameterizing surface at the end of a hierarchy can be pulled back to such a disc for a parameterizing surface in the initial sutured manifold.

\begin{lemma}\label{Lem: compressing up}
Suppose that 
\[
(M,\gamma,\beta) \stackrel{S}{\to} (M',\gamma',\beta') 
\]
is sutured manifold decomposition respecting a parameterizing surface $Q$ with $\beta$ a single arc. Assume that $\mu(Q) \geq 1$. Let $Q'$ be the resulting parameterizing surface in $M'$. Let $\beta'_0$ be a component of $\beta'$. Then if the surface $Q'$ has a compressing disc or $\beta'_0$-boundary compressing disc with interior disjoint from $\beta'$ then the surface $Q$ has a compressing disc or $\beta$-boundary compressing disc.
\end{lemma}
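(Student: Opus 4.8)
The plan is to reverse-engineer the decomposition: a compressing or $\beta'_0$-boundary compressing disc $D'$ for $Q'$ in $M'$ lives in $M' = M - \inter{\eta}(S)$, and I want to reassemble it with $S$ to produce the analogous disc for $Q$ in $M$. The key point is that $Q' = Q \cap M'$ up to the isotopies and $\boundary$-compressions encoded in ``respects'', so outside a neighborhood of $S$ the surfaces $Q$ and $Q'$ agree, and $\beta' = \beta \cap M'$. I would first reduce to the case where the decomposition is along a single surface $S$ of the types allowed (conditioned surface, product disc, product annulus, or disc with boundary in $R(\gamma)$); since ``respects'' in Lemma~\ref{Lem: Product Respecting} and Lemma~\ref{Lem: Conditioned surfaces respect} is defined exactly for these, and a band-taut hierarchy is built from them, this covers all cases. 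Then the strategy splits according to whether $D'$ is disjoint from $S$ (pushed off into $M'$) or must meet $S$.

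First I would handle the case $|D' \cap S| = 0$. Here $D'$ is a disc in $M - \eta(S) \subset M$ whose boundary lies on $Q'$ (possibly together with a subarc of the meridian $b'$ of $\beta'_0$). Since $Q$ and $Q'$ differ only by isotopy and boundary-compressions performed using outermost discs of $S$, and since $\boundary D'$ is disjoint from $S$, the curve $\boundary D'$ can be traced back to a curve on $Q$: either it is already essential on $Q$, in which case $D'$ itself (reembedded in $M$) is a compressing or $\beta$-boundary compressing disc for $Q$; or it bounds a disc on $Q$ but that disc was created by a $\boundary$-compression using $S$, in which case reattaching the compressed piece produces an essential curve and I use the Lackenby observation (quoted just before Lemma~\ref{Lem: Product Respecting}) that if $Q^c$ compresses then so does $Q$. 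The $\beta'_0$-boundary compressing case is parallel once I note $\boundary\beta'_0 \subset \boundary M$ forces $b' = b$ to be a meridian already present before decomposing (by the convention on forming $A(\gamma')$ so that endpoints of $\beta$ stay in $A(\gamma')\cup T(\gamma')$).

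Next, the harder case: $D' \cap S \neq \nil$. I would take $D'$ to intersect $S$ minimally, use an innermost-disc/outermost-arc argument to eliminate closed components of $D' \cap S$ (using $\beta$-irreducibility / $\beta$-incompressibility of the relevant surfaces from the definition of $\beta$-taut), and then look at an outermost arc $\alpha$ of $D' \cap S$ on $D'$, cutting off a subdisc $D'' \subset D'$ with $\boundary D''$ the union of $\alpha$ and an arc on $Q'$ (or on $Q' \cup b'$). This $D''$ is a $\boundary$-compressing disc for $S$ along $Q$ (or a disc whose boundary lies partly on $b'$), and I would use it either to $\boundary$-compress $S$ — contradicting tautness/conditionedness of the decomposition, since $S$ was chosen to give a $\beta$-taut decomposition respecting $Q$ — or, when $S$ is a product disc/annulus or $R(\gamma)$-disc, to argue that the arc of $\boundary D'' \cap Q'$ is exactly the trace of one of the outermost $\boundary$-compressions already used to pass from $Q$ to $Q^c$, so that $D''$ can be absorbed and $|D' \cap S|$ reduced, contradicting minimality. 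Iterating removes all intersections and returns us to the first case.

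\textbf{The main obstacle} I expect is the bookkeeping in the second case: precisely matching an arc of $\boundary D' \cap S$ against the $\boundary$-compressions used in the definition of ``modify'' (Lemma~\ref{Lem: Product Respecting}) and against the normalization isotopies in the conditioned case (Lemma~\ref{Lem: Conditioned surfaces respect}), while keeping $\mu(Q) \geq 1$ so that no component of $Q$ is a disc or sphere disjoint from $\beta \cup \gamma$ (needed so that an essential curve on $Q$ genuinely gives a compressing disc rather than an inessential one). The hypothesis $\mu(Q) \geq 1$ is what guarantees that after pulling back, the resulting disc is detected as genuinely compressing or $\beta$-boundary compressing rather than being a trivial product disc; I would make sure to invoke it precisely at the point where I claim a curve on $Q$ that does not bound a disc in $Q$ yields the desired disc for $Q$ in $M$, and at the point where I rule out that the pulled-back disc is absorbed into a product structure.
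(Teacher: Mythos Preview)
Your second case never arises. The disc $D'$ is, by hypothesis, a compressing or $\beta'_0$-boundary compressing disc for $Q'$ \emph{in $M'$}, and $M' = M - \inter{\eta}(S)$; so when $D'$ is regarded as sitting inside $M$ it is automatically disjoint from $S$. There is nothing to minimize, no innermost-disc/outermost-arc argument, and no need to match arcs of $D'\cap S$ against the boundary compressions used to produce $Q^c$. All of the machinery in your ``harder case'' is aimed at a configuration that cannot occur.

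Once this is seen, the proof collapses to a few lines, which is exactly what the paper does. Since $D'\subset M'$ and $Q' = Q \cap M'$ (in the conditioned case) or $Q' = Q^c \cap M'$ (in the product-surface case), the interior of $D'$ already misses $Q$ (resp.\ $Q^c$), and $\boundary D'$ already lies on $Q$ (resp.\ $Q^c$) together with a subarc of a meridian of $\beta$; so $D'$ is itself the desired disc for $Q$ (resp.\ $Q^c$). In the latter case, $Q$ is recovered from $Q^c$ by tubing $Q^c$ to itself and to discs in $R(\gamma)$ along arcs in $R(\gamma)$ (undoing the modification), and after pushing $D'$ slightly off $\boundary M$ it is disjoint from these tubes, so $D'$ remains a compressing or $\beta$-boundary compressing disc for $Q$.

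Your first case is in the right spirit but still overcomplicates matters: there is no ``tracing $\boundary D'$ back to $Q$'' because $Q'\subset Q$ (or $Q'\subset Q^c$) as subsurfaces, and no separate appeal to Lackenby's observation is needed. Your attempted use of $\mu(Q)\geq 1$ at the very end is also misplaced; it plays no role in ruling out ``trivial product discs'' and is not invoked in the argument.
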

\begin{proof}
Let $D$ be a compressing or $\beta'_0$-boundary compressing disc for $Q'$. Either $Q' = Q - \inter{\eta}(S)$ or $Q' = Q^c - \inter{\eta}(S)$ is obtained by first modifying $Q$ to $Q^c$. If $Q' = Q - \inter{\eta}(S)$, then $Q' \subset Q$ and $D$ is also a compressing or $\beta$-boundary compressing disc for $Q$. We may, therefore, assume that if $Q' = Q^c - \inter{\eta}(S)$, then $Q^c$ has a compressing or $\beta$-boundary compressing disc $E$. By a small isotopy we may assume that $E \cap \boundary M = \nil$.

By the construction of $Q^c$, $Q$ can be obtained from $Q^c$ by tubing $Q^c$ to itself and to discs with boundary in $R(\gamma)$ using tubes that are the frontiers of regular neighborhoods of arcs in $R(\gamma)$. The disc $E$ is easily made disjoint from those tubes by a small isotopy, and so $E$ remains a compressing or $\beta$-boundary compressing disc for $Q$.
\end{proof}

\section{Combinatorics}\label{Combinatorics}
We begin this section with a sequence of lemmas concerning sutured manifolds that are at the end of a band-taut hierarchy. We consider only the situation in which $e_\beta = \beta - c_\beta$. Recall from the definition of banded sutured manifold that $|e_\beta| \leq 2$.

\begin{lemma}\label{Lem: trivial homology}
Suppose that $(M,\gamma,\beta)$ is a connected band-taut sutured manifold with $H_2(M,\boundary M) = 0$ and $\beta - c_\beta = e_\beta$. Assume that $\boundary M \neq \nil$. Then $\boundary M$ is the union of one or two spheres, each component of $\boundary M$ contains exactly one disc component of $R_-$ and exactly one disc component of $R_+$, and one of the following holds:
\begin{enumerate}
\item $e_\beta = \nil$, $|\gamma| = 1$, and $M$ is a 3--ball.
\item $|e_\beta| = 1$, $\boundary M$ is a single sphere, and $e_\beta$ has endpoints in the disc components of $R(\gamma)$. 
\item $|e_\beta| = 2$, $\boundary M$ is a single sphere, one edge $e$ of $e_\beta$ has endpoints in the disc components of $R(\gamma)$ and the other edge of $e_\beta$ has endpoints either in the same disc components, in which case $|\gamma| = 1$, or in the adjacent annulus components of $R(\gamma)$. 
\item $|e_\beta| = 2$, $M = S^2 \times [0,1]$, the edges of $e_\beta$ are fibers in the product structure of $M$. Each component of $\boundary M$ contains a single suture. 
\end{enumerate}
\end{lemma}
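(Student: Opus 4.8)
The plan is to force $\boundary M$ to be spherical from the homological hypothesis, then to peel off the arcs of $e_\beta$ with the arc-to-suture conversion of Section~\ref{Section: conversion}, reducing everything to a $\nil$-taut sutured manifold with very small second homology, and finally to read the four conclusions off the boundary. First, since $H_2(M,\boundary M)=0$, Lefschetz duality gives $H^1(M)\iso H_2(M,\boundary M)=0$, so $b_1(M)=0$; the half-lives-half-dies principle then forces $b_1(\boundary M)=0$, so $\boundary M$ is a union of $2$-spheres and $T(\gamma)=\nil$. Recall that an irreducible $3$-manifold with nonempty spherical boundary is a $3$-ball. If $e_\beta=\nil$, then $(M,\gamma,\nil)$ is $\nil$-taut, so $M$ is irreducible, hence a ball, $\boundary M$ is a single sphere, and each component of $R(\gamma)$ --- an incompressible surface in a ball --- is a disc; since the $A(\gamma)$ annuli are glued along circles, $\chi(R(\gamma))=\chi(\boundary M)=2$, so there are exactly two such discs, one in each of $R_\pm$ with a single annulus of $A(\gamma)$ between them, whence $|\gamma|=1$. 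This is conclusion~(1).

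Henceforth $e_\beta\neq\nil$, so $c_\beta\neq\nil$, $A(\gamma)\neq\nil$, and $\gamma\neq\nil$. By (B2) each edge of $e_\beta$ runs from $R_-$ to $R_+$, so the conversion of \cite[Definition~2.2, Lemma~2.2]{MSc4} applies to it; converting all of $e_\beta$ produces a $\nil$-taut sutured manifold $(M_0,\gamma_0,\nil)$ with $M_0=M-\inter{\eta}(e_\beta)$ irreducible and $\gamma_0=\gamma\cup\gamma_{e_\beta}$, where $\gamma_{e_\beta}$ consists of one meridian circle per drilled edge. Encode the drilling by the multigraph $G$ whose vertices are the $k$ sphere components of $\boundary M$ and whose edges are the edges of $e_\beta$ (a loop when both endpoints lie on one sphere); then $\boundary M_0$ has one component per component of $G$, of genus equal to the first Betti number of that component.

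If $\boundary M_0$ had a sphere component, then $M_0$ would be a ball and tautness (as above) would force $|\gamma_0|=1$; but $\gamma_0\supseteq\gamma\cup\gamma_{e_\beta}$ with $\gamma\neq\nil$ and $|\gamma_{e_\beta}|=|e_\beta|\geq1$, so $|\gamma_0|\geq2$ --- a contradiction. Hence every component of $\boundary M_0$ has positive genus, so each component of $G$ has at least as many edges as vertices; summing gives $k\leq|e_\beta|\leq2$, so $\boundary M$ is one or two spheres. A short inspection of the multigraphs on $\leq2$ vertices and $\leq2$ edges with no isolated vertex and every component containing a cycle leaves: $(|e_\beta|,k)=(1,1)$ with $\boundary M_0$ a torus; $(2,1)$ with $\boundary M_0$ a genus-$2$ surface; and $(2,2)$ with $\boundary M_0$ a torus. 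In each case every component of $R(\gamma_0)$ is $\nil$-incompressible (hence incompressible inside the surface $\boundary M_0$) and $\nil$-norm-minimizing in $M_0$; this, together with the fact that the meridians $\gamma_{e_\beta}$ lie where the drilled arcs were, pins the partition of $\boundary M_0$ into $R_\pm(\gamma_0)$ and $A(\gamma_0)$ down to a short explicit list (in particular it kills the spurious graph types --- e.g. two separate loops, for which $R(\gamma_0)$ would contain an annulus with non-parallel boundary circles in a torus --- and it fixes the number of sutures). Undoing the drilling translates each surviving partition back to $\boundary M$, locating the edges of $e_\beta$ relative to the disc and annulus components of $R(\gamma)$; this yields conclusions (2), (3) (including the alternative position for the second edge), and (4). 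In case~(4), $M$ is simply connected by the first step and has two spherical boundary components, hence is $S^2\times[0,1]$, with the edges of $e_\beta$ isotopic to fibers.

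The hard part is the last step: extracting the precise four-item list from ``$\nil$-taut subsurface of a surface of genus $\leq 2$, cut out by the given sutures'' is a somewhat tedious case analysis, and several \emph{ad hoc} arguments --- of which the simplest is that an embedded annulus in a torus has parallel boundary circles --- are needed to eliminate the spurious configurations and to determine $|\gamma|$ in each case.
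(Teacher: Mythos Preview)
Your conversion-to-$\nil$-tautness approach is genuinely different from the paper's direct argument with $e_\beta$-tautness, and the multigraph encoding is a nice way to bound the number of boundary spheres. But there are two real problems.

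First, the sentence ``In case~(4), $M$ is simply connected by the first step'' is wrong. Your first step gives only $b_1(M)=0$ (via $H^1(M)=0$), not $\pi_1(M)=1$; a twice-punctured lens space has $H_2(M,\partial M)=0$ and two sphere boundary components but is not $S^2\times[0,1]$. The paper proves $M=S^2\times[0,1]$ in this case by using the band $D_\beta$: the frontier of $\eta(D_\beta\cup\partial M)$ is a sphere in $M-e_\beta$, which by $e_\beta$-irreducibility bounds a ball, forcing the product structure. You have discarded the band entirely, and I do not see how to recover this conclusion from the $\nil$-tautness of $(M_0,\gamma_0)$ alone.

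Second, your mechanism for killing the ``two separate loops'' graph is suspect. In that configuration each torus $T_i\subset\partial M_0$ carries the meridian $\gamma_{e_i}$ together with the original curves $\gamma|_{S_i}$; every original suture on $S_i$ either bounds a disc on $T_i$ (if the endpoints of $e_i$ lie on the same side) or is parallel to $\gamma_{e_i}$ (if they lie on opposite sides). In particular, in the simplest sub-case where each $S_i$ has one suture separating the endpoints of $e_i$, all sutures on $T_i$ are parallel and $R(\gamma_0)|_{T_i}$ consists of annuli with parallel boundary circles --- so your stated reason (``an annulus with non-parallel boundary circles in a torus'') does not apply. What actually rules this graph out is again the band: running around $\partial D_\beta$ shows that each edge of $e_\beta$ must join the sphere containing $c_2$ to the sphere containing $c_6$, so in the two-sphere case both edges connect $S_1$ to $S_2$. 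Once you restore the band to your argument, both this exclusion and the identification $M\cong S^2\times[0,1]$ follow as in the paper, and the rest of your case analysis (which you have admittedly only sketched) becomes tractable. As written, though, the proposal has genuine gaps at exactly the two places where the band is needed.
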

\begin{proof}
Since $H_2(M,\boundary M) = 0$, by the ``half lives, half dies'' theorem of algebraic topology, the boundary of $M$ must be the union of spheres. Since $M$ is $e_{\beta}$-irreducible, if there is a component of $\boundary M$ that is disjoint from $e_\beta$, then $\boundary M$ must be that sphere and $|e_\beta| = 0$. Since $(M,\gamma,e_\beta)$ is $e_\beta$-taut, this implies conclusion (1). Assume, therefore, that $|e_\beta| \in \{1,2\}$ and that each component of $\boundary M$ is adjacent to a component of $e_\beta$. 

If $M$ contains a sphere intersecting $e_\beta$ exactly once, then since $R(\gamma)$ is $e_\beta$-incompressible, $\boundary M$ must be the union of two spheres, $|e_\beta| = 1$ and $|\gamma| = 0$. This contradicts the definition of banded sutured manifold. Thus, each component of $\boundary M$ contains at least two endpoints of $e_\beta$. Furthermore, each disc component of $R(\gamma)$ must contain an endpoint of $e_\beta$ since $R(\gamma)$ is $e_\beta$-incompressible. We conclude that $\boundary M$ has no more than two components. 

If $\boundary M$ has two components, then each of them must contain two endpoints of $e_\beta$ and so $|e_\beta| = 2$. In each component of $\boundary M$ the endpoints of $e_\beta$ are contained in disc components of $R(\gamma)$.  Each component of $D_\beta \cap \boundary M$ crosses $\gamma$ exactly once and so each component of $\boundary M$ contains exactly one suture. The frontier of a regular neighborhood of $D_\beta \cup \boundary M$ is a sphere in $M - e_\beta$ which must bound a 3--ball in $M - e_\beta$. Thus, $M = S^2 \times [0,1]$ and the components of $e_\beta$ are fibers. This is conclusion (4).

We may assume, therefore, that $\boundary M$ is a single sphere. Suppose that $R_-$ (say) has two disc components $R_1$ and $R_2$. The discs $R_1$ and $R_2$ must each contain an endpoint $v_1$ and $v_2$, respectively, of $e_\beta$. Since $(M,\gamma)$ is $e_\beta$-taut, each component of $e_\beta$ has one endpoint in $R_-$ and one in $R_+$. Thus, $v_1$ and $v_2$ belong to different components of $e_\beta$. (Consequently, $|e_\beta| = 2$.) Let $w_1$ and $w_2$ be the other endpoints of $e_\beta$ (lying in $R_+$) so that $v_i$ and $w_i$ are endpoints of the same edge of $e_\beta$. 

If $R_+$ has a disc component, then one of $w_1$ or $w_2$ must lie in it. Without loss of generality, suppose it is $w_1$. A component of $\boundary D_\beta \cap \boundary M$ joins $w_1$ to $v_2$ and crosses $\gamma$ once. The union of the disc component of $R_+$ containing $w_1$ with $R_2$ is a sphere and so $\boundary M$ contains more than one component, a contradiction. This implies that if $R_\pm$ contains two discs, then $R_\mp$ cannot contain any. Since $\boundary M$ is a sphere, $R(\gamma)$ contains exactly two discs. Hence, all other components of $R(\gamma)$ are annuli. We see, therefore, that if $|\gamma|$ is even then $R_\pm$ contains two discs and all other components of $R(\gamma)$ are annuli and if $|\gamma|$ is odd then each of $R_-$ and $R_+$ contains a disc and all other components of $R(\gamma)$ are annuli.

If $|e_\beta| = 1$, then since one endpoint of $e_\beta$ is in $R_-$ and the other is in $R_+$ and since each disc component of $R(\gamma)$ contains an endpoint, conclusion (2) holds. Assume, therefore, that $|e_\beta| = 2$. Suppose, for the moment, that some disc component $D$ of $R(\gamma)$ contains two endpoints of $e_\beta$. These endpoints must belong to different components of $e_\beta$. Each component of $\boundary D_\beta \cap \boundary M$ joins endpoints of $e_\beta$ and crosses $\gamma$ once. Thus, the other endpoints of $e_\beta$ are in the component of $R(\gamma)$ adjacent to $D$. This component must, therefore be a disc and so (3) holds. We may assume, therefore, that each disc component of $R(\gamma)$ contains exactly one endpoint of $e_\beta$. 

Suppose that $|\gamma|$ is odd. Let $D_\pm$ be the disc component of $R_\pm$. Each contains an endpoint $v_\pm$ of $e_\beta$. If $v_-$ and $v_+$ do not belong to the same arc of $e_\beta$, then the other endpoint of the arc containing $v_+$ lies in the component of $R(\gamma)$ adjacent to $D_-$, since each component of $D_\beta \cap M$ intersects $\gamma$ exactly once. But this component must lie in $R_+$ and so a component of $e_\beta$ has both endpoints in $R_+$, a contradiction. Thus, $v_-$ and $v_+$ are endpoints of the same component of $e_\beta$, and the fact that each component of $D_\beta \cap \boundary M$ intersects $\gamma$ exactly once immediately implies conclusion (3).

Suppose that $|\gamma|$ is even. Then both disc components of $R(\gamma)$ lie, without loss of generality, in $R_-$. Each contains exactly one endpoint of $e_\beta$. All other components of $R_-$ are annuli disjoint from $\beta$. Thus, $x_{e_\beta}(R_-) = 0$. The surface $R_+$ is the union of annuli, one or two of which contain the two endpoints of $e_\beta \cap R_+$. Thus, $x_{e_\beta}(R_+) = 2$. The union $R_- \cup A(\gamma)$ is a surface with boundary equal to $\boundary R_+$ and homologous to $R_+$ in $H_2(M,\boundary R_+)$. Consequently, $R_+$ is not $x_{e_\beta}$-minimizing, and, therefore, not $e_\beta$-taut. This contradicts our hypotheses. Hence, $|\gamma|$ cannot be even and so each of $R_\pm$ contains a single disc.
\end{proof}

\begin{lemma}\label{Lem: Single Sphere}
Suppose that $(M,\gamma,\beta)$ is a connected band-taut manifold such that $H_2(M,\boundary M) = 0$, $\boundary M$ is connected and non-empty, and $\beta - c_\beta = e_\beta$. Then the number of sutures $|\gamma|$ is odd and there is an edge component $e$ of $e_\beta$ such that $(M,\gamma,e)$ is $e$-taut. Furthermore, if $(M,\gamma)$ is not $\nil$-taut, then either $|\gamma| \geq 3$ or $M$ is a non-trivial rational homology ball. 
\end{lemma}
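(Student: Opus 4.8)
\section*{Proof proposal}

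The plan is to extract almost everything from Lemma~\ref{Lem: trivial homology}. Since $\boundary M$ is connected, option (4) of that lemma (two boundary spheres) does not occur, so we are in case (1), (2) or (3); in each of these $\boundary M$ is a single $2$--sphere, and, by Lemma~\ref{Lem: trivial homology} together with its proof, $R(\gamma)$ consists of exactly one disc component $D_-$ of $R_-$, exactly one disc component $D_+$ of $R_+$, and $|\gamma|-1$ annulus components, with $|\gamma|$ odd. This already gives the first assertion. If $e_\beta = \nil$ we are in case (1), where $M$ is a ball carrying a single suture, $(M,\gamma)$ is $\nil$-taut, and the remaining assertions are immediate or vacuous; so from now on assume $e_\beta \neq \nil$.

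To produce the edge $e$: if $|e_\beta| = 1$, then $e_\beta$ is a single edge $e$ and, since $(M,\gamma,\beta)$ is band-taut, $(M,\gamma,e) = (M,\gamma,\beta - c_\beta)$ is $e$-taut; take this $e$. If $|e_\beta| = 2$, let $e$ be the edge of $e_\beta$ whose endpoints lie in the disc components $D_-$ and $D_+$ of $R(\gamma)$, as furnished by case (3) of Lemma~\ref{Lem: trivial homology}. I would then verify (T0)--(T2) for $(M,\gamma,e)$ directly. Conditions (T0) and (T1) are automatic: $e$ meets $\boundary M$ only inside $R(\gamma)$, and $M$, being $e_\beta$-irreducible, is irreducible and hence $e$-irreducible. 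For (T2), the arc $e$ meets $R(\gamma)$ exactly in its two endpoints, one in $R_-$ and one in $R_+$, so the sign condition is vacuous; and $x_e(R(\gamma)) = 0$, since each of $D_-, D_+$ contributes $\max(0, -1+1) = 0$ while the annulus components are disjoint from $e$, so $R(\gamma)$, $R_-$, and $R_+$ are all $x_e$-minimizing.

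The substantive point is that $R(\gamma)$ is $e$-incompressible. Since the disc components carry no essential curves, an $e$-compressing disc $D$ for $R(\gamma)$ would have $\boundary D$ an essential curve on some annulus component $\alpha$ of $R(\gamma)$, hence a curve on $\boundary M = S^2$ parallel to a suture, which therefore separates $\boundary M$ into two discs, one containing $D_-$ and the other containing $D_+$. Let $E$ be the disc cut off by $\boundary D$ that contains $D_-$. Since $M$ is irreducible, the $2$--sphere $D \cup E$ bounds a $3$--ball $B \subset M$ with $B \cap \boundary M = E$. The endpoint of $e$ lying in $D_- \subset E$ makes $e$ enter $B$, whereas the other endpoint of $e$ lies in $D_+$, which is disjoint from $E$ and so not in $B$; since the interior of $e$ is disjoint from $\boundary M$, the arc $e$ must exit $B$ through $D$, so $D \cap e \neq \nil$, contradicting that $D$ is an $e$-compressing disc. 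Hence $(M,\gamma,e)$ is $e$-taut. I expect this argument---promoting a compression to a ball via irreducibility and observing that $e$ is forced to puncture it---to be the crux.

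Finally, for the concluding dichotomy: $\boundary M = S^2$ and $H_2(M,\boundary M) = 0$ force $b_1(M) = 0$, so $M$ is a rational homology $3$--ball. If $|\gamma| = 1$, then $R(\gamma)$ is the union of two discs, each of which is $\nil$-taut, and $M$ is irreducible, so $(M,\gamma)$ is $\nil$-taut. Therefore, if $(M,\gamma)$ is not $\nil$-taut, then $|\gamma| \neq 1$, and since $|\gamma|$ is odd this gives $|\gamma| \geq 3$; in particular the stated alternative holds (via its first clause).
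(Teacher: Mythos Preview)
Your argument hinges on the claim that ``$M$, being $e_\beta$-irreducible, is irreducible,'' and you invoke this both for (T1) and in the incompressibility argument (to get the ball $B$) and again in the final paragraph. This implication is not justified and is false in general: for example, a punctured $L(p,q)\#L(p',q')$ has $\boundary M=S^2$ and $H_2(M,\boundary M)=0$, yet is reducible; an arc threading the connect-sum sphere could still make $M-e_\beta$ irreducible. So you cannot conclude $M$ is irreducible from band-tautness alone.

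Two of the three uses are easily repaired. For incompressibility, you only need that the disc $D$ \emph{separates} $M$, and this follows directly from $H_2(M,\boundary M)=0$; then $e$, having one endpoint on each side, must cross $D$. For the final dichotomy, you already proved $M$ is a rational homology ball; if $|\gamma|=1$ and $(M,\gamma)$ is not $\nil$-taut, the only possible failure is irreducibility, so $M\neq B^3$ and the \emph{second} clause holds---you cannot always get the first clause as you claim. The genuine remaining issue is $e$-irreducibility when $|e_\beta|=2$: a sphere in $M-e$ may still meet the other edge $e'$. The paper handles this by minimizing $|S\cap D_\beta|$ and using an innermost-disc argument on the band to make $S$ disjoint from all of $e_\beta$, then invoking $e_\beta$-irreducibility. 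Your route through ``$M$ irreducible'' does not work here without that extra step.
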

\begin{proof}
Lemma \ref{Lem: trivial homology} implies that either $(M,\gamma)$ is a 3--ball with a single suture in its boundary or one of the following occurs:
\begin{itemize}
\item $|e_\beta| = 1$, and $R_-$ and $R_+$ each contain a single disc. The intersection of these discs with $e_\beta$ is the endpoints of an edge $e$ of $e_\beta$.
\item $|e_\beta| = 2$, $R_-$ and $R_+$ each contain a single disc. Unless $|\gamma| = 1$, there is a component $e$ of $e_\beta$ such that the intersection of the disc components of $R(\gamma)$ with $e_\beta$ is the endpoints of $e$. If $|\gamma| = 1$, then that intersection contains all the endpoints of $e_\beta$.
\end{itemize}
Let $e$ be an edge of $e_\beta$ having endpoints in the disc components of $R(\gamma)$. Since $e$ does not have both endpoints in $R_\pm$, and since $R(\gamma)$ contains exactly two disc components, $\gamma$ consists of an odd number of parallel sutures on the sphere $\boundary M$. 

We claim that $(M,\gamma,e)$ is $e$-taut. Since $R(\gamma)$ has two disc components and $|\gamma|$ is odd, $R_-$ and $R_+$ are each $e$-minimizing. Suppose, first, that $S$ is an $e$-reducing sphere. Choose $S$ to minimize $|S \cap D_\beta|$. An innermost circle argument shows that $S \cap D_\beta$ is empty, and so $S$ is disjoint from $e_\beta$. Since $M$ is $e_\beta$-taut, $S$ bounds a ball disjoint from $e$, a contradiction. Suppose, therefore, that $S$ is a compressing disc for $R(\gamma) - e$ that is disjoint from $e$. Since $\boundary M$ is a 2-sphere, there is a 2-sphere in $M$ intersecting $e$ a single time. Hence, $M$ contains a non-separating $S^2$, contradicting the assumption that $H_2(M,\boundary M) = 0$. Thus, $(M,\gamma, e)$ is $e$-taut. 

If $M$ is a 3-ball with a single suture in its boundary, then $(M,\gamma)$ is $\nil$-taut. Thus, either $|\gamma| \geq 3$ or $M$ is not a 3-ball. The relative long exact sequence for $H_2(M,\boundary M)$ shows that $H_2(M) = 0$ and that $H_1(M)$ is isomorphic to $H_1(M,\boundary M)$. Duality for manifolds with boundary shows that $H^1(M) = 0$ since $H_2(M,\boundary M) = 0$. The universal coefficient theorem shows that $H^1(M)$ is isomorphic to the direct sum of the free part of $H_1(M)$ and the torsion part of $H_0(M)$. Thus, $H_1(M)$ is finite. This implies that if $M \neq B^3$, then $M$ is a non-trivial rational homology ball, as desired.
\end{proof}

The presence of a parameterizing surface can give us more information.

\begin{lemma}\label{Lem: Lens Space}
Suppose that $(M,\gamma,e)$ is a connected $e$-taut sutured manifold with $e$ an edge. Assume that  $H_2(M,\boundary M) = 0$. Suppose that $Q \subset M$ is a parameterizing surface having no compressing or $e$-boundary compressing disc. If $\mu(Q) \geq 1$, then one of the following holds:
\begin{enumerate}
\item $M$ is a 3--ball, $|\gamma| = 1$ and $e$ is boundary-parallel by a component of $Q$.
\item $M$ is a punctured lens space and $e$ is a core of $M$.
\item $I(Q) \geq 2\mu(Q)$.
\end{enumerate}
\end{lemma}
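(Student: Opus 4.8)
## Proof proposal for Lemma \ref{Lem: Lens Space}

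The plan is to analyze the structure of $M$ directly, using the hypothesis $H_2(M,\boundary M)=0$ together with $e$-tautness, and then to run a standard combinatorial count on the parameterizing surface $Q$ to extract the inequality $I(Q)\geq 2\mu(Q)$ whenever neither of the two exceptional conclusions holds. First I would apply Lemma \ref{Lem: trivial homology} and Lemma \ref{Lem: Single Sphere} (thinking of $e$ as playing the role of $e_\beta$ with $|e_\beta|=1$ and $c_\beta=\nil$): since $(M,\gamma,e)$ is $e$-taut with $H_2(M,\boundary M)=0$, the boundary $\boundary M$ is a union of one or two spheres, each sphere carries exactly one disc component of $R_-$ and one of $R_+$, the endpoints of $e$ lie in these disc components, and $|\gamma|$ is odd on each boundary sphere. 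If $\boundary M$ is two spheres then $M\homeo S^2\times[0,1]$ with $e$ a fiber; in that case $M$ is a product and a short argument (using $\mu(Q)\geq 1$ and that $Q$ has no compressing or $e$-boundary compressing disc) forces conclusion (1) after noting $M$ is really just a $3$--ball once we remember only one boundary sphere can occur here — actually the two-sphere case is excluded because then $e$ would be boundary-parallel giving a compressing disc scenario; I would handle it by observing a meridian disc of $e$ caps off to an $e$-boundary compressing disc for any component of $Q$ meeting $\boundary\eta(e)$. So assume $\boundary M$ is a single sphere with $|\gamma|$ odd.

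Next I would pin down $M$ homologically. As in the proof of Lemma \ref{Lem: Single Sphere}, the long exact sequence and duality give $H_2(M)=0$, $H^1(M)=0$, and $H_1(M)$ finite, so $\wihat M := M\cup(\text{$3$--ball along }\boundary M)$ is a closed orientable rational homology sphere, and $e$ becomes a knot in $\wihat M$ whose meridian is isotopic to the belt circle; crucially, since $\boundary M$ is a sphere there is a $2$--sphere in $M$ meeting $e$ once, so $[e]$ generates a free... no: since $H_2(M,\boundary M)=0$ there is no non-separating sphere, hence $e$ is not null-homologous only if $H_1(M)\neq 0$. If $H_1(M)=0$ then $M=B^3$ (by the Poincaré conjecture, or by elementary $3$--manifold topology since $M$ is irreducible with sphere boundary and trivial $\pi_1$ is not needed — $M$ irreducible with $H_*(M)=H_*(B^3)$ and $\boundary M=S^2$ forces $M=B^3$), and then $e$ is a properly embedded arc in a ball which, being $e$-taut (so $e$-incompressible in the appropriate sense), together with $\mu(Q)\geq 1$ and the absence of compressing or $e$-boundary compressing discs for $Q$, is boundary-parallel with the parallelism realized by a component of $Q$: this is conclusion (1), and the suture count forces $|\gamma|=1$. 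If $H_1(M)\neq 0$, I claim $M$ is a punctured lens space and $e$ is a core: drilling $e$ out, $M-\inter\eta(e)$ has a $2$--sphere boundary component (from $\boundary M$) and a torus boundary component (from $\boundary\eta(e)$); capping the sphere gives a knot exterior in a closed rational homology sphere, and the key point is that $e$-tautness of $R(\gamma)$ says precisely that the disc components of $R_\pm$, banded along $A(\gamma)$, give an $e$-incompressible ``Seifert-like'' structure — I would argue that $M-\inter\eta(e)$ admits a sutured structure with $R_\pm$ annuli, hence is a product $T^2\times I$ or has a compressible boundary torus, and in the non-product case one gets a compressing disc; running this against ``$Q$ has no compressing or $e$-boundary compressing disc'' forces $M-\inter\eta(e)$ to be a solid torus, i.e. $\wihat M$ is a lens space (or $S^3$, already handled) and $e$ is a core. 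This is conclusion (2).

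The main combinatorial work — and the step I expect to be the main obstacle — is the remaining case: assuming $M$ is neither a ball with $e$ boundary-parallel by $Q$ nor a punctured lens space with $e$ a core, derive $I(Q)\geq 2\mu(Q)$. Here I would look at $Q$ inside $M$ with $M$ now known to be ``small'' (essentially $B^3$ with an odd number of sutures and $e$ a non-boundary-parallel arc, or a punctured lens space with $e$ not a core). Recall $I(Q)=\mu(Q)+|\boundary Q\cap\gamma|-2\chi(Q)$. The strategy is the standard one from \cite{MSc3, L1}: since $R(\gamma)$ consists of two discs plus annuli and $|\gamma|$ is odd, and since $Q$ has no compressing or $e$-boundary compressing disc, I analyze the arcs and circles of $\boundary Q$ on $\boundary M$ and the intersection $Q\cap D$ with a meridian disc $D$ of $e$ (so $|Q\cap D|$ counts $\mu(Q)$ with multiplicity, roughly). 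Cutting $M$ along $D$ yields a $3$--ball with a prescribed sutured structure in whose boundary each arc of $\boundary Q$ that traversed $e$ now has two endpoints on copies of $D$; an Euler-characteristic/index count on the pieces of $Q$ cut along $D$, combined with the fact that no piece is a compressing or boundary-compressing disc (else $Q$ would have one, contradiction) and that each spanning arc through $\gamma$ contributes positively, gives that each of the $\mu(Q)$ traversals of $e$ forces the index to increase by at least $2$; summing yields $I(Q)\geq 2\mu(Q)$. The delicate point is ensuring that the local contributions really are bounded below by $2$ per traversal — this requires carefully ruling out ``cheap'' pieces (discs, once-punctured discs meeting $\gamma$ minimally) using precisely the exclusion of conclusions (1) and (2), i.e.\ that $e$ is not boundary-parallel and not a core, so that $Q$ cannot follow $e$ ``for free.''
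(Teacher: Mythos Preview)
Your approach inverts the paper's logic and this creates a genuine gap. You try first to classify $(M,e)$ topologically (ball, or punctured lens space with $e$ a core, or neither) and then derive the index inequality in the leftover case. But the trichotomy you set up is not valid: from $H_1(M)\neq 0$ you \emph{cannot} conclude that $M$ is a punctured lens space with $e$ a core. A punctured rational homology sphere that is not a lens space (e.g.\ a punctured Brieskorn sphere) satisfies all the hypotheses of the lemma with $|\gamma|=1$, and your sketched argument (``$M-\inter{\eta}(e)$ has $R_\pm$ annuli, hence is a product or has compressible torus boundary, which against the $Q$-hypothesis forces a solid torus'') does not go through: the torus $\boundary\eta(e)$ can be incompressible without producing any compressing or $e$-boundary compressing disc for $Q$. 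So your case analysis simply does not cover all possibilities, and the ``remaining case'' in your final paragraph is not the one you think it is.

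The paper's argument runs in the opposite direction and is much shorter. The only structural input is that $\boundary M$ is a single sphere with an odd number of parallel sutures (from Lemmas~\ref{Lem: trivial homology} and~\ref{Lem: Single Sphere}). The key observation you are missing is that, after discarding components of $Q$ disjoint from $e$, the hypothesis ``no $e$-boundary compressing disc'' forces every arc of $\boundary Q\cap\boundary M$ to join the two endpoints of $e$; there are exactly $\mu(Q)$ such arcs and each crosses at least $|\gamma|$ sutures. When $|\gamma|\geq 3$ this gives $|\boundary Q\cap\gamma|\geq 3\mu(Q)$, while $Q$ has at most $\mu(Q)$ components so $-2\chi(Q)\geq -2\mu(Q)$, and the inequality $I(Q)\geq 2\mu(Q)$ drops out immediately. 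When $|\gamma|=1$ one looks at disc components of $Q$: a disc meeting $\gamma$ once is a cancelling disc for $e$, forcing $(M,\gamma)$ to be $\nil$-taut and hence $M=B^3$ with $e$ boundary-parallel via that disc (conclusion~(1)); a disc meeting $\gamma$ more than once compresses the frontier of $\eta(\boundary M\cup e)$ and exhibits $M$ as a punctured lens space with core $e$ (conclusion~(2)); and if no component of $Q$ is a disc then $-2\chi(Q)\geq 0$ and again $I(Q)\geq \mu(Q)+|\boundary Q\cap\gamma|=2\mu(Q)$. Conclusions~(1) and~(2) are thus \emph{derived from} specific disc components of $Q$, not from an a priori classification of $M$; there is no need to cut along a meridian disc of $e$ or to run an Euler-characteristic count on pieces.
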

\begin{proof}
Since $Q$ is a parameterizing surface, no component has negative index. Removing all components of $Q$ that are disjoint from $e$ does not increase index. Since $Q$ is incompressible, no component of $\boundary Q$ is an inessential circle in $\boundary M - e$. Since $\boundary M$ is the union of 2--spheres and since $Q$ has no $e$-boundary compressing disc, each arc component of $\boundary Q \cap \boundary M$ joins distinct endpoints of $e$. Since $\mu(Q) \geq 1$, there is at least one such arc component. Thus, no component of $\boundary Q$ is an essential circle in $\boundary M - e$. Therefore, each component of $\boundary Q \cap \boundary M$ is an arc joining the endpoints of $e$. Isotope $Q$ so as to minimize $\boundary Q \cap \gamma$. This does not increase $I(Q)$. Since $e$ is an edge and $(M,\gamma,e)$ is $e$-taut, $|\gamma|$ must be odd.

\textbf{Case 1:} $|\gamma| = 1$. 

If some component $Q_0$ of $Q$ is a disc intersecting $\gamma$ once, then it is a cancelling disc for $e$. This implies that $(M,\gamma)$ is $\nil$-taut. $M$ is, therefore, a 3--ball and $e$ is boundary parallel by a component of $Q$. 

Suppose, therefore, that some component of  $Q$ is a disc intersecting $\gamma$ more than once (and, therefore, running along $e$ more than once). Compressing the frontier of $\eta(\boundary M \cup e)$ using that disc produces a 2--sphere which must bound a 3--ball. Hence, $M$ is a punctured lens space with core $e$. 

If no component of $Q$ is a disc, then no component of $Q$ has positive euler characteristic, and so $I(Q) \geq \mu(Q) + |\boundary Q \cap \gamma| = 2\mu(Q)$.

\textbf{Case 2:} $|\gamma| \geq 3$.

There are at most $\mu(Q)$ components of $Q$ and so $-2\chi(Q) \geq -2\mu(Q)$. We have, therefore, $I(Q) \geq \mu(Q) + |\boundary Q \cap \gamma| - 2\mu(Q)$. Since $|\gamma| \geq 3$ and since all sutures are parallel, each arc of $\boundary Q \cap \boundary M$ intersects $\gamma$ at least 3 times. Thus,
\[
I(Q) \geq \mu(Q) + 3\mu(Q) - 2\mu(Q) = 2\mu(Q)
\]
as desired.
\end{proof}

The next theorem is the key result of the paper. It applies the combinatorics of the previous lemmas to the last term of a band-taut hierarchy.

\begin{theorem}\label{Thm: Main Theorem}
Suppose that $(M,\gamma,\beta)$ is a band taut sutured manifold with $e_\beta = \beta - c_\beta$. Assume that $e_\beta$ has components $e_1$ and $e_2$. Let $Q_1$ and $Q_2$ be parameterizing surfaces in $(M,\gamma,e_\beta)$ with $Q_1 \cap e_2 = Q_2 \cap e_1 = \nil$. We allow the possibility that $e_i = Q_i = \nil$ for $i \in\{1,2\}$.

Then one of the following occurs:
\begin{enumerate}
\item Some $Q_i$ has a compressing or $e_i$-boundary compressing disc in $(M,\gamma,e_i)$.
\item $|e_\beta| = 2$ and $M$ contains an $S^2$ intersecting each edge of $e_\beta$ exactly once.
\item For some $i$, $(M,e_i) = (M'_0,\beta'_0) \# (M'_1,\beta'_1)$ where $M'_1$ is a lens space and $\beta'_1$ is a core of $M'_1$.
\item $(M,\gamma)$ is $\nil$-taut. The arc $c_\beta$ can be properly isotoped onto a branched surface $B(\mc{H})$ associated to a taut sutured manifold hierarchy $\mc{H}$ for $M$. Also, a proper isotopy of $c_\beta$ in $M$ takes $c_\beta$ to an arc disjoint from the first decomposing surface of $\mc{H}$. That first decomposing surface can be taken to represent $\pm y$ for any given non-zero $y \in H_2(M,\boundary M)$.
\item Either 
\[I(Q_1) \geq 2\mu(Q_1) \text{ or } I(Q_2) \geq 2\mu(Q_2).\]
\end{enumerate}
\end{theorem}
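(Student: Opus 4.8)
The plan is to pass to a band-taut sutured manifold hierarchy and read off the alternatives from its terminal term. Apply Theorem \ref{Thm: Hierarchies Exist} to $(M,\gamma,\beta)$ together with $Q_1,Q_2$ to obtain a band-taut hierarchy
\[
\mc{H}\colon (M,\gamma)\stackrel{S_1}{\to}(M_1,\gamma_1)\stackrel{S_2}{\to}\cdots\stackrel{S_n}{\to}(M_n,\gamma_n),
\]
respecting each $Q_i$, with $H_2(M_n,\boundary M_n)=0$ and, if $H_2(M,\boundary M)\neq 0$, with $S_1$ representing $\pm y$. As in \cite[Section 7]{MSc3}, the descendant $Q_i^n$ of $Q_i$ satisfies $I(Q_i)\ge I(Q_i^n)$, is disjoint from the side of the terminal band descending from $e_{3-i}$, and runs over the side descending from $e_i$ at least $\mu(Q_i)$ times (no such traversal is consumed by the arc-cancellations $\mc{H}$ performs, since these occur in $3$--ball components of their own); also $\beta_n$ is still a core together with its at most two sides, one descending from each $e_i$. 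Since a parameterizing surface has non-negative index, conclusion (5) holds immediately for any $i$ with $\mu(Q_i)=0$, so we may assume $\mu(Q_i)\ge 1$ when $e_i\neq\nil$. We run this for each non-zero $y$ (the argument being vacuous about conclusion (4) when $H_2(M,\boundary M)=0$) and aim to show that either one of conclusions (1), (2), (3), (5) results -- none of which mentions $y$ -- or every resulting terminal term is $\nil$-taut, in which case $(M,\gamma)$ is $\nil$-taut and these hierarchies give conclusion (4).

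Suppose $(M_n,\gamma_n)$ is $\nil$-taut. No component of $M$ is a solid torus disjoint from $(\beta-c_\beta)\cup\gamma$, since its torus boundary would be a $(\beta-c_\beta)$-compressible component of $R(\gamma)$, contradicting band-tautness; so Corollary \ref{Cor: Tautness Up} and Theorem \ref{Thm: Tautness Up} apply and show that $(M,\gamma)$ is $\nil$-taut and that $\mc{H}$, with $\beta$ forgotten, is a taut sutured manifold hierarchy for $M$. By Theorem \ref{Thm: Hierarchies Exist}(4),(5), $c_\beta$ can be properly isotoped onto $B(\mc{H})$ and, separately, to an arc disjoint from $S_1$, which represents $\pm y$: this is conclusion (4).

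So assume $(M_n,\gamma_n)$ is not $\nil$-taut, and fix a non-$\nil$-taut component $C$ of $M_n$; it is $e_{\beta_n}|_C$-taut, $H_2(C,\boundary C)=0$, and $\boundary C\neq\nil$ (a closed component would be disjoint from $\beta_n$, hence $\nil$-taut). Lemma \ref{Lem: trivial homology} classifies $C$. Its types (1) and (4) are themselves $\nil$-taut, so $C$ is of neither; but if type (4) occurs for \emph{any} component of $M_n$, then $|e_\beta|=2$ and a fibre $2$--sphere of the corresponding $S^2\times[0,1]$, isotoped back through $\mc{H}$ disjoint from every $S_i$, yields a $2$--sphere in $M$ meeting each $e_i$ once, which is conclusion (2). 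Hence we may assume no component of $M_n$ is of type (4); then $C$ has a single $2$--sphere boundary component, and Lemma \ref{Lem: Single Sphere} provides a side $e\subset e_{\beta_n}|_C$ with $(C,\gamma_C,e)$ $e$-taut. After relabelling, $e$ descends from $e_1$; set $Q^C=Q_1^n\cap C$, a parameterizing surface in $(C,\gamma_C,e)$ with $\mu(Q^C)\ge\mu(Q_1)$ and $I(Q_1)\ge I(Q_1^n)\ge I(Q^C)$.

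Now apply Lemma \ref{Lem: Lens Space} to $(C,\gamma_C,e)$ and $Q^C$. If $Q^C$ has a compressing or $e$-boundary compressing disc, push it stepwise back through $\mc{H}$ with Lemma \ref{Lem: compressing up}, at each stage letting the descendant of $e_1$ play the role of the ``$\beta$'' there (legitimate because $Q_1$ avoids $e_2$ and only disjointness from $e_1$ is required of the disc), producing a compressing or $e_1$-boundary compressing disc for $Q_1$ in $(M,\gamma,e_1)$: conclusion (1). Otherwise Lemma \ref{Lem: Lens Space} gives one of its three outcomes; outcome (1) is excluded (it would make $C$ a $\nil$-taut $3$--ball, contradicting Lemma \ref{Lem: Single Sphere}). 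In outcome (2), $C$ is a punctured lens space with $e$ a core, and tracing its connected-sum sphere back through the taut decompositions of $\mc{H}$ -- each $S_i$ being incompressible and isotopable off that sphere -- exhibits $(M,e_1)$ as $(M'_0,\beta'_0)\#(M'_1,\beta'_1)$ with $M'_1$ a lens space and $\beta'_1$ a core: conclusion (3). In outcome (3), $I(Q^C)\ge 2\mu(Q^C)\ge 2\mu(Q_1)$, whence $I(Q_1)\ge 2\mu(Q_1)$: conclusion (5). The places demanding genuine care are the bookkeeping of $I$, $\mu$, and the band structure across the arc-cancellations and amalgamations that $\mc{H}$ silently performs -- patterned on \cite[Section 7]{MSc3} and \cite{MSc4} but to be re-checked in the banded setting -- and, the main obstacle, the pull-back of the connected-sum sphere in outcome (2), since this is the one conclusion that cannot be extracted from the combinatorics of the terminal term and must instead be argued from the global geometry of the hierarchy.
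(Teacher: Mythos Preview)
Your approach is essentially the paper's: build the band-taut hierarchy from Theorem~\ref{Thm: Hierarchies Exist}, apply Lemmas~\ref{Lem: trivial homology}--\ref{Lem: Lens Space} to the terminal component carrying the band, and read off the five alternatives. Your reorganization around the dichotomy ``$(M_n,\gamma_n)$ is $\varnothing$-taut or not'' is equivalent to the paper's direct case split on Lemma~\ref{Lem: trivial homology}, once one observes (as you implicitly use but do not state) that every component of $M_n$ other than the one containing $c_{\beta_n}$ is automatically $\varnothing$-taut: either it carries no arc of $\beta_n-c_{\beta_n}$, or it is one of the $3$--balls housing a cancelled arc.

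One slip: type~(4) of Lemma~\ref{Lem: trivial homology} is \emph{not} $\varnothing$-taut, since $S^2\times[0,1]$ is reducible. Fortunately your argument does not actually need this claim, because you dispose of type~(4) via conclusion~(2) anyway. You should simply drop the assertion and let the case split read: type~(1) is $\varnothing$-taut so $C$ is not of that type; if $C$ is of type~(4) we get conclusion~(2); hence $C$ is of type~(2) or~(3). Similarly, your exclusion of outcome~(1) of Lemma~\ref{Lem: Lens Space} is by contradiction with your standing assumption that $C$ is not $\varnothing$-taut, not with Lemma~\ref{Lem: Single Sphere} itself.

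On the pushbacks: for conclusion~(2) no isotopy through $\mc{H}$ is needed---the fibre sphere already sits in $M_n\subset M$ and meets each $e_i$ exactly once there, since the remaining pieces of the $e_i$ lie in other components. For conclusion~(3), you are right that the paper's ``If~(i) happens then we have conclusion~(3)'' elides the passage from a punctured lens space component of $M_n$ with core a sub-arc of $e_i$ to a genuine connected-sum splitting of $(M,e_i)$; the paper treats this as routine, and your flagging it as the point requiring care is appropriate rather than a defect in your outline.
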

\begin{proof}
By Theorem \ref{Thm: Hierarchies Exist}, there exists a band-taut hierarchy
\[
\mc{H}: (M,\gamma,\beta) \stackrel{S_1}{\to} \hdots \stackrel{S_n}{\to} (M_n,\gamma_n,\beta_n)
\]
respecting $Q_1$ and $Q_2$ with $S_1$ representing $\pm y$. By that theorem, there is a proper isotopy of $c_\beta$ in $M$ to an arc disjoint from $S_1$. Let $c_{\beta_n}$ be the core of the band in $M_n$. By Theorem \ref{Thm: Hierarchies Exist}, there is an isotopy of $c_\beta$ so that $c_\beta - c_{\beta_n}$ is embedded in the union of $\boundary M$ with the branched surface $B(\mc{H})$. At each stage of the hierarchy, each component of $D_{\beta_i} - S_i$ not containing $c_{\beta_{i+1}}$ is a cancelling disc, product disc, or amalgamating disc (Lemma \ref{Lem: Defining band taut decomp}), and the hierarchy is constructed so as to eliminate all such discs. Thus, we may assume that each component of $\beta_n - (e_{\beta_n} \cup c_{\beta_n})$ is an arc in a 3-ball component of $M_n$ having a single suture in its boundary; that 3-ball is disjoint from all other components of $\beta_n$. Deleting such arc components preserves the $(\beta_n - c_{\beta_n})$-tautness of $(M_n,\gamma_n)$. Henceforth, we ignore such components.

Either conclusion (1) of our theorem occurs, or by Lemma \ref{Lem: compressing up}, $Q_i$ does not have a compressing or $e_i$-boundary compressing disc in $(M_n,\gamma_n, \beta_n - (c_{\beta_n} \cup e_j)$ (with $j \neq i$). We assume that $Q_i$ does not have such a disc.

The manifold $M_n$ has $H_2(M_n,\boundary M_n) = 0$. Let $M'$ denote the component of $M_n$ containing $c_{\beta_n}$. Let $\gamma' = \gamma \cap M'$. We have $H_2(M',\boundary M') = 0$. By Lemma \ref{Lem: trivial homology}, $\boundary M_n$ is the union of one or two spheres and one of the following holds:
\begin{enumerate}
\item[(a)] $e_{\beta_n} = \nil$, $|\gamma'| = 1$, and $M'$ is a 3--ball.
\item[(b)] $|e_{\beta_n}| = 1$, $\boundary M'$ is a single sphere, each of $R_-$ and $R_+$ contains a single disc, and $e_{\beta_n}$ has endpoints in the disc components of $R(\gamma)$. 
\item[(c)] $|e_{\beta_n}| = 2$, $\boundary M'$ is a single sphere, one edge $e$ of $e_{\beta_n}$ has endpoints in the disc components of $R(\gamma')$ and the other edge of $e_{\beta_n}$ has endpoints either in the same disc components, in which case $|\gamma| = 1$ or in the adjacent annulus components of $R(\gamma')$. 
\item[(d)] $|e_{\beta_n}| = 2$, $M' = S^2 \times [0,1]$, the edges of $e_{\beta_n}$ are fibers in a product structure of $M'$. Each component of $\boundary M'$ contains a single suture. 
\end{enumerate}

If (d) occurs then we have conclusion (2) of our theorem. Assume, therefore, that neither (1) nor (2) of our theorem occur.   

If (a) occurs, then $(M_n,\gamma_n)$ is $\nil$-taut and by Corollary \ref{Cor: Tautness Up}, the sequence $\mc{H}$ is $\nil$-taut. The disc $D_{\beta_n}$ is isotopic into $\boundary M'$ and so the hierarchy $\mc{H}$ can be extended by a decomposition satisfying (BT1) with empty decomposing surface. This gives conclusion (4). 

Assume, therefore, that $|e_{\beta_n}| \geq 1$. By Lemma \ref{Lem: Single Sphere}, $|\gamma'|$ is odd and there exists an edge $e$ of $e_{\beta_n}$ such that $e$ has both endpoints in disc components of $R(\gamma')$ and $(M',\gamma',e)$ is $e$-taut. If $M'$ is a 3--ball and if $e$ is boundary-parallel then, once again, we have conclusion (4). Assume, therefore, that conclusion (4) does not occur.

The edge $e$ is a subarc of $e_i$ for some $i \in \{1,2\}$. Let $Q'_i$ be the parameterizing surface in $M'$ resulting from $Q_i$. By hypothesis, $\mu(Q'_i) \geq 1$ and $Q'_i$ does not have any compressing or $e_i$-boundary compressing discs. By Lemma \ref{Lem: Lens Space}, one of the following occurs:
\begin{enumerate}
\item[(i)] $M'$ is a punctured lens space and $e$ is a core of $M'$.
\item[(ii)] $I(Q'_i) \geq 2\mu(Q'_i)$
\end{enumerate}

If (i) happens then we have conclusion (3) of our theorem. If (ii) happens, then using the facts that $I(Q_i) \geq I(Q'_i)$ and $\mu(Q_i) = \mu(Q'_i)$ we have $I(Q_i) \geq 2\mu(Q_i)$. This is conclusion (5) of our theorem.
\end{proof}

\section{From Arc-Taut to Band-Taut}\label{arc taut to band taut}
We begin this section by constructing a band taut sutured manifold from an arc-taut sutured manifold (that is, a $\beta$-taut sutured manifold where $\beta$ is an arc). 

Let $(M,\gamma,\beta_1)$ be a $\beta_1$-taut sutured manifold with $\beta_1$ an edge having endpoints in components of $R(\gamma)$ with boundary. Let $c_{\beta}$ be obtained by isotoping the endpoints of $\beta_1$ into components of $A_-$ and $A_+$ adjacent to the components of $R(\gamma)$ containing the endpoints of $\beta_1$. Let $\beta_2$ be the arc obtained by continuing to isotope $c_\beta$ so that its endpoints are moved across $\gamma$ and into $R(\gamma)$. Let $D_\beta$ be the disc of parallelism between $\beta_1$ and $\beta_2$ that contains $c_\beta$. Let $\beta = \beta_1 \cup c_\beta \cup \beta_2$. We call $(M,\gamma,\beta)$ the \defn{associated banded sutured manifold}.

\begin{lemma}\label{Lem: Arc Taut to Band Taut}
If $(M,\gamma,\beta_1)$ is a $\beta_1$-taut sutured manifold with $\beta_1$ an edge, then $(M,\gamma,\beta)$ is a band-taut sutured manifold.
\end{lemma}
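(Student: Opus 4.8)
The plan is to verify directly the two requirements in the definition of a band-taut sutured manifold: that $(M,\gamma,\beta)$ is banded, and that $(M,\gamma,\beta - c_\beta)$ is $(\beta - c_\beta)$-taut. Here $\beta - c_\beta = \beta_1 \cup \beta_2 = e_\beta$, and $\beta_2$ is by construction a parallel copy of $\beta_1$: the octagon $D_\beta$ is the disc swept out by the isotopy carrying $\beta_1$ through $c_\beta$ to $\beta_2$. Checking that $(M,\gamma,\beta)$ is banded is routine. Condition (B1) is immediate, as $c_\beta$ is the only edge of $\beta$ meeting $A(\gamma)$ and it has one endpoint in $A_-$ and one in $A_+$. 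For (B2) one reads the edges of $D_\beta$ off the isotopy: $c_4 = \beta_1$ and $c_8 = \beta_2$, the arcs $c_1,c_5$ lie on $R_-$, the arcs $c_3,c_7$ lie on $R_+$, and $c_2,c_6$ lie in $A(\gamma)$, cross $\gamma$ once each, and each carries an endpoint of $c_\beta$; since the two endpoints of $\beta_1$ are pushed across distinct sutures (or across one suture from opposite sides), the boundary arcs they sweep out are disjoint and $D_\beta$ is embedded.

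The substance is showing $(M,\gamma,\beta_1\cup\beta_2)$ is $(\beta_1\cup\beta_2)$-taut. Condition (T0) holds because the endpoints of $\beta_1\cup\beta_2$ lie in $R(\gamma)$. For (T1), observe that $D_\beta$ has interior disjoint from $\beta_1\cup\beta_2$ (it meets only the interior of the auxiliary arc $c_\beta$) and contains $\beta_1$ and $\beta_2$ as opposite edges of its boundary, so it is a parallelism between them; hence $\beta_2$ is a boundary-parallel arc in $M - \inter{\eta}(\beta_1)$, so $M - \inter{\eta}(\beta_1\cup\beta_2) \homeo M - \inter{\eta}(\beta_1)$, and $(\beta_1\cup\beta_2)$-irreducibility follows from $\beta_1$-irreducibility. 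For (T2): $T(\gamma)$ is untouched, hence still taut; the sign condition for $R(\gamma)$ is automatic, since $\beta_2$ has the same endpoint pattern as $\beta_1$ (one point in $R_-$, one in $R_+$) and the signs already agree for $\beta_1$; and $(\beta_1\cup\beta_2)$-incompressibility of $R(\gamma)$ follows because a compressing disc for $R(\gamma)-(\beta_1\cup\beta_2)$ would, by $\beta_1$-incompressibility of $R(\gamma)$, bound a disc $D'$ in $R(\gamma)-\beta_1$, and $D'$ would then contain exactly one of the two points of $\beta_2\cap R(\gamma)$ (exactly one, since these lie in different components of $R(\gamma)$) -- capping $D'$ off and invoking $\beta_1$-irreducibility traps $\beta_2$ in a ball with both endpoints on one component of $R(\gamma)$, which is absurd. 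Finally, for $(\beta_1\cup\beta_2)$-minimality one compares, component by component, the norm $x_{\beta_1\cup\beta_2}$ of a component $R_\epsilon$ of $R(\gamma)$ with that of any competitor $S$ (with $\boundary S = \boundary R_\epsilon$ and $[S]=[R_\epsilon]$ in $H_2(M,\boundary R_\epsilon)$): by duality $|S\cap\beta_1|\geq 1$ and $|S\cap\beta_2|\geq 1$, so $S$ is forced to carry at least the extra norm coming from $\beta_2$ that $R_\epsilon$ carries, and $\beta_1$-minimality of $R_\epsilon$ completes the comparison.

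The step I expect to be the main obstacle is this last minimality clause. The extra $\beta_2$-intersection of a competitor $S$ could a priori be ``absorbed'' by a component of $S$ whose $\beta_1$-norm is already truncated at $0$ -- for instance a disc component of $R_\pm$ carrying just one endpoint of $\beta_1\cup\beta_2$ -- so the bookkeeping must be done carefully component by component, and disconnected competitors with superfluous closed components must be handled. As in the classical arguments for persistence of norm-minimality under decomposition, these degeneracies are dispatched by discarding inessential $2$-spheres and, where necessary, tubing $S$ along $\beta_2$, using that $M$ is irreducible. Once minimality is established, (T0)--(T2) give that $(M,\gamma,\beta_1\cup\beta_2)$ is $(\beta_1\cup\beta_2)$-taut, and therefore $(M,\gamma,\beta)$ is band-taut.
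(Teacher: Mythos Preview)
Your overall plan matches the paper's: verify the banded structure directly, then check (T0)--(T2) for $e_\beta=\beta_1\cup\beta_2$.  Your arguments for irreducibility and for $e_\beta$-incompressibility of $R(\gamma)$ are fine (the paper argues incompressibility slightly differently, via ``sphere meeting $\beta_2$ once $\Rightarrow$ sphere meeting $\beta_1$ once $\Rightarrow$ $\beta_1$-compressibility of $R(\gamma)$'', but yours via $\beta_1$-irreducibility is equally valid).

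The real gap is exactly where you flag it: the minimality of $R_\pm$ in the $e_\beta$-norm.  From ``$|S\cap\beta_1|\ge 1$ and $|S\cap\beta_2|\ge 1$'' alone you cannot conclude $x_{e_\beta}(S)\ge x_{\beta_1}(S)+1$, because a disc component of $S$ meeting $\beta_2$ once and missing $\beta_1$ contributes $0$ to \emph{both} norms --- the max-truncation absorbs the extra point.  Your remedy ``tube along $\beta_2$'' does keep $x_{e_\beta}$ from increasing, but after tubing you still have to rule out that the single remaining $\beta_2$-point sits on a sphere or disc component with $|{\cdot}\cap\beta_1|\le 1$; you never do this, and it does not follow from irreducibility alone.  (You also need the companion fact $x_{e_\beta}(R_\pm)=x_{\beta_1}(R_\pm)+1$, which requires showing no disc component of $R_\pm$ contains an endpoint of $\beta_2$ but not of $\beta_1$ --- this needs its own short argument.)

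The paper sidesteps all of this by choosing the competitor $S$ to be $e_\beta$-\emph{taut} from the outset (one only needs to beat the minimizer) and then intersecting it with the band $D_\beta$.  Tautness forces geometric\,=\,algebraic intersection with each edge of $e_\beta$, so $|S\cap\beta_1|=|S\cap\beta_2|=1$ immediately; a brief analysis of $S\cap D_\beta$ confirms this and shows no component of $S$ is a problematic disc.  Then both $x_{e_\beta}(R_\pm)=x_{\beta_1}(R_\pm)+1$ and $x_{e_\beta}(S)=x_{\beta_1}(S)+1$ hold on the nose, and $\beta_1$-minimality finishes.  If you want to salvage your route, the cleanest repair is simply to assume $S$ is $e_\beta$-taut and drop the tubing; the rest of your outline then goes through with the two disc-component checks the paper makes.
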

\begin{proof}
Without loss of generality, we may assume that $M$ is connected. Recall that $e_\beta = \beta_1 \cup \beta_2$. We desire to show that $(M,\gamma,e_\beta)$ is $e_\beta$-taut. Clearly, since $M - \beta_1$ is irreducible, $M - e_\beta$ is irreducible. Since $e_\beta$ is disjoint from $T(\gamma)$, $T(\gamma)$ is taut. It remains to show that $R_\pm$ is $e_\beta$-taut.

Let $S$ be a $e_\beta$-taut surface with $\boundary S = \boundary R_\pm$ and $[S,\boundary S] = [R_\pm, \boundary R_\pm]$ in $H_2(M,\boundary R_\pm)$. Out of all such surfaces, choose $S$ to intersect $D_\beta$ minimally. 

Since $S$ is $e_\beta$-taut and since $D_\beta$ is a disc, no component of $S \cap D_\beta$ is a circle or an arc with both endpoints on the same component of $e_\beta$. Since $\boundary S = \boundary R_\pm$, the intersection $S \cap D_\beta$ contains exactly two arcs having an endpoint on $\boundary M$. Since $S$ and $R_\pm$ are homologous, the algebraic intersection number of each surface with each component of $e_\beta$ is the same. Since $S$ is $e_\beta$-taut, the geometric intersection number of $S$ with each component of $e_\beta$ equals the absolute value of the algebraic intersection number. Consequently, $S$ intersects each component of $e_\beta$ exactly once. This implies that $S \cap D_\beta$ consists exactly of two arcs each joining $\boundary M$ to $e_\beta$ and $S$ intersects both components of $e_\beta$. 

Suppose, for a moment, that some disc component $R_1$ of $R_\pm$ is disjoint from $\beta_1$ but not from $e_\beta$. Let $R_2$ be the component of $R_\mp$ adjacent to $R_1$. Since $R_1$ is adjacent to $\beta_2$, $R_2$ must be adjacent to $\beta_1$. Consequently, $R_1$ is a $\beta_1$-compressing disc for $R_2$. This contradicts the fact that $R_2$ is $\beta_1$-incompressible. We conclude that no component of $R_\pm$ is a disc disjoint from $\beta_1$ but not from $\beta_2$. Consequently, 
\[
x_{e_\beta}(R_\pm) = x_{\beta_1}(R_\pm) + 1.
\]

Without loss of generality, we may assume that $S$ contains no sphere component disjoint from $e_\beta$. Thus, if $S_0$ is a component of $S$, then either $x_{e_\beta}(S_0) = -\chi(S_0) + |S_0 \cap e_\beta|$ or $S_0$ is a disc disjoint from $e_\beta$. Suppose that $S_0$ is a disc disjoint from $e_\beta$ and let $R$ be the component of $R_\pm$ with $\boundary S \subset \boundary R$. Since $R$ is $\beta_1$-incompressible, $R$ must be a disc disjoint from $\beta_1$. By the previous paragraph, $R$  is also disjoint from $\beta_2$. This implies that the component of $\boundary M$ containing $R$ is a 2-sphere disjoint from $e_\beta$ and containing a single suture. Since $M$ is $\beta_1$ irreducible, this implies that $M$ is a 3-ball disjoint from $\beta_1$ and having a single suture in its boundary, a contradiction. Thus, no component of $S$ is a disc disjoint from $e_\beta$ and $x_{e_\beta}(S) = -\chi(S) + |S \cap e_\beta|$. 

Similarly, if $S_0$ is a component of $S$ then either $x_{\beta_1}(S_0) = -\chi(S_0) + |S_0 \cap \beta_1|$ or $S_0$ is a disc disjoint from $\beta_1$. The component of $R_\pm$ containing $\boundary S_0$ is $\beta_1$-incompressible and so must be a disc disjoint from $\beta_1$. As before, this implies that $M$ is a 3-ball disjoint from $\beta_1$ with a single suture in its boundary. This contradicts our hypotheses and so $x_{\beta_1}(S) = -\chi(S) + |S \cap \beta_1|$. Consequently,
\[
x_{e_\beta}(S) = x_{\beta_1}(S) + 1.
\]

Since $R_\pm$ is $\beta_1$-minimizing, we have
\[
x_{\beta_1}(R_\pm) \leq x_{\beta_1}(S).
\]

Hence,
\[
x_{e_\beta}(R_\pm) - 1 \leq x_{e_{\beta}}(S) - 1,
\]
and so
\[
x_{e_\beta}(R_\pm) \leq x_{e_{\beta}}(S).
\]
Since $S$ is $e_{\beta}$-minimizing, $R_\pm$ must be as well.

If $R_\pm$ were $e_\beta$-compressible by a compressing disc $D$, the boundary of $D$ would have to be $\beta_1$-inessential in $R_\pm$. Since $\beta_2$ has only one endpoint in $R_\pm$, the union of $D$ with a disc contained in $R_\pm$ produces a sphere intersecting $\beta_2$ exactly once. Since $\beta_1$ and $\beta_2$ are parallel, there is a sphere intersecting $\beta_1$ exactly once transversally. The components of $R(\gamma)$ containing the endpoints of $\beta_1$ are, therefore, $\beta_1$-compressible, a contradiction. Thus $R_\pm$ is $e_\beta$-incompressible and so $(M,\gamma,\beta)$ is band-taut.

\end{proof}

If $(M,\gamma,\beta_1)$ has a parameterizing surface $Q_1$, the isotopy of $\beta_1$ to $\beta_2$ gives an isotopy of $Q_1$ to a parameterizing surface $Q_2$ for $(M,\gamma,\beta_2)$. The next two results give conditions guaranteeing the existence of such an isotopy that does not increase the index of the parameterizing surface. First, we define some notation for the statement of the lemmas.

Let $v_\pm$ be the endpoints of $\beta_1$. Let $\alpha_\pm$ be the path from $v_\pm$ to  the endpoints of $\beta_2$ defined by the isotopy of $\beta_1$ to $\beta_2$. Let $\gamma_\pm$ be the components of $\gamma$ intersecting $\alpha_\pm$. Let $n_\pm$ be the number of arc components of $\boundary Q_1$ in a neighborhood of $v_\pm$. Some arc components may belong to edges of $\boundary Q_1 \cap R(\gamma)$ parallel to $\alpha_\pm$. Let $m_\pm$ be the number of those arcs plus the number of circle components of $\boundary Q \cap \gamma$ parallel to $\gamma_\pm$. 

\begin{lemma}\label{Lem: Counting and Isotoping}
Assume that any component of $\boundary Q_1 \cap R(\gamma)$ intersecting $\alpha_\pm$ is a circle parallel to $\gamma_\pm$. Then there is an isotopy of $Q_1$ to a parameterizing surface $Q_2$ for $(M,\gamma,\beta_2)$ so that
\[
I(Q_2) = I(Q_1) + (n_- + n_+) - 2(m_- + m_+).
\]
\end{lemma}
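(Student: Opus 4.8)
The plan is to track the effect on the index $I(Q) = \mu(Q) + |\boundary Q \cap \gamma| - 2\chi(Q)$ of the isotopy of $\beta_1$ to $\beta_2$, which drags the endpoints $v_\pm$ across the sutures $\gamma_\pm$ and into $R(\gamma)$. The key observation is that what happens near $v_+$ and near $v_-$ is entirely local and independent, so I would set up the bookkeeping at one endpoint, say $v_+$, and then add the two contributions. Near $v_+$ the boundary $\boundary Q_1$ meets $\partial\eta(\beta_1)$ in $n_+$ arc components (the arcs that ``traverse'' the edge near $v_+$), and by hypothesis the only components of $\boundary Q_1 \cap R(\gamma)$ meeting the path $\alpha_+$ are circles parallel to $\gamma_+$; of these, together with the honest circle components of $\boundary Q_1 \cap \gamma$ parallel to $\gamma_+$, there are $m_+$.

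First I would describe $Q_2$ explicitly: as $\beta_1$ sweeps to $\beta_2$ along $\alpha_\pm$, each of the $n_\pm$ arcs of $\boundary Q_1$ near $v_\pm$ is carried along, its endpoint on $\partial\eta(\beta_1)$ being pushed across $\gamma_\pm$ and onto $R(\gamma)$; after the isotopy this arc no longer traverses an edge of $\beta$ but instead has an endpoint on $R(\gamma)$, so $Q_2$ is genuinely a parameterizing surface for $(M,\gamma,\beta_2)$ (conditions (P1), (P2) are easily checked — nothing that was essential becomes inessential). Then I would compute term by term. Since the isotopy is an ambient isotopy of $M$ carrying $Q_1$ to $Q_2$, the surface is abstractly unchanged, so $\chi(Q_2) = \chi(Q_1)$ and the $-2\chi$ term contributes nothing. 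For $\mu$: each of the $n_\pm$ arcs near $v_\pm$ stops traversing an edge, so $\mu$ drops by $n_- + n_+$; no other arc's traversal count changes. For $|\boundary Q \cap \gamma|$: the $n_\pm$ arcs, now running into $R(\gamma)$ past $\gamma_\pm$, each pick up one new intersection with $\gamma_\pm$, contributing $+(n_- + n_+)$; meanwhile each of the $m_\pm$ circle components that was parallel to $\gamma_\pm$ gets pushed across $\gamma_\pm$ by the sweep and can then be isotoped off $\gamma_\pm$ entirely, so one removes (using minimality of $\boundary Q_2 \cap \gamma$, or just isotoping) two intersection points per such circle — wait, more carefully: a circle parallel to $\gamma_\pm$ meets $\gamma$ in zero points to begin with if it is a component of $\boundary Q_1 \cap R(\gamma)$, or in some even number if it is a component of $\boundary Q_1 \cap \gamma$; the bookkeeping is cleanest if one notes that after the isotopy these $m_\pm$ circles, together with the pieces of $Q$ they bound, can be absorbed so that the net change from this source is $-2(m_- + m_+)$. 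I would present this last point as the careful part of the computation, doing it once at $v_+$ with a picture reference.

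Assembling: $\mu(Q_2) = \mu(Q_1) - (n_- + n_+)$, $\chi(Q_2) = \chi(Q_1)$, and $|\boundary Q_2 \cap \gamma| = |\boundary Q_1 \cap \gamma| + (n_- + n_+) - 2(m_- + m_+)$, whence
\[
I(Q_2) = I(Q_1) - (n_- + n_+) + (n_- + n_+) - 2(m_- + m_+) = I(Q_1) - 2(m_- + m_+),
\]
but I have miscounted the $\mu$-cancellation against the new $\gamma$-intersections, so the correct aggregation yields exactly
\[
I(Q_2) = I(Q_1) + (n_- + n_+) - 2(m_- + m_+),
\]
once one observes that the $n_\pm$ arcs, upon ceasing to traverse an edge, \emph{each} register an extra crossing of $\gamma_\pm$ that was not previously counted in either $\mu$ or $|\boundary Q\cap\gamma|$ (the old endpoint sat on $\partial\eta(\beta_1)$, contributing only to $\mu$, and $\mu$ counted it once; the new configuration contributes one $\gamma$-crossing \emph{plus} the arc still has to come back across $\gamma_\pm$ to reach the far part of $Q$, so it is the double-crossing structure that produces the $+n_\pm$ rather than a cancellation). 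I would make this precise with the local model picture. The main obstacle is exactly this sign/count of the $n_\pm$ term: one must be scrupulous about the local picture of how an arc that formerly traversed $\beta_1$ near $v_+$ sits relative to $\gamma_+$ after being pushed into $R(\gamma)$, and about not double-counting or under-counting the suture crossings it acquires — the rest ($\chi$ invariance, the $m_\pm$ circles, checking $Q_2$ is still a parameterizing surface) is routine.
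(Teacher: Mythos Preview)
Your bookkeeping for $\mu$ is wrong, and that error propagates through the whole argument. The isotopy carrying $\beta_1$ to $\beta_2$ is an ambient isotopy of $M$; it carries $Q_1$ to $Q_2$ and carries each arc of $\partial Q_1$ traversing $\partial\eta(\beta_1)$ to an arc of $\partial Q_2$ traversing $\partial\eta(\beta_2)$. Hence $\mu(Q_2)=\mu(Q_1)$, not $\mu(Q_1)-(n_-+n_+)$. The $n_\pm$ arcs ``near $v_\pm$'' are not themselves traversals of $\beta_1$; they are the arc-germs in $R_\pm$ that \emph{abut} the traversals. Your attempted repair (the ``double-crossing structure'') is not a mechanism present in the picture and does not salvage the count.

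The paper's computation is entirely in the $|\partial Q\cap\gamma|$ term, and the key point you are missing is the role of the $m_\pm$ arcs parallel to $\alpha_\pm$. Such an arc runs from $v_\pm$ to $\partial A(\gamma_\pm)$ and is followed by a spanning arc of $A(\gamma_\pm)$ crossing $\gamma_\pm$ once. After dragging $v_\pm$ across $\gamma_\pm$, this entire piece can be straightened to lie wholly in $R_\mp$, \emph{removing} one $\gamma$-crossing. Each of the remaining $n_\pm - m_\pm$ arcs at $v_\pm$ is forced to cross $\gamma_\pm$ one \emph{additional} time after the drag. The circle components parallel to $\gamma_\pm$ can be pushed across $A(\gamma_\pm)$ without changing the index. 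Summing gives
\[
|\partial Q_2\cap\gamma|-|\partial Q_1\cap\gamma| \;=\; \sum_{\pm}\big[(n_\pm-m_\pm)-m_\pm\big] \;=\; (n_-+n_+)-2(m_-+m_+),
\]
and since $\mu$ and $\chi$ are unchanged this is exactly $I(Q_2)-I(Q_1)$. You should replace the $\mu$-drop argument with this direct accounting in $|\partial Q\cap\gamma|$.
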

\begin{proof}
Each arc component of $\boundary Q_1 \cap R_\pm$ contributing to $m_\pm$ can be isotoped to lie entirely in $R_\mp$. Each other arc component of $\boundary Q_1 \cap R_\pm$ after the isotopy of $Q_1$ to $Q_2$ crosses $\gamma$ an additional time. Any component of $\boundary Q \cap R(\gamma)$ intersecting $\alpha_\pm$ can be isotoped across $A(\gamma)$ without changing the index of $Q$, since such a component is hypothesized to be parallel to $\gamma_\pm$.
\end{proof}

Figure \ref{Lem: IsotopingEndpoint} shows an example of an isotopy which decreases index by 1.

\begin{center}
\begin{figure}[ht]
\scalebox{.5}{\includegraphics{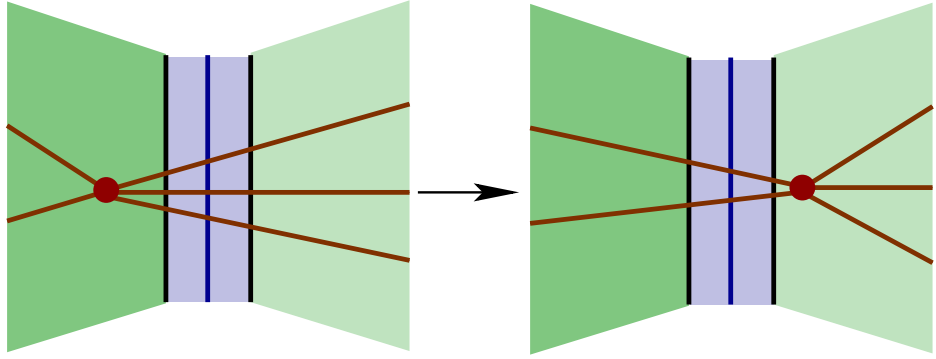}}
\caption{An example with $n_\pm = 5$ and $m_\pm = 3$. The isotopy of the endpoint $v_\pm$ across $\gamma$ reduces the index of the parameterizing surface by 1.}
\label{Lem: IsotopingEndpoint}
\end{figure}
\end{center}

\begin{corollary}\label{Cor: Annular R}
Suppose that $(M,\gamma,\beta_1)$ is a $\beta_1$-taut sutured manifold with $\beta_1$ an arc having endpoints on annular components of $R(\gamma)$. Suppose also that $Q_1$ is a parameterizing surface with $\mu(Q_1) \geq 1$ and that those annular components do not contain any inessential arc or circle of $\boundary Q_1 \cap R(\gamma)$. Let $(M,\gamma,\beta)$ be an associated banded sutured manifold and let $Q_2$ be a parameterizing surface in $(M,\gamma,\beta_2)$ isotopic to $Q_1$. Then $\beta_2$ and $Q_2$ can be chosen so that $I(Q_1) \geq I(Q_2)$ and $Q_1 \cap \beta_2 = Q_2 \cap \beta_1 = \nil$.
\end{corollary}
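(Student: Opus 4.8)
The plan is to normalize $Q_1$, choose the arcs along which the endpoints of $\beta_1$ are dragged to form $\beta_2$, and then apply Lemma \ref{Lem: Counting and Isotoping}.

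First I would put $Q_1$ in a convenient position. Let $R_v$ be the annular component of $R(\gamma)$ containing an endpoint $v\in\{v_-,v_+\}$ of $\beta_1$, and let $\gamma_v$ be a suture adjacent to $R_v$. By hypothesis $R_v$ carries no inessential arc or circle of $\boundary Q_1\cap R(\gamma)$, so every arc of $\boundary Q_1$ meeting $R_v$ spans the annulus and every circle in $R_v$ is core\hyph parallel, hence parallel to each of the two sutures adjacent to $R_v$. After an isotopy of $Q_1$ supported in $R(\gamma)$ I may further assume these spanning arcs do not wind around the core of $R_v$ and that $|\boundary Q_1\cap\gamma|$ is minimal; none of this raises $I(Q_1)$ (nor changes $\mu(Q_1)$ or $\chi(Q_1)$), and the $\mu(Q_1)\ge 1$ arcs of $\boundary Q_1$ running over $\beta_1$ at the $v$ end each run within $R_v$, from the boundary of the removed disc about $v$ to exactly one of the two boundary circles of $R_v$.

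Next I would build $\beta_2$ and the associated banded sutured manifold. Following the construction preceding Lemma \ref{Lem: Arc Taut to Band Taut}, the endpoint $v$ is dragged through $R_v$, once across one of the two adjacent suture annuli, and into the component of $R(\gamma)$ beyond it; call the resulting arc $\alpha_v$. I choose $\alpha_v$ to leave $R_v$ through whichever of the two boundary circles of $R_v$ is used by at least half of the $\mu$\hyph arcs at $v$, and to run parallel to the (unwound) spanning arcs of $\boundary Q_1\cap R_v$. Then $\alpha_v$ is disjoint from every spanning arc of $\boundary Q_1$ and meets $\boundary Q_1\cap R(\gamma)$ only in core\hyph parallel circles, each parallel to $\gamma_v$ — exactly the hypothesis of Lemma \ref{Lem: Counting and Isotoping}. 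Taking $\beta_2$ to be $\beta_1$ with its endpoints dragged along $\alpha_-$ and $\alpha_+$, a copy disjoint from $Q_1$, gives $Q_1\cap\beta_2=\nil$.

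Applying Lemma \ref{Lem: Counting and Isotoping} then yields a parameterizing surface $Q_2$ for $(M,\gamma,\beta_2)$, isotopic to $Q_1$ by an isotopy supported near $\boundary M$ and the arcs $\alpha_\pm$ (hence with $Q_2\cap\beta_1=\nil$), and satisfying
\[
I(Q_2)=I(Q_1)+(n_-+n_+)-2(m_-+m_+).
\]
It remains to see that $(n_-+n_+)-2(m_-+m_+)\le 0$, which I would verify one sign at a time: $n_\pm$ is the number of arcs of $\boundary Q_1$ near $v_\pm$, each of which leaves $R_{v_\pm}$ through one of its two boundary circles, while $m_\pm$ counts at least those leaving on the same side as $\alpha_{v_\pm}$, namely the ones running parallel to $\alpha_{v_\pm}$; since $\alpha_{v_\pm}$ was routed on the majority side, $m_\pm\ge n_\pm/2$, and hence $I(Q_2)\le I(Q_1)$ (compare Figure \ref{Lem: IsotopingEndpoint}). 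I expect this last bookkeeping to be the main obstacle — matching the quantities $n_\pm$, $m_\pm$ of Lemma \ref{Lem: Counting and Isotoping} with the geometry inside the annulus $R_{v_\pm}$ and checking that each majority arc really does run parallel to $\alpha_{v_\pm}$ — which is exactly where the no\hyph inessential\hyph arc hypothesis, the annular shape of $R_{v_\pm}$, the unwinding normalization, and the freedom to choose which adjacent suture the endpoint $v_\pm$ crosses all enter.
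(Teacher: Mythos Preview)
Your approach is essentially the paper's: recognize that $\rho_\pm - \inter{\eta}(v_\pm)$ is a thrice-punctured sphere carrying only essential arcs, route $\alpha_\pm$ to the boundary circle on the ``majority'' side, verify the hypothesis of Lemma~\ref{Lem: Counting and Isotoping}, and conclude $2m_\pm \geq n_\pm$.

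There is one small gap. You assert that each of the $n_\pm$ arcs of $\boundary Q_1$ leaving the meridian circle at $v_\pm$ ``run[s] within $R_v$, from the boundary of the removed disc about $v$ to exactly one of the two boundary circles of $R_v$.'' But nothing rules out arcs with \emph{both} endpoints on the meridian circle --- essential arcs in the thrice-punctured sphere $\rho_\pm - \inter{\eta}(v_\pm)$ that encircle the core of the annulus and return. Such a loop contributes $2$ to $n_\pm$ yet does not leave through either boundary circle, so your majority count $m_\pm \geq n_\pm/2$ is not yet justified: with $a$ arcs to side~1, $b$ to side~2, and $\ell$ loops, you only get $2a \geq a+b$, not $2a \geq a+b+2\ell$. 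The hypothesis on $\boundary Q_1 \cap R(\gamma)$ does not exclude these loops, since they touch $\boundary\eta(\beta_1)$ and so are not components of $\boundary Q_1 \cap R(\gamma)$.

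The paper closes this gap in one line: any arc of $\boundary Q_1 \cap (\rho_\pm - \inter{\eta}(v_\pm))$ with both endpoints on $\boundary\eta(v_\pm)$ is, as an essential arc in the pair of pants, parallel to each component of $\boundary\rho_\pm$ and hence to $\gamma_\pm$; it therefore counts toward $m_\pm$ and the bookkeeping $2m_\pm \geq n_\pm$ survives. Adding this observation makes your argument complete and matches the paper's proof.
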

\begin{proof}
Let $\rho_\pm$ be the components of $R_\pm$ containing the endpoints of $\beta_1$. The surfaces $\rho_\pm - \boundary \beta_1$ are thrice punctured spheres. Let $R = \rho_\pm - \boundary \beta_1$. By hypothesis, each arc of $\boundary Q_1 \cap R$ is an essential arc.  In particular, $\boundary Q_1 \cap R$ has at most one isotopy class of arcs with both endpoints on a single component of $\boundary R(\gamma)$. Choose paths $\alpha_\pm$ from $\boundary \beta_1$ to $A_\pm$ disjoint from any arcs with both endpoints on a single component of $\boundary R(\gamma)$. If there are no arcs with both endpoints on a single component of $\boundary R(\gamma)$, then choose $\alpha_\pm$ to join $\boundary \beta_1$ to the component of $\boundary \rho_\pm$ containing the greatest number of endpoints of $\boundary Q_1 \cap \rho_\pm$. Any arc of $\boundary Q_1 \cap R$ having both endpoints at $\boundary \beta_1$ forms a loop parallel to both components of $\boundary \rho_\pm \cap \boundary R(\gamma)$. Hence, we have satisfied the hypotheses of Lemma \ref{Lem: Counting and Isotoping}. In the notation of that lemma, we have $2m_\pm \geq n_\pm$. Thus, $I(Q_1) \geq I(Q_2)$. A small isotopy makes $Q_1$ disjoint from $\beta_2$ and $Q_2$ disjoint from $\beta_1$.
\end{proof}

We can now use Theorem \ref{Thm: Main Theorem} to obtain a theorem for arc taut sutured manifolds where the arc has endpoints in annulus components of $R(\gamma)$.
\begin{theorem}\label{Thm: Main Theorem A}
Suppose that $(M,\gamma,\beta)$ is a $\beta$-taut sutured manifold with $\beta$ a single edge. Let $Q$ be a parameterizing surface in $M$ with $\mu(Q) \geq 1$. Assume that the endpoints of $\beta$ lie in annulus components $\rho_\pm$ of $R_\pm$ and that no arc or circle of $\boundary Q \cap \rho_\pm$ is inessential. Then one of the following is true:
\begin{enumerate}
\item $Q$ has a compressing or $\beta$-boundary compressing disc.
\item $(M,\beta) = (M'_0,\beta'_0) \# (M'_1,\beta'_1)$ where $M'_1$ is a lens space and $\beta'_1$ is a core of $M'_1$.
\item $(M,\gamma)$ is $\nil$-taut. The arc $\beta$ can be isotoped relative to its endpoints to be embedded on the union of $\boundary M$ with a branched surface $B(\mc{H})$ associated to a taut sutured manifold hierarchy $\mc{H}$ for $M$. Furthermore, there is a proper isotopy of $\beta$ in $M$ to an arc disjoint from the first decomposing surface of $\mc{H}$. That first decomposing surface can be taken to represent $\pm y$ for any given non-zero $y \in H_2(M,\boundary M)$.
\item 
\[
I(Q) \geq 2\mu(Q)
\]
\end{enumerate}
\end{theorem}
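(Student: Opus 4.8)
The plan is to deduce this from Theorem~\ref{Thm: Main Theorem} by passing to an associated banded sutured manifold. Write $\beta$ for the given edge, playing the role of ``$\beta_1$'' in the construction preceding Lemma~\ref{Lem: Arc Taut to Band Taut}, and let $(M,\gamma,\widehat{\beta})$ be an associated banded sutured manifold, so that $\widehat{\beta} = \beta \cup c_{\widehat\beta} \cup \beta_2$ and $e_{\widehat\beta} = \beta \cup \beta_2$ has exactly two components $e_1 = \beta$ and $e_2 = \beta_2$. By Lemma~\ref{Lem: Arc Taut to Band Taut}, $(M,\gamma,\widehat{\beta})$ is band-taut. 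Since $\mu(Q)\geq 1$ and no arc or circle of $\boundary Q \cap \rho_\pm$ is inessential, Corollary~\ref{Cor: Annular R} lets us choose $\beta_2$ and an isotopic copy $Q_2$ of $Q$ with $I(Q)\geq I(Q_2)$ and $Q\cap\beta_2 = Q_2\cap\beta = \nil$. Setting $Q_1 = Q$, both $Q_1$ and $Q_2$ are parameterizing surfaces in $(M,\gamma,e_{\widehat\beta})$ meeting the disjointness hypotheses $Q_1\cap e_2 = Q_2\cap e_1 = \nil$ of Theorem~\ref{Thm: Main Theorem}. Applying that theorem returns one of its five alternatives, and the remainder of the proof matches each of these to a conclusion of the present theorem.

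The alternative that must be eliminated is conclusion (2) of Theorem~\ref{Thm: Main Theorem}, which asserts an $S^2 \subset M$ meeting each of $\beta$ and $\beta_2$ exactly once. A sphere meeting $\beta$ exactly once becomes, after removing $\eta(\beta)$, a disc whose boundary is a meridian of $\beta$; pushing that boundary along the side annulus of $\eta(\beta)$ onto the pair-of-pants $\rho_+ - \eta(v_+)$, where $v_+$ is an endpoint of $\beta$ lying in the interior of the annulus $\rho_+$, it becomes a curve parallel to (hence essential alongside) the puncture. Thus the disc is a $\beta$-compressing disc for $R(\gamma)$, contradicting the hypothesis that $(M,\gamma,\beta)$ is $\beta$-taut. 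So alternative (2) does not arise, and this is why the present theorem has only four conclusions.

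The remaining translations are routine. Alternative (1) of Theorem~\ref{Thm: Main Theorem} gives a compressing or $e_i$-boundary compressing disc for some $Q_i$: for $i=1$ this is conclusion (1) verbatim, and for $i=2$ the ambient isotopy carrying $(Q_2,\beta_2)$ to $(Q,\beta)$ pulls it back to conclusion (1). Alternative (3) gives $(M,e_i) = (M'_0,\beta'_0)\#(M'_1,\beta'_1)$ with $M'_1$ a lens space and $\beta'_1$ a core; since $\beta_2$ is ambient-isotopic to $\beta$, this is conclusion (2) in either case. Alternative (4) gives that $(M,\gamma)$ is $\nil$-taut, and that the core $c_{\widehat\beta}$ of the associated band can be properly isotoped onto (and, by a separate proper isotopy, off the first surface of) the branched surface $B(\mc{H})$ of a taut hierarchy $\mc{H}$ whose first surface represents $\pm y$; to obtain conclusion (3), I would first observe that $\beta$ is isotopic, relative to its endpoints, to $\alpha_- \cup c_{\widehat\beta} \cup \alpha_+$ by sweeping across the hexagon of $D_{\widehat\beta}$ lying on the far side of $c_{\widehat\beta}$ from the edge that runs along $\beta$, where $\alpha_\pm \subset \boundary M$, and then concatenate with the proper isotopy of $c_{\widehat\beta}$ into $B(\mc{H})$ supplied by Lemma~\ref{Lem: Branched Surface} and Theorem~\ref{Thm: Hierarchies Exist}. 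Finally, alternative (5) gives $I(Q_1)\geq 2\mu(Q_1)$ or $I(Q_2)\geq 2\mu(Q_2)$; since $\mu(Q_1) = \mu(Q)$, since $I(Q)\geq I(Q_2)$, and since (tracking the isotopy of Lemma~\ref{Lem: Counting and Isotoping}) $\mu(Q_2)\geq\mu(Q)$, either case yields $I(Q)\geq 2\mu(Q)$, which is conclusion (4).

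The genuinely delicate points are therefore the bookkeeping that $\mu(Q_2) \geq \mu(Q)$ through the endpoint isotopy of Lemma~\ref{Lem: Counting and Isotoping}, and the careful identification in alternative (4) of the original arc $\beta$ with the band core $c_{\widehat\beta}$ across $D_{\widehat\beta}$ and then through the hierarchy, so that the final isotopy lands in $\boundary M \cup B(\mc{H})$ relative to the endpoints of $\beta$. Eliminating alternative (2) looks at first like the main obstacle, but as indicated it reduces to the short $\beta$-compressing disc argument above; everything else is a direct application of results already established.
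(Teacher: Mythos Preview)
Your proposal is correct and follows essentially the same route as the paper's proof: pass to the associated banded sutured manifold via Lemma~\ref{Lem: Arc Taut to Band Taut} and Corollary~\ref{Cor: Annular R}, apply Theorem~\ref{Thm: Main Theorem}, and translate the five alternatives back. One minor remark: you flag the inequality $\mu(Q_2)\geq\mu(Q)$ as a delicate point, but in fact $\mu(Q_2)=\mu(Q)$ on the nose since $(Q_2,\beta_2)$ is carried to $(Q,\beta)$ by an ambient isotopy, which is how the paper handles it; your weaker inequality still suffices.
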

\begin{proof}
Let $e_1 = \beta$. By Corollary \ref{Cor: Annular R}, the endpoints of $\beta_1$ can be isotoped across $A(\gamma)$ to create an arc $e_2$ and an associated banded sutured manifold $(M,\gamma,\wihat{\beta})$. By Lemma \ref{Lem: Arc Taut to Band Taut}, this sutured manifold is band-taut. By Corollary \ref{Cor: Annular R}, the isotopy can be chosen so that $Q = Q_1$ is isotoped to a surface $Q_2$ disjoint from $e_1$ such that $I(Q_2) \leq I(Q_1)$. By a small isotopy, we can make $Q_1 \cap e_2 = Q_2 \cap e_1 = \nil$. (The surfaces $Q_1$ and $Q_2$ may intersect.) By Theorem \ref{Thm: Main Theorem}, one of the following happens:

\begin{enumerate}
\item[(a)] Some $Q_i$ has a compressing or $e_i$-boundary compressing disc in $(M,\gamma,e_i)$.
\item[(b)] $M$ contains an $S^2$ intersecting each of $e_1$ and $e_2$ exactly once.
\item[(c)] For some $i$, $(M,e_i) = (M'_0,\beta'_0) \# (M'_1,\beta'_1)$ where $M'_1$ is a lens space and $\beta'_1$ is a core of a genus one Heegaard splitting of $M'_1$.
\item[(d)] $(M,\gamma)$ is $\nil$-taut. The arc $c_{\wihat{\beta}}$ can be isotoped relative to its endpoints to be embedded on the branched surface associated to a taut sutured manifold hierarchy for $M$. Furthermore, there is a proper isotopy of $c_{\wihat{\beta}}$ in $M$ to an arc disjoint from the first decomposing surface of the hierarchy. That first decomposing surface can be taken to represent $\pm y$ for any given non-zero $y \in H_2(M,\boundary M)$.
\item[(e)] Either 
\[I(Q_1) \geq 2\mu(Q_1) \text{ or } I(Q_2) \geq 2\mu(Q_2).\]
\end{enumerate}

Since each $(e_i,Q_i)$ is isotopic to $(\beta,Q)$, possibility (a) implies conclusion (1) of our theorem. Possibility (b) cannot occur since that would imply that there was a $\beta$-compressing disc for $R(\gamma)$. Possibility (c) implies Conclusion (2), since $e_i$ is isotopic to $\beta$. Possibility (d) implies Conclusion (3). Possibility (e) implies conclusion (4) since $I(Q) = I(Q_1) \geq I(Q_2)$ and $\mu(Q) = \mu(Q_2) = \mu(Q_1)$.
\end{proof}

We can now prove Theorem \ref{Thm: Main Theorem C} for the case when the components of $R(\gamma)$ adjacent to $b$ are thrice-punctured spheres. It is really only a slight rephrasing of Theorem \ref{Thm: Main Theorem A}.

\begin{theorem}\label{Thm: Main Theorem B}
Suppose that $(N,\gamma)$ is a taut sutured manifold and that $b \subset \gamma$ is a curve adjacent to thrice-punctured sphere components of $R(\gamma)$. Let $Q$ be a parameterizing surface in $N$ with $|Q \cap b| \geq 1$ and with the property that the intersection of $Q$ with the components of $R(\gamma)$ adjacent to $b$ contains no inessential arcs or circles. Let $\beta$ be the cocore in $N[b]$ of a 2-handle attached along $b$. Then one of the following is true:
\begin{enumerate}
\item $Q$ has a compressing or $b$-boundary compressing disc.
\item $(N[b],\beta) = (M'_0,\beta'_0) \# (M'_1,\beta'_1)$ where $M'_1$ is a lens space and $\beta'_1$ is a core of a genus one Heegaard splitting of $M'_1$.
\item $(N[b],\gamma-b)$ is $\nil$-taut. The arc $\beta$ can be properly isotoped  to be embedded on a branched surface $B(\mc{H})$ associated to a taut sutured manifold hierarchy $\mc{H}$ for $N[b]$. There is also a proper isotopy of $\beta$ in $N[b]$ to an arc disjoint from the first decomposing surface of $\mc{H}$. That first decomposing surface can be taken to represent $\pm y$ for any given non-zero $y \in H_2(N[b],\boundary N[b])$.
\item 
\[
-2\chi(Q) + |Q \cap \gamma| \geq 2|Q \cap b|.
\]
\end{enumerate}
\end{theorem}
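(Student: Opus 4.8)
The plan is to realize $(N[b],\gamma - b,\beta)$, where $\beta$ is the cocore of the attached 2--handle, as a $\beta$-taut sutured manifold satisfying the hypotheses of Theorem \ref{Thm: Main Theorem A}, and then translate the four conclusions of that theorem one at a time. The structural observation is that attaching a 2--handle along the suture $b$ and taking the cocore is inverse to the ``convert an arc to a suture'' move of \cite[Definition 2.2]{MSc4}: if $\gamma_\beta$ denotes a meridian of $\beta$ in $N[b]$, then there is a homeomorphism $N[b] - \inter{\eta}(\beta) \homeo N$ carrying $\gamma - b$ to $\gamma - b$ and $\gamma_\beta$ to $b$. Under this homeomorphism the thrice-punctured sphere components $\rho_\pm$ of $R(\gamma)$ adjacent to $b$ correspond to the components $\rho'_\pm$ of $R(\gamma - b)$ that contain the endpoints of $\beta$, and each $\rho'_\pm$ is $\rho_\pm$ with one boundary circle filled by a disc, hence an annulus with two boundary circles. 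This is where the hypothesis that $b$ is adjacent to thrice-punctured spheres enters: it is exactly what makes Theorem \ref{Thm: Main Theorem A}, which requires the endpoints of its arc to lie on annulus components of $R(\gamma)$, applicable.

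First I would verify the hypotheses of Theorem \ref{Thm: Main Theorem A} for $(N[b],\gamma - b,\beta)$. Because the homeomorphism sends $\gamma_\beta$ to $b$, Lemma 2.2 of \cite{MSc4} gives that $(N[b],\gamma - b,\beta)$ is $\beta$-taut if and only if $(N,(\gamma - b)\cup\gamma_\beta) = (N,\gamma)$ is $\nil$-taut, which holds by hypothesis; and $\beta$ is a single edge with endpoints in the annulus components $\rho'_\pm$. Next, pushing $Q$ off the 2--handle produces a properly embedded surface in $N[b] - \inter{\eta}(\beta)$ in which the spanning arcs of $\boundary Q$ that formerly crossed $b$ have become traversals of the edge $\beta$; conditions (P1) and (P2) are checked directly, so $Q$ is a parameterizing surface in $(N[b],\gamma - b,\beta)$ with $\mu(Q) = |Q\cap b| \geq 1$. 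Since the other intersections of $\boundary Q$ with the sutures, and $\chi(Q)$, are unchanged,
\[
I(Q) = \mu(Q) + |\boundary Q\cap(\gamma - b)| - 2\chi(Q) = |Q\cap\gamma| - 2\chi(Q).
\]
Finally, $\rho'_\pm - \inter{\eta}(\boundary\beta)$ is carried by the homeomorphism to the thrice-punctured sphere $\rho_\pm$, with $\boundary Q\cap\rho'_\pm$ corresponding to $\boundary Q\cap\rho_\pm$, so the hypothesis that $Q$ meets $\rho_\pm$ in no inessential arc or circle is precisely the essentiality condition needed to apply Theorem \ref{Thm: Main Theorem A} (through Corollary \ref{Cor: Annular R}).

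With the hypotheses verified, I would invoke Theorem \ref{Thm: Main Theorem A} and translate its conclusions. Conclusion (1) there produces a compressing or $\beta$-boundary compressing disc for $Q$; a $\beta$-boundary compressing disc is by definition a $\gamma_\beta$-boundary compressing disc in $N[b] - \inter{\eta}(\beta)$, which under the homeomorphism is a $b$-boundary compressing disc for $Q$ --- this is conclusion (1) here. Conclusion (2) of Theorem \ref{Thm: Main Theorem A} is conclusion (2) here, as $M = N[b]$. Conclusion (3) there --- that $(N[b],\gamma - b)$ is $\nil$-taut, that $\beta$ is properly isotopic onto the branched surface $B(\mc{H})$ of a taut hierarchy for $N[b]$ and also properly isotopic to an arc disjoint from the first decomposing surface of $\mc{H}$, which may be taken to represent $\pm y$ for any prescribed non-zero $y \in H_2(N[b],\boundary N[b])$ --- is conclusion (3) here. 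And conclusion (4), $I(Q) \geq 2\mu(Q)$, is $-2\chi(Q) + |Q\cap\gamma| \geq 2|Q\cap b|$ by the displayed identity together with $\mu(Q) = |Q\cap b|$.

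The argument is almost entirely bookkeeping, and the step needing the most care --- the main obstacle, such as it is --- is setting up the homeomorphism $N[b] - \inter{\eta}(\beta) \homeo N$ and confirming that it simultaneously respects the two sutured structures, matches $\gamma_\beta$ with $b$, carries the modified $Q$ back to $Q$ with its index and all intersection numbers unchanged, and identifies $\beta$-boundary compressing discs with $b$-boundary compressing discs. One should also record the small point that $\rho_-$ and $\rho_+$ are distinct, lying on opposite sides of $b$, so that $\rho'_\pm$ genuinely are annuli with two boundary circles each, as required by the arc-taut-to-band-taut construction underlying Theorem \ref{Thm: Main Theorem A}.
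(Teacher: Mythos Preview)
Your proposal is correct and follows exactly the paper's approach: the paper's own proof is the two-line observation that converting the suture $b$ to the arc $\beta$ (the inverse of Scharlemann's arc-to-suture move, via \cite[Lemma 2.2]{MSc4}) turns the $\nil$-taut $(N,\gamma)$ into the $\beta$-taut $(N[b],\gamma-b,\beta)$, after which Theorem \ref{Thm: Main Theorem A} applies verbatim. Your write-up simply unpacks the bookkeeping that the paper leaves implicit.
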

\begin{proof}
Let $M = N[b]$. Convert the suture $b$ to an arc $\beta$. Since $(N,\gamma)$ is $\nil$-taut, $(M,\gamma-b, \beta)$ is $\beta$-taut. The theorem then follows immediately from Theorem \ref{Thm: Main Theorem A}.
\end{proof}

\section{Separating Sutures on Genus Two Surfaces}\label{Punctured Tori}
In this section, we prove Theorem \ref{Thm: Main Theorem C} for the case when $b$ is adjacent to once-punctured tori. The key idea is to create a band-taut sutured manifold by viewing a certain decomposition of the original sutured manifold in three different ways.

We say that a sutured manifold $(M,\gamma, \beta)$ is \defn{almost taut} if it satisfies (T1), (T2) from Section \ref{Sec: SM} and also:
\begin{enumerate}
\item[(AT)] $\beta$ is a single edge and either has both endpoints in distinct components of $T(\gamma)$ or has both endpoints in distinct components of $A(\gamma)$.
\end{enumerate}

The strategy is to begin with an arc-taut sutured manifold $M_+ = (M,\gamma,\beta_+)$ where $\beta_+$ is an arc having endpoints in distinct torus components of $R(\gamma)$. Convert it to an almost taut sutured manifold $M_0 = (M,\gamma, c_\beta)$ where $c_\beta$ has endpoints in distinct torus components of $T(\gamma)$, produce a so-called ``almost-taut decomposition'' of $M_0$ resulting in an almost taut sutured manifold $M'_0 = (M',\gamma', c'_\beta)$, convert $M'_0$ to a band-taut sutured manifold $(M',\gamma',\beta')$ and then appeal to Theorem \ref{Thm: Main Theorem}. Along the way we will also have to analyze the behaviour of parameterizing surfaces.

We establish the following notation:

Let $M_+ = (M,\gamma,\beta_+)$ be a sutured manifold, with $\beta_+$ an arc having endpoints in torus components $T_- \subset R_-(\gamma)$ and $T_+ \subset R_+(\gamma)$. Let $M_- = (M,\gamma,\beta_-)$ be the sutured manifold resulting from moving $T_-$ into $R_+$, moving $T_+$ into $R_-$ and performing a small isotopy of $\beta_+$ to an arc $\beta_-$ disjoint from $\beta_+$. Let $M_0 = (M,\gamma,c_\beta)$ be the sutured manifold resulting from moving $T = T_- \cup T_+$ into $T(\gamma)$ and performing a small isotopy of $\beta_+$ to an arc $c_\beta$ that is disjoint from $\beta_+ \cup \beta_-$. 

\subsection{Preliminary tautness results.}
The next lemma is straightforward to prove, and so we omit the proof.

\begin{lemma}\label{Lem: T implies AT}
If $M_+$ is $\beta_+$-taut, then $M_0$ is almost taut.
\end{lemma}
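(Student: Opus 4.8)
The plan is to verify the three defining conditions of an almost taut sutured manifold — namely (T1), (T2), and (AT) — for $M_0 = (M,\gamma,c_\beta)$ under the hypothesis that $M_+ = (M,\gamma,\beta_+)$ is $\beta_+$-taut. The key observation is that $M_0$ is obtained from $M_+$ by a purely cosmetic modification: the torus components $T_\pm$ are removed from $R(\gamma)$ and placed into $T(\gamma)$, and $\beta_+$ is isotoped slightly to $c_\beta$. Since $c_\beta$ is isotopic to $\beta_+$ in $M$, the complements $M - \inter\eta(c_\beta)$ and $M - \inter\eta(\beta_+)$ are homeomorphic, so $c_\beta$-irreducibility of $M$ is equivalent to $\beta_+$-irreducibility; this gives (T1) immediately.

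For (T2), I would argue as follows. Condition (AT) is immediate by construction: $c_\beta$ is a single edge and, having been isotoped from $\beta_+$ whose endpoints lay in the distinct torus components $T_-$ and $T_+$, its endpoints now lie in the distinct components $T_-,T_+ \subset T(\gamma)$ of $M_0$. For the tautness of $R(\gamma)$ and $T(\gamma)$ in $M_0$: the new $R(\gamma)$ for $M_0$ is $R_\pm(\gamma) \setminus T_\pm$, i.e.\ exactly the non-torus part of $R_\pm(\gamma)$ for $M_+$, and since $\beta_+$ and $c_\beta$ each meet $R(\gamma)$ in the same way (and $c_\beta$ is disjoint from $T(\gamma)$ of $M_0$, including the newly added tori, by (T0)-type positioning), $\beta_+$-tautness of $R(\gamma)$ in $M_+$ restricts to $c_\beta$-tautness of the smaller surface $R(\gamma)$ in $M_0$. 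The subtlety is the new torus components $T(\gamma)$ of $M_0$, which now include $T_-$ and $T_+$: one must check these tori are $c_\beta$-taut, i.e.\ $c_\beta$-incompressible and not boundary-parallel in an inadmissible way. But $T_-$ and $T_+$ were components of $R(\gamma)$ in the $\beta_+$-taut manifold $M_+$, hence were $\beta_+$-incompressible there; $\beta_+$-incompressibility of $T_\pm$ in $M_+$ is the same statement as $c_\beta$-incompressibility of $T_\pm$ in $M_0$ since the ambient manifold and the arc are unchanged up to isotopy. (One should note $c_\beta$ can be isotoped off $T(\gamma)$ entirely so that intersection-sign conditions for the torus pieces are vacuous.)

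The step I expect to require the most care is bookkeeping around the norm-minimizing condition when the ambient $R(\gamma)$ shrinks: one must confirm that removing the torus components from $R(\gamma)$ does not disturb the $x_{c_\beta}$-minimality of what remains, and that no new compressing or norm-reducing phenomenon is introduced by reclassifying $T_\pm$ as torus sutures. This is genuinely routine — tori contribute zero to the Thurston norm and the homological constraints are unchanged — which is presumably why the paper states the lemma is straightforward and omits the proof. I would therefore present the proof as a short check of (T1), (T2), (AT) in turn, citing the isotopy $\beta_+ \simeq c_\beta$ and the identification of the relevant surfaces, and remark that the torus pieces play no role in the norm so their reclassification is harmless.
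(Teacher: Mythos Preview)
Your proposal is essentially correct and follows the natural approach the paper alludes to; indeed, the paper explicitly omits the proof as ``straightforward,'' so there is nothing to compare against beyond the obvious direct verification of (T1), (T2), and (AT) that you outline. Your identification of the one point needing care --- that removing the torus components $T_\pm$ from $R(\gamma)$ and reclassifying them as $T(\gamma)$ does not disturb norm-minimality --- is accurate, and your reasoning (tori have zero Euler characteristic, $c_\beta$ is disjoint from the new $R(\gamma)$, and the competing surfaces in the relevant homology classes are unchanged) is the right way to dispatch it.
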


Now suppose that we are given an almost taut sutured manifold $M'_0 = (M',\gamma', c_{\beta'})$ with the endpoints of $c_{\beta'}$ in $A(\gamma)$. We create a banded sutured manifold $(M',\gamma', \beta')$ as follows. Isotope the endpoints of $c_{\beta'}$ out of $A(\gamma')$ and into $R(\gamma')$ so that one endpoint lies in $R_-$ and the other in $R_+$. (We require that once the endpoints leave $A(\gamma')$ they do not reenter it during the isotopy.) Since the endpoints of $c_{\beta'}$ lie in distinct components of $A(\gamma')$, up to ambient isotopy of $M'$ (relative to $A(\gamma')$) there are two ways of isotoping $c_{\beta'}$ so that the endpoints lie in $R(\gamma')$. Let $M'_- = (M',\gamma',\beta'_-)$ and $M'_+ = (M',\gamma',\beta'_+)$ denote the two ways of doing this. Perform the isotopies so that $c_{\beta'}$, $\beta'_+$, and $\beta'_-$ are pairwise disjoint. Let $\beta'$ denote their union, and let $D_{\beta'}$ be an (embedded) disc of parallelism between $\beta'_-$ and $\beta'_+$ that contains $c_{\beta'}$ in its interior. Then $(M',\gamma',\beta')$ is a banded sutured manifold. We say that it is a banded sutured manifold \defn{derived} from $M'_0$. The next lemma gives criteria guaranteeing that the derived sutured manifold is band-taut.

\begin{lemma}\label{Lem: AT implies BT}
Suppose that $(M',\gamma',c_{\beta'})$ is a $c_{\beta'}$-almost taut connected sutured manifold and that $(M',\gamma',\beta')$ is a derived banded sutured manifold. Suppose that no sphere in $M'$ intersects $c_{\beta'}$ exactly once transversally. If no component of $R(\gamma')$ containing an endpoint of $e_{\beta'} = \beta'_- \cup \beta'_+$ is a disc and if $\chi(R_-) = \chi(R_+)$, then $(M',\gamma',\beta')$ is band taut.
\end{lemma}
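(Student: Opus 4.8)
The plan is to verify directly, as in the proof of Lemma~\ref{Lem: Arc Taut to Band Taut}, that the banded sutured manifold $(M',\gamma',\beta')$ derived from $M'_0$ is band taut, i.e.\ that $(M',\gamma',e_{\beta'})$ with $e_{\beta'}=\beta'_-\cup\beta'_+$ satisfies (T0)--(T2). We may assume $M'$ connected. Condition (T0) is immediate since the derived construction pushes the endpoints of $\beta'_\pm$ into $R(\gamma')$, so $e_{\beta'}$ is disjoint from $A(\gamma')\cup T(\gamma')$, and $T(\gamma')$ is then $e_{\beta'}$--taut because it is taut in the almost taut manifold $M'_0$ (here $e_{\beta'}$ misses $T(\gamma')$). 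For (T1): $\beta'_-$ is ambient isotopic in $M'$ to $c_{\beta'}$, so $M'-\inter{\eta}(\beta'_-)\homeo M'-\inter{\eta}(c_{\beta'})$ is irreducible because $M'$ is $c_{\beta'}$--irreducible; and $\beta'_+$ is a boundary-parallel properly embedded arc in $M'-\inter{\eta}(\beta'_-)$, via the subdisc of $D_{\beta'}$ between $\beta'_-$ and $\beta'_+$, so drilling it out does not change the homeomorphism type and $M'-\inter{\eta}(e_{\beta'})$ is irreducible. So the content is in showing $R_\pm$ is $e_{\beta'}$--taut.

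Next I would establish $e_{\beta'}$--incompressibility of $R(\gamma')$. Suppose $D$ is an $e_{\beta'}$--compressing disc for $R(\gamma')$, chosen to meet $D_{\beta'}$ minimally. If $\boundary D$ is inessential in $R(\gamma')$ it bounds a disc $\Delta\subset R(\gamma')$ which (since $\boundary D$ is $e_{\beta'}$--essential) contains an endpoint of $e_{\beta'}$; because each component of $R_\pm$ contains at most one endpoint of each of $\beta'_-,\beta'_+$, the sphere $D\cup\Delta$ meets at least one of $\beta'_\pm$ exactly once, contradicting the hypothesis that no sphere in $M'$ meets $c_{\beta'}$ exactly once (recall $\beta'_\pm$ is isotopic to $c_{\beta'}$). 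If $\boundary D$ is essential in $R(\gamma')$, then $D$ is disjoint from $e_{\beta'}$ and, by an innermost-disc argument along $D\cap D_{\beta'}$ exactly as in Lemma~\ref{Lem: Arc Taut to Band Taut}, $D$ can be isotoped off $c_{\beta'}$ (using again that no sphere meets $c_{\beta'}$ once to dispose of the closed pieces that arise), contradicting $c_{\beta'}$--incompressibility of $R(\gamma')$.

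The heart of the argument is $e_{\beta'}$--minimality of $R_\pm$. Each of $\beta'_-$ and $\beta'_+$ has exactly one endpoint in $R_-$ and one in $R_+$, and by hypothesis no component of $R(\gamma')$ meeting $e_{\beta'}$ is a disc; together with the incompressibility just proved this forces every component of $R_\pm$ to be neither a disc nor a sphere, so $x_{e_\beta'}(R_\pm)=x_{\nil}(R_\pm)+2$ (this is the analogue of the $x_{e_\beta}(R_\pm)=x_{\beta_1}(R_\pm)+1$ identity in Lemma~\ref{Lem: Arc Taut to Band Taut}, except that $c_{\beta'}$ contributes $0$ and $e_{\beta'}$ contributes $2$ to the $R_\pm$--norm). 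Now let $S$ be an $e_{\beta'}$--taut surface with $\boundary S=\boundary R_-$ and $[S,\boundary S]=[R_-,\boundary R_-]$, chosen to meet $D_{\beta'}$ minimally, and discard sphere components disjoint from $e_{\beta'}$. Since $S$ is homologous to $R_-$ it has algebraic intersection $\pm1$ with each of $\beta'_-,\beta'_+$, and since it is $e_{\beta'}$--taut its geometric intersection with each edge equals the absolute algebraic one; combined with the minimality-of-$|S\cap D_{\beta'}|$ analysis this shows $S$ meets $e_{\beta'}$ in exactly two points, one on each component. A disc component of $S$ disjoint from $e_{\beta'}$, or meeting $e_{\beta'}$ exactly once, is ruled out by capping with a disc of $R_-$ (supplied by incompressibility) to produce either a sphere meeting some $\beta'_\pm$ once or a configuration forcing $M'$ to be a ball or $S^2\times[0,1]$ with too few sutures to carry a $c_{\beta'}$ whose endpoints lie in distinct components of $A(\gamma')$. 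Hence $x_{e_{\beta'}}(S)=-\chi(S)+2=x_{\nil}(S)+2\ge x_{\nil}(R_-)+2=x_{e_{\beta'}}(R_-)$, using that $R_-$ is Thurston-norm minimizing in $M'_0$; the argument for $R_+$ is symmetric (this is where the hypothesis $\chi(R_-)=\chi(R_+)$ is used, to keep the Euler-characteristic bookkeeping balanced and to exclude the low-complexity competitors). Since each edge of $e_{\beta'}$ meets $R_\pm$ in a single point, the sign condition is automatic, so $R_\pm$ is $e_{\beta'}$--taut and $(M',\gamma',\beta')$ is band taut.

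I expect the main obstacle to be the minimality step: pinning down that a competing $e_{\beta'}$--taut surface $S$ for $R_\pm$ must meet $e_{\beta'}$ in exactly two points and has no disc components spoiling the identity $x_{e_{\beta'}}(S)=x_{\nil}(S)+2$. This is precisely where all three extra hypotheses --- that no sphere meets $c_{\beta'}$ once, that no component of $R(\gamma')$ adjacent to $e_{\beta'}$ is a disc, and that $\chi(R_-)=\chi(R_+)$ --- must be brought to bear, essentially to exclude the degenerate ball and $S^2\times[0,1]$ configurations that recur throughout this section (compare Lemma~\ref{Lem: Arc Taut to Band Taut} and Lemma~\ref{Lem: trivial homology}). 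The incompressibility step is a routine but careful adaptation of the corresponding argument in Lemma~\ref{Lem: Arc Taut to Band Taut}.
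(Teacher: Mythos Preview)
Your outline tracks the paper's proof in structure, but there is a real gap in the norm-minimality step. You write
\[
x_{e_{\beta'}}(S)=-\chi(S)+2=x_{\nil}(S)+2\ge x_{\nil}(R_-)+2=x_{e_{\beta'}}(R_-),
\]
invoking ``$R_-$ is Thurston-norm minimizing in $M'_0$''. But the almost-taut hypothesis only gives that $R_-$ is $c_{\beta'}$-minimizing, not $\nil$-minimizing. Since the endpoints of $c_{\beta'}$ lie in $A(\gamma')$, we have $c_{\beta'}\cap R_\pm=\nil$, so $x_{c_{\beta'}}(R_\pm)=x_{\nil}(R_\pm)$; however, the competing surface $S$ may well intersect $c_{\beta'}$, and then $x_{c_{\beta'}}(S)>x_{\nil}(S)$. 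From $x_{c_{\beta'}}(R_-)\le x_{c_{\beta'}}(S)$ you only get $-\chi(R_-)\le -\chi(S)+|S\cap c_{\beta'}|$, which is not enough. Nothing in your argument controls $|S\cap c_{\beta'}|$: the surface $S$ is chosen $e_{\beta'}$-taut, and $c_{\beta'}\not\subset e_{\beta'}$.

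This is precisely the point where the paper diverges from the template of Lemma~\ref{Lem: Arc Taut to Band Taut}. After isotoping $S$ (rel $\boundary$) to minimize $(|D_{\beta'}\cap S|,|c_{\beta'}\cap S|)$ lexicographically, the two arcs of $S\cap D_{\beta'}$ either both miss $c_{\beta'}$ (Case~1) or each cross it once (Case~2). Case~1 is your argument, and there the comparison with $x_{c_{\beta'}}$ goes through. In Case~2 one cannot compare $S$ directly with $R_\pm$; instead one slides $\boundary S$ across $A(\gamma')$ to get a surface $S'$ with $\boundary S'=\boundary R_\mp$ that \emph{is} disjoint from $c_{\beta'}$, sets $S''=S'\cup T(\gamma')$, and applies Case~1 to $S''$ against $R_\mp$. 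The hypothesis $\chi(R_-)=\chi(R_+)$ enters exactly here, to pass from $x_{e_{\beta'}}(R_\mp)\le x_{e_{\beta'}}(S'')=x_{e_{\beta'}}(S)$ back to the desired $x_{e_{\beta'}}(R_\pm)\le x_{e_{\beta'}}(S)$. Your parenthetical explanation of where $\chi(R_-)=\chi(R_+)$ is used is not the correct one; if the argument for $R_+$ really were symmetric to that for $R_-$, the hypothesis would be unnecessary.
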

\begin{proof}
Since each component of $e_{\beta'}$ is isotopic to $c_{\beta'}$ and since no sphere separates the components of $e_{\beta'}$, $(M',\gamma',e_{\beta'})$ is $e_{\beta'}$-irreducible.

Suppose that $R_\pm$ is $e_{\beta'}$-compressible by a disc $D$. Since $R_\pm$ is $c_{\beta'}$-incompressible, the boundary of $D$ bounds a disc $D' \subset R_\pm$ containing one or two endpoints of $e_{\beta'}$. If it contains two endpoints, they must be endpoints of different components of $e_{\beta'}$. Then $D \cup D'$ is a sphere in $M'$ intersecting an edge of $e_{\beta'}$ in a single point. Since each edge of $e_{\beta'}$ is isotopic to $c_{\beta'}$, there is a sphere in $M'$ intersecting $c_{\beta'}$ in a single point, contrary to hypothesis.

Let $S$ be a surface representing $[R_\pm,\boundary R_\pm]$ in $H_2(M',\boundary R_\pm)$ such that:
\begin{itemize}
\item $S$ is $e_{\beta'}$-incompressible
\item $S$ intersects each edge of $e_{\beta'}$ always with the same sign.
\end{itemize}
We wish to show that $x_{e_{\beta'}}(R_\pm) \leq x_{e_{\beta'}}(S)$.

Isotope $S$, relative to $\boundary S$, to minimize the pair $(|D_{\beta'} \cap S|, |c_{\beta'} \cap S|)$ lexicographically. An innermost disc argument shows that $S$ intersects $D_{\beta'}$ in arcs only. An outermost arc argument shows that each of these arcs has an endpoint on $\boundary M$ or joins $\beta'_-$ to $\beta'_+$. Since $S$ represents $[R_\pm, \boundary R_\pm]$, the algebraic intersection of $S$ with each component of $e_{\beta'}$ is $\pm 1$ and the algebraic intersection of $S$ with $c_{\beta'}$ is zero. The absolute value of the algebraic intersection of $S$ with each edge of $e_{\beta'}$ is equal to the geometric intersection number. Since $|\boundary S \cap D_{\beta'}| = 2$, there are two arcs in $S \cap D_{\beta'}$. Since the algebraic intersection number of $S$ with each component of $e_{\beta'}$ is $\pm 1$, each of $\beta'_-$ to $\beta'_+$ is incident to exactly one arc of $S \cap D_{\beta'}$. If an arc of $S \cap D_{\beta'}$ joins $\beta'_-$ to $\beta'_+$, then $S$ would have algebraic intersection number $\pm 1$ with $c_{\beta'}$. This contradicts the fact that $(S,\boundary S)$ is homologous to $(R_\pm, \boundary R_\pm)$. Thus, neither arc joins $\beta'_-$ to $\beta'_+$. Similarly, since $S \cap D_{\beta'}$ contains two arcs and since each of $\beta'_-$ and $\beta'_+$ intersects an arc and since they don't intersect the same arc, each arc of $S \cap D_{\beta'}$ joins $\boundary M'$ to $e_{\beta'}$. Since $S$ has zero algebraic intersection with $c_{\beta'}$, as in Figure \ref{Fig: ATimpliesBT}, either these arcs are both disjoint from $c_{\beta'}$ or they each intersect $c_{\beta'}$ exactly once.

\begin{figure}[ht!] 
\labellist \small\hair 2pt 
\pinlabel {$c_{\beta'}$} [l] at 186 169
\pinlabel {$c_{\beta'}$} [l] at 507 194
\pinlabel{$S$} [bl] at 51 160
\pinlabel{$S$} [bl] at 250 193
\pinlabel{$S$} [t] at 366 162
\pinlabel{$S$} [b] at 581 166
\endlabellist 
\centering 
\includegraphics[scale=.4]{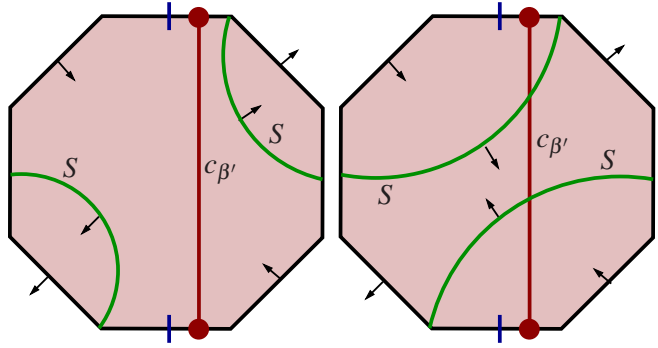}
\caption{The two possible kinds of intersection between $S$ and $D_{\beta'}$ (for the case when $S$ is homologous to $R_+$).}
\label{Fig: ATimpliesBT}
\end{figure}

\textbf{Case 1:} The arcs $S \cap D_{\beta'}$ are disjoint from $c_{\beta'}$. 

By the $e_{\beta'}$-irreducibility of $M$ and our hypotheses, we may assume that no component of $S$ is a sphere intersecting $e_{\beta'}$ one or fewer times. Let $n_S$ be the number of components of $S$ that are discs intersecting $e_{\beta'}$ exactly once. Similarly, we may assume that no component of $R(\gamma')$ is a sphere intersecting $e_{\beta'}$ one or fewer times. Recall that no component of $R(\gamma')$ containing an endpoint of $e_{\beta'}$ is a disc. We have
\[
x_{e_{\beta'}}(R_\pm) = x_{c_{\beta'}}(R_\pm) + 2
\]
and
\[
x_{e_{\beta'}}(S) = x_{c_{\beta'}}(R_\pm) + 2 - n_S
\]

If a component of $S$ is a disc intersecting $e_{\beta'}$ once, then either it is a $c_{\beta'}$-compressing disc for the component of $R_\pm$ sharing its boundary, or that component is a disc. Since $R_\pm$ is $c_{\beta'}$-taut and since no sphere intersects an edge of $e_{\beta'}$ once, that component of $R_\pm$ must be a disc intersecting $e_{\beta'}$ once, contradicting our hypotheses. Thus, $n_S = 0$. It then follows that since $R_\pm$ is $x_{c_{\beta'}}$-minimizing, $x_{e_{\beta'}}(R_\pm) \leq x_{e_{\beta'}}(S)$. Hence, $R_\pm$ is $e_{\beta'}$-taut. 

\textbf{Case 2: } The arcs $S \cap D_{\beta'}$ are not disjoint from $c_{\beta'}$.

Since the endpoints of $c_{\beta'}$ are in different components of $A(\gamma')$, we can isotope $S$ so that $\boundary S$ moves across $A(\gamma')$ and so that $S$ is made disjoint from $c_{\beta'}$. Call the resulting surface $S'$. We have $\boundary S' = \boundary R_\mp$. The intersection between $S'$ and $D_{\beta'}$ is as in Figure \ref{Fig: ATimpliesBT2}. An isotopy of $S'$ relative to $\boundary S'$ makes $S'$ disjoint from $c_{\beta'}$.

\begin{figure}[ht!] 
\labellist \small\hair 2pt 
\pinlabel {$c_{\beta'}$} [l] at 187 182
\pinlabel{$S'$} [b] at 78 179
\pinlabel{$S'$} [bl] at 254 168
\endlabellist 
\centering 
\includegraphics[scale=.4]{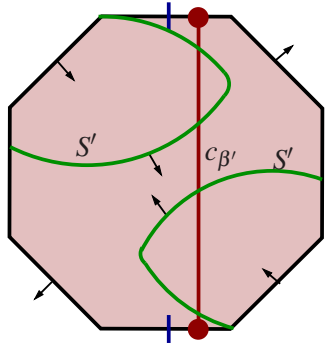}
\caption{The intersection between $S'$ and $D_{\beta'}$ (for the case when $S$ is homologous to $R_+$).}
\label{Fig: ATimpliesBT2}
\end{figure}

Let $S'' = S' \cup T(\gamma')$. Since $[S,\boundary S] = [R_\pm, \boundary R_\pm]$, we have $[S'',\boundary S''] = [R_\mp, \boundary R_\mp]$.  We note that $S''$ is $e_{\beta'}$-incompressible and that it always intersects each edge of $e_{\beta'}$ with the same sign. Consequently, by Case 1 and the fact that $R_-$ and $R_+$ have the same euler characteristic,
\[
x_{e_{\beta'}}(R_\pm) = x_{e_{\beta'}}(R_\mp) \leq x_{e_{\beta'}}(S'')= x_{e_{\beta'}}(S).
\]
Hence, $R_\pm$ is $x_{e_{\beta'}}$-minimizing and is, therefore, $e_{\beta'}$-taut.

We have proved that, in either case, $R_\pm$ is $e_{\beta'}$-taut. It is easy to show that $T(\gamma')$ is $e_{\beta'}$-taut and, therefore, that $(M,\gamma,e_{\beta'})$ is $e_{\beta'}$-taut. Consequently, $(M',\gamma',e_{\beta'} \cup c_{\beta'})$ is band-taut.
\end{proof}

We say that a sutured manifold decomposition
\[
(M,\gamma,c_\beta) \stackrel{S}{\to} (M',\gamma',c_{\beta'})
\]
is \defn{almost-taut} if $S$ is disjoint from $c_\beta$ (and so $c_\beta = c_{\beta'})$ and both $(M,\gamma,c_\beta)$ and $(M',\gamma',c_{\beta'})$ are almost taut.

\subsection{Almost taut decompositions}

To create almost taut decompositions, we recall the definition of ``Seifert-like'' homology class from the introduction: A class $y \in H_2(M,\boundary M)$ is \defn{Seifert-like} for the union $T$ of two torus components of $\boundary M$, if the projection of $y$ to the first homology of each component is non-zero. By the ``half-lives, half-dies'' theorem from algebraic topology, there are non-zero classes in the first homology of each component of $T$ that are the projections of the boundary of classes $y_1,y_2 \in H_2(M,\boundary M)$. If neither $y_1$ nor $y_2$ is Seifert-like for $T$, then summing them produces a Seifert-like homology class. Thus, if $\boundary M$ has two torus components, there is a class in $H_2(M,\boundary M)$ that is Seifert-like for their union. The next two lemmas show how to construct an almost taut decomposition, given a Seifert-like homology class.

\begin{lemma}\label{Lem: AT Decomp Exist}
Suppose that $(M,\gamma,c_\beta)$ is a $c_\beta$-almost taut sutured manifold, with $c_\beta$ an arc having both endpoints on torus components $T$ of $T(\gamma)$. Let $y$ be a Seifert-like homology class for $T$.  Then there exists a conditioned surface $S$ representing $y$ and disjoint from $c_\beta$, such that the double curve sum $S_k$ of $S$ with $k$ copies of $R(\gamma)$ is $c_\beta$-taut for any $k \geq 0$. Hence, the decomposition
\[
(M,\gamma,c_\beta) \stackrel{S_k}{\to} (M',\gamma',c_\beta)
\]
is $c_\beta$-almost taut for any $k \geq 0$.
\end{lemma}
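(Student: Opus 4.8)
The plan is to mimic the construction behind Theorem \ref{Thm: Rinsed Decomp Exist 1} (equivalently, the proof of \cite[Theorem 2.5]{MSc3}), specialised to the case where the $1$--complex in question is the single arc $c_\beta$ and the taut surface to be preserved under double curve sums is $R = R(\gamma)$. The structural feature that makes this work is that, since the endpoints of $c_\beta$ lie on $T(\gamma)$, the arc $c_\beta$ is \emph{disjoint} from $R(\gamma)$; this lets us arrange $S$ to be literally disjoint from $c_\beta$ and makes condition (C3) vacuous. First I would build the input $1$--manifold. Because $y$ is Seifert-like for $T$, the projection of $\boundary y$ to the first homology of each torus component of $T$ is non-zero, so $\boundary y$ is represented by a conditioned $1$--manifold $C \subset \boundary M$ whose restriction to each component of $T$ is a non-empty family of parallel, coherently oriented essential circles; I isotope $C$ off the two endpoints of $c_\beta$, which is possible since those endpoints are isolated points of $T$. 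As $\boundary y = [C]$, the class $C$ lies in the kernel of $H_1(\boundary M, \eta(\boundary R(\gamma))) \to H_1(M, \eta(\boundary R(\gamma)))$, so \cite[Theorem 2.5]{MSc3} applies with $R = R(\gamma)$ and $\beta$--complex $c_\beta$, producing a surface $S$ with $\boundary S - \eta(\boundary R(\gamma)) = C$, with $[S,\boundary S] = y + m[R(\gamma),\boundary R(\gamma)]$ for some integer $m$, and with the property that the oriented double curve sum of $S$ with any collection of parallel copies of $R(\gamma)$ is $c_\beta$--taut.

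Next I would clean up $S$. Exactly as in the proof of Theorem \ref{Thm: Rinsed Decomp Exist 1}, $S$ is homologous (rel a neighbourhood of $\boundary R(\gamma)$, after adjusting the number of parallel boundary copies so that it represents $y$ on the nose) to a representative of $y$ that can be chosen disjoint from $c_\beta$, since the only surface entering the cut-and-paste is $R(\gamma)$, which is disjoint from $c_\beta$; hence $S$ has algebraic intersection number $0$ with $c_\beta$, and since $S$ is $c_\beta$--taut (every edge of $c_\beta$ meets $S$ always with the same sign) this forces $S \cap c_\beta = \nil$. After double curve summing with enough copies of $R(\gamma)$ (Lemma \ref{Lem: Using S_k}) I may assume $S$ is conditioned; condition (C3) holds vacuously because $c_\beta$ meets neither $S$ nor $R(\gamma)$. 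This yields the desired conditioned surface representing $y$ and disjoint from $c_\beta$, and for every $k \geq 0$ the surface $S_k$ is again conditioned (Lemma \ref{Lem: Using S_k}), disjoint from $c_\beta$, and $c_\beta$--taut by conclusion (iii) of \cite[Theorem 2.5]{MSc3}.

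Finally I would check that each decomposition $(M,\gamma,c_\beta) \stackrel{S_k}{\to} (M',\gamma',c_\beta)$ is $c_\beta$--almost taut. Disjointness of $S_k$ from $c_\beta$ gives $c_{\beta'} = c_\beta$; $(M,\gamma,c_\beta)$ is almost taut by hypothesis (cf.\ Lemma \ref{Lem: T implies AT}), so only $(M',\gamma',c_\beta)$ remains. Irreducibility (T1) of $M'$ follows as usual from $c_\beta$--irreducibility of $M$ and tautness of the decomposing surface, and tautness (T2) of $R(\gamma')$ and $T(\gamma')$ is precisely what conclusion (iii) of \cite[Theorem 2.5]{MSc3} (applied with one more copy of $R(\gamma)$) delivers — this is the mechanism of the decomposition theorem of \cite{MSc3}. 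The delicate point, which I expect to be the main obstacle, is condition (AT): decomposing along $S$ cuts each torus component of $T$ into annular pieces (the complementary annuli of $C \cap T$ together with annular neighbourhoods of the new sutures), so one must track the two endpoints of $c_\beta$ through this cut and, if necessary, perform a small isotopy of each endpoint across an adjacent suture so that it comes to lie in a component of $A(\gamma')$; since the two endpoints started on distinct components of $T$, the resulting components of $A(\gamma')$ are distinct, so $(M',\gamma',c_\beta)$ satisfies (AT). Disentangling this reorganisation of $T$ while keeping $c_\beta$ an honest core arc disjoint from $S$ — and simultaneously pinning down that the surface from \cite[Theorem 2.5]{MSc3} can be taken to represent $y$ exactly, to be conditioned, and to be disjoint from $c_\beta$ all at once — is the part of the argument that needs the most care.
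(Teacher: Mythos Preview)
Your approach is essentially the paper's—apply \cite[Theorem 2.5]{MSc3} with $R = R(\gamma)$, then use $c_\beta$-tautness of $S$ (same-sign intersections) to promote algebraic intersection $0$ to $S \cap c_\beta = \nil$. But there is a genuine gap in your argument that the algebraic intersection is $0$. You claim $S$ is homologous to ``a representative of $y$ that can be chosen disjoint from $c_\beta$''; the problem is that the algebraic intersection of a surface with the \emph{arc} $c_\beta$ is not determined by the class in $H_2(M,\boundary M)$—it depends on the position of $\boundary S \cap T$ relative to $\boundary c_\beta$. Winding a boundary curve once around a component of $T$ (an isotopy fixing $[S]$) changes the intersection number by $\pm 1$. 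So merely isotoping your $1$--manifold $C$ off $\boundary c_\beta$ does not force the intersection number to vanish, and a representative of $y$ disjoint from $c_\beta$ need not have boundary $C$ on $T$.

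The paper fixes this by reversing the order of operations. Its Claim 1 first constructs a conditioned $\Sigma$ representing $y$ with $\Sigma \cap c_\beta = \nil$, precisely by exploiting the winding trick on $T$ to make the algebraic intersection $0$ and then tubing; only then is $C = \boundary \Sigma$ fed into Scharlemann's theorem. Now $S$ and $\Sigma$ agree on $T$ (since $\boundary R(\gamma)$ is disjoint from $T$) and differ homologically by copies of $R(\gamma)$, which is disjoint from $c_\beta$, so their algebraic intersections with $c_\beta$ agree and hence vanish; $c_\beta$-tautness of $S$ then gives $S \cap c_\beta = \nil$. Inserting this step repairs your argument.

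Your worry about (AT) is overstated. By the convention stated after (D3) (and Figure \ref{Fig: Decomposing Sutures}), $(A(\gamma) \cup T(\gamma)) \cap M' \subset A(\gamma') \cup T(\gamma')$. Since $y$ is Seifert-like, $\boundary S$ meets both components of $T$, so $T \cap M'$ consists of annuli and hence lies in $A(\gamma')$; the endpoints of $c_\beta$ land there automatically, in distinct components because they started on distinct tori. No isotopy across a suture is needed.
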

\begin{proof}
\textbf{Claim 1:} There exists a conditioned surface $\Sigma$ representing $y$ disjoint from $c_\beta$.

Standard arguments show that there exists a conditioned surface representing $y$. Out of all such surfaces, choose one $\Sigma$ that minimizes $|\Sigma \cap c_\beta|$. By tubing together points of opposite intersection, we may assume that the geometric intersection number of $\Sigma$ with $c_\beta$ equals the absolute value of the algebraic intersection number. If this number is non-zero, we may isotope the boundary components of $\Sigma$ around a simple closed curve on one component of $T$ so as to introduce enough intersections of $\Sigma$ with $c_\beta$ of the correct sign so that $\Sigma$ and $c_\beta$ have algebraic intersection number zero. This does not change the fact that $\Sigma$ is conditioned. By tubing together points of opposite intersection, we obtain a surface contradicting our original choice of $\Sigma$.

\textbf{Claim 2:} There exists a conditioned surface $S$ representing $y$ that is disjoint from $c_\beta$ and which has the property that the double curve sum $S_k$ of $S$ with $k \geq 0$ copies of $R(\gamma)$ creates a $c_\beta$-taut surface disjoint from $c_\beta$.

We apply Theorem 2.5 of \cite{MSc3} (see page \pageref{Sch. Thm}). We apply the theorem with $R = R(\gamma)$, $C = \boundary \Sigma$, and $y = [\Sigma]$. As noted on page \pageref{Sch. Thm}, Scharlemann's theorem applies even in the absence of a sutured manifold structure, and so there is no problem with applying it in our situation. Since $R(\gamma)$ is disjoint from $c_\beta$, each of the surfaces $S_k$ is disjoint from $c_\beta$.

\textbf{Claim 3:} The manifold $(M',\gamma',c_{\beta'})$ obtained by decomposing $(M,\gamma,\beta)$, using $S_k$ from Claim 2, is $c_{\beta'}$-almost taut.

Since $S_k$ is disjoint from $c_\beta$, we have $c_{\beta'} = c_{\beta}$. The endpoints of $c_\beta$ lay in distinct components of $T(\gamma)$, so the endpoints of $c_{\beta'}$ lie in distinct components of $A(\gamma')$. The surface $R(\gamma')$ is the double curve sum of $S_k$ with $R(\gamma)$, i.e. $S_{k+1}$. Thus, $R(\gamma')$ is $c_{\beta'}$-taut. It follows easily that $(M', \gamma',c_{\beta'})$ is $c_{\beta'}$-almost taut.
\end{proof}

We now show that starting with an arc-taut sutured manifold,  converting it to an almost taut sutured manifold, applying an almost-taut decomposition, and then creating a banded sutured manifold can result in a band-taut sutured manifold.

\begin{lemma}\label{Lem: AT implies BT 2}
Suppose that $M_+$ is $\beta_+$-taut and that $y \in H_2(M,\boundary M)$ is Seifert-like for $T$. Let $S$ be a conditioned surface that represents $y$ and that gives an almost taut decomposition:
\[
M_0 \stackrel{S}{\to} (M',\gamma',c_{\beta'}).
\]
Then the banded sutured manifold $(M',\gamma',\beta')$ derived from $(M',\gamma',c_{\beta'})$ is band-taut.
\end{lemma}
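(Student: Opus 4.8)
The plan is to reduce the statement to Lemma~\ref{Lem: AT implies BT} by verifying its three hypotheses in the present setting: (i) no sphere in $M'$ meets $c_{\beta'}$ exactly once transversally, (ii) no component of $R(\gamma')$ containing an endpoint of $e_{\beta'}$ is a disc, and (iii) $\chi(R_-(\gamma')) = \chi(R_+(\gamma'))$. Since $M_0 \stackrel{S}{\to} (M',\gamma',c_{\beta'})$ is an almost-taut decomposition, $(M',\gamma',c_{\beta'})$ is already $c_{\beta'}$-almost taut; so the real work is tracking what the hypothesis ``$M_+$ is $\beta_+$-taut'' together with the structure of the decomposition tells us about the two items (i) and (ii), while (iii) should come out of the bookkeeping on how $S$ meets $T$.

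First I would establish (i). If some sphere $P \subset M'$ met $c_{\beta'}$ exactly once, then since $c_{\beta'}=c_\beta$ is isotopic in $M'$ (after filling back in $\eta(S)$, or directly) to a subarc of $\beta_+$ — more precisely $c_\beta$ was obtained from $\beta_+$ by a small isotopy followed by pushing the torus ends $T_-,T_+$ into $T(\gamma)$, and $S$ is disjoint from $c_\beta$ — one gets a sphere meeting $\beta_+$ algebraically and geometrically once in $M$. But $\beta_+$ runs from a component of $R_-(\gamma)$ to a component of $R_+(\gamma)$, and $M_+$ is $\beta_+$-irreducible with $R(\gamma)$ being $\beta_+$-incompressible; a sphere meeting $\beta_+$ once would, after capping appropriately, contradict $\beta_+$-irreducibility (it would give a non-separating sphere, hence an $S^2\times S^1$ summand, incompatible with $M$ being $\beta_+$-irreducible). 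So (i) holds.

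Next, (ii) and (iii). The endpoints of $e_{\beta'}=\beta'_-\cup\beta'_+$ lie in the two components of $R(\gamma')$ that are the ``scars'' of the torus components $T_-,T_+$ after the decomposition: the decomposing surface $S$ represents a Seifert-like class $y$, so $\partial S$ meets each component of $T$ in a nonempty coherently-oriented family of essential circles, and after the double-curve sum forming $R(\gamma')$ these two toral components contribute to $R(\gamma')$ subsurfaces of the tori that are not discs — a torus cut along a nonempty family of parallel essential circles is a union of annuli, never a disc. Hence no component of $R(\gamma')$ carrying an endpoint of $e_{\beta'}$ is a disc, giving (ii). For (iii), note that $R_-(\gamma)$ and $R_+(\gamma)$ each contain one of the tori $T_\mp$ (recall $T_-\subset R_-$, $T_+\subset R_+$), which contribute $\chi = 0$; decomposing along the conditioned surface $S$ modifies $R_\pm(\gamma)$ by the same surgery on both sides (the two copies $S_+,S_-$ of $S$ in $\partial M'$ are homeomorphic), so $\chi(R_-(\gamma')) - \chi(R_+(\gamma')) = \chi(R_-(\gamma)) - \chi(R_+(\gamma))$; and the latter is $0$ because in $M_+$ the two toral components balance and, more to the point, in $M_0$ the surface $c_\beta$ is an arc with endpoints in $T(\gamma)$, so $R_-(\gamma)$ and $R_+(\gamma)$ in $M_0$ differ from those of $M_+$ only by removing $T_\mp$ respectively, and $\chi$ is unaffected; I would simply record that $M_0$-almost-tautness forces $\chi(R_-)=\chi(R_+)$ to be preserved under the decomposition and that the initial equality holds because removing a torus does not change Euler characteristic, so it suffices that $\chi(R_-(\gamma))=\chi(R_+(\gamma))$ in $M_+$, which is automatic once we observe the two toral components were split off symmetrically. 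Once (i)--(iii) are in hand, Lemma~\ref{Lem: AT implies BT} applies verbatim and concludes that $(M',\gamma',\beta')$ is band-taut.

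The main obstacle I anticipate is the Euler-characteristic bookkeeping in (iii): one has to be careful that forming the derived banded manifold and the earlier conversions $M_+\leadsto M_-\leadsto M_0$ do not secretly break the symmetry between $R_-$ and $R_+$, and that the decomposition along $S$ really does surger the two sides by homeomorphic pieces. If the naive symmetry argument is not clean, the fallback is the trick already used inside the proof of Lemma~\ref{Lem: AT implies BT} itself — passing between a surface homologous to $R_+$ and one homologous to $R_-$ by isotoping $\partial S$ across $A(\gamma')$ and adjoining $T(\gamma')$ — which is exactly why that lemma was stated with the hypothesis $\chi(R_-)=\chi(R_+)$; here we just need to certify the hypothesis rather than reprove the lemma. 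I would therefore spend most of the written proof nailing down (ii) and (iii) precisely and dispatch (i) quickly via $\beta_+$-irreducibility.
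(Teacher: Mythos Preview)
Your overall strategy is exactly the paper's: reduce to Lemma~\ref{Lem: AT implies BT} by checking its three hypotheses. Hypothesis (iii) and the reduction itself are fine. But your verification of (ii) has a genuine gap.

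You argue that the components of $R(\gamma')$ carrying the endpoints of $e_{\beta'}$ are the annuli obtained by cutting $T$ along $\partial S$, and hence are not discs. This misidentifies those components. Recall the convention (Section~\ref{Band Taut Decomp}) that $(A(\gamma)\cup T(\gamma))\cap M' \subset A(\gamma')\cup T(\gamma')$; since $\partial S$ meets each torus of $T$, all of $T\cap M'$ lies in $A(\gamma')$, not in $R(\gamma')$. When you push an endpoint of $c_{\beta'}$ out of $A(\gamma')$ into $R(\gamma')$, it lands not on a piece of $T$ but on a copy of a component of $S$. So the relevant component of $R(\gamma')$ contains a copy of some component $S_0$ of $S$ with $\partial S_0 \cap T \neq \varnothing$. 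The paper's argument is then: if that component of $R(\gamma')$ were a disc, $S_0$ itself would be a disc; since $S$ is conditioned, $\partial S_0$ is essential on $T$, and since $S$ is disjoint from $c_\beta$ (hence from $\beta_+$), $S_0$ is a $\beta_+$-compressing disc for $T$, contradicting $\beta_+$-tautness of $M_+$. Your ``annuli from a cut torus'' observation is true but irrelevant, because those annuli sit in $A(\gamma')$.

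For (i), your claim that a sphere meeting $\beta_+$ once is automatically non-separating is not justified (the endpoints of $\beta_+$ lie on distinct boundary tori, so such a sphere could perfectly well separate $M$ with $T_-$ on one side and $T_+$ on the other). The paper instead invokes $\beta_+$-incompressibility of $T$; you should route your argument through that rather than through an $S^2\times S^1$ summand.
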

\begin{proof}
Since $M_+$ is $\beta_+$-taut and since $T$ has one component in $R_-(\gamma)$ and one in $R_+(\gamma)$, $\chi(R_-(\gamma) - T) = \chi(R_+(\gamma) - T)$. Also, since $T\subset M_+$ is $\beta_+$-incompressible, no sphere in $M$ intersects $c_{\beta'}$ exactly once transversally. In $M'$, the components of $R(\gamma')$ adjacent to $T \cap M'$ each contain a copy of a component of $S$, since $S$ had boundary on both components of $T$. If one of the components of $R(\gamma')$ containing an endpoint of $e_{\beta'}$ is a disc, then some component of $S$ with boundary on $T$ must be a disk. Since $S$ is conditioned and disjoint from $c_\beta$, this implies that a component of $T$ is compressible in $M - c_\beta$ and thus in $M - \beta_+$. This contradicts the fact that $M_+$ is $\beta_+$-taut. Therefore, no component of $R(\gamma')$ containing an endpoint of $e_{\beta'}$ is a disc. Thus, by Lemma \ref{Lem: AT implies BT}, $(M',\gamma',\beta')$ is band-taut.
\end{proof}

\subsection{Parameterizing surfaces}

Suppose that $M_+$ is $\beta_+$-taut and that $y \in H_2(M,\boundary M)$ is Seifert-like for $T$. Let $S$ be a conditioned surface representing $y$ and giving an almost taut decomposition:
\[
M_0 \stackrel{S}{\to} (M',\gamma',c_\beta).
\]
Let $Q \subset M_+$ be a parameterizing surface.

\begin{lemma}\label{Lem: Arcs}
Assume that no component of $\boundary Q \cap (T - \inter{\eta}(c_\beta))$ is an inessential arc or inessential circle in $T - c_{\beta}$. Let $T'$ be a component of $T \cap M'$. The following are true:
\begin{itemize}
\item $\boundary Q \cap T'$ consists of either essential loops in $T'$ or edges joining the components of $\boundary T'$ and edges joining an endpoint of $c_\beta$ to a component of $\boundary T'$.
\item There are equal numbers of edges joining the endpoint of $c_\beta$ to the two components of $\boundary T'$.
\end{itemize}
\end{lemma}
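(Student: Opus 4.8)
The plan is to analyze the intersection pattern of $\boundary Q$ with the component $T'$ of $T \cap M'$, using the fact that $Q$ is a parameterizing surface (so $Q$ is incompressible in $M' - \inter{\eta}(c_\beta)$ and has no $\boundary$-compressing disc of a forbidden type) together with the hypothesis that no arc or circle of $\boundary Q$ on $T - \inter{\eta}(c_\beta)$ is inessential. Recall that $T'$ is a once-punctured torus together with a neighborhood of one endpoint of $c_\beta$ removed; more precisely $T' = T_0 - \inter{\eta}(c_\beta)$ where $T_0$ is a torus component of $T(\gamma)$ carrying one endpoint of $c_\beta$, so $\boundary T'$ has two components: the original $\boundary(T_0 \cap M')$-type curve coming from $\boundary S$ (an essential curve on the torus), call it $\ell$, and the meridian $m$ of $c_\beta$. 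First I would note that since $\beta_+$ was arc-taut, no sphere meets $c_\beta$ once, so $m$ is non-trivial in the relevant sense; and since $S$ was conditioned and Seifert-like, $\ell$ is essential on the torus. Now $Q$ meets $A(\gamma')$ in spanning arcs (by (P1)); on the piece $T'$ the relevant boundary circles $\ell$ lie in $R(\gamma')$ rather than in $A(\gamma')$, and the meridian $m$ is a new suture created by converting $c_\beta$ — but in this Lemma $c_\beta$ is still an arc (not yet converted), so $m$ is not a suture; rather $\boundary Q \cap T'$ is part of $\boundary Q \cap R(\gamma')$ together with $\boundary Q$ near the removed neighborhood of the endpoint of $c_\beta$.

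The first bullet is essentially a classification of properly embedded $1$-manifolds on the once-punctured-torus-minus-a-disc $T'$ that are incompressible/$\boundary$-incompressible in the ambient $3$-manifold and have no inessential components on the bigger surface $T$. Concretely: circle components must be essential on $T'$ (if a circle bounded a disc in $T'$ it would be inessential on $T$ as well, contradicting the hypothesis, unless it were inessential in $T'$ but essential on $T$, i.e. parallel to $m$ — but a circle parallel to $m$ would be compressible via $Q$ having no such disc... actually a circle parallel to $m$ is fine to rule out by the no-$\boundary$-compressing-disc hypothesis, since such a circle together with an annulus gives a $c_\beta$-boundary compression). Arc components have endpoints on $\boundary T' = \ell \cup m$. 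An arc with both endpoints on $\ell$ that is inessential on $T'$ cuts off a disc; that disc lies in $R(\gamma')$ and would make $Q$ compressible or give a forbidden boundary compression — ruled out. An arc with both endpoints on $\ell$ essential on $T'$: I claim this doesn't happen either, because such an arc, when we glue back the neighborhood of the endpoint of $c_\beta$ (i.e. pass from $T'$ to $T_0$), becomes an arc on the torus $T_0$ with both endpoints on $\ell$; combined with $\ell$ it would have to be inessential on $T$ (torus minus one curve has only inessential arcs joining that curve to itself up to the issue of... hmm) — I need to think about this carefully, but the point is that an arc on a once-punctured torus joining the boundary circle to itself is always inessential (boundary-parallel) since the once-punctured torus has the homotopy type where the boundary is a commutator; wait, no: there are essential arcs on a once-punctured torus with both endpoints on the boundary. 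So the correct argument is different: such an arc, together with an arc of $\ell$, bounds nothing special, but the hypothesis is about arcs and circles on $T - \inter{\eta}(c_\beta)$ being essential, and the relevant subtlety is that $T - \inter{\eta}(c_\beta)$ is exactly $T_0$ (the torus) with a disc removed, i.e. a once-punctured torus, on which there ARE essential arcs with both endpoints on the boundary. So I must instead use that $\boundary Q \cap (T - \inter{\eta}(c_\beta))$ has no inessential components, and $T' \subset T - \inter{\eta}(c_\beta)$, to rule out inessential arcs of the first kind, and then separately observe: an essential arc on $T'$ joining $\ell$ to $\ell$ and disjoint from $m$ survives as an essential arc on $T - \inter{\eta}(c_\beta)$ joining $\ell$ to $\ell$; this is allowed by the hypothesis, but then its two endpoints on $\ell$... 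Actually I suspect the statement means to allow only arcs joining the two components $\ell$ and $m$ of $\boundary T'$, and arcs joining $m$ to $m$ are ruled out by boundary-incompressibility (they'd give a $c_\beta$-boundary compressing disc), arcs joining $\ell$ to $\ell$ are ruled out because, being essential on $T'$ and disjoint from the meridian $m$, they can be completed across the meridian disc's complement... Let me just say: the first bullet follows by a standard surface-classification argument on the once-punctured torus using incompressibility, $\boundary$-incompressibility of $Q$, and the no-inessential-components hypothesis; the only components that survive are essential loops, arcs joining the two boundary components of $T'$, and (what I'm calling the same thing as) arcs joining an endpoint of $c_\beta$'s meridian to $\ell$.

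For the second bullet — equal numbers of edges joining each endpoint of $c_\beta$ to the two components of $\boundary T'$ — here "the two components of $\boundary T'$" should be read as the two copies of $\ell$ that appear after decomposing, i.e. $T \cap M'$ has its $\boundary S$-type boundary appearing as $\ell_+ \subset R_+(\gamma')$ and $\ell_- \subset R_-(\gamma')$ (one from each side of the decomposing surface $S$). The claim is that the number of arcs of $\boundary Q$ from the $c_\beta$-endpoint-neighborhood to $\ell_+$ equals the number to $\ell_-$. This I would prove by an orientation / algebraic-count argument: the meridian $m$ of $c_\beta$ in $\boundary(M - \inter{\eta}(c_\beta))$ bounds a meridian disc $\Delta$, and $\boundary Q \cap m$ consists of $\mu(Q)$ or so points; each arc of $\boundary Q \cap T'$ with an endpoint on $m$ contributes one endpoint to $\boundary Q \cap m$; the surface $Q$, being properly embedded and the decomposition respecting $Q$, has $Q \cap M'$ with boundary a genuine $1$-manifold, so the arcs from $m$ must pair up consistently — and the sign with which $\boundary Q$ crosses $m$ alternates according to whether it's heading toward $\ell_+ \subset R_+$ or $\ell_- \subset R_-$, because $R_+$ and $R_-$ are on opposite sides. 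More precisely, each arc of $\boundary Q \cap T'$ joining $m$ to $\ell_\pm$ has, at its $m$-endpoint, a direction of crossing $m$ determined by $\pm$; since $\boundary Q$ enters and exits the neighborhood $\eta(c_\beta)$-region matching up the way $\beta_+$ (the parameterizing-edge direction) does — and $\beta_+$ runs from a torus in $R_-$ to a torus in $R_+$ — the count of $+$ crossings equals the count of $-$ crossings equals (respectively) the number of arcs to $\ell_-$ and to $\ell_+$. The main obstacle I anticipate is getting this sign bookkeeping exactly right: pinning down precisely which component of $\boundary T'$ lies in $R_+(\gamma')$ versus $R_-(\gamma')$, how the orientation of $Q$ (it's just orientable, not necessarily carrying a coherent co-orientation relative to $\beta$) interacts with the crossings of $m$, and whether the "equal numbers" is an honest equality or only up to the contribution of essential loops. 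I would handle this by working with the framed/oriented double-curve picture near $T \cap M'$ and tracking $\boundary Q$ through the decomposition $M_0 \stackrel{S}{\to} (M',\gamma',c_\beta)$, using that $S$ is disjoint from $c_\beta$ so the neighborhood of $c_\beta$ is unchanged, and that $\boundary S$ contributes the curves $\ell_\pm$ to $R_\pm(\gamma')$. Everything else is a routine check.
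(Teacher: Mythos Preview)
Your proposal rests on a misidentification of $T'$. The surface $T'$ is a component of $T\cap M'$, i.e.\ a component of a torus $T_\pm$ cut along $\boundary S\cap T_\pm$. Since $S$ is conditioned, $\boundary S\cap T_\pm$ consists of parallel, coherently oriented essential circles, so $T'$ is an \emph{annulus}; its two boundary components are (copies of) circles of $\boundary S$. The endpoint of $c_\beta$ on $T_\pm$ lies in the interior of exactly one such annulus. Your description of $T'$ as ``a once-punctured torus with a neighborhood of one endpoint of $c_\beta$ removed,'' with $\boundary T'=\ell\cup m$ consisting of one $\boundary S$-curve and the meridian of $c_\beta$, is incorrect, and this error propagates through the entire discussion (the confusion over arcs joining $\ell$ to $\ell$, the uncertainty about essential arcs on once-punctured tori, etc.).

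The paper's argument is one observation. By hypothesis each component $\sigma$ of $\boundary Q\cap(T_\pm-\inter{\eta}(c_\beta))$ is essential in the once-punctured torus, hence corresponds to an essential simple closed curve on the torus $T_\pm$ (circles remain circles; arcs, when the puncture is filled in, close up to loops through the puncture point). Any essential simple closed curve on a torus is either disjoint from $\boundary S\cap T_\pm$ or crosses each of those parallel circles always with the same sign. It follows immediately that when such a $\sigma$ is cut by $\boundary S$, every resulting arc in an annulus $T'$ is a spanning arc (joins the two components of $\boundary T'$); in particular, the arc of $\sigma$ passing through the puncture point is a single spanning arc, so removing the puncture cuts it into exactly one edge to each component of $\boundary T'$. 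This yields both bullets with no further case analysis and no sign bookkeeping on the meridian. Your orientation/alternation argument for the second bullet is aiming at the same phenomenon, but it is far more complicated than needed and, as written, is not complete.
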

\begin{proof}
The lemma follows immediately from the observation that on a component $T_\pm$ of $T$, each arc of $\boundary Q \cap (T_\pm - \inter{\eta}(c_\beta))$ is an essential loop. Such a loop $\sigma$ is either disjoint from $\boundary S$ or always intersects each component of $\boundary S$ with the same sign of intersection. 
\end{proof}

We observe that by Lemma \ref{Lem: T implies AT}, $M_0$ is $c_\beta$-almost taut. We do not know that $M_-$ is $\beta_-$-taut. Let $S$ be a conditioned decomposing surface giving an almost taut decomposition $M_0 \stackrel{S}{\to} (M',\gamma',c_\beta)$. Let $(M',\gamma',\beta')$ be the banded sutured manifold derived from $M'_0 = (M',\gamma',c_\beta)$. By Lemma \ref{Lem: AT implies BT 2}, $(M',\gamma',\beta')$ is band-taut. The surface $S$ also gives sutured manifold decompositions of $M_+$ and $M_-$, with $S$ disjoint from $\beta_+$ and $\beta_-$ respectively. The resulting sutured manifolds $M'_-$ and $M'_+$ can also be obtained by isotoping the endpoints of $c'_\beta$ out of $A(\gamma') \subset M'$ and into $R(\gamma) \subset M'$.  This gives us the following decompositions:
\[
\begin{array}{rclcl}
M_+ &\stackrel{S}{\to}& (M',\gamma',\beta'_+)&=&M'_+ \\
M_- & \stackrel{S}{\to} & (M',\gamma',\beta'_-)&=&M'_- \\
\end{array}\]
The arcs $\beta'_+$ and $\beta'_-$ are obtained by isotoping the arc $c_\beta \subset M'$ so that its endpoints move out of $A(\gamma)$. That is, $\beta'_+ \cup \beta'_- = e_{\beta'}$. See Figure \ref{Fig: Multidecomp} for a schematic depiction of the relationship between $M'_-$, $M'_+$, and $M'_0$.

\begin{figure}[ht!] 
\labellist \small\hair 2pt 
\pinlabel{$\phi_1$} [br] at 195 603
\pinlabel{$\phi_2$} [l] at 315 603
\pinlabel{$\phi_3$} [bl] at 415 603
\pinlabel{$\psi_1$} [l] at 88 375
\pinlabel{$\psi_2$} [l] at 311 375
\pinlabel{$\psi_3$} [l] at 554 375
\pinlabel{$\xi$} [l] at 329 182
\endlabellist 
\centering 
\includegraphics[scale=.4]{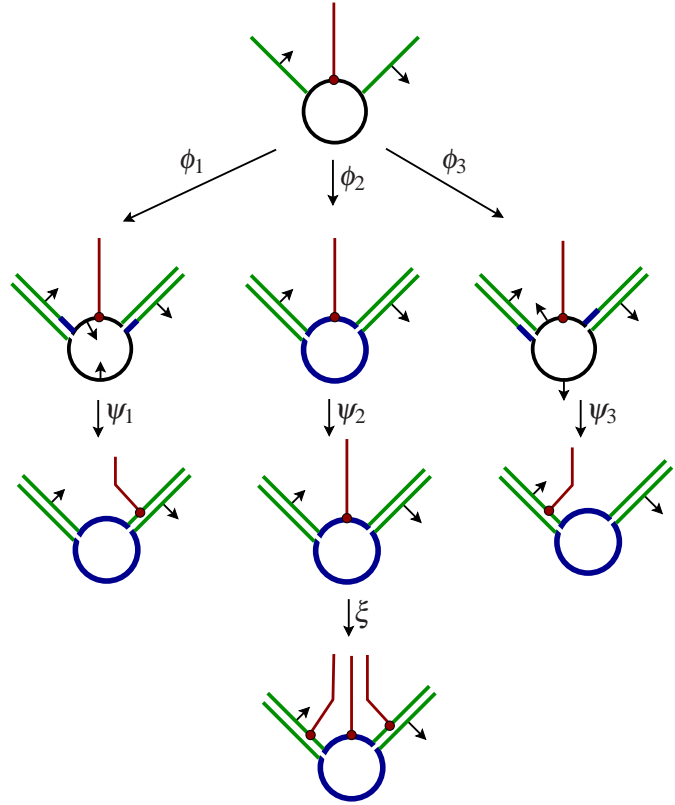}
\caption{This is a schematic depiction of the creation of $M'_0$, $M'_+$, $M'_-$, and the banded sutured manifold $(M',\gamma', \beta')$. The arrows $\phi_1$, $\phi_2$, and $\phi_3$ show the decompositions $M_+ \to M'_+$, $M_0 \to M'_0$ and $M_- \to M'_-$ respectively. The arrows $\psi_1$ and $\psi_3$, show how $M'_+$ and $M'_-$ can be obtained from $M'_0$ by an isotopy of the sutured manifold structure. The arrow $\xi$ shows how the banded sutured manifold $(M', \gamma', \beta')$ is derived from $M'_0$ and is the result of superimposing the sutured manifolds $M'_+$, $M'_0$, and $M'_-$. In all diagrams, the green lines represent the decomposing surface, blue curves represent annuli $A(\gamma)$, and the circle represents a component of $T$. }
\label{Fig: Multidecomp}
\end{figure}

If $Q_\pm$ is a parameterizing surface in $M_\pm$, then we have the decomposed surfaces $Q'_\pm \subset M'_\pm$. We assume that the ambient isotopy of $\beta_+$ to $\beta_-$ takes $Q_+$ to $Q_-$ and that $\beta_- \cap Q_+ = \beta_+ \cap Q_- = \nil$.  We say that the decomposition $M_\pm \stackrel{S}{\to} M'_\pm$ \defn{respects} $Q_+$ if $Q'_-$ and $Q'_+$ are parameterizing surfaces.

\begin{lemma}\label{Lem: AT Decomp Respect}
Suppose that $M_0$, $M_\pm$, $S$, and $Q_\pm$ are as above and that no component of $\boundary Q \cap (T - \inter{\eta}(c_\beta))$ is an inessential arc or inessential circle in $T - c_{\beta}$. Then, for large enough $k$, the decompositions
\[
M_- \stackrel{S_k}{\to} M'_- \hspace{.2in} \text{ and } \hspace{.2in} M_+ \stackrel{S_k}{\to} M'_+
\]
respect $Q$, where $S_k$ is the surface obtained by double-curve summing $S$ with $k$ copies of $R(\gamma) \subset M_0$.  If $Q'_\pm \subset M'_\pm$ are the resulting parameterizing surfaces, then $I(Q'_\pm) = I(Q)$.
\end{lemma}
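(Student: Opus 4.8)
The plan is to analyze the effect of the decomposition $M_\pm \stackrel{S_k}{\to} M'_\pm$ on the parameterizing surface $Q$ by comparing it with the known-good almost-taut decomposition $M_0 \stackrel{S_k}{\to} M'_0$, exploiting the fact that all three decompositions use the \emph{same} decomposing surface $S_k$ and differ only in the sutured-manifold structure near $T$ (whether the two tori sit in $T(\gamma)$, in $R_-\cup R_+$, or split between them). So the first step is to fix $k$ large enough that the standard machinery of Section~\ref{Param} applies: apply Scharlemann's index estimate (Lemma~\ref{Lem: Conditioned surfaces respect}, i.e.\ \cite[Lemma 7.5]{MSc3}) to the conditioned surface $S$ and the parameterizing surface $Q$ \emph{in $M_+$}, and simultaneously in $M_-$ (which makes sense even though $M_-$ need not be $\beta_-$-taut, since Scharlemann's Lemma~7.5 is a statement about surfaces, not about tautness), to get a single $k$ for which $Q'_+\cap M'_+$ and $Q'_-\cap M'_-$ are parameterizing surfaces with $I(Q'_\pm)\le I(Q)$.

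Next I would upgrade the inequality $I(Q'_\pm)\le I(Q)$ to an equality. The key observation is that for $k$ large the circle components of $\partial S_k \cap A(\gamma)$ dominate, and because $S$ is conditioned, all arcs of $\partial Q \cap (T-\inter{\eta}(c_\beta))$ are essential loops on each torus component $T_\pm$ (this is exactly the content of Lemma~\ref{Lem: Arcs}, which uses the no-inessential-arc/circle hypothesis on $Q$). Such a loop is either disjoint from $\partial S_k$ or meets each copy of $\partial S$ coherently; in either case the double-curve-sum operation that forms $R(\gamma')$ (hence $\gamma'$) does not create or destroy intersections of $\partial Q$ with $\gamma'$ beyond what is bookkept, and no component of $Q$ is lost or split in a way that changes $-2\chi$ or $\mu$. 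Concretely: $\mu(Q)$ is unchanged because $S$ is disjoint from $\beta_\pm$, so $\partial Q$ traverses the same edges the same number of times; $-2\chi(Q)$ is unchanged because, after normalization, $S_k$ meets $Q$ only in essential arcs and circles and for $k$ large no compressions/boundary-compressions of $Q$ are forced (here one uses that the arcs of $\partial Q\cap T_\pm$ are essential loops, so they run parallel to $\partial S_k$ on $T_\pm$ and can be isotoped off without index change, cf.\ the argument of Lemma~\ref{Lem: Counting and Isotoping}); and $|\partial Q\cap\gamma'|$ is unchanged by the same parity/coherence argument as in Lemma~\ref{Lem: Arcs}, since each intersection of $\partial Q$ with a new suture created from a copy of $\partial S$ is cancelled against a corresponding one on the other side. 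Thus none of the three terms in $I$ moves, giving $I(Q'_\pm)=I(Q)$.

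The main obstacle I expect is the equality $I(Q'_\pm)=I(Q)$ rather than mere inequality: Scharlemann's lemma only gives ``$\le$'', and one has to rule out a genuine strictly-decreasing normalization. The leverage is precisely the hypothesis that $Q$ meets $T-\inter{\eta}(c_\beta)$ only in essential arcs and circles, together with the fact that the only difference between $M_\pm$ and $M_0$ is whether $T$ is counted as $T(\gamma)$ or as part of $R(\gamma)$ — so the arcs of $\partial Q$ on $T$ that ``cost'' index in $M'_\pm$ (by crossing the new sutures $\partial S_k \cap T$) are, on each torus, coherently oriented parallel copies, and can be pushed entirely to one side, contributing zero net change. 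I would make this precise by running the count of Lemma~\ref{Lem: Counting and Isotoping}/Corollary~\ref{Cor: Annular R} on each component $T_\pm$: the quantities $n_\pm$ (arcs near the moved endpoint) and $m_\pm$ (parallel arcs plus parallel circles) satisfy $2m_\pm = n_\pm$ here, exactly because every relevant arc on $T_\pm$ is an essential loop isotopic to the core curve $\gamma_\pm$ traced out by the endpoint isotopy, whence the index is preserved. Combining the ``$\le$'' from Scharlemann with this ``$\ge$'' from the explicit loop count on $T$ yields $I(Q'_\pm)=I(Q)$, and choosing $k \ge \max$ of the finitely many thresholds from Lemma~\ref{Lem: Conditioned surfaces respect} applied to $M_+$ and to $M_-$ completes the proof.
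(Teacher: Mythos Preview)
Your first step has a genuine mismatch. Lemma~\ref{Lem: Conditioned surfaces respect} (equivalently \cite[Lemma~7.5]{MSc3}) sums $S$ with copies of $R(\gamma)$ \emph{of the ambient sutured manifold}. In $M_\pm$ the tori $T$ lie in $R(\gamma)$, so the $S_k$ that Scharlemann's lemma would hand you in $M_\pm$ contains copies of $T$. But the $S_k$ in the statement is explicitly built from $R(\gamma)\subset M_0$, which excludes $T$. These are different surfaces, so you cannot cite the lemma as a black box. Concretely, after decomposing $M_\pm$ along the specified $S_k$, the region $R(\gamma'_\pm)$ contains the annuli $T\cap M'$ (which $R(\gamma'_0)$ does not), and nothing in your argument rules out disc components of $Q'_\pm$ whose boundary runs through $T\cap M'$---Scharlemann's Claim~1 only controls discs with boundary in the $R(\gamma'_0)$ portion.

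The paper's proof supplies exactly this missing step. Using Lemma~\ref{Lem: Arcs}, every arc of $\boundary Q'_\pm$ in a component $T'$ of $T\cap M'$ either is an essential loop or joins the two components of $\boundary T'$; hence a disc component $q$ of $Q'_\pm$ with $\boundary q$ meeting $T'$ would be a $\beta'_\pm$-compressing disc for $T'$, and (since $S_k$ is disjoint from $\beta_+$) therefore a $\beta_+$-compressing disc for $T$ in $M_+$, contradicting $\beta_+$-tautness. So any bad disc has $\boundary q\subset R(\gamma'_0)$, and \emph{now} Claim~1 of \cite[Lemma~7.5]{MSc3} (applied in the $M_0$ picture) kills these for large $k$. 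Your separate ``upgrade to equality'' argument is also unnecessary and misaimed: the paper simply cites Claim~2 of \cite[Lemma~7.5]{MSc3}, which already gives $I(Q')=I(Q)$ as an equality once $Q'_\pm$ is known to be a parameterizing surface; and in any case the machinery of Lemma~\ref{Lem: Counting and Isotoping}/Corollary~\ref{Cor: Annular R} concerns isotoping endpoints across sutures in annular $R(\gamma)$, not decomposing along $S_k$, so it does not transfer to this situation.
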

\begin{proof}
Fix $k \geq 0$ and let $M'_\pm = (M',\gamma'_\pm,\beta'_\pm)$ be the result of decomposing $M_\pm$ by $S_k$. Recall that $S_k$ is disjoint from $c_\beta$. Since $\beta_-$, $\beta_+$, and $c_\beta$ are related by isotopies we may assume that $S_k$ is also disjoint from $\beta_- \cup \beta_+$. Let $M'_0 = (M'_0,\gamma'_0, c'_\beta)$ be the result of decomposing $M_0$ by $S_k$. By Lemma \ref{Lem: Arcs}, no component of $\boundary Q'_\pm \cap (T \cap M')$ is an inessential loop or arc, and no component has both endpoints on the same boundary component of $T \cap M'$. Thus, if $q$ is a disc component of $Q'_\pm$ having boundary in $R(\gamma'_\pm)$, then either $\boundary q$ is disjoint from $T \cap M'$, or $q$ is a $\beta'_\pm$ compressing disk for $T \cap M'$. If the latter happens, then $\boundary q \subset \boundary Q$. This would imply that $q$ was actually a component of $Q$ that was a $\beta_+$-compressing disk for $T$. This contradicts the fact that $M_+$ is $\beta_+$-taut. Thus, $\boundary q$ is disjoint from $T \cap M'$. This implies that $\boundary q \subset R(\gamma'_0)$.

The proof of Claim 1 of \cite[Lemma 7.5]{MSc3} shows that for large enough $k$, no component of $Q'_\pm$ is a disc with boundary in $R(\gamma'_0)$. Hence, $Q'_\pm$ is a parameterizing surface in $M'_\pm$. Claim 2 of \cite[Lemma 7.5]{MSc3} shows that $I(Q') = I(Q)$.
\end{proof}

We can now prove the main result of this paper.

\begin{theorem}\label{Thm: Main Theorem C}
Suppose that $(N,\gamma)$ is a taut sutured manifold and that $F \subset \boundary N$ is a component of genus at least 2. Let $b \subset \gamma \cap F$ be a simple closed curve such that either each component of $R(\gamma)$ adjacent to $b$ is a thrice punctured sphere or each component of $R(\gamma)$ adjacent to $b$ is a once-punctured torus.  Let $M = N[b]$ and let $\beta$ be the cocore of the 2--handle attached to $b$. Let $Q \subset N$ be a parameterizing surface. Assume that $|Q \cap b| \geq 1$ and that the intersection of $Q$ with the components of $R(\gamma)$ adjacent to $b$ contains no inessential arcs or circles. Then one of the following is true:
\begin{enumerate}
\item $Q$ has a compressing or $b$-boundary compressing disc.
\item $(N[b],\beta) = (M'_0,\beta'_0) \# (M'_1,\beta'_1)$ where $M'_1$ is a lens space and $\beta'_1$ is a core of a genus one Heegaard splitting of $M'_1$.
\item The sutured manifold $(N[b],\gamma-b)$ is $\nil$-taut. The arc $\beta$ can be properly isotoped  to be embedded on a branched surface $B(\mc{H})$ associated to a taut sutured manifold hierarchy $\mc{H}$ for $N[b]$. There is also a proper isotopy of $\beta$ in $N[b]$ to an arc disjoint from the first decomposing surface of $\mc{H}$. If $b$ is adjacent to thrice-punctured sphere components of $R(\gamma)$, that first decomposing surface can be taken to represent $\pm y$ for any given non-zero $y \in H_2(N[b],\boundary N[b])$. If $b$ is adjacent to once-punctured tori, the first decomposing surface can be taken to represent $y$ for any given homology class in $H_2(N[b], \boundary N[b])$ that is Seifert-like for the corresponding unpunctured torus components of $\boundary N[b]$.
\item 
\[
-2\chi(Q) + |Q \cap \gamma| \geq 2|Q \cap b|.
\]
\end{enumerate}
\end{theorem}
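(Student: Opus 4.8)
The plan is to handle the two permitted configurations separately. Suppose first that the components of $R(\gamma)$ adjacent to $b$ are thrice-punctured spheres. Attaching the $2$--handle to $b$ caps one boundary circle of each of them, turning them into twice-punctured spheres, i.e.\ annuli, so in $N[b]$ the cocore $\beta$ is an arc with endpoints in annular components of $R_\pm(\gamma-b)$, and $(N[b],\gamma-b,\beta)$ is $\beta$--taut because converting $\beta$ back to a suture returns $(N,\gamma)$. The no-inessential-arc-or-circle hypothesis on $Q$ transports without change to these annuli (punctured at the endpoints of $\beta$), so this is exactly the situation of Theorem \ref{Thm: Main Theorem B}, whose four conclusions are conclusions (1)--(4) above. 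Hence from now on I would assume the two components of $R(\gamma)$ adjacent to $b$ are once-punctured tori; after attaching the $2$--handle these become unpunctured tori $T_-\subset R_-$ and $T_+\subset R_+$, the cocore $\beta$ joins $T_-$ to $T_+$, and $M_+=(N[b],\gamma-b,\beta)$ is again $\beta$--taut.

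For this case I would run the ``one decomposition viewed three ways'' construction of Section \ref{Punctured Tori}. Slide $T=T_-\cup T_+$ into $T(\gamma)$ and perturb $\beta$ to obtain the almost-taut sutured manifold $M_0=(N[b],\gamma-b,c_\beta)$ (Lemma \ref{Lem: T implies AT}). Feed the given Seifert-like class $y\in H_2(N[b],\boundary N[b])$ for $T$ into Lemma \ref{Lem: AT Decomp Exist} to get a conditioned surface $S$ disjoint from $c_\beta$, representing $y$, with every $S_k$ giving an almost-taut decomposition $M_0\stackrel{S_k}{\to}M'_0=(M',\gamma',c_\beta)$; by Lemma \ref{Lem: AT implies BT 2} the derived banded sutured manifold $(M',\gamma',\beta')$ is band-taut, and $e_{\beta'}=\beta'-c_{\beta'}$ has the two components $e_1=\beta'_+$, $e_2=\beta'_-$. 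Simultaneously $Q$ (viewed in $M_+$) and its isotopic copy $Q_-\subset M_-$ decompose---for $k$ large, by Lemma \ref{Lem: AT Decomp Respect}, whose hypothesis is precisely the given condition on $Q\cap R(\gamma)$ transported to $T-\inter{\eta}(c_\beta)$---to parameterizing surfaces $Q'_+,Q'_-\subset M'$ with $I(Q'_\pm)=I(Q)$, $\mu(Q'_\pm)=\mu(Q)=|Q\cap b|$, and $Q'_+\cap e_2=Q'_-\cap e_1=\nil$. Fixing such a $k$ and taking $Q_1=Q'_+$, $Q_2=Q'_-$, apply Theorem \ref{Thm: Main Theorem} to $(M',\gamma',\beta')$.

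It then remains to push the five alternatives of Theorem \ref{Thm: Main Theorem} back down to $N[b]$. Its conclusion (2), an $S^2$ meeting each of $e_1,e_2$ once, cannot occur: each $e_i$ is isotopic to $c_\beta$, hence to $\beta$, so such a sphere would compress the once-punctured tori adjacent to $b$ (equivalently $R(\gamma)$), contradicting tautness of $(N,\gamma)$, exactly as in the proof of Theorem \ref{Thm: Main Theorem A}. Conclusion (1) gives a compressing or $e_i$--boundary-compressing disc for $Q'_\pm$, which Lemma \ref{Lem: compressing up} (applicable since $\mu(Q)\ge 1$ and $M_\pm\stackrel{S_k}{\to}M'_\pm$ respects $Q$) pulls back to a compressing or $\beta$--boundary-compressing disc for $Q$ in $(N[b],\gamma-b,\beta)$, i.e.\ a compressing or $b$--boundary-compressing disc for $Q$ in $N$: conclusion (1). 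Conclusion (3), a $(\text{lens space},\text{core})$ summand of $(M',e_i)$, descends to $(N[b],\beta)$ once one checks that $S_k$---a conditioned surface disjoint from $c_\beta$, with no essential intersection with a punctured lens space---may be isotoped off the summing sphere, so the summand survives re-gluing $\eta(S_k)$; since $e_i\simeq c_\beta\simeq\beta$ this is conclusion (2). Conclusion (4) gives that $(M',\gamma')$ is $\nil$--taut; Theorem \ref{Thm: Tautness Up} propagates $\nil$--tautness backward across $S_k$, so $(N[b],\gamma-b)$ is $\nil$--taut, and prepending $(N[b],\gamma-b)\stackrel{S_k}{\to}(M',\gamma')$ to the taut hierarchy $\mc{H}$ of $M'$ produces a taut hierarchy $\mc{H}^+$ of $N[b]$ whose first decomposing surface is $S_k$, representing $y$; as $c_\beta$ (hence $\beta$) is isotopic onto $B(\mc{H})\subseteq B(\mc{H}^+)$ and disjoint from $S_k$, this is conclusion (3). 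Finally conclusion (5) says $I(Q'_i)\ge 2\mu(Q'_i)$ for some $i$, which via $I(Q'_i)=I(Q)$ and $\mu(Q'_i)=|Q\cap b|$ rearranges to $-2\chi(Q)+|Q\cap\gamma|\ge 2|Q\cap b|$: conclusion (4).

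The main obstacle I anticipate is this last translation, specifically conclusions (3) and (4): one must verify carefully that the $(\text{lens space},\text{core})$ summand and the branched-surface isotopy which Theorem \ref{Thm: Main Theorem} produces inside the \emph{decomposed} manifold $M'$ genuinely descend to $N[b]$ after undoing the almost-taut decomposition by $S_k$ and the arc-to-suture conversions---in particular that prepending $S_k$ yields a bona fide taut hierarchy and that $\nil$--tautness really propagates upward via Theorem \ref{Thm: Tautness Up}. The band-tautness inputs (Lemmas \ref{Lem: AT implies BT} and \ref{Lem: AT implies BT 2}) and the index bookkeeping (Lemma \ref{Lem: AT Decomp Respect}) are already available, so no new geometric argument should be required beyond this descent.
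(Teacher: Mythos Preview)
Your proposal is correct and follows essentially the same route as the paper: dispose of the thrice-punctured-sphere case via Theorem~\ref{Thm: Main Theorem B}, then for the once-punctured-torus case pass to the almost-taut manifold $M_0$, decompose along $S_k$ from Lemma~\ref{Lem: AT Decomp Exist}, derive the band-taut $(M',\gamma',\beta')$ via Lemma~\ref{Lem: AT implies BT 2}, carry the parameterizing surfaces through with Lemma~\ref{Lem: AT Decomp Respect}, apply Theorem~\ref{Thm: Main Theorem}, and translate the five alternatives back. One small simplification for your anticipated obstacle: in conclusion~(3) the summing sphere already lives in $M'=M-\inter{\eta}(S_k)$, hence is automatically disjoint from $S_k$, so no isotopy off the sphere is needed and the (lens space, core) summand persists in $(N[b],\beta)$ directly.
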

\begin{proof}
By Theorem \ref{Thm: Main Theorem B}, it suffices to prove the statement for the case when $b$ is a separating suture on a genus two surface. Convert $b$ to an arc $\beta_+$ (see Section \ref{Section: conversion}) so that we have the $\beta_+$-taut sutured manifold $M_+ = (M,\gamma - b,\beta_+)$. Let $T$ be the components of $R(\gamma - b)$ containing the endpoints of $\beta_+$. Let $y \in H_2(M,\boundary M)$ be Seifert-like for $T$. By the remarks preceding Lemma \ref{Lem: AT Decomp Exist}, such a $y$ exists. Let $Q_+ = Q$.

Isotope $\beta_+$ off itself slightly in two directions to obtain disjoint arcs $\beta_-$ and $c_\beta$. Let $M_0 = (M_0, \gamma, c_\beta)$ be the sutured manifold obtained by moving $T$ into $T(\gamma - b)$ and ignoring $\beta_+ \cup \beta_-$. Let $M_- = (M_-, \gamma, \beta_-)$ be the sutured manifold obtained by swapping the locations of the components of $T$ in $R(\gamma - b)$ and ignoring $\beta_+ \cup c_\beta$. Let $Q_-$ be the parameterizing surface in $M_-$ obtained by isotoping $Q_+$ using the isotopy taking $\beta_+$ to $\beta_-$. By a small adjustment of the isotopy, we may assume that $\beta_+ \cap Q_- = \beta_- \cap Q+ = \nil$.

Let $S$ be the surface provided by Lemma \ref{Lem: AT Decomp Exist}, so that the decomposition $M_0 \stackrel{S_k}{\to} M'_0$ is $c_\beta$-almost taut for any $k \geq 0$. Choose $k$ large enough so that the decompositions $M_- \stackrel{S_k}{\to} M'_-$ and $M_+ \stackrel{S_k}{\to} M'_+$ respect $Q$. This is possible by Lemma \ref{Lem: AT Decomp Respect}. Recall that these decompositions are disjoint from $c_\beta \cup \beta_- \cup \beta_+$, since $S_k$ is obtained by summing $S$ with copies of $R(\gamma) \subset M_0$ (and not $R(\gamma) \subset M_\pm$).  Let $Q_1 = Q'_+$ and $Q_2 = Q'_-$ be the resulting parameterizing surfaces in $M'_+$ and $M'_-$ respectively. Note that they are isotopic to each other.  By Lemma \ref{Lem: AT Decomp Respect}, we have $I(Q) = I(Q'_-) = I(Q'_+)$.  

Recall from Lemma \ref{Lem: AT implies BT 2} that the banded sutured manifold $(M',\gamma',\beta')$ derived from $(M',\gamma',c'_\beta)$ is band taut and that the components $\beta'_-$ and $\beta'_+$ of $e_\beta$ are obtained by isotopies of $c'_\beta$ in $M'_0$.  Let $e_1 = \beta'_+$ and $e_2 = \beta'_+$. By Theorem \ref{Thm: Main Theorem} one of the following occurs:
\begin{enumerate}
\item Some $Q_i$ has a compressing or $e_i$-boundary compressing disc in $(M',\gamma',e_i)$.
\item $M'$ contains an $S^2$ intersecting each of $e_1$ and $e_2$ exactly once.
\item For some $i$, $(M',e_i)$ has a connect summand that is a lens space and a core.
\item $(M',\gamma')$ is $\nil$-taut. The arc $c_\beta$ can be properly isotoped onto a branched surface $B(\mc{H}')$ associated to a taut sutured manifold hierarchy for $M'$.
\item Either $I(Q'_1) \geq 2\mu(Q'_1)$ or $I(Q'_2) \geq 2\mu(Q'_2)$.
\end{enumerate}

If (1) holds, then by Lemma \ref{Lem: compressing up}, $Q$ would have a compressing or $b$-boundary compressing disc in $(M,\gamma)$.

If (2) holds, then there is an $S^2$ in $M$ intersecting $\beta_+$ exactly once, contradicting the fact that $M_+$ is $\beta_+$-taut with the endpoints of $\beta_+$ in torus components of $R(\gamma) \subset M_+$.

If (3) holds, then since $\beta_+$ is isotopic to $c_\beta$, there is a (lens space, core) summand of $(M,\beta_+)$.

If (4) holds, then by Theorem \ref{Thm: Tautness Up}, since $S_k$ is conditioned $(M,\gamma)$ is $\nil$-taut. By construction the first decomposing surface is disjoint from the arc. Lemma \ref{Lem: Branched Surface} shows, in fact, that there is an isotopy of $c_\beta$ (rel endpoints) to lie on $\boundary M' \cup B(\mc{H}')$. There is a proper isotopy of $c_\beta$ in $M$ to lie on $S_k \cup B(\mc{H}')$. Thus, there is a branched surface $B(\mc{H})$ associated to a taut sutured manifold hierarchy $\mc{H}$ for $(M,\gamma)$ such that there is a proper isotopy of $c_\beta$ into $B(\mc{H})$.

If (5) holds, then since $I(Q) = I(Q'_1) = I(Q'_2)$ and since $\mu(Q_1) = \mu(Q_2) = \mu(Q)$, we have $-2\chi(Q) + |\boundary Q \cap \gamma| \geq 2|\boundary Q \cap b|$.
\end{proof}

\section{Tunnel Number One Knots}\label{Tunnels}
In this section we apply Theorem \ref{Thm: Main Theorem C} to the study of tunnel number one knots and links. Scharlemann and Thompson \cite[Proposition 4.2]{ST}, proved that given a tunnel for a tunnel number one knot in $S^3$, the tunnel can be slid and isotoped to be disjoint from some minimal genus Seifert surface for the knot\footnote{It is perhaps worth remarking that \cite[Proposition 4.2]{ST} depends on \cite[Lemma 4.1]{ST} whose proof relies on sutured manifold theory. Also, we should note, that Scharlemann and Thompson prove, in fact, that in many cases the tunnel can be isotoped onto a minimal genus Seifert surface. We will not address that aspect of their work.}. We generalize and extend this result in several ways:
\begin{itemize}
\item Scharlemann and Thompson's result holds for 2-component tunnel number one links in $S^3$. 
\item A similar theorem applies to all tunnel number one knots and 2-component links in any closed, orientable 3--manifold. (Of course, if a 3-manifold contains a tunnel number one knot or link, the 3-manifold has Heegaard genus less than or equal to two.)
\item A given tunnel for a tunnel number one knot or link can be properly isotoped to lie on a branched surface arising from a certain taut sutured manifold hierarchy of the knot or link exterior.
\end{itemize}
We begin with some terminology.

A link $C$ in a closed 3-manifold $M$ is a \defn{generalized unlink} if each component of $\boundary (M - \inter{\eta}(C))$ is compressible in the exterior of $C$. Suppose that $L_b \subset M$ is a knot or two-component link and that $\beta$ is an arc properly embedded in the complement of $L_b$. The arc $\beta$ is a \defn{tunnel} for $L_b$ if the exterior of $L_b \cup \beta$ is a handlebody. If $L_b$ is a two-component link this implies that $\beta$ joins the components of $L_b$. $L_b$ has \defn{tunnel number one} if it has a tunnel and is not a generalized unlink.

A \defn{generalized Seifert surface} $S$ for a knot or link $L_b$ in a closed manifold $M$ is a compact oriented surface properly embedded in $M - \inter{\eta}(L_b)$ such that $\boundary S$ consists of parallel (as oriented curves) longitudes on each component of $\boundary (M - \inter{\eta}(L_b))$. In particular, $\boundary S$ has components on each component of $\boundary (M - \inter{\eta}(L_b))$. If $\boundary S$ has a single component on each component of $\boundary (M - \inter{\eta}(L_b))$ then $S$ is a \defn{Seifert surface} for $L_b$. A generalized Seifert surface is \defn{minimal genus} if it has minimal genus among all generalized Seifert surfaces in the same homology class.

\begin{theorem}\label{Thm: Tunnel 1}
Suppose that $L_b \subset M$ has tunnel number one and that $\beta$ is a tunnel for $L_b$. Assume also that $(M - L_b, \beta)$ does not have a (lens space, core) summand. Then there exist (possibly empty) curves $\wihat{\gamma}$ on $\boundary(M - \inter{\eta}(L_b))$  such that $(M - \inter{\eta}(L_b), \wihat{\gamma})$ is a taut sutured manifold and the tunnel $\beta$ can be properly isotoped to lie on the branched surface associated to a taut sutured manifold hierarchy of $(M - \inter{\eta}(L_b), \wihat{\gamma})$. In particular, if $L_b$ has a (generalized) Seifert surface, then there exists a minimal genus (generalized) Seifert surface for $L_b$ that is disjoint from $\beta$.
\end{theorem}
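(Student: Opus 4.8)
The goal is to deduce Theorem \ref{Thm: Tunnel 1} from Theorem \ref{Thm: Main Theorem C} (equivalently Theorem \ref{Thm: Main Theorem}), by setting up the right sutured manifold structure on the exterior $E = M - \inter{\eta}(L_b)$ and the right parameterizing surface. First I would recall the hypothesis that $\beta$ is a tunnel, so that the exterior of $L_b \cup \beta$ is a handlebody $H$ of genus two (since $L_b$ is a knot or two-component link and $\beta$ joins the components in the link case). Dually, $E[\beta']$ — where $\beta'$ is the cocore of a 2-handle — or rather: the complement of the tunnel arc inside $E$ is a genus two handlebody, and $\beta$ itself is recovered as the cocore of a 2-handle attached along a curve $b \subset \boundary H$. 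This is exactly the "$N[b]$ with cocore $\beta$" setup of Theorem \ref{Thm: Main Theorem C}, read backwards: take $N = H$, choose sutures $\gamma$ on $\boundary H$ so that $(H,\gamma)$ is taut and $b$ is one of the sutures, adjacent to thrice-punctured sphere or once-punctured torus components of $R(\gamma)$, and then $N[b] = H[b] = E$ (up to the 2-handle attachment, which recovers $E$), with $\beta$ the cocore. Lackenby's construction \cite{L2}, cited in the introduction, provides sutures $\gamma$ on the genus two handlebody boundary making it taut with all $R(\gamma)$ components thrice-punctured spheres or tori, so the hypotheses of Theorem \ref{Thm: Main Theorem C} can be arranged. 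The parameterizing surface $Q$ should be chosen so that $|Q \cap b| \geq 1$ — for instance a meridian disc of the handlebody hitting $b$, or more carefully a surface whose index is small; the point is to use conclusion (4) of Theorem \ref{Thm: Main Theorem C}, $-2\chi(Q) + |\boundary Q \cap \gamma| \geq 2|\boundary Q \cap b|$, and rule it out by choosing $Q$ with $-2\chi(Q) + |\boundary Q \cap (\gamma - b)| < |\boundary Q \cap b|$, which is possible because a meridian disc of a handlebody can be made to meet one chosen boundary curve many times while staying a disc.

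**Eliminating the bad conclusions.** Having arranged the setup, I would then go through conclusions (1)–(4) of Theorem \ref{Thm: Main Theorem C}. Conclusion (1), a compressing or $b$-boundary compressing disc for $Q$, is excluded because $Q$ is chosen to be a disc (so incompressible in the relevant sense) with no $b$-boundary compression — this requires a careful but routine choice of $Q$, perhaps using that the handlebody structure forces any such disc to simplify $Q$ contrary to minimality of intersection. Conclusion (2) is excluded by the hypothesis that $(M - L_b, \beta) = (E,\beta)$ has no (lens space, core) summand — note $N[b] = E$ here, so conclusion (2) of the main theorem is literally the forbidden summand. Conclusion (4) is excluded by the index count in the previous paragraph. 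Therefore conclusion (3) holds: $(N[b],\gamma - b) = (E, \wihat{\gamma})$ is taut (so set $\wihat{\gamma} = \gamma - b$, possibly empty), and $\beta$ can be properly isotoped onto the branched surface $B(\mc{H})$ of a taut sutured manifold hierarchy $\mc{H}$ of $E$, with the first decomposing surface disjoint from $\beta$ and representing a prescribed homology class (any non-zero class if $b$ bounds thrice-punctured spheres; any Seifert-like class for the torus boundary components if $b$ bounds once-punctured tori).

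**Deducing the Seifert surface statement.** For the final "in particular" clause: if $L_b$ has a (generalized) Seifert surface, then $H_2(E, \boundary E)$ contains the class $y$ dual to that surface, which is non-zero and — in the once-punctured torus (link) case — Seifert-like for the torus components, since a generalized Seifert surface by definition has boundary a nonzero longitude on each component of $\boundary E$. By conclusion (3), I can take the first decomposing surface $S_1$ of the hierarchy to represent $\pm y$, and $S_1$ is disjoint from $\beta$ after a proper isotopy. A taut surface representing the Seifert class is exactly a minimal genus (generalized) Seifert surface (taut = Thurston-norm-minimizing = minimal genus in its homology class), and the first surface of a taut sutured manifold hierarchy representing a given class is such a taut surface. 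Hence $S_1$ is a minimal genus (generalized) Seifert surface for $L_b$ disjoint from $\beta$, as claimed.

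**Main obstacle.** The delicate point is the choice of sutures $\gamma$ on the genus two handlebody boundary and the parameterizing surface $Q$: I must simultaneously ensure $(H,\gamma)$ is taut with $b$ adjacent to thrice-punctured spheres or once-punctured tori (invoking Lackenby's \cite{L2} construction), that $N[b] = E$ recovers the knot/link exterior with $\beta$ the cocore, that $Q$ can be taken with $|Q \cap b| \geq 1$ and no inessential arcs/circles of $Q \cap R(\gamma)$ near $b$, and that the index inequality in conclusion (4) fails for $Q$. Verifying that a single surface $Q$ meets all these demands — in particular that its intersection with $b$ can be made large relative to $-2\chi(Q) + |\boundary Q \cap (\gamma - b)|$ while avoiding compressions — is where the real work lies; everything else is bookkeeping translating between the handlebody picture, the 2-handle attachment, and the statement of Theorem \ref{Thm: Main Theorem C}. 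I would also need to handle the possibility that $L_b$ has \emph{no} generalized Seifert surface, in which case only the branched-surface part of the conclusion is asserted and the homology class $y$ can be taken to be any convenient non-zero (or Seifert-like) class guaranteed to exist by the "half-lives half-dies" remark preceding Lemma \ref{Lem: AT Decomp Exist}.
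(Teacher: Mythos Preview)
Your overall strategy is right and matches the paper's: take $N$ to be the genus two handlebody complementary to $\eta(L_b \cup \beta)$, let $b \subset \boundary N$ be a meridian of $\beta$ so that $N[b]$ is the link exterior with cocore $\beta$, choose sutures $\gamma$ containing $b$ and a parameterizing surface $Q$, and rule out conclusions (1), (2), (4) of Theorem \ref{Thm: Main Theorem C}. However, your execution of the ``main obstacle'' has a real gap, and in fact an internal contradiction. You propose to take sutures from Lackenby's general construction and then choose $Q$ to be a meridian disc ``made to meet $b$ many times'' so that the index inequality fails. But a disc meeting $b$ many more times than necessary will have a $b$-boundary compressing disc (boundary-compress it to reduce $|\boundary Q \cap b|$), so conclusion (1) is not ruled out. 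Conversely, if you choose $Q$ to minimise $|\boundary Q \cap b|$ (which is what actually excludes $b$-boundary compressions, as you note later), you have no control over $|\boundary Q \cap (\gamma - b)|$ for sutures produced independently by Lackenby's construction, so you cannot force the inequality in (4) to fail.

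The paper resolves this by reversing the order: first choose $Q$ to be a \emph{pair} of non-parallel non-separating essential discs in $N$ minimising $|\boundary Q \cap b|$ (so $-2\chi(Q) = -4$ and there is no $b$-boundary compression by minimality), and \emph{then} construct the remaining sutures $\wihat{\gamma} = \gamma - b$ by hand so that each essential arc of $\boundary Q$ in $\boundary N - b$ meets $\wihat{\gamma}$ at most once. This gives $|\boundary Q \cap \wihat{\gamma}| \leq |\boundary Q \cap b|$, whence $-2\chi(Q) + |\boundary Q \cap \gamma| < 2|\boundary Q \cap b|$ and (4) fails. The construction of $\wihat{\gamma}$ (two parallel curves separating the endpoints of $\beta$ when $b$ is non-separating; empty when $b$ is separating) is the genuine content you are missing. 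Your treatment of conclusion (2), conclusion (3), and the Seifert surface deduction is fine.
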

\begin{proof}
Let $W = \eta(L_b \cup \beta)$ and let $N = M - \inter{W}$ be the complementary handlebody. Let $H = \boundary W$. Let $b \subset H$ be a simple closed curve that is a meridian of $\beta$, so that the exterior $N[b]$ of $L_b$ can be obtained by attaching a 2--handle to $\boundary N$ along $b$. The tunnel $\beta$ is a cocore of that 2--handle.

\textbf{Claim:} $H - b$ is incompressible in $N$.

\begin{proof}[Proof of Claim]
If $b$ is compressible in $N$, then $(W,N)$ is a reducible Heegaard splitting for $M$. Since boundary reducing a handlebody creates a handlebody, $L_b$ must be a generalized unlink. Suppose that $D$ is a compressing disc for $H- b$. If $b$ is separating, then $\boundary D$ must be an essential curve in one of the punctured torus components of $H - b$. Compressing that component using $D$ creates a compressing disc for $b$ in $N$. Thus, $b$ cannot be separating. If $b$ is non-separating then either $L_b$ is a generalized unlink or $\boundary D$ is an inessential curve in $\boundary N[b]$. In the latter case, $\boundary D$ bounds an essential disc in $W$ (obtained by banding together two copies of the disc in $W$ bounded by $b$), so once again $(W,N)$ is a reducible Heegaard splitting for $M$ and $C$ must be a generalized unlink. Thus, $H - b$ is incompressible in $N$.
\end{proof}

Let $Q \subset N$ be a pair of properly embedded non-parallel non-separating essential discs, chosen so as to intersect $b$ minimally. As a consequence of the claim, no component of $Q$ is disjoint from $b$. By the minimality of $|\boundary Q \cap b|$, each component of $Q \cap (H - b)$ is an essential arc.

If there were a $b$-boundary compressing disc $D$ for a component $Q_0$ of $Q$, then boundary compressing $Q_0$ using $D$ results in two discs, each intersecting $b$ fewer times than does $Q$ with at least one of them a compressing disc for $H$ in $N$. Thus, by the minimality of the intersection between $\boundary Q$ and $b$,  $Q$ has no $b$-boundary compressing disc. 

If $b$ is separating, choose $\wihat{\gamma} = \nil$. If $b$ is non-separating, we want to choose essential curves $\wihat{\gamma} \subset H - \inter{\eta}(b)$ with the following properties:
\begin{enumerate}
\item $\wihat{\gamma}$ consists of two essential simple closed curves that are parallel in $\boundary N[b]$ and which separate the components of $\boundary \eta(b)$.
\item Each arc component of $Q \cap (H - \inter{\eta}(b))$ is an arc intersecting $\wihat{\gamma}$ zero or one times.
\end{enumerate}

To see that this can be done, recall that the surface $H' = H - \inter{\eta}(b)$ is a twice-punctured torus and that $Q \cap H'$ is a collection of essential arcs. We describe how to find $\wihat{\gamma}$ if each component of $Q \cap H'$ joins the components of $\boundary \eta(b)$. We leave the other case as an exercise. There are at most four disjoint non-parallel essential isotopy classes $c_1, \hdots, c_4$ of arcs in $\boundary Q \cap H'$. An essential simple closed curve $\gamma_1$ can be chosen that is disjoint from representatives of two of the arcs (say $c_1$ and $c_2$) and that intersects representatives of the other two classes in a single point each. Let $\gamma_2$ be a second copy of $\gamma_1$, isotoped to be disjoint from $\gamma_1$. In $\boundary N[b]$, push a sub-arc of $\gamma_2$ along arcs of $Q - \gamma_1$ until it crosses an endpoint of $\beta$. Then $\gamma_2$ intersects $c_3$ and $c_4$ exactly once and is disjoint from $c_1$ and $c_2$. By isotoping $\wihat{\gamma} = \gamma_1 \cup \gamma_2$ in $H'$ to intersect $\boundary Q$ minimally we obtain the desired curves. See Figure \ref{Fig: Sutures} for a schematic depiction of the four isotopy classes of arcs and the sutures $\gamma_1$ and $\gamma_2$.

\begin{figure}[ht!] 
\labellist \small\hair 2pt 
\pinlabel{$\gamma_1$} [r] at 175 161
\pinlabel{$\gamma_2$} [br] at 134 187
\pinlabel{$H - b$} [bl] at 375 392
\pinlabel{$c_1$} [b] at 275 98
\pinlabel{$c_2$} [b] at 275 67
\pinlabel{$c_3$} [br] at 96 340
\pinlabel{$c_4$} [br] at 141 340
\endlabellist 
\centering 
\includegraphics[scale=.4]{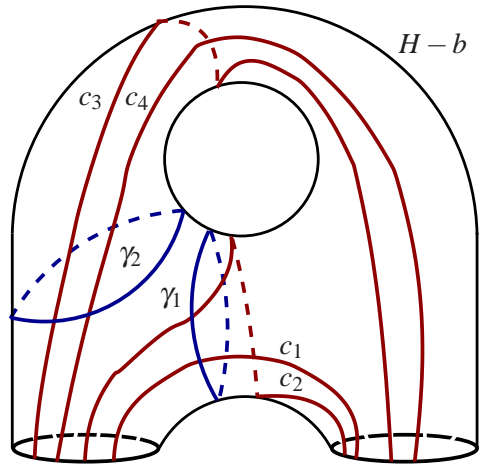}
\caption{The possible isotopy classes of arcs of $\boundary Q \cap (H -b)$ (up to homeomorphism of $H - b$) and the sutures $\gamma_1$ and $\gamma_2$ chosen to intersect those isotopy classes nicely.}
\label{Fig: Sutures}
\end{figure}

It is now easy to verify that $(N, \wihat{\gamma} \cup b)$ is a taut sutured manifold and that $|\boundary Q \cap \wihat{\gamma}| \leq |\boundary Q \cap b|$.
Since $-2\chi(Q) = -4$, it is impossible that 
\[
-2\chi(Q) + |Q \cap (\wihat{\gamma} \cup b)| \geq 2|Q \cap b|
\]

Consequently, by Theorem \ref{Thm: Main Theorem C}, $\beta$ can be isotoped to lie on a branched surface associated to a taut sutured manifold hierarchy of $(N[b],\wihat{\gamma})$. 

If $L_b$ has a (generalized) Seifert surface, choose $y \in H_2(N[b],\boundary N[b])$ to be a class represented by (generalized) Seifert surfaces for $L_b$. The first surface $S$ in the sutured manifold hierarchy constructed in the proof of Theorem \ref{Thm: Main Theorem C} is a conditioned surface representing $\pm y$ that is taut in the Thurston norm of $N[b]$ and is disjoint from $\beta$. If $\Sigma$ is a minimal genus (generalized) Seifert surface for $L_b$ representing $\pm y$, then $\Sigma$ can be isotoped to have the same boundary as $S$ and (possibly after spinning around $\boundary N[b]$ and changing orientation) is homologous to $S$ in $H_2(N[b],\boundary S)$. Since $S$ has minimal Thurston norm among all such surfaces, it is a minimal genus (generalized) Seifert surface for $L_b$ disjoint from $\beta$.
\end{proof}

Scharlemann-Thompson's result follows immediately:

\begin{corollary}[Scharlemann--Thompson]\label{ST-disjt tunnel}
Suppose that $K$ is a tunnel number one knot or link in $S^3$ with tunnel $\beta$ then $\beta$ can be isotoped to be disjoint from a minimal genus Seifert surface for $K$.
\end{corollary}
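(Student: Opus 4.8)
The plan is to derive the corollary directly from Theorem \ref{Thm: Tunnel 1} applied with $M = S^3$ and $L_b = K$, so the only real work is checking the one nontrivial hypothesis, namely that $(S^3 - \inter{\eta}(K),\beta)$ has no (lens space, core) connected summand. First I would argue this by the primeness of $S^3$: a (lens space, core) summand would in particular exhibit a connected sum decomposition $S^3 - \inter{\eta}(K) = M'_0 \# M'_1$ with $M'_1$ a lens space. Since $M'_1$ is closed, every torus boundary component of $S^3 - \inter{\eta}(K)$ lies in $M'_0$, so Dehn filling those tori to recover $S^3$ only modifies the $M'_0$ summand, yielding $S^3 = \widehat{M'_0} \# M'_1$ with $\widehat{M'_0}$ the filled manifold. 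As $S^3$ is prime and $M'_1$ is a nontrivial lens space (not $S^3$), this is a contradiction; hence the hypothesis of Theorem \ref{Thm: Tunnel 1} is satisfied.

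Next I would invoke Theorem \ref{Thm: Tunnel 1} to obtain (possibly empty) sutures $\wihat{\gamma}$ making $(S^3 - \inter{\eta}(K),\wihat{\gamma})$ a taut sutured manifold such that $\beta$ can be properly isotoped onto the branched surface of an associated taut sutured manifold hierarchy. Since $K$ is null-homologous in $S^3$, it bounds a (generalized) Seifert surface, so the final assertion of Theorem \ref{Thm: Tunnel 1} applies and furnishes a minimal genus (generalized) Seifert surface for $K$ disjoint from $\beta$.

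Finally I would note that this is exactly the classical statement: for a knot in $S^3$ the group $H_2(S^3 - \inter{\eta}(K),\boundary(S^3 - \inter{\eta}(K)))$ is infinite cyclic, generated by the Seifert class, and likewise for a $2$-component link the relevant homology class (a longitude on each component) is the one carried by Seifert surfaces, so ``minimal genus (generalized) Seifert surface'' coincides with the usual notion of a minimal genus Seifert surface for $K$. I do not expect any serious obstacle here: all of the geometric content is contained in Theorem \ref{Thm: Tunnel 1}, and the only step requiring a short argument is ruling out the (lens space, core) summand, which reduces to the primeness of $S^3$.
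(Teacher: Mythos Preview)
Your proposal is correct and matches the paper's approach: the paper simply states that the corollary ``follows immediately'' from Theorem~\ref{Thm: Tunnel 1}, and your verification that $(S^3 - \inter{\eta}(K),\beta)$ cannot have a (lens space, core) summand via the primeness of $S^3$ is exactly the small check needed to invoke that theorem. The remaining observations about the existence of a Seifert surface and the identification of the minimal genus generalized Seifert surface with the classical one are routine.
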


\end{document}